\let\th@plain\relax
\pgfplotsset{compat=newest}
\DeclareFontFamily{U}{MnSymbolA}{}
\DeclareFontShape{U}{MnSymbolA}{m}{n}{
    <-6> MnSymbolA5
    <6-7> MnSymbolA6
    <7-8> MnSymbolA7
    <8-9> MnSymbolA8
    <9-10> MnSymbolA9
    <10-12> MnSymbolA10
    <12-> MnSymbolA12
}{}
\DeclareFontShape{U}{MnSymbolA}{b}{n}{
    <-6> MnSymbolA-Bold5
    <6-7> MnSymbolA-Bold6
    <7-8> MnSymbolA-Bold7
    <8-9> MnSymbolA-Bold8
    <9-10> MnSymbolA-Bold9
    <10-12> MnSymbolA-Bold10
    <12-> MnSymbolA-Bold12
}{}
\DeclareSymbolFont{MnSymA}{U}{MnSymbolA}{m}{n}
\DeclareMathSymbol{\lcirclearrowright}{\mathrel}{MnSymA}{252}
\DeclareMathSymbol{\lcirclearrowdown}{\mathrel}{MnSymA}{255}
\DeclareMathSymbol{\rcirclearrowleft}{\mathrel}{MnSymA}{250}
\DeclareMathSymbol{\rcirclearrowdown}{\mathrel}{MnSymA}{251}
\DeclareFontFamily{U}{MnSymbolC}{}
\DeclareSymbolFont{MnSyC}{U}{MnSymbolC}{m}{n}
\DeclareFontShape{U}{MnSymbolC}{m}{n}{
    <-6>  MnSymbolC5
    <6-7>  MnSymbolC6
    <7-8>  MnSymbolC7
    <8-9>  MnSymbolC8
    <9-10> MnSymbolC9
    <10-12> MnSymbolC10
    <12->   MnSymbolC12%
}{}
\DeclareMathSymbol{\powerset}{\mathord}{MnSyC}{180}
\DeclareMathSymbol{\righthalfcap}{\mathbin}{MnSyC}{186}
\DeclareMathAlphabet{\mathpzc}{OT1}{pzc}{m}{it}
\DeclareMathAlphabet{\blackboardfont}{U}{BOONDOX-ds}{m}{n}
\def\boolwahr{true}
\def\boolfalsch{false}
\def\boolleer{}
\let\boolinappendix\boolfalsch
\let\boolinmdframed\boolfalsch
\newlength\rtab
\newlength\gesamtlinkerRand
\newlength\gesamtrechterRand
\newlength\ownspaceabovethm
\newlength\ownspacebelowthm
\newlength\aboveequation
\newlength\belowequation
\def\secnumberingpt{.}
\def\secnumberingseppt{.}
\def\subsecnumberingseppt{}
\def\thmnumberingpt{.}
\def\thmnumberingseppt{}
\def\thmForceSepPt{.}
\definecolor{leer}{gray}{1}
\definecolor{boxgrau}{gray}{0.85}
\definecolor{dunkelgrau}{gray}{0.5}
\definecolor{maroon}{rgb}{0.6901961,0.1882353,0.3764706}
\definecolor{dunkelgruen}{rgb}{0.015625,0.363281,0.109375}
\definecolor{dunkelrot}{rgb}{0.5450980392,0,0}
\definecolor{dunkelblau}{rgb}{0,0,0.5450980392}
\definecolor{blau}{rgb}{0,0,1}
\definecolor{newresult}{rgb}{0.6,0.6,0.6}
\definecolor{improvedresult}{rgb}{0.9,0.9,0.9}
\definecolor{hervorheben}{rgb}{0,0.9,0.7}
\definecolor{starkesblau}{rgb}{0.1019607843,0.3176470588,0.8156862745}
\definecolor{achtung}{rgb}{1,0.5,0.5}
\definecolor{frage}{rgb}{0.5,1,0.5}
\definecolor{schreibweise}{rgb}{0,0.7,0.9}
\definecolor{axiom}{rgb}{0,0.3,0.3}
\definecolor{drawing_light_grey}{gray}{0.85}
\definecolor{background_light_grey}{gray}{0.95}
\def\let@name#1#2{
    \expandafter\let\csname #1\expandafter\endcsname\csname #2\endcsname\relax
}
\DeclareRobustCommand\crfamily{\fontfamily{ccr}\selectfont}
\DeclareTextFontCommand{\textcr}{\crfamily}
\def\ifthenelseleer#1#2#3{\ifthenelse{\equal{#1}{}}{#2}{#1#3}}
\def\bedingtesspaceexpand#1#2#3{\ifthenelseleer{\csname #1\endcsname}{#3}{#2#3}}
\def\hraum{\null\hfill\null}
\def\nvraum{\@ifnextchar\bgroup{\nvraum@c}{\nvraum@bes}}
    \def\nvraum@c#1{\vspace*{-#1\baselineskip}}
    \def\nvraum@bes{\vspace*{-\baselineskip}}
\def\forceaddspace{\relax\ifmmode\else\@\xspace\fi}
\def\forceremovespace{\relax\ifmmode\else\expandafter\@gobble\fi}
\def\send@toaux#1{\@bsphack\protected@write\@auxout{}{\string#1}\@esphack}
\def\rlabel#1[#2]#3#4#5{#5\rlabel@aux{#1}[#2]{#3}{#4}{#5}}
    \def\rlabel@aux#1[#2]#3#4#5{%
        \send@toaux{\newlabel{#1}{{\@currentlabel}{\thepage}{{\unexpanded{#5}}}{#2.\csname the#2\endcsname}{}}}\relax%
    }
\def\tag@rawscheme#1#2[#3]#4#5{\@ifnextchar[{\tag@rawscheme@{#1}{#2}[#3]{#4}{#5}}{\tag@rawscheme@{#1}{#2}[#3]{#4}{#5}[*]}}
    \def\tag@rawscheme@#1#2[#3]#4#5[#6]{\@ifnextchar\bgroup{\tag@rawscheme@@{#1}{#2}[#3]{#4}{#5}[#6]}{\tag@rawscheme@@{#1}{#2}[#3]{#4}{#5}[#6]{}}}
    \def\tag@rawscheme@@#1#2[#3]#4#5[#6]#7{%
        \ifthenelse{\equal{#6}{*}}{%
            \ifthenelse{\equal{#7}{\boolleer}}{\refstepcounter{#3}#4\csname the#3\endcsname#5}{#4#7#5}%
        }{%
            \refstepcounter{#3}#4%
            \ifthenelse{\equal{#7}{\boolleer}}{\rlabel{#6}[#3]{#1}{#2}{\csname the#3\endcsname}}{\rlabel{#6}[#3]{#1}{#2}{#7}}%
            #5%
        }%
    }
\def\tag@scheme#1#2[#3]{\tag@rawscheme{#1}{#2}[#3]{\upshape(}{\upshape)}}
\def\eqtag@post#1{\makebox[0pt][r]{#1}}
\def\eqtag@pre{\tag@scheme{Eq}{Equation}[equation]}
\def\eqtag{\@ifnextchar[{\eqtag@}{\eqtag@[*]}}
    \def\eqtag@[#1]{\@ifnextchar\bgroup{\eqtag@@[#1]}{\eqtag@@[#1]{}}}
    \def\eqtag@@[#1]#2{\eqtag@post{\eqtag@pre[#1]{#2}}}
\def\eqcref#1{\text{(\ref{#1})}}
\def\punktlabel#1{\label{it:#1:\beweislabel}}
\def\punktcref#1{\eqcref{it:#1:\beweislabel}}
\def\Crefit#1#2{\Cref{#1}~\eqcref{it:#2:#1}}
\def\opfromto[#1]_#2^#3{\underset{#2}{\overset{#3}{#1}}}
\def\textoverset#1#2{\overset{\text{#1}}{#2}}
\def\eqcrefoverset#1#2{\textoverset{\eqcref{#1}}{#2}}
\def\mathclap#1{#1}
\def\oberunterset#1{\@ifnextchar^{\oberunterset@oben{#1}}{\oberunterset@unten{#1}}}
    \def\oberunterset@oben#1^#2_#3{\underset{\mathclap{#3}}{\overset{\mathclap{#2}}{#1}}}
    \def\oberunterset@unten#1_#2^#3{\underset{\mathclap{#2}}{\overset{\mathclap{#3}}{#1}}}
    \def\breitunderbrace#1_#2{\underbrace{#1}_{\mathclap{#2}}}
    \def\breitoverbrace#1^#2{\overbrace{#1}^{\mathclap{#2}}}
    \def\breitunderbracket#1_#2{\underbracket{#1}_{\mathclap{#2}}}
    \def\breitoverbracket#1^#2{\overbracket{#1}^{\mathclap{#2}}}
\def\generatenestedsecnumbering#1#2#3{%
    \expandafter\gdef\csname thelong#3\endcsname{%
        \expandafter\csname the#2\endcsname%
        \secnumberingpt%
        \expandafter\csname #1\endcsname{#3}%
    }%
    \expandafter\gdef\csname theshort#3\endcsname{%
        \expandafter\csname #1\endcsname{#3}%
    }%
}
\def\generatenestedthmnumbering#1#2#3{%
    \expandafter\gdef\csname the#3\endcsname{%
        \expandafter\csname the#2\endcsname%
        \thmnumberingpt%
        \expandafter\csname #1\endcsname{#3}%
    }%
    \expandafter\gdef\csname theshort#3\endcsname{%
        \expandafter\csname #1\endcsname{#3}%
    }%
}
\providecommand{\setcounternach}{}
\renewcommand{\setcounternach}[2]{\setcounter{#1}{#2}\addtocounter{#1}{-1}}
\providecommand{\textsubscript}{}
\renewcommand{\textsubscript}[1]{${}_{\textup{#1}}$}
\def\forcepunkt#1{#1\IfEndWith{#1}{.}{}{.}}
\def\matrix#1{\left(\begin{array}{#1}}
    \def\endmatrix{\end{array}\right)}
\def\smatrix{\left(\begin{smallmatrix}}
    \def\endsmatrix{\end{smallmatrix}\right)}
\def\multiargrekursiverbefehl#1#2#3#4#5#6#7#8{%
    \expandafter\gdef\csname#1\endcsname #2##1#4{\csname #1@anfang\endcsname##1#3\egroup}
    \expandafter\def\csname #1@anfang\endcsname##1#3{#5##1\@ifnextchar\egroup{\csname #1@ende\endcsname}{#7\csname #1@mitte\endcsname}}
    \expandafter\def\csname #1@mitte\endcsname##1#3{#6##1\@ifnextchar\egroup{\csname #1@ende\endcsname}{#7\csname #1@mitte\endcsname}}
    \expandafter\def\csname #1@ende\endcsname##1{#8}
}
\def\BeweisRichtung[#1]{\@ifnextchar\bgroup{\@BeweisRichtung@c[#1]}{\@BeweisRichtung@bes[#1]}}
    \def\@BeweisRichtung@bes[#1]{{\bfseries (#1)}}
    \def\@BeweisRichtung@c[#1]#2#3{#2~#1~#3}
\def\erzeugeBeweisRichtungBefehle#1#2{
    \expandafter\gdef\csname #1text\endcsname##1##2{\BeweisRichtung[#2]{##1}{##2}}
    \expandafter\gdef\csname #1\endcsname{%
        \@ifnextchar\bgroup{\csname #1@\endcsname}{\csname #1text\endcsname{}{}}%
    }
    \expandafter\gdef\csname #1@\endcsname##1##2{%
        \csname #1text\endcsname{\punktcref{##1}}{\punktcref{##2}}%
    }
}
\def\mathfrak#1{\mbox{\usefont{U}{euf}{m}{n}#1}}
\def\rectangleblack{\text{\RectangleBold}}
\def\squareblack{\blacksquare}
\def\create@abbreviation#1#2{
    \expandafter\gdef\csname #1\endcsname{%
        #2\@ifnextchar.{%
            \relax\ifmmode\else\expandafter\@gobble\fi%
        }{%
            \relax\ifmmode\else\@\xspace\fi%
        }%
    }
}
\def\crefname@full#1#2#3#4#5{%
    \crefname{#1}{#2}{#3}
    \Crefname{#1}{#4}{#5}
}
\def\crefname@fullmod#1#2#3#4#5{%
    \crefname@full{#1}{#2}{#3}{#4}{#5}
    \crefname@full{#1@basic}{#2}{#3}{#4}{#5}
    \crefname@full{#1@withName}{#2}{#3}{#4}{#5}
}
\def\qedEIGEN#1{\@ifnextchar[{\qedEIGEN@c{#1}}{\qedEIGEN@bes{#1}}}
\def\qedEIGEN@bes#1{%
    \parfillskip=0pt
    \widowpenalty=10000
    \displaywidowpenalty=10000
    \finalhyphendemerits=0
    \leavevmode
    \unskip
    \nobreak
    \hfil
    \penalty50
    \hskip.2em
    \null
    \hfill
    #1
    \par%
}
\def\qedEIGEN@c#1[#2]{%
    \parfillskip=0pt
    \widowpenalty=10000
    \displaywidowpenalty=10000
    \finalhyphendemerits=0
    \leavevmode
    \unskip
    \nobreak
    \hfil
    \penalty50
    \hskip.2em
    \null
    \hfill
    {#1~{\small\bfseries\upshape (#2)}}%
    \par%
}
\def\qedVARIANT#1#2{
    \expandafter\def\csname ennde#1Sign\endcsname{#2}
    \expandafter\def\csname ennde#1\endcsname{\@ifnextchar[{\qedEIGEN@c{#2}}{\qedEIGEN@bes{#2}}} 
}
\def\ra@pretheoremwork{
    \setlength{\theorempreskipamount}{\ownspaceabovethm}
    \setlength{\theorempostskipamount}{\ownspacebelowthm}
}
\def\rathmtransfer#1#2{
    \expandafter\def\csname #2\endcsname{\csname #1\endcsname}
    \expandafter\def\csname end#2\endcsname{\csname end#1\endcsname}
}
\def\ranewthm#1#2#3[#4]{
    \theoremstyle{\current@theoremstyle}
    \theoremseparator{\current@theoremseparator}
    \theoremprework{\ra@pretheoremwork}
    \@ifundefined{#1@basic}{\newtheorem{#1@basic}[#4]{#2}}{\renewtheorem{#1@basic}[#4]{#2}}
    \theoremstyle{\current@theoremstyle}
    \theoremseparator{\thmForceSepPt}
    \theoremprework{\ra@pretheoremwork}
    \@ifundefined{#1@withName}{\newtheorem{#1@withName}[#4]{#2}}{\renewtheorem{#1@withName}[#4]{#2}}
    \theoremstyle{nonumberplain}
    \theoremseparator{\thmForceSepPt}
    \theoremprework{\ra@pretheoremwork}
    \@ifundefined{#1@star@basic}{\newtheorem{#1@star@basic}[#4]{#2}}{\renewtheorem{#1@star@basic}[#4]{#2}}
    \theoremstyle{nonumberplain}
    \theoremseparator{\thmForceSepPt}
    \theoremprework{\ra@pretheoremwork}
    \@ifundefined{#1@star@withName}{\newtheorem{#1@star@withName}[#4]{#2}}{\renewtheorem{#1@star@withName}[#4]{#2}}
    \umbauenenv{#1}{#3}[#4]
    \umbauenenv{#1@star}{#3}[#4]
    \rathmtransfer{#1@star}{#1*}
}
\def\umbauenenv#1#2[#3]{%
    \expandafter\def\csname #1\endcsname{\relax%
        \@ifnextchar[{\csname #1@\endcsname}{\csname #1@\endcsname[*]}%
    }
    \expandafter\def\csname #1@\endcsname[##1]{\relax%
        \@ifnextchar[{\csname #1@@\endcsname[##1]}{\csname #1@@\endcsname[##1][*]}%
    }
    \expandafter\def\csname #1@@\endcsname[##1][##2]{%
        \ifx*##1%
            \def\enndeOfBlock{\csname end#1@basic\endcsname}
            \csname #1@basic\endcsname%
        \else%
            \def\enndeOfBlock{\csname end#1@withName\endcsname}
            \csname #1@withName\endcsname[##1]%
        \fi%
        \def\makelabel####1{%
            \gdef\beweislabel{####1}%
            \label{\beweislabel}%
        }%
        \ifx*##2%
            \def\enndeSymbol{\qedEIGEN{#2}}
        \else%
            \def\enndeSymbol{\qedEIGEN{#2}[##2]}
        \fi
    }
    \expandafter\gdef\csname end#1\endcsname{\enndeSymbol\enndeOfBlock}
}
    \def\current@theoremstyle{plain}
    \def\current@theoremseparator{\thmnumberingseppt}
    \theoremstyle{\current@theoremstyle}
\def\shortclaim@claim{%
    \iflanguage{british}{Claim}{%
    \iflanguage{english}{Claim}{%
    \iflanguage{ngerman}{Behauptung}{%
    \iflanguage{russian}{Утверждение}{%
    Claim%
    }}}}%
}
\def\shortclaim@pf@kurz{%
    \iflanguage{british}{Pf}{%
    \iflanguage{english}{Pf}{%
    \iflanguage{ngerman}{Bew}{%
    \iflanguage{russian}{Доказательство}{%
    Pf%
    }}}}%
}
\def\shortclaim{\@ifnextchar\bgroup{\shortclaim@c}{\shortclaim@bes}}
    \def\shortclaim@c#1{\item[{\bfseries \shortclaim@claim\forceaddspace #1.}]}
    \def\shortclaim@bes{\item[{\bfseries \shortclaim@claim.}]}
\def\proofofshortclaim{\item[{\bfseries\itshape\shortclaim@pf@kurz.}]}
\newcolumntype{\RECHTS}[1]{>{\raggedleft}p{#1}}
\newcolumntype{\LINKS}[1]{>{\raggedright}p{#1}}
\newcolumntype{m}{>{$}l<{$}}
\newcolumntype{C}{>{$}c<{$}}
\newcolumntype{L}{>{$}l<{$}}
\newcolumntype{R}{>{$}r<{$}}
\newcolumntype{0}{@{\hspace{0pt}}}
\newcolumntype{\LINKSRAND}{@{\hspace{\@totalleftmargin}}}
\newcolumntype{h}{@{\extracolsep{\fill}}}
\newcolumntype{i}{>{\itshape}}
\newcolumntype{t}{@{\hspace{\tabcolsep}}}
\newcolumntype{q}{@{\hspace{1em}}}
\newcolumntype{n}{@{\hspace{-\tabcolsep}}}
\newcolumntype{M}[2]{%
    >{\begin{minipage}{#2}\begin{math}}%
    {#1}%
    <{\end{math}\end{minipage}}%
}
\newcolumntype{T}[2]{%
    >{\begin{minipage}{#2}}%
    {#1}%
    <{\end{minipage}}%
}
\def\punkteumgebung@genbefehl#1#2#3{
    \punkteumgebung@genbefehl@{#1}{#2}{#3}{}{}
    \punkteumgebung@genbefehl@{multi#1}{#2}{#3}{
        \setlength{\columnsep}{10pt}%
        \setlength{\columnseprule}{0pt}%
        \begin{multicols}{\thecolumnanzahl}%
    }{\end{multicols}\nvraum{1}}
}
\def\punkteumgebung@genbefehl@#1#2#3#4#5{
    \expandafter\gdef\csname #1\endcsname{
        \@ifnextchar\bgroup{\csname #1@c\endcsname}{\csname #1@bes\endcsname}
    }
        \expandafter\def\csname #1@c\endcsname##1{
            \@ifnextchar[{\csname #1@c@\endcsname{##1}}{\csname #1@c@\endcsname{##1}[\z@]}
        }
        \expandafter\def\csname #1@c@\endcsname##1[##2]{
            \@ifnextchar[{\csname #1@c@@\endcsname{##1}[##2]}{\csname #1@c@@\endcsname{##1}[##2][\z@]}
        }
        \expandafter\def\csname #1@c@@\endcsname##1[##2][##3]{
            \let\alterlinkerRand\gesamtlinkerRand
            \let\alterrechterRand\gesamtrechterRand
            \addtolength{\gesamtlinkerRand}{##2}
            \addtolength{\gesamtrechterRand}{##3}
            \advance\linewidth -##2%
            \advance\linewidth -##3%
            \advance\@totalleftmargin ##2%
            \parshape\@ne \@totalleftmargin\linewidth%
            #4
            \begin{#2}[\upshape ##1]%
                \setlength{\parskip}{0.5\baselineskip}\relax%
                \setlength{\topsep}{\z@}\relax%
                \setlength{\partopsep}{\z@}\relax%
                \setlength{\parsep}{\parskip}\relax%
                \setlength{\itemsep}{#3}\relax%
                \setlength{\listparindent}{\z@}\relax%
                \setlength{\itemindent}{\z@}\relax%
        }
        \expandafter\def\csname #1@bes\endcsname{
            \@ifnextchar[{\csname #1@bes@\endcsname}{\csname #1@bes@\endcsname[\z@]}
        }
        \expandafter\def\csname #1@bes@\endcsname[##1]{
            \@ifnextchar[{\csname #1@bes@@\endcsname[##1]}{\csname #1@bes@@\endcsname[##1][\z@]}
        }
        \expandafter\def\csname #1@bes@@\endcsname[##1][##2]{
            \let\alterlinkerRand\gesamtlinkerRand
            \let\alterrechterRand\gesamtrechterRand
            \addtolength{\gesamtlinkerRand}{##1}
            \addtolength{\gesamtrechterRand}{##2}
            \advance\linewidth -##1%
            \advance\linewidth -##2%
            \advance\@totalleftmargin ##1%
            \parshape\@ne \@totalleftmargin\linewidth%
            #4
            \begin{#2}%
                \setlength{\parskip}{0.5\baselineskip}\relax%
                \setlength{\topsep}{\z@}\relax%
                \setlength{\partopsep}{\z@}\relax%
                \setlength{\parsep}{\parskip}\relax%
                \setlength{\itemsep}{#3}\relax%
                \setlength{\listparindent}{\z@}\relax%
                \setlength{\itemindent}{\z@}\relax%
        }
    \expandafter\gdef\csname end#1\endcsname{%
        \end{#2}#5
        \setlength{\gesamtlinkerRand}{\alterlinkerRand}
        \setlength{\gesamtlinkerRand}{\alterrechterRand}
    }
}
\def\ritempunkt{{\Large \textbullet}} 
\setdefaultitem{\ritempunkt}{\ritempunkt}{\ritempunkt}{\ritempunkt}
\def\enumerate{%
    \@ifnextchar\bgroup{%
        \enumerate@legacyarg%
    }{%
        \@ifnextchar[{\enumerate@args}{\enumerate@noargs}
    }%
}
    \def\enumerate@spacing{
        \setlength{\parskip}{0.5\baselineskip}\relax%
        \setlength{\topsep}{\z@}\relax%
        \setlength{\partopsep}{\z@}\relax%
        \setlength{\parsep}{\parskip}\relax%
        \setlength{\itemsep}{\z@}\relax%
        \setlength{\listparindent}{\z@}\relax%
        \setlength{\itemindent}{\z@}\relax%
    }
    \def\enumerate@noargs{
        \begin{oldenumerate}
        \enumerate@spacing
    }
    \def\enumerate@args[#1]{
        \begin{oldenumerate}[#1]
        \enumerate@spacing
    }
    \def\enumerate@legacyarg#1{
        \begin{oldenumerate}[label=#1]
        \enumerate@spacing
    }
    \def\endenumerate{%
        \end{oldenumerate}
    }
\def\shorteqnarray{%
    \bgroup
    \setlength{\abovedisplayshortskip}{\aboveequation}%
    \setlength{\belowdisplayshortskip}{\belowequation}%
    \setlength{\abovedisplayskip}{\aboveequation - \baselineskip}%
    \setlength{\belowdisplayskip}{\belowequation}%
    \begin{eqnarray*}%
}
\def\endshorteqnarray{%
    \end{eqnarray*}%
    \egroup
}
\def\longeqnarray{%
    \bgroup%
    \allowdisplaybreaks%
    \setlength{\abovedisplayshortskip}{\aboveequation}%
    \setlength{\belowdisplayshortskip}{\belowequation}%
    \setlength{\abovedisplayskip}{\aboveequation - \baselineskip}%
    \setlength{\belowdisplayskip}{\belowequation}%
    \begin{eqnarray*}
}
\def\endlongeqnarray{%
    \end{eqnarray*}%
    \egroup%
}
\def\displayarray[#1]#2{
    \bgroup
    \everymath={\displaystyle}
    \begin{array}[#1]{#2}
}
\def\enddisplayarray{
    \end{array}
    \egroup
}
\def\matrix#1{\left(\begin{array}[mc]{#1}}
    \def\endmatrix{\end{array}\right)}
\def\smatrix{\left(\begin{smallmatrix}}
    \def\endsmatrix{\end{smallmatrix}\right)}
\def\multiargrekursiverbefehl#1#2#3#4#5#6#7#8{%
    \expandafter\gdef\csname#1\endcsname #2##1#4{\csname #1@anfang\endcsname##1#3\egroup}
    \expandafter\def\csname #1@anfang\endcsname##1#3{#5##1\@ifnextchar\egroup{\csname #1@ende\endcsname}{#7\csname #1@mitte\endcsname}}
    \expandafter\def\csname #1@mitte\endcsname##1#3{#6##1\@ifnextchar\egroup{\csname #1@ende\endcsname}{#7\csname #1@mitte\endcsname}}
    \expandafter\def\csname #1@ende\endcsname##1{#8}
}
\def\underbracenodisplay#1{%
    \mathop{\vtop{\m@th\ialign{##\crcr
    $\hfil\displaystyle{#1}\hfil$\crcr
    \noalign{\kern3\p@\nointerlineskip}%
    \upbracefill\crcr\noalign{\kern3\p@}}}}\limits%
}
\def\changemargins{\@ifnextchar[{\indents@}{\indents@[\z@]}}
\def\indents@[#1]{\@ifnextchar[{\indents@@[#1]}{\indents@@[#1][\z@]}}
\def\indents@@[#1][#2]{%
    \begin{list}{}{\relax
        \setlength{\topsep}{\z@}\relax
        \setlength{\partopsep}{\z@}\relax
        \setlength{\parsep}{\parskip}\relax
        \setlength{\listparindent}{\z@}\relax
        \setlength{\itemindent}{\z@}\relax
        \setlength{\leftmargin}{#1}\relax
        \setlength{\rightmargin}{#2}\relax
        \let\alterlinkerRand\gesamtlinkerRand
        \let\alterrechterRand\gesamtrechterRand
        \addtolength{\gesamtlinkerRand}{#1}
        \addtolength{\gesamtrechterRand}{#2}
    }\relax
        \item[]\relax
}
    \def\endchangemargins{%
        \setlength{\gesamtlinkerRand}{\alterlinkerRand}
        \setlength{\gesamtlinkerRand}{\alterrechterRand}
        \end{list}%
    }
\def\indentonce{\begin{changemargins}[\rtab][\rtab]}
    \def\endindentonce{\end{changemargins}}
\def\restoremargins{\begin{changemargins}[-\gesamtlinkerRand][-\gesamtrechterRand]}
    \def\endrestoremargins{\end{changemargins}}
\def\programmiercode{
    \modulolinenumbers[1]
    \begin{changemargins}[\rtab][\rtab]%
    \begin{linenumbers}%
        \fontfamily{cmtt}\fontseries{m}\fontshape{u}\selectfont%
        \setlength{\parskip}{1\baselineskip}%
        \setlength{\parindent}{0pt}%
}
    \def\endprogrammiercode{
        \end{linenumbers}
        \end{changemargins}
    }
\def\schattiertebox@genbefehl#1#2#3{
    \expandafter\gdef\csname #1\endcsname{%
        \@ifnextchar[{\csname #1@args\endcsname}{\csname #1@args\endcsname[#3]}
    }
        \expandafter\def\csname #1@args\endcsname[##1]{%
            \@ifnextchar[{\csname #1@args@l\endcsname[##1]}{\csname #1@args@n\endcsname[##1]}
        }
        \expandafter\def\csname #1@args@l\endcsname[##1][##2]{%
            \@ifnextchar[{\csname #1@args@l@r\endcsname[##1][##2]}{\csname #1@args@l@n\endcsname[##1][##2]}
        }
        \expandafter\def\csname #1@args@n\endcsname[##1]{%
            \let\boolinmdframed\boolwahr
            \begin{mdframed}[#2leftmargin=0,rightmargin=0,outermargin=0,innermargin=0,##1]
        }
        \expandafter\def\csname #1@args@l@n\endcsname[##1][##2]{%
            \let\boolinmdframed\boolwahr
            \begin{mdframed}[#2leftmargin=##2/2,rightmargin=##2/2,outermargin=##2/2,innermargin=##2/2,##1]
        }
        \expandafter\def\csname #1@args@l@r\endcsname[##1][##2][##3]{%
            \let\boolinmdframed\boolwahr
            \begin{mdframed}[#2leftmargin=##2,rightmargin=##3,outermargin=##2,innermargin=##3,##1]
        }
    \expandafter\gdef\csname end#1\endcsname{%
        \end{mdframed}
        \let\boolinmdframed\boolfalsch
    }
}
\def\tikzsetzepfeil#1{%
    \begin{tikzpicture}[remember picture,overlay,>=latex]%
        \draw #1;%
    \end{tikzpicture}%
}
\def\tikzsetzekreise[#1]#2#3{%
    \tikzsetzepfeil{%
    [rounded corners,#1]%
        ([shift={(-\tabcolsep,0.75\baselineskip)}]#2)%
        rectangle%
        ([shift={(\tabcolsep,-0.5\baselineskip)}]#3)
    }%
}
\tikzset{
    >=stealth,
    auto,
    node distance=1cm,
    thick,
    main node/.style={
        circle,draw,font=\sffamily\Large\bfseries,minimum size=0pt
    },
    state/.style={minimum size=0pt}
    loop above right/.style={loop,out=30,in=60,distance=0.5cm},
    loop above left/.style={above left,out=150,in=120,loop},
    loop below right/.style={below right,out=330,in=300,loop},
    loop below left/.style={below left,out=240,in=210,loop},
    itria/.style={
        draw,dashed,shape border uses incircle,
        isosceles triangle,shape border rotate=90,yshift=-1.45cm
    },
    rtria/.style={
        draw,dashed,shape border uses incircle,
        isosceles triangle,isosceles triangle apex angle=90,
        shape border rotate=-45,yshift=0.2cm,xshift=0.5cm
    },
    ritria/.style={
        draw,dashed,shape border uses incircle,
        isosceles triangle,isosceles triangle apex angle=110,
        shape border rotate=-55,yshift=0.1cm
    },
    litria/.style={
        draw,dashed,shape border uses incircle,
        isosceles triangle,isosceles triangle apex angle=110,
        shape border rotate=235,yshift=0.1cm
    }
}
\renewenvironment{cases}[0]{\left\{\begin{array}[c]{0lcl}}{\end{array}\right.}
\providecommand{\usesinglequotes}{}
\renewcommand{\usesinglequotes}[1]{`#1'}
\providecommand{\zeroone}{}
\renewcommand{\zeroone}[0]{\textup{0\=/1}\relax\ifmmode\else\@\xspace\fi}
\providecommand{\onetoone}{}
\renewcommand{\onetoone}[0]{\ensuremath{1\!\!:\!\!1}\relax\ifmmode\else\@\xspace\fi}
\providecommand{\First}{}
\renewcommand{\First}[0]{\text{I\textsuperscript{st}}\relax\ifmmode\else\@\xspace\fi}
\providecommand{\Second}{}
\renewcommand{\Second}[0]{\text{II\textsuperscript{nd}}\relax\ifmmode\else\@\xspace\fi}
\providecommand{\Third}{}
\renewcommand{\Third}[0]{\text{III\textsuperscript{rd}}\relax\ifmmode\else\@\xspace\fi}
\providecommand{\TextCStarAlg}{}
\renewcommand{\TextCStarAlg}[0]{\text{C\textsuperscript{\ensuremath{\ast}}\=/algebra}\relax\ifmmode\else\@\xspace\fi}
\providecommand{\TextCStarSubAlg}{}
\renewcommand{\TextCStarSubAlg}[0]{\text{C\textsuperscript{\ensuremath{\ast}}\=/subalgebra}\relax\ifmmode\else\@\xspace\fi}
\providecommand{\TextCStarAlgs}{}
\renewcommand{\TextCStarAlgs}[0]{\text{C\textsuperscript{\ensuremath{\ast}}\=/algebras}\relax\ifmmode\else\@\xspace\fi}
\providecommand{\TextCStarSubAlgs}{}
\renewcommand{\TextCStarSubAlgs}[0]{\text{C\textsuperscript{\ensuremath{\ast}}\=/subalgebras}\relax\ifmmode\else\@\xspace\fi}
\providecommand{\envPreMathsLong}{}
\renewcommand{\envPreMathsLong}[0]{%
    \bgroup\relax%
    \let\old@arraystretch\arraystretch\relax%
    \renewcommand\arraystretch{1.2}\relax\relax%
}
\providecommand{\envPostMathsLong}{}
\renewcommand{\envPostMathsLong}[0]{%
    \renewcommand\arraystretch{\old@arraystretch}\relax%
    \egroup\relax%
}
\providecommand{\id}{}
\renewcommand{\id}[0]{\mathrm{\textit{id}}}
\providecommand{\complex}{}
\renewcommand{\complex}[0]{\mathbb{C}}
\providecommand{\reals}{}
\renewcommand{\reals}[0]{\mathbb{R}}
\providecommand{\realsNonNeg}{}
\renewcommand{\realsNonNeg}[0]{\reals_{\geq 0}}
\providecommand{\integers}{}
\renewcommand{\integers}[0]{\mathbb{Z}}
\providecommand{\naturals}{}
\renewcommand{\naturals}[0]{\mathbb{N}}
\providecommand{\naturalsZero}{}
\renewcommand{\naturalsZero}[0]{\mathbb{N}_{0}}
\providecommand{\HilbertRaum}{}
\renewcommand{\HilbertRaum}[0]{\mathcal{H}}
\providecommand{\BanachRaum}{}
\renewcommand{\BanachRaum}[0]{\mathcal{E}}
\providecommand{\CStarAlg}{}
\renewcommand{\CStarAlg}[0]{\mathcal{A}}
\providecommand{\GenSet}{}
\renewcommand{\GenSet}[1]{\langle #1 \rangle}
\providecommand{\GenSetLong}{}
\renewcommand{\GenSetLong}[1]{\Big\langle #1 \Big\rangle}
\providecommand{\GenSetBy}{}
\renewcommand{\GenSetBy}[2]{\langle #1 \mathrel{\vert} #2 \rangle}
\providecommand{\GenSetByLong}{}
\renewcommand{\GenSetByLong}[2]{\Big\langle #1 \mathrel{\Big\vert} #2 \Big\rangle}
\providecommand{\topSOT}{}
\renewcommand{\topSOT}[0]{\text{\upshape\scshape sot}}
\providecommand{\card}{}
\renewcommand{\card}[1]{\lvert #1 \rvert}
\providecommand{\onefct}{}
\renewcommand{\onefct}[0]{1\!\!1}
\providecommand{\Gph}{}
\renewcommand{\Gph}[1]{\mathcal{G}\mathrm{ph}(#1)}
\providecommand{\iunit}{}
\renewcommand{\iunit}[0]{\imath}
\providecommand{\abs}{}
\renewcommand{\abs}[1]{\lvert #1 \rvert}
\providecommand{\Aut}{}
\renewcommand{\Aut}[1]{\mathrm{Aut}(#1)}
\providecommand{\linspann}{}
\renewcommand{\linspann}[0]{\textup{lin}}
\providecommand{\adjoint}{}
\renewcommand{\adjoint}[0]{\text{\upshape ad}}
\providecommand{\tr}{}
\renewcommand{\tr}[0]{\text{\upshape tr}}
\providecommand{\onematrix}{}
\renewcommand{\onematrix}[0]{\text{\upshape\bfseries I}}
\providecommand{\zeromatrix}{}
\renewcommand{\zeromatrix}[0]{\mathbf{0}}
\providecommand{\brkt}{}
\renewcommand{\brkt}[2]{\langle{}#1,\:#2{}\rangle}
\providecommand{\ketbra}{}
\renewcommand{\ketbra}[2]{\ket{#1}{}\bra{#2}}
\providecommand{\norm}{}
\renewcommand{\norm}[1]{\lVert #1 \rVert}
\providecommand{\normLong}{}
\renewcommand{\normLong}[1]{\Big\| #1 \Big\|}
\providecommand{\opDomain}{}
\renewcommand{\opDomain}[1]{\mathcal{D}(#1)}
\providecommand{\opResolventSet}{}
\renewcommand{\opResolventSet}[1]{\rho(#1)}
\providecommand{\BoundedOpsSymbol}{}
\renewcommand{\BoundedOpsSymbol}[0]{\mathfrak{L}}
\providecommand{\BaseVector}{}
\renewcommand{\BaseVector}[1]{\mathbf{e}_{#1}}
\providecommand{\ElementaryMatrix}{}
\renewcommand{\ElementaryMatrix}[2]{\mathbf{E}_{#1,#2}}
\providecommand{\Cnought}{}
\renewcommand{\Cnought}[0]{\mathcal{C}_{0}}
\providecommand{\Repr}{}
\renewcommand{\Repr}[2]{\mathrm{Repr}(#1 \!:\! #2)}
\providecommand{\dee}{}
\renewcommand{\dee}[0]{\mathop{\textup{d}}\!}
\providecommand{\letter}{}
\renewcommand{\letter}[1]{\mathbf{k}_{#1}}
\providecommand{\Irred}{}
\renewcommand{\Irred}[1]{\mathrm{Irr}(#1)}
\providecommand{\cover}{}
\renewcommand{\cover}[0]{\mathrm{cover}}
\def\Cts{\@ifnextchar_{\Cts@tief}{\Cts@tief_{}}}
    \def\Cts@tief_#1#2{\@ifnextchar\bgroup{\Cts@two_{#1}{#2}}{\Cts@one_{#1}{#2}}}
    \def\Cts@one_#1#2{C_{#1}\big(#2\big)}
    \def\Cts@two_#1#2#3{C_{#1}\big(#2,~#3\big)}
\def\BoundedOps#1{\@ifnextchar\bgroup{\BoundedOps@two{#1}}{\mathop{\BoundedOpsSymbol}(#1)}}
    \def\BoundedOps@two#1#2{\mathop{\BoundedOpsSymbol}(#1,#2)}
\def\BoundedOpsInv#1{\@ifnextchar\bgroup{\BoundedOps@two{#1}}{\mathop{\BoundedOpsSymbol}(#1)^{\times}}}
    \def\BoundedOpsInv@two#1#2{\mathop{\BoundedOpsSymbol}(#1,#2)^{\times}}
\def\restr#1{\vert_{#1}}
\def\without{\mathbin{\setminus}}
\def\eps{\varepsilon}
\let\altphi\phi
\let\altvarphi\varphi
    \def\phi{\altvarphi}
    \def\varphi{\altphi}
\def\quer#1{\overline{#1}}
\def\lim{\mathop{\ell\mathrm{im}}}
\def\supp{\mathop{\textup{supp}}}
\def\dim{\mathop{\textup{dim}}}
\def\ran{\mathop{\textup{ran}}}
\def\Re{\mathop{\mathfrak{R}\mathrm{e}}}
\def\interval{\mathcal{J}}
\def\Generate{\@ifnextchar[{\Generate@named}{\Generate@plain}}
    \def\Generate@named[#1]#2{\@ifnextchar\bgroup{\mathrm{#1}{}\GenSetBy{#2}}{\mathrm{#1}{}\GenSet{#2}}}
    \def\Generate@plain#1{\@ifnextchar\bgroup{\GenSetBy{#1}}{\GenSet{#1}}}
\def\GenerateLong{\@ifnextchar[{\GenerateLong@named}{\GenerateLong@plain}}
    \def\GenerateLong@named[#1]#2{\@ifnextchar\bgroup{\mathrm{#1}{}\GenSetByLong{#2}}{\mathrm{#1}{}\GenSetLong{#2}}}
    \def\GenerateLong@plain#1{\@ifnextchar\bgroup{\GenSetByLong{#1}}{\GenSetLong{#1}}}
\renewcommand{\arraystretch}{1}
\def\firstparagraph{\noindent}
\def\continueparagraph{\noindent}
\def\theunitnamesection{\thesection}
\def\sectionname{}
\let\appendix@orig\appendix
\def\appendix{%
    \appendix@orig%
    \let\boolinappendix\boolwahr
    \addcontentsline{toc}{part}{\appendixname}%
    \addtocontents{toc}{\protect\setcounter{tocdepth}{0}}
    \def\sectionname{Appendix}%
    \def\theunitnamesection{\Alph{section}}%
}
\def\notappendix{%
    \let\boolinappendix\boolfalse
    \addtocontents{toc}{\protect\setcounter{tocdepth}{1 }}
    \def\sectionname{}%
    \def\theunitnamesection{\arabic{section}}%
}
\def\@settitle{%
    \bgroup
    \LARGE
    \scshape
    \@title
    \egroup
}
\def\@seccntformat#1{%
    \protect\textup{%
        \protect\@secnumfont
        \expandafter\protect\csname format#1\endcsname%
        \csname the#1\endcsname
        \expandafter\protect\csname format#1@pt\endcsname%
        \space
    }%
}
\def\formatsection@text{\centering\Large\scshape}
\def\formatsection@pt{\secnumberingseppt}
\def\section{\@startsection{section}{1}{\z@}{.7\linespacing\@plus\linespacing}{.5\linespacing}{\formatsection@text}}
\def\formatsubsection@text{\flushleft\bfseries\scshape}
\def\formatsubsection@pt{\subsecnumberingseppt}
\def\subsection{\@startsection{subsection}{2}{\z@}{\z@}{\z@\hspace{1em}}{\formatsubsection@text}}
\renewcommand{\paragraph}[1]{%
    {\itshape #1}\:%
}
\def\footnotemark[#1]{\text{\textsuperscript{\getrefnumber{#1}}}}
\def\footnote@custom@period{24}
\providecommand{\footnote@ctr@prebump}{}
\renewcommand{\footnote@ctr@prebump}[1]{%
    \ifnum\value{#1}<\footnote@custom@period%
    \else\relax
        \setcounter{#1}{0}%
    \fi%
}
\providecommand{\footnoteref}{}
\renewcommand{\footnoteref}[1]{\protected@xdef\@thefnmark{\ref{#1}}\@footnotemark}
\let\@old@footnotetext\footnotetext
\def\footnotetext[#1]#2{%
    \footnote@ctr@prebump{footnote}%
    \addtocounter{footnote}{1}%
    \@old@footnotetext[\value{footnote}]{\label{#1}#2}%
}
\let\@old@footnote\footnote
\renewcommand{\footnote}[1]{%
    \footnote@ctr@prebump{footnote}%
    \@old@footnote{#1}%
}
\def\kopfzeiledefault{
    \lhead[]{}
    \lhead[]{}
    \chead[]{}
    \rhead[]{}
    \lfoot[]{}
    \cfoot{\footnotesize\thepage}
    \rfoot[]{}
}
\def\aktuellesfont{\csname lmodern\endcsname}
\def\documentfont{%
    \gdef\aktuellesfont{\csname lmodern\endcsname}%
    \fontfamily{lmr}\fontseries{m}\selectfont%
    \renewcommand{\sfdefault}{phv}%
    \renewcommand{\ttdefault}{pcr}%
    \renewcommand{\rmdefault}{cmr}
    \renewcommand{\bfdefault}{bx}%
    \renewcommand{\itdefault}{it}%
    \renewcommand{\sldefault}{sl}%
    \renewcommand{\scdefault}{sc}%
    \renewcommand{\updefault}{n}%
}
\def\startdocumentlayoutoptions{
    \selectlanguage{british}
    \setlength{\parskip}{0.25\baselineskip}
    \setlength{\parindent}{2em}
    \kopfzeiledefault
    \documentfont
    \normalsize
}
\providecommand{\highlightTerm}{}
\renewcommand{\highlightTerm}[1]{\emph{#1}}
\def\addresseshere{%
  \bgroup
  \setlength{\parindent}{0pt}
  \enddoc@text
  \egroup
  \let\enddoc@text\relax
}
\begin{document}
\startdocumentlayoutoptions

\thispagestyle{plain}



\def\abstractname{Abstract}
\begin{abstract}
    To generalise evolution families
    we consider systems
        $\{\phi(u, v)\}_{(u, v) \in E}$
    of contractions
    defined on the edges of a graph $\mathcal{G} = (\Omega, E)$.
    In this setup the Markov property,
    or \highlightTerm{divisibility},
    can be modelled via
        $\phi(u, v)\phi(v, w) = \phi(u, w)$
        for edges $(u, v), (v, w), (u, w) \in E$.
    We obtain results in three settings:
    1)~contractive Banach space operators;
    2)~positive unital maps on \TextCStarAlgs;
    and
    3)~completely positive trace-preserving (CPTP) maps on trace class operators on a Hilbert space.
    In the discrete setting, we are able to dilate
    possibly indivisible families of contractions
    to divisible families of operators with \usesinglequotes{nice} properties
    (\viz
        surjective isometries
        \resp {}\textsuperscript{\ensuremath{\ast}}\=/automorphisms
        \resp unitary representations%
    ).
    In the special case of linearly ordered graphs
    equipped with the order topology,
    we establish sufficient conditions
    for strongly continuous dilations
    of possibly indivisible families
    in the Banach space and \TextCStarAlg contexts.
    To achieve these results we work with string-rewriting systems,
    and make use of and extend dilation theorems of
        Stroescu \cite{Stroescu1973ArticleBanachDilations},
        Kraus \cite{%
            Kraus1971Article,%
            Kraus1983%
        },
        and vom~Ende--Dirr \cite{vomEnde2019unitaryDildiscreteCPsemigroups}.
\end{abstract}



\title[Dilations of non-Markovian dynamical systems on graphs]{%
    \hraum Dilations of non-Markovian\hraum%
    \large
    \newline%
    \hraum dynamical systems on graphs\hraum
}

\author{Raj Dahya}
\address{Fakult\"at f\"ur Mathematik und Informatik\newline
Universit\"at Leipzig, Augustusplatz 10, D-04109 Leipzig, Germany}
\email{raj\,[\!\![dot]\!\!]\,dahya\:[\!\![at]\!\!]\:web\,[\!\![dot]\!\!]\,de}

\def\subjclassname{Mathematics Subject Classification (2020)}
\subjclass{47A20, 47D03, 46L55, 47C15, 05C22}
\keywords{Operator families on graphs; dilations; evolution families; indivisible system; reduction systems.}

\maketitle



\setcounternach{section}{1}



\section[Introduction]{Introduction}
\label{sec:introduction:sig:article-graph-raj-dahya}



\firstparagraph
The notion of \highlightTerm{evolution families} (or: \highlightTerm{propagators}),
formally introduced by Howland
    \cite[\S{}1]{Howland1974Article}
and Evans
    \cite[Definition~1.4]{Evans1976ArticlePerturb},%
\footnote{%
    Howard and Evans treated the case of reversible systems
    in Hilbert \resp Banach space settings.
    For a more general treatment,
    see \exempli
        \cite[Chapter~5]{Pazy1983Book},
        \cite[\S{}3.1]{ChiconeLatushkin1999Book}.
}
traces its origins back to Kato \cite{Kato1953Article}
as a way to solve time-dependent partial differential equations of the form

\begin{displaymath}
    \left\{
    \begin{array}[m]{rcl}
        u^{\prime}(t) &= &A_{t}u(t) + g(t),\quad t \in \interval,\:t \geq s,\\
        u(s) &= &\xi,
    \end{array}
    \right.
\end{displaymath}

\continueparagraph
where $\interval \subseteq \reals$
is a connected subset of time points
(usually with $\min\interval = 0$),
    $s \in \interval$,
    each $A_{t}$ is the generator of
    some $\Cnought$\=/semigroup
        $T_{t} = \{T_{t}(\tau)\}_{\tau\in\realsNonNeg}$
        on a common Banach space $\BanachRaum$,
    $\xi\in\opDomain{A_{s}} \subseteq \BanachRaum$,
and
    ${g : \interval \to \BanachRaum}$
    represents the effects of an external force.
Under appropriate conditions
(see \exempli
    \cite[\S{}5.3]{Pazy1983Book}%
)
the evolution of such systems
can be expressed as

\begin{displaymath}
    u(t) = \mathcal{T}(t,s)u(s) + \text{non-homogenous term}
\end{displaymath}

\continueparagraph
for each $s,t \in \interval$ with $t \geq s$,
where $\{\mathcal{T}(t,s)\}_{t,s \in \interval,~t \geq s}$
is a family of bounded operators on $\BanachRaum$.
To model such operator families,
the following properties are considered:

\begin{enumerate}[
    label={\texttt{Ev\textsubscript{\arabic*}}},
    ref={\texttt{Ev\textsubscript{\arabic*}}},
]
    \item\label{ax:ev:cts:sig:article-graph-raj-dahya}
        (\highlightTerm{Continuity})
        The map
            ${
                E \ni (t,s)
                \mapsto
                    \mathcal{T}(t,s)
                    \in \BoundedOps{\BanachRaum}
            }$
        is strongly continuous.

    \item\label{ax:ev:id:sig:article-graph-raj-dahya}
        (\highlightTerm{Identity})
        $\mathcal{T}(t,t) = \onematrix$ for all $t \in \interval$.

    \item\label{ax:ev:id:sig:article-graph-raj-dahya}
        (\highlightTerm{Memorylessness})
        $\mathcal{T}(t', s') = \mathcal{T}(t, s)$
        for all $s, t, s', t' \in \interval$
        with $t' - s' = t - s \geq 0$.

    \item\label{ax:ev:weak-div:sig:article-graph-raj-dahya}
        (\highlightTerm{Weak divisibility})
        $\mathcal{T}(t, 0) = \mathcal{T}(t, s)\mathcal{T}(s, 0)$
        for all $s, t \in \interval$
        with $t \geq s \geq 0$.

    \item\label{ax:ev:div:sig:article-graph-raj-dahya}
        (\highlightTerm{Strong divisibility})
        $\mathcal{T}(t, r) = \mathcal{T}(t, s)\mathcal{T}(s, r)$
        for all $r, s, t \in \interval$
        with $t \geq s \geq r$.
\end{enumerate}

Our primary interest is in \eqcref{ax:ev:div:sig:article-graph-raj-dahya},
which we shall simply refer to as \highlightTerm{divisibility}.
Note that on the one hand,
divisibility in the literature typically refers to \eqcref{ax:ev:weak-div:sig:article-graph-raj-dahya},
and on the other,
\eqcref{ax:ev:div:sig:article-graph-raj-dahya}
is usually bundled with requirements on the operators
and referred to as \highlightTerm{Markovianity}.
As we shall always state the operator properties separately,
we generally avoid the latter terminology.

Now, the theory of $\Cnought$-semigroups allows us to treat
memoryless, divisible dynamical systems.
And the study of evolution families in the PDE-setting
drops memorylessness, but retains divisibility.
In recent times,
\highlightTerm{indivisible} processes,
\idest systems for which (weak \resp. strong) divisibility is not assumed,
have gathered interest in the
philosophy, mathematics, and applications of physics
(see \exempli
    \cite{%
        RivasHuelgaPlenio2014ArticleNonMarkovian,%
        MilzKimPollock2019ArticleDivisibility,%
        BarandesKagan2020MinimalModalInterpretation,%
        MilzModi2021QSPNonMarkovian,%
        Barandes2025ArticleStochQuantum%
    }%
).
In foundational work,
indivisibility is seen as a means to deal with the category problem
for \usesinglequotes{measurements} in quantum mechanics
(\cf
    \cite[\S{}1]{Barandes2025MiscIndivis}%
).
Non-Markovianity also appears to provide promising approaches
for modern applications
such as
    metrology,
    state preparation in quantum computing,
    noise handling in information processing,
\etcetera
(see \exempli
    \cite[\S{}6]{RivasHuelgaPlenio2014ArticleNonMarkovian},
    \cite[\S{}VII]{MilzKimPollock2019ArticleDivisibility}%
).



To contribute to the foundational picture,
the present work demonstrates how such processes
can be embedded (or: \highlightTerm{dilated})
into strongly divisible ones.
More abstractly, we work with families of bounded operators
    $\{\phi(u,v)\}_{(u,v) \in E} \subseteq \BoundedOps{\BanachRaum}$
on a Banach space $\BanachRaum$
defined on the edges of a graph $\mathcal{G} = (\Omega, E)$,
which can be axiomatised in a natural way
to capture the above properties,
in particular divisibility
(see \S{}\ref{sec:intro:definitions:sig:article-graph-raj-dahya} below).
It shall also be fruitful to work with systems
of the form
    $\phi = \{e^{A(u,v)}\}_{(u,v) \in E}$.
In particular, we shall see how
commutativity of the family
    $\{A(u,v)\}_{(u,v) \in E} \subseteq \BoundedOps{\BanachRaum}$
of bounded generators
plays a crucial role in determining the divisibility of $\phi$
(see \S{}\ref{sec:examples:hamiltonian:sig:article-graph-raj-dahya} below).

Considering the quantum setting,
\viz
families
    $\{\Phi_{(t,s)}(\cdot)\}_{(t,s) \in E}$
    of CPTP\=/operators (defined below)
    with $E = \{(t,s) \in \realsNonNeg^{2} \mid t \geq s\}$,
the sought after dilations are divisible unitary evolutions.
In \exempli
    \cite[\S{}V.C.]{MilzModi2021QSPNonMarkovian},
    \cite[\S{}3.4]{Barandes2025ArticleStochQuantum},
    \cite[\S{}4.2]{Barandes2025MiscIndivis},
weakly indivisible systems%
\footnote{%
    \idest (in)divisibility in the sense of \eqcref{ax:ev:weak-div:sig:article-graph-raj-dahya}.
}
are considered and their $1$\=/parameter subfamilies
    $\{\Phi_{(t, 0)}\}_{t\in\realsNonNeg}$
are dilated to unitary evolutions.
But since $1$\=/parameter dilations are necessarily memoryless,
these cannot be pieced together
to obtain meaningful dilations of $2$\=/parameter families
(\cf \Cref{rem:1-parameter-decomp:sig:article-graph-raj-dahya} below).
Others treat the full $2$\=/parameter families,
stating dilation results under the assumption of strong divisibility
\cite[\S{}3.3.2]{RivasHuelgaPlenio2014ArticleNonMarkovian},
as well as partial results without this assumption
\cite[\S{}2--3]{RybarFilippovZiman2012Article}
by relying on so-called \highlightTerm{collision models}.
We further mention the work of Wolf and Cirac \cite{WolfCirac2008Article}
especially \S{}VI and Theorem 16 of this reference,
which provides a thorough treatment of \highlightTerm{infinite divisibility}
of channels in the finite dimensional setting.

In the present paper, our first main result
(see \Cref{thm:result:graph-dilations:discrete:sig:article-graph-raj-dahya})
establishes $2$\=/parameter dilations rigorously
and under no assumptions of divisibility.
And our final results
(see \Cref{%
    thm:result:graph-dilations:cts:divisible:sig:article-graph-raj-dahya,%
    thm:result:graph-dilations:cts:indivisible:sig:article-graph-raj-dahya%
})
ensure the continuity of such dilations
under modest requirements.
The work in this paper further differs from existing literature
due to the abstract setting of systems defined on graphs,
as well as the algebraic framework we devise to derive our results
(see also \Cref{rem:literature-algebraic:sig:article-graph-raj-dahya}).




\subsection[Terminology for dynamical systems on graphs]{Terminology for dynamical systems on graphs}
\label{sec:intro:definitions:sig:article-graph-raj-dahya}

\firstparagraph
Consider a graph $\mathcal{G} = (\Omega, E)$,
where $\Omega$ denotes a non-empty set of \highlightTerm{nodes}
and $E \subseteq \Omega \times \Omega$ a non-empty set of \highlightTerm{edges}.
The physical systems we aim to model are defined
by families
    $\{\phi(u,v)\}_{(u,v) \in E} \subseteq \BoundedOps{\BanachRaum}$
of bounded operators on a Banach space $\BanachRaum$.%
\footnote{%
    throughout we shall alternate between this way of writing
    and simply considering $\phi$ as a map
    ${\phi : E \to \BoundedOps{\BanachRaum}}$.
}
We consider the following axioms:

\begin{enumerate}[
    label={\texttt{Dyn\textsubscript{\arabic*}}},
    ref={\texttt{Dyn\textsubscript{\arabic*}}},
]
    \item\label{ax:dyn:cts:sig:article-graph-raj-dahya}
        (\highlightTerm{Continuity})
        If $\Omega$ is endowed with a topology,
        endow $E$ with the subspace topology of the product space $\Omega \times \Omega$.
        Letting $\tau$ be any topology on $\BoundedOps{\BanachRaum}$,
        we say that the family
            $\phi$ is $\tau$\=/continuous
        if the map
            ${E \ni (u,v) \mapsto \phi(u,v) \in \BoundedOps{\BanachRaum}}$
        is continuous \wrt $\tau$.

    \item\label{ax:dyn:id:sig:article-graph-raj-dahya}
        (\highlightTerm{Identity})
        $\phi(u,u) = \onematrix$
        for $u \in \Omega$,
        provided $(u,u) \in E$.

    \item\label{ax:dyn:div:sig:article-graph-raj-dahya}
        (\highlightTerm{Divisibility})
        $\phi(u,w) = \phi(u,v)\phi(v,w)$
        for $u,v,w \in \Omega$,
        provided $(u,v), (v,w), (u, w) \in E$.
\end{enumerate}

And for the \usesinglequotes{generators}
in families of the form
    $\{e^{A(u,v)}\}_{(u,v) \in E}$,
the following are considered:

\begin{enumerate}[
    label={\texttt{Gen\textsubscript{\arabic*}}},
    ref={\texttt{Gen\textsubscript{\arabic*}}},
]
    \item\label{ax:gen:cts:sig:article-graph-raj-dahya}
        (\highlightTerm{Continuity})
        As \eqcref{ax:dyn:cts:sig:article-graph-raj-dahya}
        but for the map
            ${E \ni (u,v) \mapsto A(u,v) \in \BoundedOps{\BanachRaum}}$.

    \item\label{ax:gen:id:sig:article-graph-raj-dahya}
        (\highlightTerm{Identity})
        $A(u,u) = \zeromatrix$
        for $u \in \Omega$,
        provided $(u,u) \in E$.

    \item\label{ax:gen:add:sig:article-graph-raj-dahya}
        (\highlightTerm{Additivity})
        $A(u,w) = A(u,v) + A(v,w)$
        for edges $(u,v), (v,w), (u, w) \in E$.
\end{enumerate}

\continueparagraph
Observe that
additivity \eqcref{ax:gen:add:sig:article-graph-raj-dahya}
clearly implies
the identity axiom \eqcref{ax:gen:id:sig:article-graph-raj-dahya},
so we shall not need to demand the latter separately.

Our main definition is as follows:

\begin{defn}
    Let $\mathcal{G} = (\Omega, E)$ be a graph
    and
        $\phi = \{\phi(u,v)\}_{(u,v) \in E} \subseteq \BoundedOps{\BanachRaum}$
        a family of bounded operators
        on a Banach space $\BanachRaum$.
    We say that $(\mathcal{G}, \phi)$ or simply $\phi$
    is a
    (norm/strongly/\etcetera continuous)
    \highlightTerm{divisible dynamical system on graph $\mathcal{G}$}
    if it satisfies
    (the appropriate variant of \eqcref{ax:dyn:cts:sig:article-graph-raj-dahya} and)
    the identity axiom \eqcref{ax:dyn:id:sig:article-graph-raj-dahya}
    and
    the divisibility axiom \eqcref{ax:dyn:div:sig:article-graph-raj-dahya}.
\end{defn}

\begin{conv}
    Throughout this paper,
    \highlightTerm{indivisibility}
    for concrete systems shall mean that the divisibility axiom fails
    and
    for classes of operator families merely that the divisibility axiom is not assumed.%
    \footnote{%
        in case of ambiguity, we shall use the terminology
        \highlightTerm{possibly indivisible} for the latter.
    }
\end{conv}

\begin{rem}[Path independence]
\makelabel{rem:path-independence:sig:article-graph-raj-dahya}
    Let $\mathcal{G} = (\Omega, E)$ be an arbitrary graph
    and suppose that an operator family
        $\{\phi(u, v)\}_{(u,v) \in E}$
    satisfies
    the divisibility axiom \eqcref{ax:dyn:div:sig:article-graph-raj-dahya}.
    For $u, v \in \Omega$
    let $\mathrm{Path}(u,v)$
    denote the set of all finite sequences
        $\pi = \{u_{k}\}_{k=0}^{n} \subseteq \Omega$
    with $n \in \naturalsZero$
    and $(u_{k-1},u_{k}) \in E$ for all $k \in \{1,2,\ldots,n\}$.
    Define
        $\phi(\pi) \coloneqq \prod_{k=1}^{n}\phi(u_{k-1},u_{k})$
    for each such walk $\pi$.
    Then if $(u, v) \in E$
        by the divisibility axiom
        $\phi(\pi) = \phi(\pi')$
        for all $\pi,\pi' \in \mathrm{Path}(u, v)$.
    The divisibility axiom thus implies a kind of \emph{path independence}.
    This suggests that if we model networks via such dynamical systems,
    indivisibility is generally unavoidable
    (\cf \S{}\ref{sec:examples:networks:sig:article-graph-raj-dahya}).
\end{rem}

In order to treat (norm-)continuity,
we shall make use of the following geometric conditions.
Let $\mathcal{G} = (\Omega, E)$ be a graph
where $\Omega$ is a topological space
and $E \subseteq \Omega \times \Omega$
is endowed with the subspace topology.

\begin{defn}[Length functions]
    Say that a function
        ${\ell : E \to [0,\:\infty)}$
    is an
    \highlightTerm{%
        additive
        (\resp subadditive \resp superadditive)
        length function%
    }
    on (the edges of) $\mathcal{G}$,
    if
        $\ell$ is continuous \wrt the topology on $E$;
        $\ell(u, u) = 0$
        for all $u \in \Omega$
        for which $(u,u) \in E$;
    and
        $\ell(u, w) = \ell(u, v) + \ell(v, w)$
        \resp
        $\ell(u, w) \leq \ell(u, v) + \ell(v, w)$
        \resp
        $\ell(u, w) \geq \ell(u, v) + \ell(v, w)$
    for all $u,v,w\in\Omega$
    for which $(u,v), (v, w), (u, w) \in E$.
\end{defn}

\begin{defn}[Geometric growth of operator families]
\makelabel{defn:geom-growth:op-family:sig:article-graph-raj-dahya}
    We shall say that a family
        $\{\phi(u,v)\}_{(u,v) \in E} \subseteq \BoundedOps{\BanachRaum}$
    of bounded operators
    has \highlightTerm{geometric growth}
    if
        $\norm{\phi(u, v) - \onematrix} \leq \ell(u, v)$
    for all edges $(u, v) \in E$
    and some superadditive length function $\ell$ on $\mathcal{G}$.
\end{defn}

\begin{defn}[Geometric growth of generators]
\makelabel{defn:geom-growth:generators:sig:article-graph-raj-dahya}
    A family
        $\{A(u, v)\}_{(u,v) \in E} \subseteq \BoundedOps{\BanachRaum}$
    of bounded generators
    has \highlightTerm{geometric growth}
    if
        $\norm{A(u, v)} \leq \ell(u, v)$
    for all edges $(u, v) \in E$
    and some superadditive length function $\ell$ on $\mathcal{G}$.
\end{defn}



\subsection[General notation]{General notation}
\label{sec:intro:notation:sig:article-graph-raj-dahya}

\firstparagraph
Throughout this paper we use the following notation

\begin{itemize}
    \item
        $\naturals = \{1,2,\ldots\}$,
        $\naturalsZero = \{0,1,2,\ldots\}$,
        $\realsNonNeg = \{r\in\reals \mid r\geq 0\}$,
        and
        to distinguish from indices $i$
        we use $\iunit$ for the imaginary unit $\sqrt{-1}$.

    \item
        For arbitrary groups $G$ or monoids $M$,
        we let $1$ denote the neutral element.
        In particular,
        for monoids consisting of words over an alphabet,
        we also use $1$ to denote the empty word,
        since this is the neutral element
        \wrt the monoidal operation of concatenation.

    \item
        We use
            $\HilbertRaum$ and $H$
        to denote Hilbert spaces,
            $\BanachRaum$
        for Banach spaces,
        and
            $\CStarAlg$
        for \TextCStarAlgs.

    \item
        For any Hilbert or Banach space
        $\onematrix$ shall denote the identity operator.
        For \TextCStarAlgs, $\id$ shall denote the identity map
        in order to avoid confusion with the element
            $\onematrix$,
        in the case of concretely represented unital \TextCStarAlgs.
        In ambivalent circumstances we use subscripts to denote the space
        on which an identity operator lives.

    \item
        Within the matrix algebra $M_{n}(\complex)$ for $n\in\naturals$,
        we let $\ElementaryMatrix{i}{j}$
        denote the $(i,j)$-th elementary operator on $\complex^{n}$
        defined \wrt the standard basis
        $\{\BaseVector{i}\}_{i=1}^{n}$.

    \item
        Given a linear map ${\Phi : \CStarAlg_{1} \to \CStarAlg_{2}}$
        between \TextCStarAlgs
        and $n\in\naturals$,
        the map
            ${\Phi \otimes \id_{n} : \CStarAlg_{1} \otimes M_{n}(\complex) \to \CStarAlg_{2} \otimes M_{n}(\complex)}$
        is defined via
            $
                (\Phi \otimes \id_{n})(\sum_{ij} a_{ij} \otimes \ElementaryMatrix{i}{j})
                = \sum_{ij}\Phi(a_{ij}) \otimes \ElementaryMatrix{i}{j}
            $
        for $\{a_{ij}\}_{i,j=1}^{n} \subseteq \CStarAlg$.

    \item
        The map $\Phi$ is called
        \highlightTerm{unital}
        if $\Phi(1) = 1$ (assuming $\CStarAlg_{1}$, $\CStarAlg_{2}$ are unital \TextCStarAlgs);
        \highlightTerm{self-adjoint}
        if $\Phi(a)$ is self-adjoint for all self-adjoint elements $a \in \CStarAlg_{1}$;
        \highlightTerm{positive}
        if $\Phi(a)$ is positive for all positive elements $a \in \CStarAlg_{1}$;
        \highlightTerm{$n$-positive}
        if $\Phi \otimes \id_{n}$ is positive for a given $n \in \naturals$;
        \highlightTerm{completely positive}
        if $\Phi$ is $n$-positive for each $n \in \naturals$;
        and a \highlightTerm{Schwarz} map,
        if it satisfies the \highlightTerm{Schwarz-inequality}
        $
            \Phi(a^{\ast}a)
            \geq \Phi(a)^{\ast}\Phi(a)
        $
        for all $a \in \CStarAlg_{1}$.
        Note that
            completely positive
            $\Rightarrow$
            $2$-positive
            $\Rightarrow$
            Schwarz
            $\Rightarrow$
            positive
            $\Rightarrow$
            self-adjoint
        (see \exempli \cite[Chapter~1 and Corollary~1.3.2]{Stoermer2013BookPosOps}).
        But the reverse implications fail in general.

    \item
        Given a Hilbert space $\HilbertRaum$,
        $L^{1}(\HilbertRaum) \subseteq \BoundedOps{\HilbertRaum}$
        denotes the \highlightTerm{trace class} operators,
        \idest the class of operators
        $T$ for which $\tr(\abs{T}) < \infty$
        (\cf
            \cite[\S{}2.4]{Murphy1990},
            \cite[\S{}3.4]{Pedersen1989analysisBook}%
        ).

    \item
        The above properties of
            positivity,
            $n$-positivity,
            and
            completely positivity
        are similarly defined for linear maps
            ${\Phi : L^{1}(H_{1}) \to L^{1}(H_{2})}$
        between spaces of trace class operators.
        We say that $\Phi$ is a
        \highlightTerm{completely positive trace-preserving (CPTP) map},
        if it is completely positive
        and $\tr(\Phi(s)) = \tr(s)$
        for all $s \in L^{1}(H_{1})$.
        (In \S{}\ref{sec:dilation:kraus:examples:sig:article-graph-raj-dahya}
        some examples shall be considered.)

    \item
        For an element $u$ of a \TextCStarAlg $\CStarAlg$,
        the \highlightTerm{adjoint}
        is defined by
            $\adjoint_{u}(a) = u\:a\:u^{\ast}$
        for $a \in \CStarAlg$.
        For a linear map $\Psi$ on $\CStarAlg$,
        the \highlightTerm{dissipation map}
        is defined by
            $D_{\Psi}(a, b) = \Psi(b^{\ast}a) - (\Psi(b)^{\ast}a + b^{\ast}\Psi(a))$
        for $a, b \in \CStarAlg$.
        The \highlightTerm{commutator} $[\cdot,\:\cdot]$
        and \highlightTerm{anti-commutator} $\{\cdot,\:\cdot\}$
        on any ring $R$
        are defined by
            $[x,\:y] = xy - yx$
            and
            $\{x,\:y\} = xy + yx$
        for $x, y \in R$.

    \item
        In \S{}\ref{sec:dilation:kraus:sig:article-graph-raj-dahya}
        and \S{}\ref{sec:dilation:vom-ende:sig:article-graph-raj-dahya},
        it shall be convenient to work with the notation
            $\ketbra{\xi}{\eta}$,
        which,
            for vectors $\xi$, $\eta$ in a Hilbert space $\HilbertRaum$,
        denotes
        the rank\=/$1$ operator
        defined by
            ${
                \HilbertRaum \ni x
                \mapsto
                \brkt{x}{\eta} \: \xi \in \linspann\{\xi\}
            }$.
        In particular one has
            $
                \tr(\ketbra{\xi}{\eta})
                = \brkt{\xi}{\eta}
            $.

    \item
        We use $T\:\xi$
        to denote the action of a linear operator $T$ on vectors $\xi$ of a Banach space.
        For \TextCStarAlgs,
        it is more standard to use $\Phi(a)$
        to denote the action of a linear operator $\Phi$ on elements $a$ of the \TextCStarAlg,
        even though this is itself a Banach space.
        This allows us to write $\Phi(a)\:\xi$,
        to denote the action of the \TextCStarAlg element $\Phi(a)$
        on the vector $\xi$ of a Hilbert space.
        As such, we denote families of Banach spaces \resp \TextCStarAlg operators
        parameterised say by a group $G$
        as
            $\{\phi(g)\}_{g \in G}$
            \resp
            $\{\Phi_{g}(\cdot)\}_{g \in G}$
        and their actions on elements of the underlying spaces
        as $\phi(g)\:\xi$ \resp $\Phi_{g}(a)$.
\end{itemize}



\subsection[Statement of results]{Statement of results}
\label{sec:intro:aims:sig:article-graph-raj-dahya}

\firstparagraph
Our first result provides discrete dilations for possibly indivisible systems.
The terminology of
dissipative operators and partial traces
are presented in
    \S{}\ref{sec:examples:dissipative:sig:article-graph-raj-dahya},
    and
    \S{}\ref{sec:dilation:kraus:examples:sig:article-graph-raj-dahya}
below.


\begin{highlightboxWithBreaks}
\begin{thm}[Discrete dilations of indivisible systems]
\makelabel{thm:result:graph-dilations:discrete:sig:article-graph-raj-dahya}
    Let $\mathcal{G} = (\Omega, E)$ be an arbitrary graph
    and $\{\phi(u, v)\}_{(u, v) \in E}$ be a family of contractions
    on a Banach space $\BanachRaum$.
    Suppose that $\phi$
    satisfies the identity axiom \eqcref{ax:dyn:id:sig:article-graph-raj-dahya}.%
    \footnoteref{ft:1:\beweislabel}
    Then the following hold:

    \begin{enumerate}[
        label={\bfseries (\alph*)},
        ref={\alph*},
        left=\rtab,
    ]
        \item\punktlabel{banach}
            There exists
                a Banach space $\tilde{\BanachRaum}$,
                a divisible dynamical system
                    $\{U(u,v)\}_{(u,v) \in E}$
                on the graph $\mathcal{G}$
                consisting of surjective isometries on $\tilde{\BanachRaum}$,
            as well as
                a surjective contraction
                    ${j : \tilde{\BanachRaum} \to \BanachRaum}$
                and
                a linear isometry
                    ${r : \BanachRaum \to \tilde{\BanachRaum}}$
                satisfying $j \circ r = \onematrix$,
            such that
                $j\:U(u, v)\:r = \phi(u, v)$
            for all $(u, v) \in E$.

        \item\punktlabel{cstar}
            Suppose $\BanachRaum$ is a (commutative) unital \TextCStarAlg $\CStarAlg$
            and $\phi = \{\Phi_{(u, v)}\}_{(u, v) \in E}$
            is a family of positive unital linear operators on $\CStarAlg$.
            Then there exist
                a (commutative) unital \TextCStarAlg $\tilde{\CStarAlg}$,
                a divisible dynamical system
                    $\{U(u,v)\}_{(u,v) \in E}$
                on the graph $\mathcal{G}$
                consisting of {}\textsuperscript{\ensuremath{\ast}}\=/automorphisms on $\tilde{\CStarAlg}$,
            as well as
                a surjective unital {}\textsuperscript{\ensuremath{\ast}}\=/homomorphism
                    ${j : \tilde{\CStarAlg} \to \CStarAlg}$
                and an isometric positive unital linear map
                    ${r : \CStarAlg \to \tilde{\CStarAlg}}$
                satisfying $j \circ r = \id_{\CStarAlg}$
            such that
                $j\:U(u, v)\:r = \Phi_{(u, v)}$
            for all $(u, v) \in E$.

        \item\punktlabel{cptp}
            Suppose $\BanachRaum$ is the space $L^{1}(\HilbertRaum)$
            of trace class operators on a Hilbert space $\HilbertRaum$,
            and $\phi = \{\Phi_{(u, v)}\}_{(u, v) \in E}$
            is a family of CPTP\=/maps on $L^{1}(\HilbertRaum)$.
            Then there exist
                an auxiliary Hilbert space $\tilde{\HilbertRaum}$,
                a divisible dynamical system
                    $\{U(u,v)\}_{(u,v) \in E}$
                on the graph $\mathcal{G}$
                consisting of unitaries on $\HilbertRaum \otimes \tilde{\HilbertRaum}$,
            as well as
                a pure state $\omega \in L^{1}(\tilde{\HilbertRaum})$
            such that
                $\Phi_{(u,v)}(s) = \tr_{2}(\adjoint_{U(u, v)}(s \otimes \omega))$
            for all $(u, v) \in E$
            and $s \in L^{1}(\HilbertRaum)$.
    \end{enumerate}

    \nvraum{1}

\end{thm}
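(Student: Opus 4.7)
The unifying strategy for all three parts is to encode the divisibility axiom \eqcref{ax:dyn:div:sig:article-graph-raj-dahya} combinatorially via a string-rewriting system on walks in $\mathcal{G}$, and then build simultaneous dilations whose indexing \emph{automatically} enforces divisibility. The key conceptual point is that while $j \circ r = \onematrix$, the converse composition $r \circ j$ is only a projection, so the dilation may carry extra path-memory that the original $\phi$ does not: divisibility of the family $\{U(u,v)\}$ does not force divisibility of $\{\phi(u,v)\}$.

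First I would set up the combinatorial framework. Let $F$ be the free monoid on the alphabet $\{e_{(u,v)} \mathrel{\vert} (u,v) \in E\}$ equipped with the rewrite rule $e_{(u,v)}\:e_{(v,w)} \to e_{(u,w)}$ whenever $(u,v),(v,w),(u,w) \in E$. Termination is immediate since word length strictly decreases under any rewrite, and confluence follows by a Knuth--Bendix analysis of critical pairs at overlaps $e_{(u,v)}\:e_{(v,w)}\:e_{(w,x)}$, which reduces to checking consistency on $4$\=/tuples of nodes admitting all the relevant edges. Let $\mathcal{W}$ denote the set of resulting canonical (irreducible) words; each admits a well-defined source and target in $\Omega$, and every word in $F$ reduces to a unique element of $\mathcal{W}$ with the same source and target.

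For part (a), I would then invoke Stroescu's theorem to obtain simultaneous surjective isometric dilations of the contractions $\phi(u,v)$, and build $\tilde{\BanachRaum}$ as an $\ell^{p}$-type direct sum of copies of $\BanachRaum$ indexed by $\mathcal{W}$. The operator $U(u,v)$ acts on the $\pi$\=/component by prepending $e_{(u,v)}$ and applying the rewriting to reach canonical form. Prepending followed by reduction is a bijection on $\mathcal{W}$, so each $U(u,v)$ is a surjective isometry; and since $e_{(u,v)}e_{(v,w)}$ and $e_{(u,w)}$ have the same canonical form, divisibility holds on the nose. The map $r$ embeds $\BanachRaum$ isometrically into the empty-word component, and the map $j$ sends a vector $\xi_{\pi}$ in the $\pi$\=/component to $\phi(\pi)\:\xi$, where $\phi(\pi) = \phi(u_{0},u_{1}) \cdots \phi(u_{n-1},u_{n})$ is the path-product computed using the \emph{original}, possibly non-divisible family, and sums over $\pi$. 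The identity $j \circ U(u,v) \circ r = \phi(u,v)$ then follows by inspection of the action on the empty-word component, and $j$ is a contraction since each $\phi(\pi)$ is. For parts (b) and (c), the walk-indexed construction works analogously, using Kraus's dilation \cite{Kraus1971Article, Kraus1983} for positive unital maps to $^{\ast}$\=/automorphisms and the vom Ende--Dirr dilation \cite{vomEnde2019unitaryDildiscreteCPsemigroups} for CPTP maps to unitary conjugations, in place of Stroescu's theorem; the $\ell^{p}$\=/sum is replaced by a tensor \resp spatial product as appropriate, and the pure state $\omega$ is chosen as the standard vector in the walk-indexed auxiliary Hilbert space.

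The main obstacle is verifying confluence of the rewriting system on graphs with many overlapping triangles or cycles, since divisibility relations on different triples must be mutually compatible for canonical forms to be well-defined; the algebraic framework advertised in the abstract is presumably devised precisely for this. A secondary subtlety arises in part (c), where the auxiliary Hilbert space $\tilde{\HilbertRaum}$ and the pure state $\omega$ must be \emph{common} across all edges; this forces the per-edge Stinespring dilations to be organised coherently on a single Hilbert space large enough to carry both the walk indexing and the per-edge dilation data simultaneously.
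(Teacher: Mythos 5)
Your overall architecture --- turn edges into letters, impose the triangle relations $e_{(u,v)}\,e_{(v,w)} \to e_{(u,w)}$ as a terminating confluent rewriting system, extend $\phi$ along reduced words, and let the dilation act by translation on a word-indexed space so that divisibility holds on the nose --- is essentially the paper's. But the two steps you treat as routine are exactly where the paper has to work, and as written they fail. First, confluence: with rules defined only for triples whose three edges all lie in $E$, the critical pair at $e_{(u,v)}\,e_{(v,w)}\,e_{(w,x)}$ produces $e_{(u,w)}\,e_{(w,x)}$ and $e_{(u,v)}\,e_{(v,x)}$, and these join only if $(u,x)$ itself carries a rule; for an arbitrary graph there is no reason it should, so your system is not locally confluent and canonical forms need not exist. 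You flag this as ``the main obstacle'' but do not resolve it. The paper's resolution is to replace $E$ by $\breve{E}$, the reflexive--symmetric--transitive closure of $E$, define the alphabet and rules over $\breve{E}$ together with the loop-deletion rules $\letter{(u,u)} \to 1$, and verify the pre-algebraic axioms so that \Cref{prop:rew-sys-convergent:sig:article-graph-raj-dahya} applies.

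Second, and more seriously, surjectivity. Your $\tilde{\BanachRaum}$ is indexed by the irreducible words of a \emph{monoid} with no inverses, and $U(u,v)$ is ``prepend $e_{(u,v)}$ and reduce.'' That map is not surjective on the set of canonical words: every rule replaces two letters by one, so a reduction of $e_{(u,v)}\pi$ always has length at least one, and in particular the empty word is never in the image. Hence $U(u,v)$ is at best a non-surjective isometry, whereas the theorem demands surjective isometries (resp.\ {}\textsuperscript{\ensuremath{\ast}}\=/automorphisms, unitaries); moreover, without loop-deletion rules $U(u,u)$ is a nontrivial shift, so the identity axiom \eqcref{ax:dyn:id:sig:article-graph-raj-dahya} --- part of the definition of a divisible dynamical system --- also fails. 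This is precisely why the paper symmetrises: adjoining the reversed letters $\letter{(v,u)}$ makes the quotient monoid $G_{\mathcal{G}}$ a \emph{group}, left translation becomes bijective, and one then hands the normal-form extension $\quer{\phi}$ of \Cref{lemm:extension-normal-form:sig:article-graph-raj-dahya} to \Cref{thm:stroescu:banach:sig:article-graph-raj-dahya}, \Cref{cor:stroescu:cstar:sig:article-graph-raj-dahya}, or \Cref{thm:vom-ende-dirr:sig:article-graph-raj-dahya} rather than building the dilation space by hand. Note also that for part (b) Kraus's theorems concern completely positive maps and do not dilate merely positive unital maps to {}\textsuperscript{\ensuremath{\ast}}\=/automorphisms; the paper uses the commutative-friendly adaptation of Stroescu's construction there.
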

\end{highlightboxWithBreaks}

\footnotetext[ft:1:\beweislabel]{%
    but not necessarily the divisibility axiom \eqcref{ax:dyn:div:sig:article-graph-raj-dahya}
}

To obtain continuous counterparts,
we restrict our attention to linearly ordered graphs
endowed with the natural order topology
(\cf \S{}\ref{sec:examples:hamiltonian:sig:article-graph-raj-dahya}),
and make use of special conditions.

\begin{highlightboxWithBreaks}
\begin{thm}[Continuous dilations of divisible systems]
\makelabel{thm:result:graph-dilations:cts:divisible:sig:article-graph-raj-dahya}
    Let $\mathcal{G} = (\Omega, E)$
    be a graph where $E$ is a reflexive linear ordering.
    Endow $\Omega$ with the order topology
    and $E \subseteq \Omega \times \Omega$
    with the relative topology of the product topology.
    Let
        $\{\phi(u, v)\}_{(u, v) \in E}$
        be a divisible dynamical system on the graph $\mathcal{G}$
        consisting of contractions
        on a Banach space $\BanachRaum$.
    Suppose further that $\phi$ has geometric growth.
    Then the following hold:

    \begin{enumerate}[
        label={\bfseries (\alph*)},
        ref={\alph*},
        left=\rtab,
    ]
        \item\punktlabel{banach}
            There exists
                a Banach space $\tilde{\BanachRaum}$,
                a \uline{strongly continuous} divisible dynamical system
                    $\{U(u,v)\}_{(u,v) \in E}$
                on the graph $\mathcal{G}$
                consisting of surjective isometries on $\tilde{\BanachRaum}$,
            as well as
                a surjective contraction
                    ${j : \tilde{\BanachRaum} \to \BanachRaum}$
                and
                a linear isometry
                    ${r : \BanachRaum \to \tilde{\BanachRaum}}$
                satisfying $j \circ r = \onematrix$,
            such that
                $j\:U(u, v)\:r = \phi(u, v)$
            for all $(u, v) \in E$.

        \item\punktlabel{cstar}
            Suppose $\BanachRaum$ is a (commutative) unital \TextCStarAlg $\CStarAlg$
            and $\phi = \{\Phi_{(u, v)}\}_{(u, v) \in E}$
            is a family of positive unital linear operators on $\CStarAlg$.
            Then there exist
                a (commutative) unital \TextCStarAlg $\tilde{\CStarAlg}$,
                a \uline{strongly continuous} divisible dynamical system
                    $\{U(u,v)\}_{(u,v) \in E}$
                on the graph $\mathcal{G}$
                consisting of {}\textsuperscript{\ensuremath{\ast}}\=/automorphisms on $\tilde{\CStarAlg}$,
            as well as
                a surjective unital {}\textsuperscript{\ensuremath{\ast}}\=/homomorphism
                    ${j : \tilde{\CStarAlg} \to \CStarAlg}$
                and an isometric positive unital linear map
                    ${r : \CStarAlg \to \tilde{\CStarAlg}}$
                satisfying $j \circ r = \id_{\CStarAlg}$
            such that
                $j\:U(u, v)\:r = \Phi_{(u, v)}$
            for all $(u, v) \in E$.
    \end{enumerate}

    \nvraum{1}

\end{thm}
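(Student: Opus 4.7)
The overall strategy is to obtain the dilation by invoking the discrete result \Cref{thm:result:graph-dilations:discrete:sig:article-graph-raj-dahya} and then to lift the additional regularity of $\phi$ (namely geometric growth plus divisibility) to a regularity property of the dilating family~$U$. The key observation is that under divisibility on a linearly ordered graph, a geometric growth estimate at the ``diagonal'' is equivalent to norm continuity throughout~$E$.

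\emph{Step 1 (Norm continuity of $\phi$).} Fix edges $(u,v),(u,v')\in E$ with $v\leq v'$. Since $(v,v')\in E$ by linearity of the order and $(u,w)=(u,v)\cdot(v,w)$ is a valid factorisation, the divisibility axiom \eqcref{ax:dyn:div:sig:article-graph-raj-dahya} gives $\phi(u,v')=\phi(u,v)\phi(v,v')$, so that
\[
    \norm{\phi(u,v')-\phi(u,v)}
    \;\leq\;
    \norm{\phi(u,v)}\,\norm{\phi(v,v')-\onematrix}
    \;\leq\;
    \ell(v,v'),
\]
using contractivity and geometric growth. Since $\ell$ is continuous and $\ell(v,v)=0$, this yields norm continuity in the second argument; a symmetric factorisation handles the first. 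Thus $\phi$ is norm-continuous on $E$.

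\emph{Step 2 (Discrete dilation).} Apply \Cref{thm:result:graph-dilations:discrete:sig:article-graph-raj-dahya}\punktcref{banach} to obtain a Banach space $\tilde{\BanachRaum}$, a divisible family $\{U(u,v)\}_{(u,v)\in E}$ of surjective isometries, and maps $j,r$ as in the statement, satisfying $j\,U(u,v)\,r=\phi(u,v)$ for all $(u,v)\in E$. The only remaining task is to upgrade this to a strongly continuous dilation.

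\emph{Step 3 (Strong continuity of $U$).} By divisibility of $U$ together with the argument of Step~1, it suffices to show that $U$ satisfies the analogous geometric growth estimate $\norm{U(u,v)x - x}\to 0$ (uniformly in $x$ on bounded sets, or at least pointwise in $x$) as $\ell(u,v)\to 0$. I would prove this by tracing through the explicit discrete dilation construction used to establish the preceding theorem: that construction is ``functorial'' in $\phi$ in the sense that $\tilde{\BanachRaum}$ is assembled from copies of $\BanachRaum$ indexed by paths in $\mathcal{G}$ via the string-rewriting system alluded to in the introduction, and $U(u,v)$ acts as a path-extension operator whose deviation from $\onematrix$ on the distinguished summand is exactly $\phi(u,v)-\onematrix$. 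Consequently
\[
    \norm{U(u,v)-\onematrix}_{\textup{on}\ \text{each summand}}
    \;\leq\;
    C\,\norm{\phi(u,v)-\onematrix}
    \;\leq\;
    C\,\ell(u,v)
\]
for some absolute constant $C$, and this bound passes to the completion. This gives norm continuity of $U$ at the diagonal, and Step~1 (applied now to~$U$) promotes it to norm (hence strong) continuity on all of $E$.

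\emph{Step 4 (Part (b)).} For the \TextCStarAlg case, the same three-step scheme applies verbatim, replacing \Cref{thm:result:graph-dilations:discrete:sig:article-graph-raj-dahya}\punktcref{banach} with part~\punktcref{cstar}: positive unital maps remain contractive, so the diagonal-continuity estimate goes through; the divisibility of the automorphism family lets us propagate continuity as in Step~1 (now with $\norm{\cdot}$ being the norm of linear maps between \TextCStarAlgs{}); and the functorial nature of the dilation again transports the geometric growth of $\phi$ to a geometric growth of the dilating ${}^{\ast}$-automorphisms $U(u,v)$.

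\emph{Main obstacle.} The technical heart is Step~3: one must verify that the discrete dilation construction (built via the authors' free-product / string-rewriting machinery in the earlier sections) genuinely respects the norm, so that geometric growth of $\phi$ transfers to geometric growth of $U$ on $\tilde{\BanachRaum}$. The delicate part is the completion / limit step in that construction --- naively the isometric extensions could have $\norm{U(u,v)-\onematrix}$ as large as~$2$ even when $\phi(u,v)$ is close to the identity, so the estimate is only available because the specific construction chosen is compatible with the seminorm $\norm{\phi(\cdot)-\onematrix}$. Everything else (Steps 1, 2, 4) is routine once this compatibility is isolated.
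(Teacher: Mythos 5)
Your Steps 1 and 2 are fine, but Step 3 is a genuine gap, and your own \emph{Main obstacle} paragraph locates it correctly without closing it. The discrete dilation of \Cref{thm:result:graph-dilations:discrete:sig:article-graph-raj-dahya} is built by first extending $\phi$ to the edge group $G_{\mathcal{G}}$ via the \emph{normal form extension} of \Cref{lemm:extension-normal-form:sig:article-graph-raj-dahya} and then applying Stroescu's theorem to that extension $\quer{\phi}$. In Stroescu's construction $U(\iota(u,v))$ is a right shift on a function space over $G_{\mathcal{G}}$, so strong continuity of ${e \mapsto U(\iota(e))}$ requires control of $\sup_{g,h\in G_{\mathcal{G}}}\norm{(\quer{\phi}(g\,\iota(e')\,h)-\quer{\phi}(g\,\iota(e)\,h))\xi}$, i.e.\ an estimate that is \emph{uniform over the whole group}, not just on a ``distinguished summand''. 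For the normal form extension this uniform estimate genuinely fails: taking $h=[\letter{(v,u)}]$ one gets $\quer{\phi}(\iota(u,v)h)=\onematrix$ while $\quer{\phi}(\iota(u,v')h)=\phi(u,v')$ for every $v'\neq v$, so the supremum does not tend to $0$ as $v'\to v$ unless $\phi(u,v)=\onematrix$. Hence the dilation produced by the discrete theorem is in general \emph{not} strongly continuous, and no bound of the form $\norm{U(u,v)-\onematrix}\le C\,\norm{\phi(u,v)-\onematrix}$ can be extracted from it; the paper discards the normal form extension for exactly this reason at the start of \S{}\ref{sec:ext:lo:divisible:sig:article-graph-raj-dahya}.

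The paper's actual route is therefore not ``discrete theorem plus transport of regularity'' but a replacement of the underlying extension. For linearly ordered graphs and \emph{divisible} $\phi$ one defines the \First cover extension (\Cref{lemm:extension-cover:I:sig:article-graph-raj-dahya}), whose well-definedness uses divisibility in an essential way (it depends only on the cover function $\cover_{[x]}$, so forward and backward letters cancel correctly), and for which geometric growth yields the uniform estimate $\norm{\quer{\phi}([x][\letter{(u,v)}][x'])-\quer{\phi}([x][x'])}\le e^{\ell(u,v)}-1$ for all words $x,x'$ (\Cref{lemm:extension-cover-continuous:sig:article-graph-raj-dahya}). Feeding this embedded uniform strong continuity into the explicit Stroescu construction gives strong continuity of $U\circ\iota$ (\Cref{lemm:stroescu:banach:restr:sig:article-graph-raj-dahya,lemm:stroescu:cstar:restr:sig:article-graph-raj-dahya}); the same scheme with \Cref{cor:stroescu:cstar:sig:article-graph-raj-dahya} handles part (b). To repair your argument you would have to prove precisely this uniform-over-the-group estimate for the extension underlying your Step 2 --- which is where divisibility and geometric growth are actually consumed; your Step 1 and the reduction of continuity to the diagonal are correct but are not where the work lies.
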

\end{highlightboxWithBreaks}

\begin{highlightboxWithBreaks}
\begin{thm}[Continuous dilations of indivisible systems]
\makelabel{thm:result:graph-dilations:cts:indivisible:sig:article-graph-raj-dahya}
    Let $\mathcal{G}$ be as in \Cref{thm:result:graph-dilations:cts:divisible:sig:article-graph-raj-dahya}
    and consider a family
        $\phi = \{e^{A(u,v)}\}_{(u,v) \in E} \subseteq \BoundedOps{\BanachRaum}$
    of contractions on a Banach space $\BanachRaum$,
    where $\{A(u, v)\}_{(u,v) \in E} \subseteq \BoundedOps{\BanachRaum}$
        is a norm-continuous additive family of
        (not necessarily commuting)
        bounded operators on $\BanachRaum$
        with geometric growth.
    Then the following hold:

    \begin{enumerate}[
        label={\bfseries (\alph*)},
        ref={\alph*},
        left=\rtab,
    ]
        \item\punktlabel{banach}
            If each $A(u, v)$ is dissipative
            and $\{A(u, v)\}_{(u,v) \in E}$ has geometric growth,
            then the claim in
            \Cref{thm:result:graph-dilations:cts:divisible:sig:article-graph-raj-dahya}~%
            \eqcref{it:banach:thm:result:graph-dilations:cts:divisible:sig:article-graph-raj-dahya}
            holds.

        \item\punktlabel{cstar}
            Suppose $\BanachRaum$ is a (commutative) unital \TextCStarAlg $\CStarAlg$
            and that each $A(u, v) = L_{(u, v)}$,
            where
                $L_{(u, v)}$ is a self-adjoint map
            satisfying
                $L_{(u,v)}(1) = \zeromatrix$
            and
                $D_{L_{(u, v)}}(a, a) \geq \zeromatrix$
            for all $a \in \CStarAlg$.
            If $\{L_{(u, v)}\}_{(u,v) \in E}$ has geometric growth,
            then the conclusion of the claim in
            \Cref{thm:result:graph-dilations:cts:divisible:sig:article-graph-raj-dahya}~%
            \eqcref{it:cstar:thm:result:graph-dilations:cts:divisible:sig:article-graph-raj-dahya}
            holds.
    \end{enumerate}

    \nvraum{1}
\end{thm}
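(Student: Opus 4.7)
The plan is to bootstrap Theorem~\ref{thm:result:graph-dilations:discrete:sig:article-graph-raj-dahya} (the discrete dilation for possibly indivisible families) into a strongly continuous version by exploiting the additive-generator structure together with the geometric-growth hypothesis, in close analogy with the way Theorem~\ref{thm:result:graph-dilations:cts:divisible:sig:article-graph-raj-dahya} upgrades its discrete divisible counterpart. First, one checks that the hypotheses of Theorem~\ref{thm:result:graph-dilations:discrete:sig:article-graph-raj-dahya} are indeed satisfied. From additivity we get $A(u,u) = \zeromatrix$ for all self-loops, so $\phi(u,u) = \onematrix$ and the identity axiom \eqcref{ax:dyn:id:sig:article-graph-raj-dahya} holds. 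In part~(a), dissipativity of each $A(u,v)$ gives that $\phi(u,v) = e^{A(u,v)}$ is a contraction. In part~(b), the conditions $L_{(u,v)}(1) = \zeromatrix$, $L_{(u,v)}$ self-adjoint, and $D_{L_{(u,v)}}(a,a) \geq \zeromatrix$ are exactly the (commutative) Lindblad-type conditions guaranteeing that $e^{L(u,v)}$ is a positive unital map.

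Next, exploit the additive structure to reduce the two-parameter family to a single operator-valued path. Since $E$ is a reflexive linear order on $\Omega$, fix a basepoint $o \in \Omega$ and define $B(t) \coloneqq A(o,t)$ (with the obvious sign convention for $t$ preceding $o$). By additivity $A(s,t) = B(t) - B(s)$, and the remaining hypotheses transfer: norm-continuity of $\{A(u,v)\}$ gives norm-continuity of $B$, while geometric growth with superadditive length function $\ell$ yields $\norm{B(t) - B(s)} \leq \ell(s,t)$. Apply Theorem~\ref{thm:result:graph-dilations:discrete:sig:article-graph-raj-dahya}(a) (resp.\ (b)) to $\phi$ to obtain a divisible dilation $\{U(u,v)\}_{(u,v) \in E}$ by surjective isometries on a Banach space $\tilde{\BanachRaum}$ (resp.\ by {}\textsuperscript{\ensuremath{\ast}}\=/automorphisms on a commutative unital \TextCStarAlg{} $\tilde{\CStarAlg}$) together with the intertwining maps $j, r$ of the theorem.

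The remaining---and main---obstacle is to establish strong continuity of $U$. My approach would be to open up the universal dilation construction behind Theorem~\ref{thm:result:graph-dilations:discrete:sig:article-graph-raj-dahya} (the string-rewriting machinery advertised in the abstract) and derive from it a uniform geometric-growth bound at the dilation level, of the shape
\[
    \norm{(U(s,t) - \onematrix)\:\xi}_{\tilde{\BanachRaum}} \leq C \cdot \ell(s,t) \: \norm{\xi}_{\tilde{\BanachRaum}}
\]
for $\xi$ in some total subset of $\tilde{\BanachRaum}$ (for example, the $U$\=/orbit of $\ran(r)$), with an analogous estimate in the \TextCStarAlg{} case. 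The key input here is the reduction to the single path $B$: because $U$ is constructed universally in the \emph{increments} $\phi(s,t) = e^{B(t) - B(s)}$, and because $\norm{e^{B(t) - B(s)} - \onematrix} \leq e^{\ell(s,t)} - 1$ on the original level, propagating the estimate through the free/universal construction should yield the displayed bound with a constant depending only on the spectral data already controlled by $\ell$.

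Once the uniform bound is established, strong continuity at an arbitrary $(s,t) \in E$ follows from divisibility of $U$ by the standard bracketing trick: for a net $(s_{\alpha}, t_{\alpha}) \to (s, t)$, factor $U(s_{\alpha}, t_{\alpha})$ via divisibility in the form $U(s_{\alpha},s)\,U(s,t)\,U(t,t_{\alpha})$ (with appropriate handling of orientation), so that the two bracketing isometries (resp.\ {}\textsuperscript{\ensuremath{\ast}}\=/automorphisms) tend strongly to $\onematrix$ by the uniform estimate and continuity of $\ell$. For part~(b) the argument runs identically in the category of unital \TextCStarAlgs, with the positivity and unitality of $r$ and the {}\textsuperscript{\ensuremath{\ast}}\=/homomorphism structure of $j$ preserved by the same universal construction; strong continuity is then understood in the norm of $\tilde{\CStarAlg}$, which is what is required by the conclusion of \Cref{thm:result:graph-dilations:cts:divisible:sig:article-graph-raj-dahya}\eqcref{it:cstar:thm:result:graph-dilations:cts:divisible:sig:article-graph-raj-dahya}.
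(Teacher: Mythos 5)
There is a genuine gap at the step you yourself flag as the main obstacle. The dilation supplied by \Cref{thm:result:graph-dilations:discrete:sig:article-graph-raj-dahya} is built from the \emph{normal form extension} of \Cref{lemm:extension-normal-form:sig:article-graph-raj-dahya}: $\quer{\phi}([x])$ is the product of the $\phi_{0}(u_{i},v_{i})$ over the letters of the normal form of $x$. For an \emph{indivisible} family this extension is not even separately continuous along $\iota$, so no uniform estimate of the shape $\norm{(U(s,t)-\onematrix)\xi}\leq C\,\ell(s,t)\norm{\xi}$ can be propagated through that construction. Concretely, take $a\prec s\prec t$ with $\phi(a,t)\neq\phi(a,s)\phi(s,t)$ and let $s'\to s$ with $s'\neq s$: the word $\letter{(a,s)}\letter{(s,t)}$ reduces to the normal form $\letter{(a,t)}$, so $\quer{\phi}([\letter{(a,s)}]\,\iota(s,t))=\phi(a,t)$, whereas $\letter{(a,s)}\letter{(s',t)}$ is already irreducible (non-coalescent), so $\quer{\phi}([\letter{(a,s)}]\,\iota(s',t))=\phi(a,s)\phi(s',t)\to\phi(a,s)\phi(s,t)\neq\phi(a,t)$. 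Hence the quantity $\sup_{g,h}\norm{(\quer{\phi}(g\,\iota(e')\,h)-\quer{\phi}(g\,\iota(e)\,h))\xi}$ does not tend to $0$, and the Stroescu dilation of this extension is not strongly continuous along $\iota$. Your phrase ``because $U$ is constructed universally in the increments $e^{B(t)-B(s)}$'' is where the argument silently assumes what must be built: the discrete construction never sees the generators, only the contractions $\phi(u,v)$ and the word-combinatorics. The paper's proof therefore does \emph{not} upgrade the discrete dilation; it replaces the extension entirely by the \Second cover extension (\Cref{lemm:extension-cover:II:sig:article-graph-raj-dahya}), setting $\quer{\phi}([x])=e^{A([x])}$ with $A([x])$ a positive linear combination of increments $A(w_{i-1},w_{i})$ read off from the cover function of $[x]$. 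Well-definedness uses additivity, contractivity uses closure of dissipative operators under positive combinations, and the uniform bound $\norm{\quer{\phi}([x][\letter{(u,v)}][x'])-\quer{\phi}([x][x'])}\leq\ell(u,v)$ comes from \Cref{prop:bch-exp-perturbation:sig:article-graph-raj-dahya}; only then does one dilate (Stroescu with $G$ discrete) and invoke the restricted-continuity lemmata. This extension is the missing idea, not a bookkeeping detail.

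A secondary gap in part (b): to apply the \TextCStarAlg version of the Stroescu dilation you must know that \emph{every} $\quer{\phi}(g)$, $g\in G_{\mathcal{G}}$ --- not just the $\Phi_{(u,v)}$ on edges --- is a positive unital map. In the paper this follows because each $A(g)$ is a positive combination of the $L_{(u,v)}$, the Lindblad--Schwarz conditions of \Cref{lemm:linblad:schwarz:sig:article-graph-raj-dahya} are preserved under positive combinations (\Cref{rem:class-of-linblad-schwarz:sig:article-graph-raj-dahya}), and the correspondence then yields unital Schwarz, hence positive, maps $e^{A(g)}$. Your proposal only verifies positivity and unitality for the edge maps themselves. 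Finally, a caution on the single-path reduction $B(t)=A(o,t)$: dissipativity is a hypothesis on the increments $A(u,v)$ for $(u,v)\in E$ and is stable under positive combinations but not under differences, so any argument must, as the cover extension does, only ever form positive combinations of increments.
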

\end{highlightboxWithBreaks}



\begin{rem}[$1$\=/parameter factorisations]
\makelabel{rem:1-parameter-decomp:sig:article-graph-raj-dahya}
    One immediate advantage of such dilations
    is the following decomposition.
    Consider a graph $\mathcal{G} = (\Omega, E)$
    for which $E$ is a reflexive linear ordering.
    Suppose
        $\{U(t,s)\}_{(t,s) \in E}$
    is a family of (strongly continuous) surjective isometries
    which dilates a family of contractions
        $\{\phi(t, s)\}_{(t, s) \in E}$.
    Fix any point $t_{0} \in \Omega$
    and define

        \begin{shorteqnarray}
            U(t)
                \coloneqq
                    \begin{cases}
                        U(t, t_{0})
                            &: &(t, t_{0}) \in E\\
                        U(t_{0}, t)^{-1}
                            &: &(t_{0}, t) \in E
                    \end{cases}
        \end{shorteqnarray}

    \continueparagraph
    for all $t \in \Omega$.
    Then $\{U(t)\}_{t \in \Omega}$
    is a (strongly continuous) family of surjective isometries.
    Following Howland
        \cite[Lemma~1]{Howland1974Article}
    and Evans
        \cite[Lemma~6.2]{Evans1976ArticlePerturb},
    one can then readily prove that

        \begin{shorteqnarray}
            U(t, s) = U(t)U(s)^{-1}
        \end{shorteqnarray}

    \continueparagraph
    holds for all $(t, s) \in E$.
    This identity entails that
        $\{U(t)\}_{t \in \Omega}$
    is unique upto right-multiplication via a surjective isometry.
    Finally, considering the case of $(\Omega, E) = (\interval, \geq)$,
    where $\interval \subseteq \realsNonNeg$ is a connected subset,
    we note that the family of surjective isometries
        $\{U(t)\}_{t \in \interval}$
    does \uline{not} in general satisfy the semigroup law,
    otherwise $\{U(t,s)\}_{(t,s) \in E}$
    and thus $\{\phi(t, s)\}_{(t,s) \in E}$
    would be memoryless processes.
\end{rem}



In order to achieve our results,
we make use of known non-classical dilation theorems.
These in turn all rely on operator families parameterised by (topological) groups,
which poses the main challenge of this paper,
as our operator families live on (the edges of) graphs.
The roadmap of this paper can be summarised as follows:

\begin{enumerate}[
    label={\bfseries Goal~{\Roman*})},
    ref={Goal~{\Roman*}},
    left=0pt,
]
    \item\label{goal:embed:sig:article-graph-raj-dahya}
        Embed the set of graph edges $E$
        into an appropriately defined group $G = G_{\mathcal{G}}$,
        in a way that loops are mapped to the identity,
        and products of (the images of) edges
        is coherent with traversal in the graph.
        To this end, we apply the theory of
        \highlightTerm{reduction} and \highlightTerm{string-rewriting systems},
        as well as presented groups
        (see \S{}\ref{sec:algebra:sig:article-graph-raj-dahya}).

    \item\label{goal:ext:sig:article-graph-raj-dahya}
        Letting ${\iota : E \to G}$ denote the embedding
        from \ref{goal:embed:sig:article-graph-raj-dahya},
        construct \usesinglequotes{natural} extensions
        of $\{\phi(u,v)\}_{(u,v) \in E}$
        to operator families
            $\{\quer{\phi}(x)\}_{x \in G}$
        such that
            $
                \quer{\phi}(\iota(u,v))
                = \phi(u,v)
            $
        for all $(u,v) \in E$
        (see \S{}\ref{sec:ext:sig:article-graph-raj-dahya}).

    \item\label{goal:cts:sig:article-graph-raj-dahya}
        Determine sufficient conditions
        for certain continuity conditions of
            $\quer{\phi}$
        in the case that $\Omega$ is topologised
        (see \S{}\ref{sec:dilation:stroescu:cts:sig:article-graph-raj-dahya}).

    \item\label{goal:dilation:sig:article-graph-raj-dahya}
        Recall and extend appropriate dilation results for our context
        (see \S{}\ref{sec:dilation:sig:article-graph-raj-dahya}).
\end{enumerate}

\continueparagraph
These dilations
shall then be applied to the extensions
to obtain \Cref{%
    thm:result:graph-dilations:discrete:sig:article-graph-raj-dahya,%
    thm:result:graph-dilations:cts:divisible:sig:article-graph-raj-dahya,%
    thm:result:graph-dilations:cts:indivisible:sig:article-graph-raj-dahya%
}
(see \S{}\ref{sec:results:sig:article-graph-raj-dahya}).





\section[Examples]{Examples}
\label{sec:examples:sig:article-graph-raj-dahya}

\firstparagraph
Before proceeding with the above roadmap,
we recall some basic facts about
semigroups and their generators.
We then present classes of examples
arising from dynamical systems
to which the main results apply.
In particular consider natural ways
in which indivisibility arises,
both in the continuous setting of evolution families
as well as the discrete setting of networks.


\subsection[Dissipativity of generators]{Dissipativity of generators}
\label{sec:examples:dissipative:sig:article-graph-raj-dahya}

\firstparagraph
In order to obtain operator families of contractions in our examples,
we first recall some basic notions from classical semigroup theory
and mathematical physics.
Consider a Banach space $\BanachRaum$
with dual $\BanachRaum^{\prime}$
and associated bilinear evaluation map
    ${\brkt{\cdot}{\cdot} : \BanachRaum \times \BanachRaum^{\prime} \to \complex}$.
A (not necessarily linear) isometric function
    ${J : \BanachRaum \to \BanachRaum^{\prime}}$
is called a \highlightTerm{duality section}
if
    $\brkt{\xi}{J(\xi)} = \norm{\xi}^{2}$
for all $\xi\in\BanachRaum$.
By the Hahn Banach theorem, duality sections always exist.
A densely defined linear operator $A$ on $\BanachRaum$
is called \highlightTerm{dissipative \wrt $J$}
if $\Re\brkt{A\xi}{J(\xi)} \leq 0$
for all $\xi \in \opDomain{A}$.%
\footnote{%
    \cf
    \cite[\S{}I.3]{Goldstein1985semigroups}.
}
By the Lumer--Phillips form of the Hille--Yosida theorem,%
\footnote{%
    see
    \cite[Theorem~I.3.3 and Corollary~I.3.5]{Goldstein1985semigroups}.
}
the following statements are equivalent:%
\footnote{%
    here $\opResolventSet{A}$ denotes the resolvent set
    $\{\lambda \in \complex \mid (\lambda - A)^{-1} \in \BoundedOps{\BanachRaum}\}$.
}

\begin{enumerate}[
    label={\bfseries{\arabic*}.},
    ref={\arabic*},
    left=\rtab,
]
    \item
        $A$ is densely defined
        with $\opResolventSet{A} \supseteq (0,\:\infty)$
        and is dissipative \wrt all duality sections;
    \item
        $A$ is densely defined
        with $\opResolventSet{A} \cap (0,\:\infty) \neq \emptyset$
        and is dissipative \wrt some duality section;
    \item
        $A$ is the generator of a contractive semigroup.
\end{enumerate}

In the case of Hilbert spaces, standard examples include
operators of the form $A = \iunit H$
for a self-adjoint operator $H$.%
\footnote{%
    (unbounded) dissipative operators thus generalise the role of Hamiltonians.
}
In particular, for a bounded operator $A \in \BoundedOps{\BanachRaum}$,
since $\lambda \in \opResolventSet{A}$ for all $\lambda \in \complex$ with $\abs{\lambda} > \norm{A}$,
one has that $e^{\alpha A}$ is a contraction for all $\alpha > 0$
if and only if $A$ is dissipative \wrt some duality section
if and only if $A$ is dissipative \wrt all duality sections.
A bounded operator $A$ is called \highlightTerm{dissipative}
if it is dissipative \wrt some (equivalently: any) duality section.%
\footnote{%
    this equivalence continues to hold in the unbounded case
    (see
        \cite[Corollary~I.3.5]{Goldstein1985semigroups}%
    ).
}
In particular, the class of bounded dissipative operators
is closed under positive linear combinations
and weak limits.

As we shall later make use of operators of the form $e^{X}$,
where $X$ is a bounded dissipative operator,
it shall be useful to determine upper bounds for perturbations.
The next result is well-known and arises
in the process of deriving the celebrated
Baker--Campbell--Hausdorff formula
(\cf
    \cite[\S{}2 and \S{}4]{Wilcox1967ArticleExp},
    \cite[Theorem~5.4 and (5.15)]{Hall2015Book}%
).
As it is typically proved in the literature
for either Hilbert spaces or finite dimensional spaces,
we present an elementary proof without any such restrictions
for the reader's convenience.

\begin{prop}[Derivative of the operator exponential]
\makelabel{prop:bch-exp:sig:article-graph-raj-dahya}
    Let $X, Y \in \BoundedOps{\BanachRaum}$
    be bounded operators on an arbitrary Banach space $\BanachRaum$.
    Then the derivative of the analytic function
        ${f : \reals \ni t \mapsto e^{X + t\:Y} \in \BoundedOps{\BanachRaum}}$
    is given by

        \begin{restoremargins}
        \begin{equation}
        \label{eq:bch-exp:sig:article-graph-raj-dahya}
            f^{\prime}(t)
            = \int_{s \in [0,\:1]}
                    e^{(1 - s)\:(X + t\:Y)}
                    \:Y
                    \:e^{s\:(X + t\:Y)}
                \:\dee s
        \end{equation}
        \end{restoremargins}

    \continueparagraph
    for all $t \in \reals$,
    where the integrals are computed strongly via Bochner-integrals.
\end{prop}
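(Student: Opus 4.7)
The plan is to compare $e^{X+(t+\epsilon)Y}$ with $e^{X+tY}$ via a clever interpolating path and then pass to the limit $\epsilon \to 0$. First I would fix $t \in \reals$ and set $A \coloneqq X + tY$, $A_{\epsilon} \coloneqq A + \epsilon Y$. Consider the auxiliary function $\phi_{\epsilon}(s) \coloneqq e^{(1-s)A}\,e^{s A_{\epsilon}}$ on $s \in [0,1]$, which satisfies $\phi_{\epsilon}(0) = e^{A}$ and $\phi_{\epsilon}(1) = e^{A_{\epsilon}}$.

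Both factors of $\phi_{\epsilon}(s)$ are norm-analytic in $s$ via the usual power-series expansion, so $\phi_{\epsilon}$ is norm-differentiable. Using the product rule together with the fact that $A$ commutes with $e^{(1-s)A}$, one obtains
\[
    \phi_{\epsilon}'(s)
    = -e^{(1-s)A}\,A\,e^{sA_{\epsilon}} + e^{(1-s)A}\,A_{\epsilon}\,e^{sA_{\epsilon}}
    = \epsilon\,e^{(1-s)A}\,Y\,e^{sA_{\epsilon}}.
\]
Since $s \mapsto \phi_{\epsilon}'(s)$ is norm-continuous on $[0,1]$, the Bochner fundamental theorem of calculus yields
\[
    e^{A_{\epsilon}} - e^{A}
    = \phi_{\epsilon}(1) - \phi_{\epsilon}(0)
    = \epsilon \int_{0}^{1} e^{(1-s)A}\,Y\,e^{sA_{\epsilon}} \dee s.
\]

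Dividing by $\epsilon$ and letting $\epsilon \to 0$ should then give \eqcref{eq:bch-exp:sig:article-graph-raj-dahya}. The one point requiring care is justifying the interchange of limit and Bochner integral: the integrand is jointly norm-continuous in $(s,\epsilon) \in [0,1] \times [-1,1]$, and a direct power-series estimate using $\norm{A_{\epsilon}} \leq \norm{A} + \abs{\epsilon}\,\norm{Y}$ gives $\norm{e^{sA_{\epsilon}} - e^{sA}} \to 0$ uniformly in $s \in [0,1]$ as $\epsilon \to 0$. This uniform convergence on the compact interval $[0,1]$ permits the interchange. Analyticity of $f$ is separate and follows immediately from the absolutely convergent series $e^{X+tY} = \sum_{n \geq 0}(X+tY)^{n}/n!$, which converges in $\BoundedOps{\BanachRaum}$ uniformly for $t$ in compact subsets of $\reals$ and so represents an entire $\BoundedOps{\BanachRaum}$-valued function.

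The only genuine difficulty is spotting the commutation trick that collapses the two $A$-terms in $\phi_{\epsilon}'(s)$ into a single $Y$-term; once that is in hand the argument is almost mechanical and relies on nothing specific to Hilbert spaces or spectral theory, matching the claim that the result holds on an arbitrary Banach space.
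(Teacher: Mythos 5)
Your proof is correct, but it takes a genuinely different route from the paper's. You use the classical Duhamel interpolation: setting $A = X + tY$ and $A_{\epsilon} = A + \epsilon Y$, the path $\phi_{\epsilon}(s) = e^{(1-s)A}e^{sA_{\epsilon}}$ has derivative $\phi_{\epsilon}'(s) = \epsilon\, e^{(1-s)A}\,Y\,e^{sA_{\epsilon}}$ (the commutation of $A$ with $e^{(1-s)A}$ is exactly the right observation), so the fundamental theorem of calculus gives the \emph{exact} identity
\begin{equation*}
    \frac{e^{A_{\epsilon}} - e^{A}}{\epsilon}
    = \int_{0}^{1} e^{(1-s)A}\,Y\,e^{sA_{\epsilon}} \:\dee s
\end{equation*}
before any limit is taken; the passage $\epsilon \to 0$ then only requires the uniform norm estimate $\norm{e^{sA_{\epsilon}} - e^{sA}} \leq \abs{\epsilon}\,\norm{Y}\,e^{\norm{A}+\norm{Y}}$, which you correctly supply. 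The paper instead discretises: it writes $e^{Z_{t}} = (e^{\frac{1}{n}Z_{t}})^{n}$, differentiates the $n$\=/fold product term by term, and shows via explicit power-series estimates that the resulting Riemann sums $I_{n}(t)$ converge in norm to $\tfrac{\dee}{\dee t}e^{Z_{t}}$ as $n \to \infty$. Your argument is shorter and isolates the one genuine idea (the collapsing of the two generator terms into a single $Y$\=/term), at the cost of having to justify one limit--integral interchange; the paper's argument avoids any interchange by comparing the derivative directly with the approximating sums, at the cost of heavier bookkeeping. Both are elementary and valid on an arbitrary Banach space, and both correctly interpret the integral as a Bochner integral of a norm-continuous integrand.
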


    \begin{proof}
        Let $t \in \reals$ be arbitrary
        and set $Z_{t} \coloneqq X + t\:Y$.
        First observe that by analyticity of
        ${\reals \ni s \mapsto e^{(1-s)\:Z_{t}}\:Y\:e^{s\:Z_{t}} \in \BoundedOps{\BanachRaum}}$,
        the integral in \eqcref{eq:bch-exp:sig:article-graph-raj-dahya}
        exists and can be computed via the strong limit of
            $
                I_{n}(t)
                \coloneqq
                \sum_{k=1}^{n}
                \tfrac{1}{n}
                e^{(1 - \frac{k}{n})\:Z_{t}}
                \:
                Y
                \:
                e^{\frac{k}{n}\:Z_{t}}
            $
        as ${n \longrightarrow \infty}$.
        For $n \in \naturals$ the following trick can be used

            \begin{shorteqnarray}
                \tfrac{\dee}{\dee t}e^{Z_{t}}
                = \tfrac{\dee}{\dee t}
                        (e^{\frac{1}{n}\:Z_{t}})^{n}
                =
                    \sum_{k=1}^{n}
                        (e^{\frac{1}{n}\:Z_{t}})^{n - k}
                        \:
                        (
                            \tfrac{\dee}{\dee t}
                            e^{\frac{1}{n}\:Z_{t}}
                        )
                        \:
                        (e^{\frac{1}{n}\:Z_{t}})^{k - 1},
            \end{shorteqnarray}

        \continueparagraph
        which allows us to compute

            \begin{restoremargins}
            \begin{equation}
            \label{eq:1:\beweislabel}
            \everymath={\displaystyle}
            \begin{array}[m]{rcl}
                \normLong{
                    \tfrac{\dee}{\dee t}e^{Z_{t}}
                    -
                    I_{n}(t)
                }
                &\leq
                    &\tfrac{1}{n}
                    \sum_{k=1}^{n}
                        \norm{e^{\frac{n - k}{n}\:Z_{t}}}
                        \:\norm{
                            n
                            \cdot
                            \tfrac{\dee}{\dee t}
                            e^{\frac{1}{n}\:Z_{t}}
                            -
                            Y
                        }
                        \:\norm{e^{\frac{k - 1}{n}\:Z_{t}}}
                    \\
                    &&+
                    \tfrac{1}{n}
                    \sum_{k=1}^{n}
                        \norm{e^{(1 - \frac{k}{n})\:Z_{t}}}
                        \:\norm{Y}
                        \:\underbrace{
                            \norm{
                                e^{\frac{k}{n}\:Z_{t}}
                                -
                                e^{\frac{k-1}{n}\:Z_{t}}
                            }
                        }_{
                            =
                            \norm{e^{\frac{k-1}{n}\:Z_{t}}}
                            \norm{
                                e^{\frac{1}{n}\:Z_{t}}
                                -
                                \onematrix
                            }
                        }
                    \\
                &\leq
                    &e^{\norm{Z_{t}}}
                    \Big(
                        \norm{
                            n
                            \cdot
                            \tfrac{\dee}{\dee t}
                            e^{\frac{1}{n}\:Z_{t}}
                            -
                            Y
                        }
                        +
                        \norm{Y}
                        \norm{
                            e^{\frac{1}{n}\:Z_{t}}
                            -
                            \onematrix
                        }
                    \Big).
            \end{array}
            \end{equation}
            \end{restoremargins}

        \continueparagraph
        To estimate the penultimate term,
        the analyticity of
            ${\reals \ni t \mapsto e^{\frac{1}{n}\:Z_{t}} \in \BoundedOps{\BanachRaum}}$,
        allows us to compute

            \begin{restoremargins}
            \begin{equation}
            \label{eq:2:\beweislabel}
            \everymath={\displaystyle}
            \begin{array}[m]{rcl}
                n \cdot \tfrac{\dee}{\dee t}e^{\frac{1}{n}\:Z_{t}}
                - Y
                &=
                    &\Big(
                        \sum_{j=0}^{\infty}
                        \tfrac{1}{j!n^{j - 1}}
                        \:
                        \tfrac{\dee}{\dee t}
                        Z_{t}^{j}
                    \Big)
                    - Y
                    \\
                &=
                    &\Big(
                        \sum_{j=1}^{\infty}
                        \tfrac{1}{j!n^{j - 1}}
                        \sum_{(k,l) \in \naturalsZero^{2}}
                            \delta_{k + l, j - 1}
                            \:Z_{t}^{k}
                            \:(\tfrac{\dee}{\dee t}\:Z_{t})
                            \:Z_{t}^{l}
                    \Big)
                    - Y
                    \\
                &=
                    &\sum_{(k,l) \in \naturalsZero^{2} \without \{(0,0)\}}
                        \tfrac{1}{(k + l + 1)!n^{k + l}}
                        Z_{t}^{k}
                        \:Y
                        \:Z_{t}^{l}
            \end{array}
            \end{equation}
            \end{restoremargins}

        \continueparagraph
        and thus

            \begin{shorteqnarray}
                \norm{
                    n \cdot \tfrac{\dee}{\dee t}e^{\frac{1}{n}\:Z_{t}}
                    -
                    Y
                }
                &\leq
                    &\sum_{(k,l) \in \naturalsZero^{2} \without \{(0,0)\}}
                        \tfrac{1}{(k + l + 1)!n^{k + l}}
                            \norm{Z_{t}}^{k}
                            \norm{Y}
                            \norm{Z_{t}}^{l}
                    \\
                &\leq
                    &\sum_{(k,l) \in \naturalsZero^{2} \without \{(0,0)\}}
                        \tfrac{1}{(k + l + 1)!n^{k + l}}
                            (\norm{X} + t\norm{Y})^{k}
                            \norm{Y}
                            (\norm{X} + t\norm{Y})^{l}
                    \\
                &\overset{(\ast)}{=}
                    &n \cdot \tfrac{\dee}{\dee t} e^{\frac{1}{n}(\norm{X}+ t\norm{Y})} - \norm{Y}
                \\
                &=
                    &\norm{Y} e^{\frac{1}{n}(\norm{X}+ t\norm{Y})} - \norm{Y},
            \end{shorteqnarray}

        \continueparagraph
        where ($\ast$) can be derived analogously to \eqcref{eq:2:\beweislabel}.
        The expression in \eqcref{eq:1:\beweislabel} thus simplifies to

            \begin{shorteqnarray}
                \normLong{
                    \tfrac{\dee}{\dee t}e^{Z_{t}}
                    -
                    I_{n}(t)
                }
                \leq
                e^{\norm{Z_{t}}}
                \norm{Y} (
                    e^{\frac{1}{n}(\norm{X}+ t\norm{Y})} - 1
                    +
                    \norm{e^{\frac{1}{n}Z_{t}} - \onematrix}
                ),
            \end{shorteqnarray}

        \continueparagraph
        which converges to $0$
        as ${n \longrightarrow \infty}$.
        Since $\{I_{n}(t)\}_{n \in \naturals}$ converges strongly
        to the integral in \eqcref{eq:bch-exp:sig:article-graph-raj-dahya},
        this completes the proof.
    \end{proof}

\begin{prop}[Perturbations of the operator exponential]
\makelabel{prop:bch-exp-perturbation:sig:article-graph-raj-dahya}
    Let $\BanachRaum$ be an arbitrary Banach space.
    Then

        \begin{restoremargins}
        \begin{equation}
        \label{eq:bch-exp-perturbation:sig:article-graph-raj-dahya}
            \norm{e^{X + Y} - e^{X}}
            \leq \norm{Y}
        \end{equation}
        \end{restoremargins}

    \continueparagraph
    for all (not necessarily commuting)
    bounded dissipative operators $X, Y \in \BoundedOps{\BanachRaum}$.
\end{prop}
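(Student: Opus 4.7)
The plan is to combine the integral representation from \Cref{prop:bch-exp:sig:article-graph-raj-dahya} with the fundamental theorem of calculus and the contractivity of exponentials of dissipative operators. Setting $f(t) \coloneqq e^{X + t\:Y}$, which is analytic as a map $\reals \to \BoundedOps{\BanachRaum}$, I would first write
\begin{equation*}
    e^{X + Y} - e^{X}
    = f(1) - f(0)
    = \int_{0}^{1} f^{\prime}(t) \dee t
    = \int_{0}^{1}\int_{0}^{1}
        e^{(1-s)(X + t\:Y)} \: Y \: e^{s(X + t\:Y)}
    \dee s \dee t,
\end{equation*}
where the inner equality uses \eqcref{eq:bch-exp:sig:article-graph-raj-dahya} and the outer integral is again understood strongly as a Bochner integral.

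Next I would observe that, since bounded dissipative operators are closed under positive linear combinations (as noted in \S{}\ref{sec:examples:dissipative:sig:article-graph-raj-dahya}), the operator $X + t\:Y$ is dissipative for every $t \in [0,1]$. Consequently, for every $s \in [0,1]$, both $s(X + t\:Y)$ and $(1-s)(X + t\:Y)$ are dissipative, and so $e^{s(X + t\:Y)}$ and $e^{(1-s)(X + t\:Y)}$ are contractions by the Lumer--Phillips characterisation recalled in the same subsection.

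Finally, applying the standard norm estimate for Bochner integrals and the submultiplicativity of the operator norm yields
\begin{equation*}
    \norm{e^{X + Y} - e^{X}}
    \leq \int_{0}^{1}\int_{0}^{1}
        \norm{e^{(1-s)(X + t\:Y)}} \: \norm{Y} \: \norm{e^{s(X + t\:Y)}}
    \dee s \dee t
    \leq \norm{Y},
\end{equation*}
which is the claimed inequality \eqcref{eq:bch-exp-perturbation:sig:article-graph-raj-dahya}. There is essentially no obstacle here beyond having the derivative formula of \Cref{prop:bch-exp:sig:article-graph-raj-dahya} at hand; the only subtlety worth flagging is that one must invoke convexity/positive-homogeneity of the class of bounded dissipative operators (which allows the factors $s$ and $1-s$, and the combination $X + tY$ with $t \in [0,1]$), rather than appealing to any semigroup product formula for non-commuting $X$ and $Y$.
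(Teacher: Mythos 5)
Your proposal is correct and follows essentially the same route as the paper's own proof: the fundamental theorem of calculus for Banach-space-valued maps combined with the integral formula of \Cref{prop:bch-exp:sig:article-graph-raj-dahya}, with the contractivity of $e^{(1-s)(X+tY)}$ and $e^{s(X+tY)}$ deduced from the closure of bounded dissipative operators under positive linear combinations. No gaps.
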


    \begin{proof}
        This follows from
        the fundamental theorem for Banach space derivatives,
        \Cref{prop:bch-exp:sig:article-graph-raj-dahya},
        and
        the dissipativity of positive linear combinations of dissipative operators,
        since

            \begin{shorteqnarray}
                \norm{e^{X + Y} - e^{X}}
                &= &\normLong{
                        \int_{t \in [0,\:1]}
                            \tfrac{\dee}{\dee t}
                            e^{X + t\:Y}
                        \:\dee t
                    }
                \\
                &\eqcrefoverset{eq:bch-exp:sig:article-graph-raj-dahya}{=}
                    &\normLong{
                        \int_{t \in [0,\:1]}
                        \int_{s \in [0,\:1]}
                            e^{(1 - s)\:(X + t\:Y)}
                            \:Y
                            \:e^{s\:(X + t\:Y)}
                        \:\dee s
                        \:\dee t
                    }
                \\
                &\leq &\sup_{s, t \in [0,\:1]}
                    \underbrace{
                        \norm{e^{(1 - s)\:(X + t\:Y)}}
                    }_{\leq 1}
                    \norm{Y}
                    \underbrace{
                        \norm{e^{s\:(X + t\:Y)}}
                    }_{\leq 1}
                \leq \norm{Y}.
            \end{shorteqnarray}

    \end{proof}



\subsection[Linblad-correspondences]{Linblad-correspondences}
\label{sec:examples:linblad:sig:article-graph-raj-dahya}

\firstparagraph
In the narrower context of a unital \TextCStarAlg $\CStarAlg$,
as a simple example of dissipative generators
one considers operators of the form
$\iunit [h,\:\cdot]$,
where
$h \in \CStarAlg$
is a self-adjoint element,
noting that
$
    e^{\iunit t[h,\:\cdot]}
    = \adjoint_{e^{\iunit t h}}
$
for $t \in \reals$,%
\footnote{%
    Let $a \in \CStarAlg$ be arbitrary.
    One can easily observe by induction that
    $
        [a,\:\cdot]^{n}(x)
        = \sum_{k = 0}^{n}
            \binom{n}{k}
            a^{k}\:x\:(-a)^{n-k}
    $
    for all $x \in \CStarAlg$, $n\in\naturalsZero$.
    From this one obtains the Lie algebra correspondence
    $e^{t[a,\:\cdot]} = \adjoint_{e^{t a}}$
    for all $t \in \reals$.
}
which is a norm-continuous semigroup
consisting of contractions (in fact surjective isometries).
More generally, the following holds:

\begin{thm}[Linblad, 1976]
\makelabel{thm:linblad:ucp:sig:article-graph-raj-dahya}
    Let $\CStarAlg$ be a unital \TextCStarAlg.
    Let $L \in \BoundedOps{\CStarAlg}$
    be the bounded generator
    of a norm-continuous semigroup
    $\{\Phi_{t}\}_{t \geq \realsNonNeg}$.
    Consider the following statements:

    \begin{enumerate}[
        label={(\bfseries{\alph*})},
        ref={\alph*},
        left=\rtab,
    ]
        \item\punktlabel{1}
            Each $\Phi_{t}$ is a unital completely positive map.

        \item\label{repr:linblad:ucp:sig:article-graph-raj-dahya}
            $L = \iunit[h,\:\cdot] + \Psi - \frac{1}{2}\{\Psi(1),\cdot\}$
            for some self-adjoint element $h \in \CStarAlg$
            and some ultra-weakly continuous
            completely positive map
            $\Psi \in \BoundedOps{\CStarAlg}$.
    \end{enumerate}

    \continueparagraph
    If $\CStarAlg$ is a hyperfinite von~Neumann factor over a separable Hilbert space,%
    \footnoteref{ft:1:\beweislabel}
    then \punktcref{1} $\Leftrightarrow$ \eqcref{repr:linblad:ucp:sig:article-graph-raj-dahya}.
    And for unital \TextCStarAlgs in general,
        \eqcref{repr:linblad:ucp:sig:article-graph-raj-dahya} $\Rightarrow$ \punktcref{1}.
\end{thm}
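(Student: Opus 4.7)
The plan is to split the argument by implication, handling the forward direction $(\ref{repr:linblad:ucp:sig:article-graph-raj-dahya}) \Rightarrow \punktcref{1}$ for arbitrary unital \TextCStarAlgs{} and reserving the hyperfiniteness/separability hypothesis for the converse.

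For the forward direction I would proceed as follows. First, a direct substitution gives $L(\onematrix) = 0$, so $\Phi_{t}(\onematrix) = \onematrix$ for every $t \geq 0$. Decompose $L = L_{0} + \Psi$ with $L_{0}(a) \coloneqq \iunit[h,a] - \tfrac{1}{2}\{\Psi(\onematrix), a\}$, and set $V_{t} \coloneqq e^{t(\iunit h - \tfrac{1}{2}\Psi(\onematrix))}$. Differentiation shows $e^{t L_{0}}(a) = V_{t}^{\ast}\:a\:V_{t}$, which is completely positive for each $t \geq 0$ (adjoint action by a single element). I would then invoke the Dyson/Duhamel perturbation series
\[
    \Phi_{t}
    = \sum_{n \geq 0}\:\int_{0 \leq s_{1} \leq \cdots \leq s_{n} \leq t}
        e^{(t - s_{n}) L_{0}}\:\Psi\:e^{(s_{n} - s_{n-1}) L_{0}}\:\Psi \cdots \Psi\:e^{s_{1} L_{0}}
        \:\dee s_{1} \cdots \dee s_{n},
\]
whose norm convergence follows from the bound $\norm{\Psi}$ and the $1/n!$ simplex volumes. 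Each summand is a composition and Bochner integral of completely positive maps, hence is completely positive; summing preserves this, proving \punktcref{1}.

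For the converse, I would follow Lindblad's strategy. Differentiating the matricial Schwarz inequality $(\Phi_{t} \otimes \id_{n})(A^{\ast}A) \geq (\Phi_{t} \otimes \id_{n})(A)^{\ast}(\Phi_{t} \otimes \id_{n})(A)$ at $t = 0$ shows that $D_{L \otimes \id_{n}}(A, A) \geq 0$ for all $n \in \naturals$ and $A \in \CStarAlg \otimes M_{n}(\complex)$; \idest $L$ is \emph{completely dissipative}. A Stinespring-type GNS reconstruction applied to this matricial positive-form datum produces an ultra-weakly continuous completely positive map $\Psi$ whose dissipator matches that of $L$. The forward direction then implies that the residue $L - (\iunit[h_{0},\cdot] + \Psi - \tfrac{1}{2}\{\Psi(\onematrix),\cdot\})$ is a bounded ultra-weakly continuous $\ast$-derivation of $\CStarAlg$ annihilating $\onematrix$, for some provisional self-adjoint $h_{0} \in \CStarAlg$; by Kadison--Sakai, every such derivation on a hyperfinite factor over a separable Hilbert space is inner, of the form $\iunit[h',\cdot]$. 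Setting $h \coloneqq h_{0} + h'$ then yields the desired representation \eqcref{repr:linblad:ucp:sig:article-graph-raj-dahya}.

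The principal obstacle is the converse direction, and specifically the two places where hyperfiniteness and separability enter in an essential way: firstly, the Stinespring-type reconstruction must yield an \emph{ultra-weakly continuous} $\Psi$, which relies on the approximation of the factor by finite-dimensional subalgebras; and secondly, the innerness of the residual $\ast$-derivation fails for non-hyperfinite factors, which admit outer derivations. Without both conditions one obtains at best a decomposition modulo an outer derivation, which is why the equivalence is restricted to the hyperfinite separable setting while the forward implication holds for general unital \TextCStarAlgs.
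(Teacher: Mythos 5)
The paper does not actually prove this theorem; it cites Lindblad (Corollary~1, Proposition~5, and Theorem~3 of the 1976 paper) and only uses the statement. So your proposal is measured against the literature argument rather than anything in the text.

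Your forward direction, (b)~$\Rightarrow$~(a), is correct and is the standard argument: $L(\onematrix)=0$ gives unitality, the semigroup generated by $L_{0} = \iunit[h,\cdot]-\tfrac{1}{2}\{\Psi(\onematrix),\cdot\}$ is of adjoint type and hence completely positive, and the Dyson--Duhamel series exhibits $\Phi_{t}$ as a norm-convergent sum of Bochner integrals of compositions of completely positive maps. One small slip: with $V_{t}=e^{t(\iunit h-\frac{1}{2}\Psi(\onematrix))}$ the map $a\mapsto V_{t}^{\ast}\,a\,V_{t}$ has generator $-\iunit[h,\cdot]-\tfrac{1}{2}\{\Psi(\onematrix),\cdot\}$; you want $e^{tL_{0}} = \adjoint_{V_{t}}$, \idest $a\mapsto V_{t}\,a\,V_{t}^{\ast}$. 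This does not affect complete positivity, so the conclusion stands.

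The converse is where the real content lies, and your sketch contains a genuine error in locating the role of the hypotheses. You claim that non-hyperfinite factors admit outer derivations and that this is one of the two places hyperfiniteness is needed. That is false: by the Kadison--Sakai theorem \emph{every} derivation of \emph{every} von~Neumann algebra is inner, so the innerness of the residual derivation costs nothing and uses neither hyperfiniteness nor separability. The hypothesis is needed entirely in the other step you mention only in passing, namely the extraction of the completely positive part: given that $L$ is completely dissipative, one must produce a (normal) completely positive $\Psi$ with $D_{\Psi}=D_{L}$, and this splitting is a nontrivial cohomological problem. Lindblad solves it by approximating the hyperfinite factor by finite-dimensional subalgebras, where the structure of completely positive maps on matrix algebras settles the question, and then passing to a limit; your phrase \usesinglequotes{a Stinespring-type GNS reconstruction applied to this matricial positive-form datum produces \ldots\ $\Psi$} asserts exactly the hard step without argument. (Indeed Christensen and Evans later removed the hyperfiniteness assumption altogether, which underlines that the obstruction is in constructing $\Psi$, not in the derivation step.) So the architecture of your converse is the right one, but as written it both misattributes the use of the standing hypotheses and leaves the genuinely difficult step unproved.
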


\footnotetext[ft:1:\beweislabel]{%
    see \cite[Definition~XIV.1.4]{Takesaki2003BookIII},
    \cite[\S{}II.3]{Takesaki2002BookI}.
}

For a proof, see
\cite[Corollary~1, Proposition~5, and Theorem~3]{Lindblad1976Article}.
Note that by considering strong derivatives, $L(1)=\zeromatrix$
is a necessary (and sufficient)
requirement for each $\Phi_{t}$ to be unital.
As an important step in the proof
of \Cref{thm:linblad:ucp:sig:article-graph-raj-dahya},
Linblad established the following correspondence
(see
    \cite[\S{}3 and Proposition 4]{Lindblad1976Article}
for a proof).

\begin{lemm}[Schwarz-correspondence]
\makelabel{lemm:linblad:schwarz:sig:article-graph-raj-dahya}
    Let $\CStarAlg$ be a unital \TextCStarAlg.
    Let $L \in \BoundedOps{\CStarAlg}$
    be the bounded generator
    of a norm-continuous semigroup
    $\{\Phi_{t}\}_{t \geq \realsNonNeg}$.
    Then \tfae:

    \begin{enumerate}[
        label={\bfseries (\alph*)},
        ref={\alph*},
        left=\rtab,
    ]
        \item\punktlabel{1}
            Each $\Phi_{t}$ is a unital map
            satisfying the Schwarz-inequality.

        \item\label{repr:linblad:schwarz:sig:article-graph-raj-dahya}
            $L$ is self-adjoint
            with
            $L(1) = \zeromatrix$
            and
            $D_{L}(a, a) \geq \zeromatrix$
            for all $a \in \CStarAlg$.%
            \footnoteref{ft:1:\beweislabel}
    \end{enumerate}

    \nvraum{1}

\end{lemm}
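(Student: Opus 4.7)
The plan is to prove each direction separately. Direction (a) $\Rightarrow$ (b) is an elementary exercise in differentiating the hypotheses at $t = 0$, whereas direction (b) $\Rightarrow$ (a) relies on the classical semigroup-interpolation trick combined with a separate positivity argument.

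For \textbf{(a) $\Rightarrow$ (b)}, unitality of $\Phi_t$ gives $L(1) = \tfrac{d}{dt}\big|_{t=0}\Phi_t(1) = 0$. The Schwarz-inequality forces each $\Phi_t$ to be positive and hence to respect the involution, $\Phi_t(a^*) = \Phi_t(a)^*$; differentiating yields $L$ self-adjoint. Finally, the function $g(t) \coloneqq \Phi_t(a^*a) - \Phi_t(a)^*\Phi_t(a)$ is non-negative and vanishes at $t = 0$; a product-rule computation identifies its right-derivative at $0$ with $D_L(a,a)$, which must therefore be non-negative.

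For \textbf{(b) $\Rightarrow$ (a)}, unitality is immediate from the power-series representation of $\Phi_t$ and $L(1) = 0$. To establish the Schwarz-inequality, I would fix $t > 0$ and $a \in \CStarAlg$ and interpolate via
\[
  f(s) \coloneqq \Phi_s\bigl(\Phi_{t-s}(a)^*\Phi_{t-s}(a)\bigr), \qquad s \in [0, t],
\]
noting that $f(0) = \Phi_t(a)^*\Phi_t(a)$ and $f(t) = \Phi_t(a^*a)$. Using $\tfrac{d}{ds}\Phi_s = \Phi_s L = L \Phi_s$ together with the self-adjointness of $L$ (which ensures $L$ commutes with the involution), a short chain-rule computation shows
\[
  f'(s) = \Phi_s\bigl(D_L(\Phi_{t-s}(a), \Phi_{t-s}(a))\bigr).
\]
Since $D_L(b, b) \geq 0$ by hypothesis, monotonicity of $f$ on $[0, t]$---hence the desired inequality $f(0) \leq f(t)$---reduces to showing that each $\Phi_s$ is a positive map.

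The \textbf{main obstacle} is precisely this last step, since positivity of $\Phi_s$ does not sit among the hypotheses of (b) and must be derived. I plan to invoke the classical characterisation (cf.\ Bratteli--Robinson, Evans) that a bounded $*$-linear generator $L$ on a unital \TextCStarAlg produces a positive semigroup if and only if $\omega(L(a)) \geq 0$ for every state $\omega$ and every $a \geq 0$ satisfying $\omega(a) = 0$. To verify this condition, set $b \coloneqq a^{1/2}$, so that $a = b^*b$ with $b^* = b$; the Cauchy--Schwarz inequality for the state $\omega$ gives $\omega(c^*b) = 0 = \omega(b^*c)$ for every $c \in \CStarAlg$, since $\omega(b^*b) = 0$. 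In particular $\omega(L(b)^*b) = 0 = \omega(b^*L(b))$, whence
\[
  \omega(L(a)) = \omega(L(b^*b)) = \omega\bigl(D_L(b, b)\bigr) + \omega(L(b)^*b) + \omega(b^*L(b)) = \omega(D_L(b, b)) \geq 0.
\]
Combining this positivity with the interpolation argument then yields the Schwarz-inequality and completes the proof.
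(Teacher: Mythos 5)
Your proof is correct. The paper does not actually prove this lemma itself --- it defers entirely to Lindblad (\S{}3 and Proposition~4 of that reference) --- so there is no in-paper argument to compare against; your reconstruction is essentially the standard one found there: differentiation at $t=0$ for the forward direction, and the interpolation $s \mapsto \Phi_{s}\bigl(\Phi_{t-s}(a)^{\ast}\Phi_{t-s}(a)\bigr)$ for the converse. You correctly isolate the genuine sticking point, namely positivity of the $\Phi_{s}$, and the tool you invoke --- that a bounded self-adjoint $L$ generates a positive semigroup if and only if $\omega(L(a)) \geq 0$ for every state $\omega$ and every $a \geq 0$ with $\omega(a)=0$ --- is indeed the classical Evans--Hanche-Olsen characterisation; your verification of its hypothesis from $D_{L} \geq 0$ via the Cauchy--Schwarz inequality for states is exactly right, since $L(b^{\ast}b) = D_{L}(b,b) + L(b)^{\ast}b + b^{\ast}L(b)$ and the two cross terms are killed by $\omega$. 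Two minor remarks: self-adjointness of $L$ is not actually needed to differentiate $\Phi_{t-s}(a)^{\ast}$ (the involution is isometric and real-linear, so it passes through the derivative regardless), although it is a hypothesis of the positivity characterisation you cite; and in the forward direction one should add the one-line observation that $D_{L}(a,a) = \lim_{t\to 0^{+}} g(t)/t$ is positive because the positive cone is norm-closed.
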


\footnotetext[ft:1:\beweislabel]{%
    recall that the dissipation map
    associated to $L$
    is given by
    $D_{L}(a, b) = L(b^{\ast}a) - (L(b)^{\ast}a + b^{\ast}L(a))$
    for $a, b \in \CStarAlg$.
}

\begin{rem}[Physical interpretation]
\makelabel{rem:physical-interpretation-linblad:sig:article-graph-raj-dahya}
    In
        \Cref{thm:linblad:ucp:sig:article-graph-raj-dahya}~%
        \eqcref{repr:linblad:ucp:sig:article-graph-raj-dahya},
    the $\iunit [h,\:\cdot]$ part of $L$ may be referred to as the \highlightTerm{Hamiltonian part}
    and the remainder $\Psi - \frac{1}{2}\{\Psi(1),\:\cdot\}$
    as the \highlightTerm{purely dissipative part}.
    Whilst this decomposition is not unique
    (\cf
        \cite[\S{}5]{Lindblad1976Article}%
    ),
    it nonetheless admits an interesting physical interpretation.
    If $L$ consists entirely of a Hamiltonian part
    $\iunit [h,\:\cdot]$,
    then by the discussion at the beginning of this subsection,
    the semigroup takes the form
        $
            \Phi
            = \{e^{\iunit t[h,\:\cdot]}\}_{t \in \realsNonNeg}
            = \{\adjoint_{e^{\iunit t h}}\}_{t \in \realsNonNeg}
        $,
    and the converse clearly holds.
    However, if the system is not governed by unitary evolution,
    then it is understood to be subject to interference from the \usesinglequotes{environment},%
    \footnote{%
        \idest, the system, which can always be being initially
        separably coupled to a surrounding environment,
        becomes entangled with the state of the environment
        (\cf \Cref{rem:physical-interpretation-cptp:sig:article-graph-raj-dahya}).
    }
    and the dissipative part witnesses this.
    In this case, the operators in $\Phi$
    are referred to in the literature as
    \highlightTerm{noisy quantum channels}
    (under the \usesinglequotes{Heisenberg picture}).
\end{rem}

\begin{rem}
\makelabel{rem:class-of-linblad-schwarz:sig:article-graph-raj-dahya}
    Observe that since
    the maps
        ${L \mapsto L(1)}$
    and
        $
            L
            \mapsto
            D_{L}(a, a)
            = L(a^{\ast}a) - a^{\ast}L(a) - L(a)^{\ast}a
            = L(a^{\ast}a) - a^{\ast}L(a) - L(a^{\ast})a
        $
    are linear in $L$,
    the class of self-adjoint generators
    satisfying \eqcref{repr:linblad:schwarz:sig:article-graph-raj-dahya}
    is clearly closed under positive linear combinations.
\end{rem}

Since by the Russo--Dye theorem,
unital positive (\exempli completely positive or Schwarz)
maps are necessarily contractions
(see
    \cite[Corollary~1]{RussoDye1966Article},
    \cite[Corollary~2.9]{Paulsen2002book},
    \cite[Theorem~1.3.3]{Stoermer2013BookPosOps}%
),
by the Linblad theorem and the Schwarz-correspondence
one immediately obtains:

\begin{cor}[Dissipativity of Linbladian generators]
\makelabel{cor:linblad:dissipative:sig:article-graph-raj-dahya}
    Let $\CStarAlg$ be a unital \TextCStarAlg.
    Then
        $\iunit[h,\:\cdot] + \Psi - \frac{1}{2}\{\Psi(1),\cdot\}$
    is a dissipative operator
    for all self-adjoint elements $h \in \CStarAlg$
    and all completely positive (not necessarily unital)
    maps $\Psi \in \BoundedOps{\CStarAlg}$.
    And more generally,
    each $L \in \BoundedOps{\CStarAlg}$ is dissipative
    for which
        $L(1) = \zeromatrix$
        and
        $D_{L}(a, a) \geq \zeromatrix$ for all $a \in \CStarAlg$.
\end{cor}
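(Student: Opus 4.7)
The plan is to reduce both assertions to the observation, recorded in \S{}\ref{sec:examples:dissipative:sig:article-graph-raj-dahya}, that a bounded operator $L$ on a Banach space is dissipative if and only if the associated norm-continuous semigroup $\{e^{tL}\}_{t \in \realsNonNeg}$ consists of contractions. Thus in each case it suffices to exhibit $L$ as the generator of a semigroup of contractive maps.

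For the more general second assertion, I would invoke the Schwarz-correspondence (\Cref{lemm:linblad:schwarz:sig:article-graph-raj-dahya}): under the hypotheses that $L$ is self-adjoint, satisfies $L(\onematrix) = \zeromatrix$, and has $D_{L}(a,a) \geq \zeromatrix$ for all $a \in \CStarAlg$, each operator $e^{tL}$ ($t \in \realsNonNeg$) is a unital Schwarz map on $\CStarAlg$. The Russo--Dye theorem, cited just above the corollary, then implies that every unital positive map on a unital \TextCStarAlg is a contraction; since Schwarz maps are \emph{a fortiori} positive, each $e^{tL}$ is a contraction, and hence $L$ is dissipative.

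For the first assertion, I would apply the implication \textbf{(b)}$\Rightarrow$\textbf{(a)} of Linblad's theorem (\Cref{thm:linblad:ucp:sig:article-graph-raj-dahya}), which is valid for arbitrary unital \TextCStarAlgs. That direction guarantees that $L = \iunit[h,\:\cdot] + \Psi - \frac{1}{2}\{\Psi(1),\:\cdot\}$ generates a norm-continuous semigroup of unital completely positive maps; Russo--Dye again yields contractivity, and thus dissipativity of $L$. Alternatively, one could verify by direct computation (using self-adjointness of $h$ and $\Psi(1)$, together with positivity of $\Psi$) that this specific $L$ fulfils the three conditions of the Schwarz-correspondence, and then obtain the first assertion as a special case of the second.

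Since the proof is a direct assembly of results already stated in the excerpt, I do not anticipate any serious technical obstacle. The only bookkeeping care needed is to distinguish dissipativity of the bounded generator from the Schwarz inequality for the individual semigroup elements, and to remember that in the bounded case the Lumer--Phillips characterisation collapses to merely checking contractivity of $\{e^{tL}\}_{t \geq 0}$.
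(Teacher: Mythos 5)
Your proposal is correct and coincides with the paper's own (very brief) derivation: the corollary is obtained there precisely by combining the Lumer--Phillips observation that a bounded operator is dissipative iff $\{e^{tL}\}_{t\geq 0}$ consists of contractions, the Schwarz-correspondence (\Cref{lemm:linblad:schwarz:sig:article-graph-raj-dahya}) resp.\ the implication (b)$\Rightarrow$(a) of \Cref{thm:linblad:ucp:sig:article-graph-raj-dahya}, and the Russo--Dye theorem. No gaps.
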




\subsection[Dynamical systems on linear orderings]{Dynamical systems on linear orderings}
\label{sec:examples:lo:sig:article-graph-raj-dahya}

\firstparagraph
We now observe some basic properties of dynamical systems
defined on linearly ordered graphs.
To this end, let $\mathcal{G} = (\Omega, E)$
be a graph for which $E$ is a reflexive linear ordering.
For convenience we let $\mathrel{\prec}$
denote the irreflexive part of $E$,
so that $E = \{(u, v) \in \Omega \mid u \preceq v\}$.
Endow $\Omega$ with the order topology,
\idest the topology generated by sets of the form
    $\{u \in \Omega \mid u \prec v\}$
    and
    $\{u \in \Omega \mid v \prec u\}$
for $v \in \Omega$,
and $E$ with the subspace topology of $\Omega \times \Omega$.
We first observe that geometric growth entails norm-continuity.

\begin{prop}[Norm-continuity under geometric growth]
\makelabel{prop:geom-implies-norm-cts:dyn:sig:article-graph-raj-dahya}
    Let $\{\phi(u,v)\}_{(u,v) \in E} \subseteq \BoundedOps{\BanachRaum}$
    be a uniformly bounded family of bounded operators on a Banach space $\BanachRaum$.
    If $\phi$ satisfies
    the divisibility axiom \eqcref{ax:dyn:div:sig:article-graph-raj-dahya}
    and has geometric growth,
    then $\phi$ necessarily satisfies
    the identity axiom \eqcref{ax:dyn:id:sig:article-graph-raj-dahya}
    and is norm-continuous.
\end{prop}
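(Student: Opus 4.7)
The plan is to first deduce the identity axiom directly from geometric growth, and then establish joint norm-continuity by combining divisibility with uniform boundedness. For the identity, reflexivity of $E$ gives $(u, u) \in E$ for every $u \in \Omega$, so that $\norm{\phi(u, u) - \onematrix} \leq \ell(u, u) = 0$ forces $\phi(u, u) = \onematrix$; note that divisibility plays no role in this step.

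For norm-continuity at a point $(u_{0}, v_{0}) \in E$, I would fix a net $(u_{\alpha}, v_{\alpha}) \to (u_{0}, v_{0})$ in $E$ and distinguish two cases. If $u_{0} = v_{0}$, then $\phi(u_{0}, v_{0}) = \onematrix$ by the identity axiom just established, and geometric growth combined with continuity of $\ell$ on $E$ gives $\norm{\phi(u_{\alpha}, v_{\alpha}) - \onematrix} \leq \ell(u_{\alpha}, v_{\alpha}) \to \ell(u_{0}, v_{0}) = 0$. If instead $u_{0} \prec v_{0}$, then order-topology convergence ensures that, along a tail of the net, one has $\max(u_{\alpha}, u_{0}) \preceq \min(v_{\alpha}, v_{0})$, so all the intermediate edges needed below lie in $E$.

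On this tail, let $M \coloneqq \sup_{(u,v) \in E} \norm{\phi(u,v)}$ and apply divisibility to decompose $\phi(u_{\alpha}, v_{\alpha}) - \phi(u_{0}, v_{0})$ as a sum of at most two terms of the form $A (\phi(p, q) - \onematrix) B$ with $\norm{A}, \norm{B} \leq M$, where $(p, q) \in E$ is the appropriately oriented pair drawn from $\{(u_{\alpha}, u_{0}), (u_{0}, u_{\alpha})\}$ or $\{(v_{\alpha}, v_{0}), (v_{0}, v_{\alpha})\}$. For instance, when $u_{\alpha} \preceq u_{0}$ and $v_{\alpha} \preceq v_{0}$, divisibility gives $\phi(u_{\alpha}, v_{\alpha}) = \phi(u_{\alpha}, u_{0})\phi(u_{0}, v_{\alpha})$ and $\phi(u_{0}, v_{0}) = \phi(u_{0}, v_{\alpha})\phi(v_{\alpha}, v_{0})$, from which the decomposition follows by adding and subtracting $\phi(u_{0}, v_{\alpha})$. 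Geometric growth then bounds each bracket by $\ell(p, q)$, and continuity of $\ell$ at the diagonal forces $\ell(p, q) \to 0$, delivering the required convergence.

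The main obstacle is the book-keeping for this decomposition: for each of the four sub-cases (according to whether $u_{\alpha} \preceq u_{0}$ or $u_{0} \preceq u_{\alpha}$, and likewise for $v_{\alpha}$ vs. $v_{0}$), one must choose the right insertion points for divisibility so that every intermediate edge lies in $E$. The strict inequality $u_{0} \prec v_{0}$ is precisely what furnishes this room along a tail, which is why the degenerate case $u_{0} = v_{0}$ is isolated and handled by geometric growth alone.
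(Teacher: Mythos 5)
Your proposal is correct and follows essentially the same route as the paper: derive the identity axiom from $\ell(u,u)=0$, then use divisibility to insert intermediate nodes and bound the resulting brackets $\phi(p,q)-\onematrix$ by geometric growth together with the uniform bound. The paper merely streamlines the book-keeping by comparing both $\phi(u,v)$ and $\phi(u',v')$ to the common enclosing edge $\phi(\min\{u,u'\},\max\{v,v'\})$, which renders your four-way case split and the separate treatment of the diagonal case unnecessary.
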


    \begin{proof}
        Due to geometric growth one has
            $
                \norm{\phi(u, u) - \onematrix}
                \leq
                \ell(u, u)
                = 0
            $,
        whence $\phi(u, u) = \onematrix$
        for all $u \in \Omega$.
        To establish norm-continuity, let
            $
                C \coloneqq \sup_{(u, v) \in E} \norm{\phi(u, v)} < \infty
            $
        and let $\ell : E \to [0,\:\infty)$
        be a superadditive length function
        witnessing the geometric growth of $\phi$.
        For
            $(u, v),(u', v') \in E$
            with
            $u' \leq u$
            and
            $v \leq v'$
        one computes

            \begin{shorteqnarray}
                \norm{\phi(u',v') - \phi(u, v)}
                &= &\norm{\phi(u', u)\phi(u, v)\phi(v, v') - \phi(u, v)}\\
                &\leq
                    &\norm{\phi(u', u)\phi(u, v)\phi(v, v') - \phi(u', u)\phi(u, v)}\\
                    &&+ \norm{\phi(u', u)\phi(u, v) - \phi(u, v)}
                \\
                &\leq
                    &\norm{\phi(u', u)\phi(u, v)}\norm{\phi(v, v') - \onematrix}\\
                    &&+ \norm{\phi(u', u) - \onematrix}\norm{\phi(u, v)}
                \\
                &\leq &C \cdot (\ell(u', u) + \ell(v, v')).
            \end{shorteqnarray}

        From this one obtains for $(u, v), (u', v') \in E$
        the estimation
            $
                \norm{\phi(u, v) - \phi(u',v')}
                \leq
                    \norm{\phi(u, v) - \phi(\min\{u',u\},\max\{v',v\})}
                    +
                    \norm{\phi(u', v') - \phi(\min\{u',u\},\max\{v',v\})}
                \leq
                    C \cdot (
                        \ell(\min\{u',u\}, u)
                        + \ell(v, \max\{v',v\})
                        + \ell(\min\{u',u\}, u')
                        + \ell(v', \max\{v',v\})
                    )
            $.
        Together with the continuity of $\min$, $\max$, and $\ell$ \wrt the order topology,
        this implies the norm-continuity of $\phi$.
    \end{proof}

\begin{prop}[Norm-continuity under generator continuity]
\makelabel{prop:geom-implies-norm-cts:gen:sig:article-graph-raj-dahya}
    Let
        $\{A(u, v)\}_{(u,v) \in E} \subseteq \BoundedOps{\BanachRaum}$
    be a norm-continuous family of dissipative operators
    on a Banach space $\BanachRaum$,
    and
        $\phi = \{e^{A(u,v)}\}_{(u,v) \in E}$,
    which is a family of contractions.
    If $\{A(u, v)\}_{(u,v) \in E}$ satisfies
    the additivity axiom \eqcref{ax:gen:add:sig:article-graph-raj-dahya},
    then $\phi$ necessarily satisfies
    the identity axiom \eqcref{ax:dyn:id:sig:article-graph-raj-dahya}
    and is norm-continuous.
\end{prop}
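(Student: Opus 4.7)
The plan is to mirror the structure of the proof of \Cref{prop:geom-implies-norm-cts:dyn:sig:article-graph-raj-dahya}, but using the perturbation estimate \Cref{prop:bch-exp-perturbation:sig:article-graph-raj-dahya} in place of the telescoping identity for the divisible case. The identity axiom is essentially free: from the additivity axiom \eqcref{ax:gen:add:sig:article-graph-raj-dahya} applied to the loop $(u,u,u)$, one gets $A(u,u) = A(u,u) + A(u,u)$, so $A(u,u) = \zeromatrix$ and hence $\phi(u,u) = e^{\zeromatrix} = \onematrix$ whenever $(u,u) \in E$.

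For norm-continuity, I would fix $(u, v), (u', v') \in E$ and set $u'' \coloneqq \min\{u, u'\}$, $v'' \coloneqq \max\{v, v'\}$. Since $E$ is a reflexive linear order one has $(u'', v'') \in E$, as well as $(u'', u), (u'', u'), (v, v''), (v', v'') \in E$. Additivity gives
\begin{displaymath}
    A(u'', v'') = A(u'', u) + A(u, v) + A(v, v'')
    \quad \text{and} \quad
    A(u'', v'') = A(u'', u') + A(u', v') + A(v', v'').
\end{displaymath}
Hence $A(u'', v'') = A(u, v) + Y_{1}$ and $A(u'', v'') = A(u', v') + Y_{2}$ with
$Y_{1} \coloneqq A(u'', u) + A(v, v'')$
and
$Y_{2} \coloneqq A(u'', u') + A(v', v'')$.
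The crucial point is that each $Y_{i}$ is a sum of dissipative operators, hence itself dissipative (see \S{}\ref{sec:examples:dissipative:sig:article-graph-raj-dahya}), so both applications of \Cref{prop:bch-exp-perturbation:sig:article-graph-raj-dahya} are legitimate.

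Applying the perturbation estimate and the triangle inequality then yields
\begin{displaymath}
    \norm{\phi(u, v) - \phi(u', v')}
    \leq \norm{\phi(u, v) - \phi(u'', v'')} + \norm{\phi(u'', v'') - \phi(u', v')}
    \leq \norm{Y_{1}} + \norm{Y_{2}},
\end{displaymath}
which is dominated by $\norm{A(u'', u)} + \norm{A(v, v'')} + \norm{A(u'', u')} + \norm{A(v', v'')}$. As $(u', v') \to (u, v)$ in the order topology, $u'' \to u$ and $v'' \to v$ by continuity of $\min, \max$; norm-continuity of the generator family then forces each term to converge to $\norm{A(u, u)} = \norm{A(v, v)} = 0$, using the identity property just established. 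This proves norm-continuity of $\phi$ at $(u, v)$.

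The only subtle step is the dissipativity of $Y_{1}, Y_{2}$, which is why I decompose through the enclosing edge $(u'', v'')$ rather than trying to compare $A(u, v)$ and $A(u', v')$ directly; the naive difference $A(u', v') - A(u, v)$ need not be dissipative in general, so \Cref{prop:bch-exp-perturbation:sig:article-graph-raj-dahya} would not apply without this detour. Everything else is a routine application of additivity, the continuity of the lattice operations in a linearly ordered space, and the norm-continuity hypothesis on $A$.
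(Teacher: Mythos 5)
Your proposal is correct and takes essentially the same route as the paper's proof: both obtain $A(u,u)=\zeromatrix$ from additivity, pass through the enclosing edge $(\min\{u,u'\},\max\{v,v'\})$, and apply \Cref{prop:bch-exp-perturbation:sig:article-graph-raj-dahya} to the dissipative sums $A(\quer{u},u)+A(v,\quer{v})$ and $A(\quer{u},u')+A(v',\quer{v})$, concluding via continuity of $\min$, $\max$ and norm-continuity of $A$. Your closing remark about why the detour through the enclosing edge is needed (the naive difference of generators need not be dissipative) is exactly the point the paper's two-step estimate is designed to handle.
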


    \begin{proof}
        By dissipativity, $\phi$ is a family of contractions.
        Since $A(u, u) = \zeromatrix$ for all $u \in \Omega$ for which $(u, u) \in E$,
        one clearly has that $\phi$ satisfies
        the identity axiom \eqcref{ax:dyn:id:sig:article-graph-raj-dahya}.
        For edges
            $(u,v), (u', v') \in E$
        for which $(u',u),(v,v') \in E$,
        additivity as well as the general estimate in
            \Cref{prop:bch-exp-perturbation:sig:article-graph-raj-dahya}
        for dissipative operators yields

            \begin{restoremargins}
            \begin{equation}
            \label{eq:bch-exp-perturbation:graph:sig:article-graph-raj-dahya}
            \everymath={\displaystyle}
            \begin{array}[m]{rcl}
                \norm{\phi(u', v') - \phi(u, v)}
                &= &\norm{
                        e^{A(u', u) + A(u, v) + A(v, v')}
                        -
                        e^{A(u, v)}
                    }
                    \\
                &= &\norm{
                        e^{A(u, v) + A(u', u) + A(v, v')}
                        -
                        e^{A(u, v)}
                    }
                    \\
                &\eqcrefoverset{eq:bch-exp-perturbation:sig:article-graph-raj-dahya}{\leq}
                    &\norm{A(u', u) + A(v, v')}
                    \\
                &\leq
                    &\norm{A(u',u)} + \norm{A(v,v')}.
            \end{array}
            \end{equation}
            \end{restoremargins}

        For arbitrary edges
            $(u,v), (u', v') \in E$
        one has
            $\quer{u} \coloneqq \min\{u,u'\} \preceq \max\{v,v'\} \eqqcolon \quer{v}$,
        whence
            $(\quer{u}, u), (\quer{u}, u'), (v, \quer{v}), (v', \quer{v}) \in E$.
        So the above estimate yields

            \begin{restoremargins}
            \begin{equation}
            \label{eq:bch-exp-perturbation:lo:sig:article-graph-raj-dahya}
            \everymath={\displaystyle}
            \begin{array}[m]{rcl}
                \norm{\phi(u', v') - \phi(u, v)}
                    &\leq
                        &\norm{\phi(\quer{u}, \quer{v}) - \phi(u', v')}
                        + \norm{\phi(\quer{u}, \quer{v}) - \phi(u, v)}
                    \\
                    &\eqcrefoverset{eq:bch-exp-perturbation:graph:sig:article-graph-raj-dahya}{\leq}
                        &\norm{A(\quer{u}, u)}
                        + \norm{A(v,\quer{v})}
                        + \norm{A(\quer{u}, u')}
                        + \norm{A(v',\quer{v})},
            \end{array}
            \end{equation}
            \end{restoremargins}

        \continueparagraph
        which converges to $0$
        as $(u',v') \longrightarrow (u,v)$ inside $E$,
        since $\max$ and $\min$ are always continuous \wrt the order topology
        and $A(u,u) = \zeromatrix = A(v,v)$.
    \end{proof}

This allows us to obtain the following general examples:

\begin{e.g.}
\makelabel{e.g.:abstract-hamiltonian:sig:article-graph-raj-dahya}
    Let $\mathcal{G} = (\Omega, E)$
    be a graph
    where $E$ is a reflexive linear ordering.
    Endow $\Omega$ with the order topology
    and $E \subseteq \Omega \times \Omega$
    with the relative topology.
    Let
        $\{A(u,v)\}_{(u,v) \in E} \subseteq \BoundedOps{\BanachRaum}$
    be a norm-continuous additive family of dissipative operators
    on a Banach space $\BanachRaum$
    and let $\alpha > 0$.
    Then by \Cref{prop:geom-implies-norm-cts:gen:sig:article-graph-raj-dahya},
        $\phi \coloneqq \{e^{\alpha A(u,v)}\}_{(u,v) \in E}$
    is a norm-continuous family of contractions
    satisfying
    the identity axiom \eqcref{ax:dyn:id:sig:article-graph-raj-dahya}.
    If $\{A(u, v)\}_{(u,v) \in E}$ is a commuting family,
    then

        \begin{restoremargins}
        \begin{equation}
        \label{eq:1:\beweislabel}
            \phi(u, w)
            = e^{\alpha A(u, w)}
            = e^{\alpha\:(A(u, v) + A(v, w))}
            = e^{\alpha A(u, v)} e^{\alpha A(v, w)}
            = \phi(u, v)\phi(v, w)
        \end{equation}
        \end{restoremargins}

    \continueparagraph
    for all $u,v,w \in \Omega$
    for which $(u,v), (v,w) \in E$.
    Thus $\phi$ satisfies
    the divisibility axiom \eqcref{ax:dyn:div:sig:article-graph-raj-dahya}.

    Suppose however, that
        $A(u_{0},v_{0})$, $A(v_{0},w_{0})$
    fail to commute for some edges
        $(u_{0},v_{0}), (v_{0},w_{0}) \in E$.
    Then one can choose $\alpha > 0$ such that
    the simplification in \eqcref{eq:1:\beweislabel} fails.%
    \footnote{%
        for arbitrary $X, Y \in \BoundedOps{\BanachRaum}$,
        if $e^{\alpha(X + Y)} = e^{\alpha X}e^{\alpha Y}$
        for all $\alpha > 0$,
        then since the map $\alpha \mapsto e^{\alpha Z}$
        is analytic
        for bounded operators $Z$ on $\BanachRaum$,
        taking 2nd derivatives at $\alpha = 0$ yields
            $(X + Y)^{2} = X^{2} + 2XY + Y^{2}$
        and thus $YX = XY$.
    }
    Thus $\phi$
    fails to satisfy
    the divisibility axiom \eqcref{ax:dyn:div:sig:article-graph-raj-dahya}.
\end{e.g.}



\subsection[Evolution families]{Evolution families}
\label{sec:examples:hamiltonian:sig:article-graph-raj-dahya}

\firstparagraph
Using the general observations in the previous subsection,
we obtain more concrete classes of examples.
Consider the graph
    $\mathcal{G} = (\Omega, E) \coloneqq (\interval, \geq)$
for some connected subset $\interval \subseteq \reals$.

\begin{e.g.}[Divisible systems]
\makelabel{e.g.:hamiltonian:divisible:sig:article-graph-raj-dahya}
    Let
        $\{A_{\tau}\}_{\tau \in \interval} \subseteq \BoundedOps{\BanachRaum}$
    be a uniformly bounded family of dissipative operators
    on a Banach space $\BanachRaum$
    for which $\{A_{\tau}\}_{\tau \in [s\:t]}$
    is strongly measurable
    for all $(t, s) \in E$.%
    \footnote{%
        This holds for example if
            ${\interval \ni \tau \mapsto A_{\tau}}$
        is strongly continuous,
        since in particular
            $\ran(A\restr{[s,\:t]}(\cdot)\xi) \subseteq \BanachRaum$
        is norm-compact and thus separable
        for all $\xi \in \BanachRaum$
        and $(t, s) \in E$.
    }
    Set
        $C \coloneqq \sup_{\tau \in \interval}\norm{A_{\tau}} < \infty$
    and define
        $
            A(t,s)
            \coloneqq
            \int_{\tau \in [s,\:t]}
                A_{\tau}
            \:\dee\tau
        $
    for $(t,s) \in E$,
    where the integrals are computed strongly via Bochner-integrals.
    Assuming that $\{A_{\tau}\}_{\tau \in \interval}$ is a commuting family,
    then $\{A(t,s)\}_{(t,s) \in E}$
    is a Lipschitz-continuous additive family of commuting dissipative operators.
    Let $\alpha > 0$.
    As in \Cref{e.g.:abstract-hamiltonian:sig:article-graph-raj-dahya},
        $\{\phi(t,s) \coloneqq e^{\alpha A(t,s)}\}_{(t,s) \in E}$
    defines a norm-continuous divisible dynamical system on $\mathcal{G}$
    consisting of contractions.
    Moreover, by the estimate in \eqcref{eq:bch-exp-perturbation:lo:sig:article-graph-raj-dahya},
    for each $(t,s),(t',s') \in E$,
    setting $\quer{t} \coloneqq \min_{E}\{t,t'\} = \max\{t,t'\}$
    and $\quer{s} \coloneqq \max_{E}\{s,s'\} = \min\{s,s'\}$,
    one has

    \begin{restoremargins}
    \begin{equation}
    \label{eq:bch-exp-perturbation:ev:sig:article-graph-raj-dahya}
    \everymath={\displaystyle}
    \begin{array}[m]{rcl}
        \norm{\phi(t',s') - \phi(t,s)}
        &\leq
            &\alpha C \cdot (
                (\quer{t} - t')
                + (\quer{t} - t)
                + (\quer{s} - s')
                + (\quer{s} - s)
            )
        \\
        &= &\alpha C \cdot (\abs{t' - t}  + \abs{s' - s}),
    \end{array}
    \end{equation}
    \end{restoremargins}

    \continueparagraph
    whence $\phi$ is Lipschitz-continuous.
    By \eqcref{eq:bch-exp-perturbation:ev:sig:article-graph-raj-dahya}
    one obtains
        $
            \norm{\phi(t, s) - \onematrix}
            = \norm{\phi(t, s) - \phi(s, s)}
            \leq \alpha C \cdot (\abs{t - s} + \abs{s - s})
            = \alpha C \cdot (t - s)
            \eqqcolon \ell(t, s)
        $
    for all $(t, s) \in E$.
    Clearly, $\ell$ is a continuous additive function,
    whence $\phi$ has geometric growth.%
    \footnote{%
        see \Cref{defn:geom-growth:op-family:sig:article-graph-raj-dahya}.
    }
\end{e.g.}

Note that by taking derivatives,
for $\{A(t,s)\}_{(t,s) \in E}$ to be commuting,
it is necessary for
    $\{A_{\tau}\}_{\tau \in \interval}$
to be a commuting family.
To obtain indivisible systems,
we thus need to consider non-commuting families.
Consider for example the following setup:
Assume $\interval \supseteq [0,\:t_{\max}]$
for some $t_{\max} > 0$.
Let $\BanachRaum$ be any non-commutative \TextCStarAlg $\CStarAlg$.
Furthermore, assume that $\CStarAlg$
is \uline{not} Lie-nilpotent of class $2$.
Then we can find non-commuting self-adjoint elements
    $h_{1}, h_{2} \in \CStarAlg$
for which $\hat{h} \coloneqq [h_{1},\:h_{2}]$
is not in the centraliser of $\CStarAlg$.%
\footnote{%
    \idest $\hat{h}$ does not commute with every element in $\CStarAlg$.
}
Set
    $\Psi_{i} \coloneqq \iunit[h_{i},\cdot]$
for $i \in \{1,2\}$
and let

    \begin{displaymath}
        A_{\tau}
        \coloneqq
            \tfrac{\tau}{t_{\max}^{2}}\Psi_{1}
            +
            \tfrac{(t_{\max} - \tau)}{t_{\max}^{2}}\Psi_{2}
    \end{displaymath}

\continueparagraph
for each $\tau \in \interval$.
By \Cref{cor:linblad:dissipative:sig:article-graph-raj-dahya},
    $\{A_{\tau}\}_{\tau \in \interval}$
is a (clearly norm-continuous) family of dissipative operators.
Computing integrals yields

    \begin{shorteqnarray}
        A(t, s)
        &=
            &\tfrac{t^{2} - s^{2}}{2 t_{\max}^{2}}
            \Psi_{1}
            +
            \tfrac{2 t_{\max} (t-s) - (t^{2} - s^{2})}{2 t_{\max}^{2}}
            \Psi_{2}
            \\
        &=
            &\tfrac{t-s}{t_{\max}}
            \Big(
                \tfrac{t + s}{2 t_{\max}}
                \Psi_{1}
                +
                (1 - \tfrac{t + s}{2 t_{\max}})
                \Psi_{2}
            \Big)
    \end{shorteqnarray}

\continueparagraph
for $(t,s) \in E$.
Choosing
    $r_{0} \coloneqq 0$,
    $s_{0} \coloneqq \tfrac{1}{2}t_{\max}$,
    and
    $t_{0} \coloneqq t_{\max}$,
one has
    $
        A(t_{0}, s_{0})
            = \tfrac{1}{2}
                (
                    \tfrac{3}{4}
                    \Psi_{1}
                    +
                    \tfrac{1}{4}
                    \Psi_{2}
                )
            = \tfrac{1}{8}(3\Psi_{1} + \Psi_{2})
    $
and similarly
    $
        A(s_{0}, r_{0})
            = \tfrac{1}{8}(\Psi_{1} + 3\Psi_{2})
    $.
Basic operations with commutators
yields
    $
        [\Psi_{1},\:\Psi_{2}]
        = [\iunit[h_{1},\:\cdot], \iunit[h_{2},\:\cdot]]
        = -[h_{1},\:[h_{2},\:\cdot]]
          + [h_{2},\:[h_{1},\:\cdot]]
        = -[[h_{1}, h_{2}],\:\cdot]
    $
and thus

    \begin{shorteqnarray}
        [A(t_{0},s_{0}),\:A(s_{0},r_{0})]
            =
                [
                    \tfrac{1}{8}(3\Psi_{1} + \Psi_{2}),
                    \:\tfrac{1}{8}(\Psi_{1} + 3\Psi_{2})
                ]
            = \tfrac{3^{2}-1}{8^{2}}[\Psi_{1},\Psi_{2}]
            = -\tfrac{1}{8}[[h_{1}, h_{2}],\:\cdot],
    \end{shorteqnarray}

\continueparagraph
which is not the $\zeromatrix$-operator,
since by construction $[h_{1}, h_{2}]$
is not in the centraliser of $\CStarAlg$.

\begin{e.g.}[Indivisible systems]
\makelabel{e.g.:hamiltonian:indivisible:sig:article-graph-raj-dahya}
    Suppose in \Cref{e.g.:hamiltonian:divisible:sig:article-graph-raj-dahya}
    there are times $t_{0} \geq s_{0} \geq r_{0}$ in $\interval$
    for which
        $A(t_{0},s_{0})$, $A(s_{0}, r_{0})$
    fail to commute (see \exempli the above construction).
    Then as in \Cref{e.g.:abstract-hamiltonian:sig:article-graph-raj-dahya},
        $\phi \coloneqq \{e^{\alpha A(t,s)}\}_{(t,s) \in E}$
    fails to satisfy
    the divisibility axiom \eqcref{ax:dyn:div:sig:article-graph-raj-dahya}
    for some $\alpha > 0$.
    Everything else in \Cref{e.g.:hamiltonian:divisible:sig:article-graph-raj-dahya}
    however continues to hold:
        $\phi$
    is a norm-continuous
    family of contractions on $\mathcal{G}$
    satisfying
    the identity axiom \eqcref{ax:dyn:id:sig:article-graph-raj-dahya}
    but \uline{not}
    the divisibility axiom \eqcref{ax:dyn:div:sig:article-graph-raj-dahya}.
    This family again has Lipschitz-continuity and geometric growth.
\end{e.g.}



\subsection[Dynamical systems on networks]{Dynamical systems on networks}
\label{sec:examples:networks:sig:article-graph-raj-dahya}

\firstparagraph
We now briefly consider structures more adequate
to model semantics in automata theory and propagation in neural networks.
Consider a finite acyclic directed graph
    $\mathcal{G} = (\Omega, E)$
whose edges are weighted by a family
    $\{w_{u,v}\}_{(u,v) \in E} \subseteq \BoundedOps{\BanachRaum}$
of bounded operators on a Banach space $\BanachRaum$.%
\footnote{%
    $\mathcal{G}$ is acyclic there are no finite sequences
    of edges $(u_{0},u_{1}), (u_{1},u_{2}), \ldots, (u_{n-1}, u_{n}) \in E$
    with $n \geq 1$ and $u_{0} = u_{n}$.
}
Note that in automata theory, it is more common to allow cycles, including loops.
However, our goal here is to demonstrate how,
even under simple assumptions,
indivisible dynamical systems can arise.

As in \Cref{rem:path-independence:sig:article-graph-raj-dahya},
for each $u, v \in \Omega$
we let $\mathrm{Path}(u,v)$
denote the set of all finite sequences
    $\pi = \{u_{k}\}_{k=0}^{n} \subseteq \Omega$
with $n \in \naturalsZero$
and $(u_{k-1},u_{k}) \in E$ for all $k \in \{1,2,\ldots,n\}$.%
\footnote{%
    such sequences are technically referred to as \highlightTerm{walks}.
}
By the assumptions on $\mathcal{G}$,
each such set is finite.
We may thus define an operator family
    $\phi = \{\phi(u,v)\}_{(u,v) \in \Omega\times\Omega} \subseteq \BoundedOps{\BanachRaum}$
for the graph
    $(\Omega,\Omega \times \Omega)$
via

    \begin{restoremargins}
    \begin{equation}
    \label{eq:networks:sum:sig:article-graph-raj-dahya}
        \phi(u, v)
            \coloneqq
                \sum_{\pi \in \mathrm{Path}(u,v)}
                    w_{\pi}
    \end{equation}
    \end{restoremargins}

\continueparagraph
for $(u, v) \in E$, where

    \begin{restoremargins}
    \begin{equation}
    \label{eq:networks:path:sig:article-graph-raj-dahya}
        w_{\pi}
            \coloneqq
                \prod_{k=1}^{n}
                    w_{u_{k-1},u_{k}}
    \end{equation}
    \end{restoremargins}

\continueparagraph
for all $\pi = \{u_{k}\}_{k=0}^{n} \in \mathrm{Path}(u, v)$,
$n \in \naturalsZero$.

Since $\mathcal{G}$ is acyclic, it contains no loops.
So for each $u \in \Omega$
the set $\mathrm{Path}(u,u)$ only contains one walk,
\viz the trivial walk $\pi = \{u_{k}\}_{k=0}^{n}$ with $n=0$ and $u_{0} = u$.
Since the product expression in \eqcref{eq:networks:path:sig:article-graph-raj-dahya} is empty,
one has $\phi(u, u) = w_{\pi} = \onematrix$.
So $\phi$ satisfies
the identity axiom \eqref{ax:dyn:id:sig:article-graph-raj-dahya}.

However, $\phi$ fails to be divisible in general:
Consider edges $u,v,w \in \Omega$.
Combining \eqcref{eq:networks:sum:sig:article-graph-raj-dahya}
and \eqcref{eq:networks:path:sig:article-graph-raj-dahya}
yields

    \begin{shorteqnarray}
        \phi(u, w) - \phi(u, v)\phi(v, w)
            =
                \sum_{\pi \in \mathrm{Path}(u,w) \without \mathrm{Path}(u,w;\{v\})}
                    w_{\pi},
    \end{shorteqnarray}

\continueparagraph
where $\mathrm{Path}(u,w;\{v\})$
is the set of all $\{u_{k}\}_{k=0}^{n} \in \mathrm{Path}(u,w)$,
$n \in \naturalsZero$,
for which $v \in \{u_{0},u_{1},\ldots,u_{n}\}$.
Provided there is some sequence of edges $\pi$ from $u$ to $w$
in $\mathcal{G}$
which does not cross the node $v$,
we can organise \exempli for all the weights to be positive multiples of the identity,
and thereby ensure that
    $\phi(u, w) - \phi(u, v)\phi(v, w) \neq \zeromatrix$.




\section[Algebraic structures on graphs]{Algebraic structures on graphs}
\label{sec:algebra:sig:article-graph-raj-dahya}

\firstparagraph
Towards \ref{goal:embed:sig:article-graph-raj-dahya},
we note that the axioms
    \eqcref{ax:dyn:id:sig:article-graph-raj-dahya}
    and
    \eqcref{ax:dyn:div:sig:article-graph-raj-dahya}
contain the germs of an algebraic structure.
We thus turn our attention to the theory of reduction systems
and quotients of free groups,
to construct out of the edges of graphs
an appropriate algebraic structure.


\subsection[Algebraic definitions]{Algebraic definitions}
\label{sec:algebra:defn:sig:article-graph-raj-dahya}

\firstparagraph
The following definitions and results can be found in the literature,
\exempli
    \cite[Chapter~1--2]{BookOtto1993BookRewritingSystems},
    \cite[\S{}12.2--3]{HoltEickObrien2005Book}.


\subsubsection[Reduction systems]{Reduction systems}
\label{sec:algebra:defn:reduction:sig:article-graph-raj-dahya}

\firstparagraph
A \highlightTerm{reduction system} $(X, \to)$
is any set $X$ endowed with a binary relation
    $
        \mathbin{\to}
        \subseteq
        X \times X
    $.
Let $x,y \in X$
We write $x \overset{\ast}{\to} y$,
if there is a \highlightTerm{walk}
in the graph $(X, \to)$
from $x$ to $y$,
\idest
if there exists $n \in \naturalsZero$
and $\{x_{k}\}_{k=0}^{n} \subseteq X$,
such that
    $x=x_{0} \to x_{1} \to \cdots \to x_{n} = y$.
Defining
    $
        \mathbin{\leftrightarrow}
        \coloneqq
            \mathbin{\to}
            \cup
            \mathbin{\to}^{-1}
        \subseteq
            X \times X
    $,
one similarly defines
    $x \overset{\ast}{\leftrightarrow} y$
if there is a walk in the graph $(X, \leftrightarrow)$.
In other words, $\overset{\ast}{\to}$
is the reflexive and transitive hull of $\to$
and $\overset{\ast}{\leftrightarrow}$
is the smallest equivalence relation extending $\to$.

We shall say that $x \in X$ is \highlightTerm{irreducible},
if there is no $y \in X$ with $x \to y$,
and we let $\Irred{X} \subseteq X$
denote the subset of irreducible elements.
We shall also say that $x, y \in X$
is \highlightTerm{compatible} written $x \parallel y$,
if there exists $z \in X$
with
    $x \overset{\ast}{\to} z$
    and
    $y \overset{\ast}{\to} z$.
A reduction system $(X, \to)$

\begin{enumerate}[
    label={\texttt{RS\textsubscript{\arabic*}})},
    ref={\texttt{RS\textsubscript{\arabic*}}},
]
\item\label{ax:rs:noeth:sig:article-graph-raj-dahya}
    is \highlightTerm{Noetherian}
    if there is no infinite sequence
    $\{x_{n}\}_{n\in\naturals} \subseteq X$
    with $x_{1} \to x_{2} \to x_{3} \to \ldots$;%
    \footnote{%
        In set theoretic terms,
        under the axiom of choice,
        this is equivalent
        to $(X, \to)$ being a \highlightTerm{well-founded} relation,
        \idest for all non-empty $A \subseteq X$
        there exists a \usesinglequotes{minimal} element $x \in A$
        in the sense that there be no $y \in A$ with $x \to y$.
        In fact, this holds under weaker background axioms
        than choice, see \exempli \cite[Lemma~5.5(ii)]{Jech2003}.
    }
\item\label{ax:rs:church-rosser:sig:article-graph-raj-dahya}
    has the \highlightTerm{Church--Rosser property}
    if
        $x \overset{\ast}{\leftrightarrow} y$
    implies $x \parallel y$
    for all $x, y \in X$;
\item\label{ax:rs:conf:sig:article-graph-raj-dahya}
    is \highlightTerm{confluent}
    if
        $w \overset{\ast}{\to} x$
        and
        $w \overset{\ast}{\to} y$
    implies $x \parallel y$
    for all $w, x, y \in X$;
\item\label{ax:rs:loc-conf:sig:article-graph-raj-dahya}
    is \highlightTerm{locally confluent}
    if
        $w \to x$
        and
        $w \to y$
    implies $x \parallel y$
    for all $w, x, y \in X$.
\end{enumerate}

\continueparagraph
Clearly,
    \eqcref{ax:rs:church-rosser:sig:article-graph-raj-dahya}
    $\Rightarrow$
    \eqcref{ax:rs:conf:sig:article-graph-raj-dahya}
    $\Rightarrow$
    \eqcref{ax:rs:loc-conf:sig:article-graph-raj-dahya}.
In fact one has:

\begin{prop}
\makelabel{prop:cr-iff-conf:sig:article-graph-raj-dahya}
    A reduction system $(X, \to)$ has the Church--Rosser property
    if and only if it is confluent.
\end{prop}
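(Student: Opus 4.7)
The forward implication is immediate from the definitions: if the system is Church--Rosser and $w \overset{\ast}{\to} x$, $w \overset{\ast}{\to} y$, then in particular $x \overset{\ast}{\leftrightarrow} w \overset{\ast}{\leftrightarrow} y$, so $x \overset{\ast}{\leftrightarrow} y$, and the Church--Rosser property yields $x \parallel y$. So the content of the proposition lies in the reverse direction, which the plan is to establish by induction on the length of a zigzag witnessing $x \overset{\ast}{\leftrightarrow} y$.

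Concretely, unfold $x \overset{\ast}{\leftrightarrow} y$ into a sequence $x = z_{0} \leftrightarrow z_{1} \leftrightarrow \cdots \leftrightarrow z_{n} = y$ in $X$ and induct on $n \in \naturalsZero$. The base case $n = 0$ is trivial: take $z \coloneqq x = y$. For the inductive step, suppose $z_{0} \parallel z_{n-1}$, witnessed by some $u \in X$ with $z_{0} \overset{\ast}{\to} u$ and $z_{n-1} \overset{\ast}{\to} u$, and split on the orientation of the final edge $z_{n-1} \leftrightarrow z_{n}$. If $z_{n} \to z_{n-1}$, then $z_{n} \overset{\ast}{\to} z_{n-1} \overset{\ast}{\to} u$, so $u$ already witnesses $z_{0} \parallel z_{n}$. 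The interesting case is $z_{n-1} \to z_{n}$: here $z_{n-1}$ reduces both to $u$ and to $z_{n}$, so confluence applied at $z_{n-1}$ produces some $v \in X$ with $u \overset{\ast}{\to} v$ and $z_{n} \overset{\ast}{\to} v$. Concatenating gives $z_{0} \overset{\ast}{\to} u \overset{\ast}{\to} v$ together with $z_{n} \overset{\ast}{\to} v$, which is exactly $z_{0} \parallel z_{n}$.

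There is no real obstacle to this argument; the only step invoking a nontrivial hypothesis is the confluence appeal at $z_{n-1}$, and the one subtlety worth flagging is that the two arrows $z_{0} \overset{\ast}{\to} u$ and $z_{n-1} \overset{\ast}{\to} u$ in the inductive hypothesis are both genuinely forward-directed (otherwise confluence could not be applied), which is precisely the reason the induction is formulated in terms of compatibility $\parallel$ rather than in terms of the symmetric relation $\overset{\ast}{\leftrightarrow}$. I would write the proof as a short paragraph, with the case distinction rendered as two short displayed diagrams to make the bookkeeping transparent.
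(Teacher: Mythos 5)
Your proof is correct and is the standard zigzag-induction argument; the paper itself does not spell out a proof but defers to Book--Otto (Lemma~1.1.7), whose proof is exactly this induction on the length of the $\overset{\ast}{\leftrightarrow}$-chain with the case split on the orientation of the last edge. Nothing to add.
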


\begin{prop}
\makelabel{prop:conf-iff-loc-conf:sig:article-graph-raj-dahya}
    Suppose that a given reduction system $(X, \to)$ is Noetherian.
    Then $(X, \to)$ is confluent if and only if it is locally confluent.
\end{prop}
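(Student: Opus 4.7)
The forward direction is immediate: confluence is a statement about arbitrary multistep reductions $w \overset{\ast}{\to} x$, $w \overset{\ast}{\to} y$, and in particular it applies when each of these consists of a single step. So the substance of the claim lies entirely in the converse, which is the classical Newman's diamond lemma. My plan is to prove this converse by Noetherian (well-founded) induction, exploiting precisely the assumption \eqcref{ax:rs:noeth:sig:article-graph-raj-dahya}.

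Formally, I will proceed as follows. Since $(X,\to)$ is Noetherian, the reverse relation $\to^{-1}$ is well-founded, so it suffices to prove, for an arbitrary $w \in X$, the statement $P(w)$ that ``whenever $w \overset{\ast}{\to} x$ and $w \overset{\ast}{\to} y$, then $x \parallel y$'', under the inductive hypothesis that $P(w')$ already holds for every $w'$ with $w \to^{+} w'$ (one or more steps). Fix such $x, y$. If either chain $w \overset{\ast}{\to} x$ or $w \overset{\ast}{\to} y$ has length zero, then $x = w$ or $y = w$, and compatibility is witnessed by the other endpoint, so we may assume both chains are non-trivial. Thus we can factor them as $w \to x_{1} \overset{\ast}{\to} x$ and $w \to y_{1} \overset{\ast}{\to} y$ for some $x_{1}, y_{1} \in X$.

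At this point local confluence \eqcref{ax:rs:loc-conf:sig:article-graph-raj-dahya} applied to the fork $x_{1} \leftarrow w \to y_{1}$ yields some $z \in X$ with $x_{1} \overset{\ast}{\to} z$ and $y_{1} \overset{\ast}{\to} z$. Now invoke the inductive hypothesis $P(x_{1})$ (valid since $w \to x_{1}$) on the two reductions $x_{1} \overset{\ast}{\to} x$ and $x_{1} \overset{\ast}{\to} z$: this provides $z' \in X$ with $x \overset{\ast}{\to} z'$ and $z \overset{\ast}{\to} z'$. Composing, $y_{1} \overset{\ast}{\to} z \overset{\ast}{\to} z'$, so $P(y_{1})$ (again valid since $w \to y_{1}$) applied to $y_{1} \overset{\ast}{\to} y$ and $y_{1} \overset{\ast}{\to} z'$ produces $z'' \in X$ with $y \overset{\ast}{\to} z''$ and $z' \overset{\ast}{\to} z''$. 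Concatenation of $x \overset{\ast}{\to} z' \overset{\ast}{\to} z''$ and $y \overset{\ast}{\to} z''$ witnesses $x \parallel y$, completing the induction.

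The only subtle step is the double application of the inductive hypothesis to close the diamond: local confluence alone produces a single common successor $z$ of $x_{1}$ and $y_{1}$, but this $z$ is a priori unrelated to $x$ and $y$, so one must bootstrap twice, once at $x_{1}$ and once at $y_{1}$, to bring everything together. This is precisely where Noetherianity is essential; in its absence the construction could in principle require unboundedly many alternations and fail to terminate. Conversely, the forward direction does not need Noetherianity and is a matter of specialising the quantifiers.
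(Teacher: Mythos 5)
Your proposal is correct and complete. Note, however, that the paper does not actually prove this proposition: it is the classical diamond lemma of Newman, and the text simply refers the reader to \cite[Lemma~1.1.7 and Theorem~1.1.13]{BookOtto1993BookRewritingSystems}. What you have written is the standard modern proof (essentially Huet's well-founded-induction argument): the forward direction is a specialisation of quantifiers, and the converse proceeds by Noetherian induction on $w$, peeling off the first step of each reduction, closing the local fork $x_{1} \leftarrow w \to y_{1}$ by local confluence, and then invoking the inductive hypothesis twice --- once at $x_{1}$ and once at $y_{1}$ --- to propagate the common reduct down to $x$ and $y$. All steps check out: the base case where one chain is empty is handled, both applications of the inductive hypothesis are legitimate since $w \to x_{1}$ and $w \to y_{1}$ place $x_{1}, y_{1}$ strictly below $w$ in the well-founded order induced by \eqcref{ax:rs:noeth:sig:article-graph-raj-dahya}, and the final concatenations witness $x \parallel y$. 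Your closing remark is also apt: without Noetherianity the statement is false (there are standard counterexamples with a four-element cycle-like system that is locally confluent but not confluent), so the well-founded induction is not merely a convenience but the essential mechanism.
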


Thus under \eqcref{ax:rs:noeth:sig:article-graph-raj-dahya},
the axioms
    \eqcref{ax:rs:church-rosser:sig:article-graph-raj-dahya},
    \eqcref{ax:rs:conf:sig:article-graph-raj-dahya},
    and
    \eqcref{ax:rs:loc-conf:sig:article-graph-raj-dahya}
are equivalent.
See \cite[Lemma~1.1.7 and Theorem~1.1.13]{BookOtto1993BookRewritingSystems}
for a proof.
\Cref{prop:conf-iff-loc-conf:sig:article-graph-raj-dahya}
is referred to as the \highlightTerm{diamond lemma},
attributed to Newman
\cite[\S{}7]{Newman1942DiamondLemma}.%
\footnote{%
    not to be confused with the Bergman \highlightTerm{diamond lemma}
    \cite[\S{}7]{Bergman1978DiamondLemma},
    which is similar but different.
}
One further has:

\begin{prop}
\makelabel{prop:rs-normal-form:sig:article-graph-raj-dahya}
    Suppose that a given reduction system $(X, \to)$
    is Noetherian and confluent.
    Then every element $x \in X$
    possesses a unique \highlightTerm{normal form},
    that is an irreducible element $\hat{x} \in X$
    for which $x \overset{\ast}{\leftrightarrow} \hat{x}$.
    In fact $x \overset{\ast}{\to} \hat{x}$
    for all $x \in X$ with normal form $\hat{x}$.
    For any $x' \in X$ with $x' \overset{\ast}{\leftrightarrow} x$
    one has that $x' \overset{\ast}{\to} \hat{x}$.
\end{prop}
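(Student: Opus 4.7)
The plan is to prove the three assertions (existence, uniqueness, and the final reachability claim) in that order, using the Noetherian hypothesis for existence and Church--Rosser (which follows from confluence by \Cref{prop:cr-iff-conf:sig:article-graph-raj-dahya}) for uniqueness and the final claim.

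First I would establish existence. Fix $x \in X$. Construct a (possibly terminating) sequence $x = x_{0} \to x_{1} \to x_{2} \to \ldots$ by choosing, whenever $x_{n}$ is reducible, any $x_{n+1} \in X$ with $x_{n} \to x_{n+1}$; if at some stage $x_{n}$ is irreducible, the sequence terminates there. By \eqcref{ax:rs:noeth:sig:article-graph-raj-dahya} no infinite such sequence exists, so the process terminates at some $\hat{x} \coloneqq x_{n} \in \Irred{X}$ with $x \overset{\ast}{\to} \hat{x}$, and in particular $x \overset{\ast}{\leftrightarrow} \hat{x}$. This already supplies one normal form for $x$ and shows the claim $x \overset{\ast}{\to} \hat{x}$ in the parenthetical remark of the statement.

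For uniqueness, suppose $\hat{x}_{1}, \hat{x}_{2} \in \Irred{X}$ both satisfy $x \overset{\ast}{\leftrightarrow} \hat{x}_{i}$. Transitivity and symmetry of $\overset{\ast}{\leftrightarrow}$ give $\hat{x}_{1} \overset{\ast}{\leftrightarrow} \hat{x}_{2}$. By \Cref{prop:cr-iff-conf:sig:article-graph-raj-dahya} the system has the Church--Rosser property \eqcref{ax:rs:church-rosser:sig:article-graph-raj-dahya}, so $\hat{x}_{1} \parallel \hat{x}_{2}$, that is, there exists $z$ with $\hat{x}_{1} \overset{\ast}{\to} z$ and $\hat{x}_{2} \overset{\ast}{\to} z$. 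Since both $\hat{x}_{1}$ and $\hat{x}_{2}$ are irreducible, the only walks emanating from them have length $0$, forcing $z = \hat{x}_{1} = \hat{x}_{2}$.

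Finally, for any $x' \in X$ with $x' \overset{\ast}{\leftrightarrow} x$, transitivity gives $x' \overset{\ast}{\leftrightarrow} \hat{x}$. Applying Church--Rosser once more, choose $z \in X$ with $x' \overset{\ast}{\to} z$ and $\hat{x} \overset{\ast}{\to} z$; irreducibility of $\hat{x}$ forces $z = \hat{x}$, so $x' \overset{\ast}{\to} \hat{x}$ as required. There is no serious obstacle here: the entire argument is a clean application of two already-established facts, with the only delicate point being to remember that the Church--Rosser conclusion must be invoked via \Cref{prop:cr-iff-conf:sig:article-graph-raj-dahya} rather than directly from \eqcref{ax:rs:conf:sig:article-graph-raj-dahya}, and that irreducibility collapses the joinability element $z$ onto the normal forms themselves.
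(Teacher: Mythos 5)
Your proof is correct and follows essentially the same route as the paper: the paper defers existence and uniqueness to a standard exercise (which you fill in with the usual Noetherian-termination and Church--Rosser-plus-irreducibility arguments), and handles the final reachability claims exactly as you do. The only cosmetic difference is that you obtain $x \overset{\ast}{\to} \hat{x}$ directly from the termination construction, whereas the paper re-derives it by applying Church--Rosser to $x \overset{\ast}{\leftrightarrow} \hat{x}$; both are sound.
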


    \begin{proof}
        Proof of the existence and uniqueness is a simple exercise
        (see \cite[Theorem~1.1.12]{BookOtto1993BookRewritingSystems}).
        Towards the penultimate claim,
        since $x \overset{\ast}{\leftrightarrow} \hat{x}$
        and since by \Cref{prop:cr-iff-conf:sig:article-graph-raj-dahya}
        confluence is equivalent to the Church--Rosser property,
        there exists $y \in X$
        such that
            $x \overset{\ast}{\to} y$
            and
            $\hat{x} \overset{\ast}{\to} y$.
        Since $\hat{x}$ is irreducible,
        it follows that $\hat{x} = y$
        and thus $x \overset{\ast}{\to} y = \hat{x}$.

        Towards the final claim,
        for any $x' \in X$ with $x' \overset{\ast}{\leftrightarrow} x$,
        since $x \overset{\ast}{\leftrightarrow} \hat{x}$,
        one has $x' \overset{\ast}{\leftrightarrow} \hat{x}$.
        Arguing as above,
        by the Church--Rosser property and irreducibility,
        one again obtains that $x' \overset{\ast}{\to} \hat{x}$.
    \end{proof}

The existence and above properties of such normal forms
make it desirable to work with reduction systems
which are Noetherian and confluent ($\equiv$ locally confluent).



\subsubsection[String-rewriting systems]{String-rewriting systems}
\label{sec:algebra:defn:rewrite:sig:article-graph-raj-dahya}

\firstparagraph
We now describe reduction systems that occur in the study of free algebraic structures,
\viz \highlightTerm{string-rewriting systems}.
Let $\Gamma$ be an arbitrary non-empty set of elements,
which shall be referred to as an \highlightTerm{alphabet}.
We refer to the elements of $\Gamma$ as \highlightTerm{letters}.
For each $n \in \naturalsZero$
the set of $n$\=/tuples over $\Gamma$ is denoted
as $\Gamma^{n}$,
where in particular $\Gamma^{0}$ contains a single element,
\namely the \highlightTerm{empty word},
which we shall denote via $1$.%
\footnote{%
    The empty word is typically denoted $\eps$ in the literature.
    We use $1$ to avoid conflicting terminology.
}
The disjoint union
$
    \Gamma^{\ast}
    \coloneqq
    \bigcup_{n \in \naturalsZero}
        \Gamma^{n}
$,
referred to as the \highlightTerm{set of all words over $\Gamma$},
constitutes a monoid under the operation of (tuple) concatenation,
with $1$ as its neutral element.
It shall also be useful to define
$
    \Gamma^{+}
    \coloneqq
    \bigcup_{n \in \naturals}
        \Gamma^{n}
$,
which is the set of all non-empty words.
For $n\in\naturalsZero$ and $w \in \Gamma^{n}$,
we define $\card{w} \coloneqq n$
to the \highlightTerm{word length} of $w$.

Consider now a relation of the form
    $
        R \subseteq (\Gamma^{\ast})^{+} \times \Gamma^{\ast}
    $,
\idest
$R$ associates arbitrary $n$\=/tuples of words over $\Gamma$,
where $n\in\naturals$,
to words over $\Gamma$.
We shall refer to the elements of $R$ as \highlightTerm{rules}.
We associate to
$(\Gamma, R)$
the reduction system
$(\Gamma^{\ast}, \to_{R})$
defined via

\begin{restoremargins}
\begin{equation}
\label{eq:context-free:sig:article-graph-raj-dahya}
    x \to_{R} y
    :\Leftrightarrow
    \begin{array}[t]{l}
        \exists{w, w' \in \Gamma^{\ast}:~}
        \exists{n\in\naturals:~}
        \exists{(\{u_{k}\}_{k=1}^{n}, v) \in R:~}\\
        \quad
        x = w \cdot u_{1} \cdot u_{2} \cdot \ldots \cdot u_{n} \cdot w'
        ~\text{and}\\
        \quad
        y = w \cdot v \cdot w'
    \end{array}
\end{equation}
\end{restoremargins}

\continueparagraph
for $x, y \in \Gamma^{\ast}$,
and refer to this structure as a \highlightTerm{string-rewriting system}.
Such binary reduction rules defined on words are referred to as \highlightTerm{context-free grammars}.
We now demonstrate sufficient conditions
for the associated reduction system
to be Noetherian and confluent.

\begin{prop}
\makelabel{prop:rew-sys-noeth:sig:article-graph-raj-dahya}
    Suppose that
    $R$ is \highlightTerm{strictly monotone decreasing}
    in the sense that
        $
            \card{y}
            < \sum_{k=1}^{n}
                \card{x_{i}}
        $
    for all $n\in\naturals$
    and $(\{x_{k}\}_{k=1}^{n}, y) \in R$.
    Then $(\Gamma^{\ast}, \to_{R})$
    is Noetherian.
\end{prop}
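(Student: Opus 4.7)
The plan is straightforward: I will show that the word length functional $|\cdot|\colon \Gamma^{\ast} \to \naturalsZero$ is strictly decreasing under $\to_R$, and then invoke the well-ordering of $\naturalsZero$ to rule out infinite descending chains.

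First, I would fix arbitrary $x, y \in \Gamma^{\ast}$ with $x \to_R y$ and unpack the definition in \eqcref{eq:context-free:sig:article-graph-raj-dahya}: there exist $w, w' \in \Gamma^{\ast}$, some $n \in \naturals$, and a rule $(\{u_k\}_{k=1}^{n}, v) \in R$ such that $x = w \cdot u_1 \cdots u_n \cdot w'$ and $y = w \cdot v \cdot w'$. Since word length is additive under concatenation, I get
\begin{equation*}
    |x| = |w| + \sum_{k=1}^{n}|u_k| + |w'|
    \quad\text{and}\quad
    |y| = |w| + |v| + |w'|.
\end{equation*}
The strict monotonicity assumption on $R$ gives $|v| < \sum_{k=1}^{n}|u_k|$, whence $|y| < |x|$.

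Next, suppose for contradiction that $(\Gamma^{\ast}, \to_R)$ is not Noetherian. Then by definition there exists an infinite sequence $\{x_n\}_{n \in \naturals} \subseteq \Gamma^{\ast}$ with $x_1 \to_R x_2 \to_R x_3 \to_R \cdots$. Applying the length estimate above inductively yields $|x_1| > |x_2| > |x_3| > \cdots$, which is an infinite strictly decreasing sequence in $\naturalsZero$, contradicting its well-ordering. Hence no such infinite chain exists and $(\Gamma^{\ast}, \to_R)$ is Noetherian.

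No obstacle is anticipated here: the whole argument is a one-line length-function termination proof, standard for context-free rewriting, and the strict-monotonicity hypothesis has been tailored precisely to make this work. The only thing to be careful about is ensuring that the left-hand side of a rule can be an $n$-tuple of words rather than a single word, which is why the hypothesis is phrased with the sum $\sum_{k=1}^{n}|u_k|$ rather than just $|u_1|$; the computation above handles this uniformly.
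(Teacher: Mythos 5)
Your proof is correct and takes essentially the same route as the paper's: the paper's proof is the one-line observation that an infinite reduction chain would give an infinite strictly decreasing sequence of word lengths, which is impossible. You have merely spelled out the additivity-of-length computation that the paper leaves implicit.
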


    \begin{proof}
        If $\{x_{n}\}_{n\in\naturals} \subseteq \Gamma^{\ast}$
        were an infinite sequence with
            $x_{1} \to_{R} x_{2} \to_{R} \ldots$,
        then $\card{x_{1}} > \card{x_{2}} > \ldots$,
        which is impossible.
    \end{proof}

By considering the following axioms,
which we shall refer to as \highlightTerm{pre-algebraic axioms},
one can obtain sufficient conditions for local confluence:

\begin{enumerate}[
    label={\texttt{PA}\textsubscript{\arabic*}},
    ref={\texttt{PA}\textsubscript{\arabic*}},
    left=\rtab,
]
    \item\label{ax:pre-alg:neutral:sig:article-graph-raj-dahya}
        (\highlightTerm{Pre-Identity})
        For all rules
            $((x,y),z) \in R$,
        if
            $(x, 1) \in R$
        then $y = z$
        and
        if
            $(y, 1) \in R$
        then $x = z$.

    \item\label{ax:pre-alg:associative:sig:article-graph-raj-dahya}
        (\highlightTerm{Pre-Associativity})
        For all rules
            $((x,y),z)$
            and
            $((x',y'),z')$
            in $R$,
        if $y = x'$,
        there exist rules
            $((x, z'), w)$
            and
            $((z, y'), w)$
            in $R$
        for some $w \in \Gamma^{\ast}$.
\end{enumerate}

\begin{prop}
\makelabel{prop:rew-sys-loc-conf:sig:article-graph-raj-dahya}
    Suppose that
        $
            R
            \subseteq
            (\Gamma^{1})^{1} \times \Gamma^{0}
            \cup
            (\Gamma^{1})^{2} \times \Gamma^{1}
        $,
    \idest $R$
    consists of rules which reduce
    certain letters to the empty word
    and
    certain pairs of letters to single letters.
    Suppose further that $R$
    satisfies the pre-algebraic axioms
        \eqcref{ax:pre-alg:neutral:sig:article-graph-raj-dahya}
        and
        \eqcref{ax:pre-alg:associative:sig:article-graph-raj-dahya}.
    Then $(\Gamma^{\ast}, \to_{R})$
    is locally confluent.
\end{prop}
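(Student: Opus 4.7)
The plan is a classical critical-pair analysis. Suppose $w \to_R x_1$ and $w \to_R x_2$ via rules whose left-hand sides occupy windows of length $1$ or $2$ inside $w$, at positions $p_1$ and $p_2$ respectively. If the two windows are positionally disjoint, substitution at $p_1$ does not touch the window at $p_2$ and vice versa, so the two reductions commute and yield a common reduct $z$ with $x_1 \to_R z$ and $x_2 \to_R z$ in one further step on each side. The nontrivial work is then to check joinability of the finitely many \emph{critical} configurations, which I would enumerate by the types of the two rules involved.

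The critical cases are the following. \emph{(i)} Two type-$1$ reductions at the same letter $a$: both delete $a$ and produce the same word. \emph{(ii)} A type-$1$ rule $(a, 1)$ applied to the left letter of a type-$2$ window $((a, b), c)$ applied to $ab$: the reducts are $\ldots b \ldots$ and $\ldots c \ldots$, and \eqcref{ax:pre-alg:neutral:sig:article-graph-raj-dahya} forces $b = c$; symmetrically $(b, 1)$ versus $((a, b), c)$ forces $a = c$. \emph{(iii)} Two type-$2$ windows sharing exactly one letter, say $((a, b), c)$ at positions $i, i+1$ and $((b, d), e)$ at positions $i+1, i+2$: the reducts are $\ldots c d \ldots$ and $\ldots a e \ldots$, and applying \eqcref{ax:pre-alg:associative:sig:article-graph-raj-dahya} with $y = x' = b$ yields some $v \in \Gamma^{\ast}$ together with rules $((a, e), v), ((c, d), v) \in R$, which close the diagram at $\ldots v \ldots$ in one step on each side.

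The one remaining configuration is two type-$2$ windows occupying the \emph{same} positions $i, i+1$; here both reducts are single letters produced from rules with identical left-hand side $(a, b)$, and in the intended application of the result (where $R$ is functional in its left-hand side) the two right-hand sides coincide and the reducts are equal. The main obstacle is essentially bookkeeping: one must be careful to distinguish \emph{adjacent but disjoint} windows (which fall under the commutation argument) from windows \emph{overlapping in a single letter} (which is the genuine critical case \emph{(iii)}), and verify exhaustiveness of the enumeration given that rule left-hand sides have length at most $2$. Once that dichotomy is cleanly established, \eqcref{ax:pre-alg:neutral:sig:article-graph-raj-dahya} and \eqcref{ax:pre-alg:associative:sig:article-graph-raj-dahya} are tailored precisely to close the two nontrivial critical pairs, yielding local confluence.
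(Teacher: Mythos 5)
Your proposal is correct and follows essentially the same route as the paper's own proof: a Newman\-/style critical\-/pair analysis in which positionally disjoint redexes commute in one further step each, the letter/pair overlaps are closed by \eqcref{ax:pre-alg:neutral:sig:article-graph-raj-dahya}, and the single\-/letter overlap of two pair\-/rules is closed by \eqcref{ax:pre-alg:associative:sig:article-graph-raj-dahya}.

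Your last configuration deserves emphasis, because it exposes a gap in the paper's own five\-/case enumeration rather than in your argument: two \emph{distinct} type\-/2 rules with identical left\-/hand side applied in the same window give $u \neq u'$, are not disjoint, and match none of the paper's overlap cases. You are right that the pre\-/algebraic axioms do not force the two right\-/hand sides to coincide; indeed with $\Gamma = \{a,b,c,d\}$ and $R = \{((a,b),c),\:((a,b),d)\}$ both axioms hold vacuously, yet $ab$ reduces to the distinct irreducible words $c$ and $d$, so the proposition is false without the additional hypothesis that $R$ is functional on left\-/hand sides. That hypothesis does hold where the proposition is applied: in $R_{\mathcal{G}}$ the left\-/hand side $(\letter{(u,v)}, \letter{(v,w)})$ determines $(u,w)$ and hence the right\-/hand side $\letter{(u,w)}$, so your conditional closure of this case is legitimate there. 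Making the left\-/functionality assumption explicit (either in the statement of the proposition or when invoking it) is the only repair needed.
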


    \begin{proof}
        For $w,u,u'\in \Gamma^{\ast}$
        with $w \to_{R} u$
        and $w \to_{R} u'$
        there are two trivial and three non-trivial cases
        to consider:

        \begin{enumerate}[
            label={\itshape Case~\arabic*.},
            ref={\itshape Case~\arabic*},
            left=\rtab,
        ]
            \item
                $u = u'$.
                Then
                $u \overset{\ast}{\to}_{R} u$
                and
                $u' \overset{\ast}{\to}_{R} u' = u$.
            \item
                The reductions
                $w \to_{R} u$
                and
                $w \to_{R} u'$
                are obtained by reducing disjoint parts of $w$.
                Then due to the context-free nature of the reduction,
                one may apply these reductions successively in any order
                to obtain
                    $w \to_{R} u \to_{R} \tilde{w}$
                and
                    $w \to_{R} u' \to_{R} \tilde{w}$
                for some common word $\tilde{w} \in \Gamma^{\ast}$.

            \item
                There exist $x,y,z \in \Gamma$
                with
                    $(x, 1) \in R$
                    and $((x,y), z) \in R$,
                such that
                    $w = \alpha x y \beta$,
                    $u = \alpha y \beta$,
                    and
                    $u' = \alpha z \beta$
                    (or vice versa),
                for some words $\alpha, \beta \in \Gamma^{\ast}$.
                By \eqcref{ax:pre-alg:neutral:sig:article-graph-raj-dahya}
                one has that $y = z$
                and thus $u = u'$.
                Hence
                    $u \overset{\ast}{\to}_{R} u$
                    and
                    $u' \overset{\ast}{\to}_{R} u' = u$.

            \item
                There exist $x,y,z \in \Gamma$
                with
                    $(y, 1) \in R$
                    and $((x,y), z) \in R$,
                such that
                    $w = \alpha x y \beta$,
                    $u = \alpha x \beta$,
                    and
                    $u' = \alpha z \beta$
                    (or vice versa),
                for some words $\alpha, \beta \in \Gamma^{\ast}$.
                By \eqcref{ax:pre-alg:neutral:sig:article-graph-raj-dahya}
                one has that $x = z$
                and thus $u = u'$.
                Hence
                    $u \overset{\ast}{\to}_{R} u$
                    and
                    $u' \overset{\ast}{\to}_{R} u' = u$.

            \item
                There exist rules
                    $((x,y),z)$
                    and
                    $((x',y'),z')$
                    in $R$
                with $y = x'$
                such that
                    $
                        w = \alpha x y y' \beta
                        = \alpha x x' y' \beta
                    $,
                    $u = \alpha z y' \beta$,
                and $u' = \alpha x z' \beta$
                for some words $\alpha, \beta \in \Gamma^{\ast}$.
                By \eqcref{ax:pre-alg:associative:sig:article-graph-raj-dahya}
                    $((x, z'), s)$
                    and
                    $((z, y'), s)$
                    in $R$
                for some $s \in \Gamma^{\ast}$
                (in fact $s \in \Gamma$).
                It follows that
                    $u = \alpha z y' \beta \to_{R} \alpha s \beta$
                    and
                    $u' = \alpha x z' \beta \to_{R} \alpha s \beta$.
        \end{enumerate}

        So in all cases,
        some common word $\tilde{w} \in \Gamma^{\ast}$
        exists such that
            $u \overset{\ast}{\to}_{R} \tilde{w}$
        and
            $u' \overset{\ast}{\to}_{R} \tilde{w}$.
        Since $w, u, u'$ were arbitrarily chosen,
        it follows that $(\Gamma^{\ast},\to_{R})$
        is locally confluent.
    \end{proof}

Combining
\Cref{%
    prop:conf-iff-loc-conf:sig:article-graph-raj-dahya,%
    prop:rew-sys-noeth:sig:article-graph-raj-dahya,%
    prop:rew-sys-loc-conf:sig:article-graph-raj-dahya%
}
yields

\begin{prop}
\makelabel{prop:rew-sys-convergent:sig:article-graph-raj-dahya}
    Let $\Gamma$ be a non-empty alphabet
    and let
        $
            R
            \subseteq
            (\Gamma^{1})^{1} \times \Gamma^{0}
            \cup
            (\Gamma^{1})^{2} \times \Gamma^{1}
        $
    be a set of rules satisfying
    \eqcref{ax:pre-alg:neutral:sig:article-graph-raj-dahya}
    and
    \eqcref{ax:pre-alg:associative:sig:article-graph-raj-dahya}.
    Then the string-rewriting system
    $(\Gamma^{\ast},\to_{R})$
    associated to $(\Gamma, R)$
    is Noetherian and confluent.
\end{prop}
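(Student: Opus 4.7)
The statement is precisely the convergence conclusion one gets by combining the three preceding propositions, so my plan is to verify that their hypotheses are met and then assemble them.

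First I would verify strict monotonicity of $R$ in order to apply \Cref{prop:rew-sys-noeth:sig:article-graph-raj-dahya}. By assumption, every rule in $R$ has one of two forms: either it lies in $(\Gamma^{1})^{1} \times \Gamma^{0}$, in which case it replaces a single letter (total left-length $1$) by the empty word (length $0$), or it lies in $(\Gamma^{1})^{2} \times \Gamma^{1}$, in which case it replaces a pair of letters (total left-length $2$) by a single letter (length $1$). In both cases the length of the right-hand side is strictly less than the sum of the lengths of the letters on the left-hand side, so $R$ is strictly monotone decreasing in the sense of \Cref{prop:rew-sys-noeth:sig:article-graph-raj-dahya}. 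That proposition then yields that $(\Gamma^{\ast},\to_{R})$ is Noetherian.

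Next I would invoke \Cref{prop:rew-sys-loc-conf:sig:article-graph-raj-dahya}. Its hypotheses are exactly the shape constraint on $R$ together with the two pre-algebraic axioms \eqcref{ax:pre-alg:neutral:sig:article-graph-raj-dahya} and \eqcref{ax:pre-alg:associative:sig:article-graph-raj-dahya}, all of which are granted in the statement. Therefore $(\Gamma^{\ast},\to_{R})$ is locally confluent.

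Finally, having Noetherianity and local confluence in hand, the diamond lemma (\Cref{prop:conf-iff-loc-conf:sig:article-graph-raj-dahya}) upgrades local confluence to full confluence, completing the proof. No step here is an obstacle in the usual sense; the work has already been done in the three propositions cited, and the only thing to check is that the rule-length hypothesis of the present proposition lines up with the strict-monotonicity condition needed by \Cref{prop:rew-sys-noeth:sig:article-graph-raj-dahya}, which it does essentially by inspection.
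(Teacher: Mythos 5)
Your proposal is correct and matches the paper exactly: the paper states this proposition as the immediate combination of \Cref{prop:rew-sys-noeth:sig:article-graph-raj-dahya}, \Cref{prop:rew-sys-loc-conf:sig:article-graph-raj-dahya}, and the diamond lemma \Cref{prop:conf-iff-loc-conf:sig:article-graph-raj-dahya}. Your additional check that the shape constraint on $R$ (left-hand sides of total length $1$ or $2$ reducing to words of length $0$ or $1$ respectively) implies strict monotone decrease is the right and only verification needed.
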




\subsubsection[Quotient monoid of a string-rewriting system]{Quotient monoid of a string-rewriting system}
\label{sec:algebra:defn:free:sig:article-graph-raj-dahya}

\firstparagraph
Let $(\Gamma, R)$ be an alphabet and set of rules
and consider the associated string-rewriting system
    $(\Gamma^{\ast}, \to_{R})$.
As in \S{}\ref{sec:algebra:defn:reduction:sig:article-graph-raj-dahya},
the reduction system
    $(\Gamma^{\ast}, \to_{R})$
induces an equivalence relation
    $(\Gamma^{\ast}, \overset{\ast}{\leftrightarrow}_{R})$.
For $x \in \Gamma^{\ast}$
we let $[x]_{R}$ or simply $[x]$
denote the equivalence class of $x$
within this structure.

Let $u,v,x,y \in \Gamma^{\ast}$ be arbitrary words over $\Gamma$.
Relying on the definition in \eqcref{eq:context-free:sig:article-graph-raj-dahya},
it is a simple exercise to show that
    ${x \to_{R} y}$
    implies ${u x v \to_{R} u y v}$.
And by induction one may obtain that
    ${x \overset{\ast}{\to}_{R} y}$
    implies
    ${u x v \overset{\ast}{\to}_{R} u y v}$.
In a similar way, one may arrive at the fact that
    ${x \overset{\ast}{\leftrightarrow}_{R} y}$
    implies
    ${u x v \overset{\ast}{\leftrightarrow}_{R} u y v}$.
The equivalence relation
    $\overset{\ast}{\leftrightarrow}_{R}$
is thus a \highlightTerm{congruence}
on the monoid $(\Gamma^{\ast},\cdot,1)$.

Since the equivalence relation is a congruence,
it follows that the operation
    $[x]_{R} [y]_{R} \coloneqq [x y]_{R}$
for $x, y \in \Gamma^{\ast}$
is well-defined.%
\footnote{%
    since for $x,x',y,y'\in\Gamma^{\ast}$
    with
        $x \overset{\ast}{\leftrightarrow}_{R} x'$
    and
        $y \overset{\ast}{\leftrightarrow}_{R} y'$,
    the congruence yields
        $
            x y
            \overset{\ast}{\leftrightarrow}_{R}
            x' y
            \overset{\ast}{\leftrightarrow}_{R}
            x' y'
        $
    and thus $[x y]_{R} = [x' y']_{R}$,
    so that the operation does not depend on the choice of representatives.
}
One may thus define the (quotient) \highlightTerm{monoid associated to} $(\Gamma, R)$
as
    $
        \Generate[Mon]{\Gamma}{R}
        \coloneqq \{[x]_{R} \mid x \in \Gamma^{\ast}\}
    $
endowed with the operation
    $
        [x]_{R} [y]_{R} = [x y]_{R}
    $
for $x, y \in \Gamma^{\ast}$
and the neutral element $1 \coloneqq [1]_{R}$.
Observe in particular that

\begin{restoremargins}
\begin{equation}
\label{eq:epimorphism-free-monoid:sig:article-graph-raj-dahya}
\everymath={\displaystyle}
\begin{array}[m]{rcl}
        \Gamma^{\ast} &\to &\Generate[Mon]{\Gamma}{R}\\
        x &\mapsto &[x]_{R}
\end{array}
\end{equation}
\end{restoremargins}

\continueparagraph
is a surjective morphism between the monoids.

Finally note that one can similarly associate a group
to $(\Gamma^{\ast},\to_{R})$.
Fix a bijection ${\theta : \Gamma \to \Gamma^{-1}}$
between $\Gamma$
and a disjoint copy $\Gamma^{-1}$ of the alphabet.
Define
    $
        R_{0}
        \coloneqq
        \{
            ((a, a^{-1}), 1)
            \mid a \in \Gamma
        \}
        \cup
        \{
            ((a^{-1}, a), 1)
            \mid a \in \Gamma
        \}
    $
where
    $a^{-1} \coloneqq \theta(a) \in \Gamma^{-1}$
    for $a \in \Gamma$.
One can then define
    $
        \Generate[Gp]{\Gamma}{R}
        \coloneqq
        \Generate[Mon]{\Gamma \cup \Gamma^{-1}}{R_{0} \cup R}
    $,
and verify that this defines a group.
We shall however, not need these constructions in the present paper.




\subsection[Groups induced by graphs]{Groups induced by graphs}
\label{sec:algebra:graphs:sig:article-graph-raj-dahya}

\firstparagraph
The definitions of the previous subsection shall now be applied to graphs.
We start by creating a string-rewriting system
on an alphabet whose letters arise from edges.
By transitively closing the set of edges under traversal,
we obtain a Noetherian confluent system.
By further closing under symmetry,
the quotient monoid associated to the string-rewriting system
provides us with a group.


\subsubsection[String-rewriting system for graphs]{String-rewriting system for graphs}
\label{sec:algebra:graphs:rewrite:sig:article-graph-raj-dahya}

\firstparagraph
Let $\mathcal{G} = (\Omega, E)$ be an arbitrary graph,
where $\Omega$ is a non-empty set of nodes
and $E \subseteq \Omega \times \Omega$ a (possibly empty) set of edges.
For $F \subseteq \Omega \times \Omega$
let $\Generate{F} \subseteq \Omega \times \Omega$
denote its transitive hull.
Set
    $
        \breve{E}
        \coloneqq
        \Generate{E \cup E^{-1} \cup \Gph{\id_{\Omega}}}
    $
where
    $E^{-1} = \{(v, u) \mid (u, v) \in E\}$
    and
    $\Gph{\id_{\Omega}} = \{(u, u) \mid u \in \Omega\}$.
Then
    $\breve{E}$
is the smallest equivalence relation on $\Omega$ containing $E$.
For each pair $(u, v) \in \Omega \times \Omega$
define a distinct \usesinglequotes{letter}
    $\letter{(u,v)}$.
We shall refer to

\begin{restoremargins}
\begin{equation}
\label{eq:alphabet-graph:sig:article-graph-raj-dahya}
    \Gamma_{\mathcal{G}}
        \coloneqq \{
            \letter{(u,v)}
            \mid
            (u, v) \in \breve{E}
        \}
\end{equation}
\end{restoremargins}

\continueparagraph
as the \highlightTerm{edge alphabet},%
\footnote{%
    One can of course choose the set of letters
    to simply be $E$ itself
    with $\letter{e} \coloneqq e$
    for each edge $e \in E$.
    The choice of notation here is simply a visual
    aid to distinguish between the graph-theoretic
    and algebraic setting.
}
and the set of rules

\begin{restoremargins}
\begin{equation}
\label{eq:rules-graph:sig:article-graph-raj-dahya}
\everymath={\displaystyle}
\begin{array}[m]{rcl}
    R_{\mathcal{G}}
    &\coloneqq
    &\{
        (
            (\letter{(u,v)}, \letter{(v,w)}),
            \letter{(u,w)}
        )
        \mid
        (u, v, w) \in \Omega^{3},
        (u, v), (v, w), (u, w)
        \in \breve{E}
    \}\\
    &&\cup
    \{
        (\letter{(u,u)}, 1)
        \mid
        u \in \Omega
    \}
\end{array}
\end{equation}
\end{restoremargins}

\continueparagraph
as the \highlightTerm{edge reduction rules}.
Consider now the string-rewriting system
$(\Gamma_{\mathcal{G}}^{\ast}, \to_{R_{\mathcal{G}}})$
associated to
$(\Gamma_{\mathcal{G}}, R_{\mathcal{G}})$,
as defined in \eqcref{eq:context-free:sig:article-graph-raj-dahya}.
For brevity, we shall write $\to_{\mathcal{G}}$ instead of $\to_{R_{\mathcal{G}}}$
and
    $[x]_{\mathcal{G}}$
    or simply $[x]$
for the equivalence class of each word $x \in \Gamma_{\mathcal{G}}^{\ast}$
in $(\Gamma_{\mathcal{G}}^{\ast}, \overset{\ast}{\leftrightarrow}_{\mathcal{G}})$.

\begin{prop}
\makelabel{prop:rew-sys-on-graph-is-noeth+conf:sig:article-graph-raj-dahya}
    The rewriting-system
    $(\Gamma_{\mathcal{G}}^{\ast}, \to_{\mathcal{G}})$
    is Noetherian and confluent.
\end{prop}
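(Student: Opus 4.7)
The plan is to verify that the rewriting system $(\Gamma_{\mathcal{G}}^{\ast}, \to_{\mathcal{G}})$ satisfies the hypotheses of \Cref{prop:rew-sys-convergent:sig:article-graph-raj-dahya} and then invoke that result. To this end, I first observe that by construction every element of $R_{\mathcal{G}}$ is either a $2$-to-$1$ rule of the form $((\letter{(u,v)}, \letter{(v,w)}), \letter{(u,w)})$ with $(u,v),(v,w),(u,w) \in \breve{E}$, or a $1$-to-empty rule of the form $(\letter{(u,u)}, 1)$ with $u \in \Omega$. Thus $R_{\mathcal{G}} \subseteq (\Gamma_{\mathcal{G}}^{1})^{1} \times \Gamma_{\mathcal{G}}^{0} \cup (\Gamma_{\mathcal{G}}^{1})^{2} \times \Gamma_{\mathcal{G}}^{1}$, as required.

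Next I would verify the pre-identity axiom \eqcref{ax:pre-alg:neutral:sig:article-graph-raj-dahya}. Consider a rule $((x,y), z) \in R_{\mathcal{G}}$, so that $x = \letter{(u,v)}$, $y = \letter{(v,w)}$, $z = \letter{(u,w)}$ for some $u,v,w \in \Omega$ with $(u,v),(v,w),(u,w) \in \breve{E}$. If $(x,1) \in R_{\mathcal{G}}$, then $x = \letter{(u,u)}$ forces $v = u$, so $y = \letter{(u, w)} = z$, as required. Symmetrically, if $(y, 1) \in R_{\mathcal{G}}$, then $v = w$, so $x = \letter{(u,v)} = z$.

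The main (though still routine) step is to verify the pre-associativity axiom \eqcref{ax:pre-alg:associative:sig:article-graph-raj-dahya}. Given two $2$-to-$1$ rules $((x,y),z)$ and $((x',y'),z')$ in $R_{\mathcal{G}}$ with $y = x'$, write $x = \letter{(u_0,u_1)}$, $y = \letter{(u_1,u_2)}$, $z = \letter{(u_0,u_2)}$, and accordingly $x' = \letter{(u_1,u_2)}$, $y' = \letter{(u_2,u_3)}$, $z' = \letter{(u_1,u_3)}$, where all relevant pairs lie in $\breve{E}$. Because $\breve{E}$ is transitively closed (indeed, an equivalence relation on $\Omega$), one concludes that $(u_0, u_3) \in \breve{E}$, together with $(u_0, u_1), (u_1, u_3), (u_0, u_2), (u_2, u_3)$. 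Setting $w \coloneqq \letter{(u_0, u_3)}$, it follows that both $((x, z'), w) = ((\letter{(u_0, u_1)}, \letter{(u_1, u_3)}), \letter{(u_0, u_3)})$ and $((z, y'), w) = ((\letter{(u_0, u_2)}, \letter{(u_2, u_3)}), \letter{(u_0, u_3)})$ lie in $R_{\mathcal{G}}$.

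Having verified both pre-algebraic axioms and the structural form of the rules, \Cref{prop:rew-sys-convergent:sig:article-graph-raj-dahya} directly yields that $(\Gamma_{\mathcal{G}}^{\ast}, \to_{\mathcal{G}})$ is Noetherian and confluent. No step should present a genuine obstacle; the only point demanding care is ensuring that the intermediate pair $(u_0, u_3)$ indeed lies in $\breve{E}$ during the pre-associativity check, which is exactly why the edge set was transitively (and reflexively, symmetrically) closed in the definition \eqcref{eq:alphabet-graph:sig:article-graph-raj-dahya}.
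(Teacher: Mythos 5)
Your proposal is correct and follows essentially the same route as the paper's proof: checking that $R_{\mathcal{G}}$ has the required structural form, verifying the pre-identity and pre-associativity axioms (using the transitivity of $\breve{E}$ to produce the common reduct in the associativity case), and then invoking \Cref{prop:rew-sys-convergent:sig:article-graph-raj-dahya}. The only difference is notational.
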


    \begin{proof}
        Since $R_{\mathcal{G}}$
        is a subset of
            $
                (\Gamma_{\mathcal{G}}^{1})^{1} \times \Gamma_{\mathcal{G}}^{0}
                \cup
                (\Gamma_{\mathcal{G}}^{1})^{2} \times \Gamma_{\mathcal{G}}^{1}
            $,
        by \Cref{prop:rew-sys-convergent:sig:article-graph-raj-dahya}
        it suffices to prove that
            $R_{\mathcal{G}}$
        satisfies the pre-algebraic axioms
        \eqcref{ax:pre-alg:neutral:sig:article-graph-raj-dahya}
        and
        \eqcref{ax:pre-alg:associative:sig:article-graph-raj-dahya}
        in \S{}\ref{sec:algebra:defn:rewrite:sig:article-graph-raj-dahya}.

        Towards \eqcref{ax:pre-alg:neutral:sig:article-graph-raj-dahya},
        let $((x, y), z) \in R_{\mathcal{G}}$.
        Then there exist
            $(u, v, w) \in \Omega^{3}$
        such that
            $(u, v), (v, w), (u, w) \in \breve{E}$
        and
            $x = \letter{(u, v)}$,
            $y = \letter{(v, w)}$,
            and
            $z = \letter{(u, w)}$.
        If $(x, 1) \in R_{\mathcal{G}}$.
        then there exists $s \in \Omega$
        such that $x = \letter{(s, s)}$,
        whence $u = s = v$,
        which implies
            $z = \letter{(u, w)} = \letter{(v, w)} = y$.
        And if $(y, 1) \in R_{\mathcal{G}}$,
        then there exists $s \in \Omega$
        such that $y = \letter{(s, s)}$,
        whence $v = s = w$,
        which implies
            $z = \letter{(u, w)} = \letter{(u, v)} = x$.
        Thus \eqcref{ax:pre-alg:neutral:sig:article-graph-raj-dahya}
        is fulfilled by $R_{\mathcal{G}}$.

        Towards \eqcref{ax:pre-alg:associative:sig:article-graph-raj-dahya},
        let
            $((x, y), z), ((x', y'), z') \in R_{\mathcal{G}}$
        with $y = x'$.
        Then there exist
            $(u, v, w), (u', v', w') \in \Omega^{3}$
        such that
            $(u, v), (v, w), (u, w), (u', v'), (v', w'), (u', w') \in \breve{E}$
        and
            $x = \letter{(u, v)}$,
            $y = \letter{(v, w)}$,
            $z = \letter{(u, w)}$,
            $x' = \letter{(u', v')}$,
            $y' = \letter{(v', w')}$,
            and
            $z' = \letter{(u', w')}$.
        Since $y = x'$
        one has $(v, w) = (u', v')$
        and thus
            $
                z'
                = \letter{(u', w')}
                = \letter{(v, w')}
            $
        and
            $
                y'
                = \letter{(v', w')}
                = \letter{(w, w')}
            $.
        Since $\breve{E}$ is transitive
        and contains
            $(u, v)$ and $(u', w') = (v, w')$,
        one has that $(u, w') \in \breve{E}$.
        By construction, it follows that
            $
                ((x, z'), \letter{(u, w')})
                = ((\letter{(u, v)}, \letter{(v, w')}), \letter{(u, w')})
            $
            and
            $
                ((z, y'), \letter{(u, w')})
                = ((\letter{(u, w)}, \letter{(w, w')}), \letter{(u, w')})
            $
        are rules in $R_{\mathcal{G}}$.
        Thus \eqcref{ax:pre-alg:associative:sig:article-graph-raj-dahya}
        is fulfilled by $R_{\mathcal{G}}$.
    \end{proof}



\subsubsection[Groups associated to graphs]{Groups associated to graphs}
\label{sec:algebra:graphs:free:sig:article-graph-raj-dahya}

\firstparagraph
Consider now the monoid
$
    \Generate[Mon]{\Gamma_{\mathcal{G}}}{R_{\mathcal{G}}}
$
associated to
    $(\Gamma_{\mathcal{G}}, R_{\mathcal{G}})$,
as defined in \S{}\ref{sec:algebra:defn:free:sig:article-graph-raj-dahya}.
Due to the epimorphism in \eqcref{eq:epimorphism-free-monoid:sig:article-graph-raj-dahya},
since the monoid
    $\Gamma_{\mathcal{G}}^{\ast}$
is generated by the subset
    $\Gamma_{\mathcal{G}}$,
the monoid
    $\Generate[Mon]{\Gamma_{\mathcal{G}}}{R_{\mathcal{G}}}$
is generated by the subset
    $
        S \coloneqq \{
            [x]
            \mid
            x \in \Gamma_{\mathcal{G}}
        \}
    $.
Let $x \in \Gamma_{\mathcal{G}}$ be arbitrary.
Then $x = \letter{(u, v)}$ for some edge $(u, v) \in \breve{E}$.
Since $\breve{E}$ is an equivalence relation on $\Omega$,
$(v, u), (u, u), (v, v) \in \breve{E}$.
Setting
    $x' \coloneqq \letter{(v, u)}$,
one has that
    $((x', x), \letter{(v, v)})$,
    $((x, x'), \letter{(u, u)})$,
    $(\letter{(u, u)}, 1)$,
    and
    $(\letter{(v, v)}, 1)$
are rules in $R_{\mathcal{G}}$.
Thus
    $
        x' x
        \to_{\mathcal{G}}
        \letter{(v, v)}
        \to_{\mathcal{G}}
        1
    $
and
    $
        x x'
        \to_{\mathcal{G}}
        \letter{(u, u)}
        \to_{\mathcal{G}}
        1
    $.
So
    $
        [x'] [x]
        = [1]
        = 1
    $
and
    $
        [x] [x']
        = [1]
        = 1
    $.
Hence every element in
    $S$
has an inverse.
It follows that
    $G_{\mathcal{G}} \coloneqq \Generate[Mon]{\Gamma_{\mathcal{G}}}{R_{\mathcal{G}}} = \Generate{S}$
is a group,
which we shall refer to as the
\highlightTerm{group associated with the edges of the graph $\mathcal{G}$}
or simply the \highlightTerm{edge group}.

\begin{rem}
    Note that in \Cref{prop:rew-sys-on-graph-is-noeth+conf:sig:article-graph-raj-dahya}
    one could replace $\breve{E}$
    by the transitive hull of $E \cup \Gph{\id_{\Omega}}$.
    We only needed $\breve{E}$ to be symmetric,
    in order for the associated monoid
    $\Generate[Mon]{\Gamma_{\mathcal{G}}}{R_{\mathcal{G}}}$
    to constitute a group.
\end{rem}



\subsubsection[Normal forms]{Normal forms}
\label{sec:algebra:graphs:normal:sig:article-graph-raj-dahya}

\firstparagraph
Now since by \Cref{prop:rew-sys-on-graph-is-noeth+conf:sig:article-graph-raj-dahya}
the rewriting-system
    $(\Gamma_{\mathcal{G}}^{\ast}, \to_{\mathcal{G}})$
is Noetherian and confluent,
by \Cref{prop:rs-normal-form:sig:article-graph-raj-dahya}
for each word $x \in \Gamma_{\mathcal{G}}^{\ast}$,
there exists a unique normal form
    $\hat{x} \in \Irred{\Gamma_{\mathcal{G}}^{\ast}}$,
\idest an irreducible element,
such that $x' \overset{\ast}{\to}_{\mathcal{G}} \hat{x}$
for all $x' \in [x]$.
The map

\begin{restoremargins}
\begin{equation}
\label{eq:graph-normal-form:sig:article-graph-raj-dahya}
\everymath={\displaystyle}
\begin{array}[m]{rcccl}
    N_{\mathcal{G}}
        &: &G_{\mathcal{G}} &\to &\Irred{\Gamma_{\mathcal{G}}^{\ast}}\\
        &&[x]
            &\mapsto
            &\text{the}~\hat{x}~\text{with $x \overset{\ast}{\to}_{\mathcal{G}} \hat{x}$}
\end{array}
\end{equation}
\end{restoremargins}

\continueparagraph
is thus well-defined.
Observe in particular, that $N_{\mathcal{G}}$ is a \highlightTerm{choice function},
\idest $\hat{x} = N_{\mathcal{G}}([x]) \in [x]$
for all $x \in \Gamma_{\mathcal{G}}^{\ast}$.
Using this map we may prove the following:

\begin{prop}
\makelabel{prop:normal-form-for-single-letters-in-graph-alphabet:sig:article-graph-raj-dahya}
    Let ${\iota : \breve{E} \to G_{\mathcal{G}}}$
    be defined by
    $\iota(u, v) = [\letter{(u, v)}]$
    for $(u, v) \in \breve{E}$.
    Then
        $\iota(u, u) = [1]$
        and
        $N_{\mathcal{G}}(\iota(u, u)) = 1$
        for $u \in \Omega$
    and
        $N_{\mathcal{G}}(\iota(u, v)) = \letter{(u, v)}$
    for all $(u, v) \in \breve{E} \without \Gph{\id_{\Omega}}$.
    In particular,
        $\iota\restr{\breve{E} \without \Gph{\id_{\Omega}}}$
    is injective.
\end{prop}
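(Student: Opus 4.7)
The plan is to reduce each of the four assertions to an immediate consequence of \Cref{prop:rs-normal-form:sig:article-graph-raj-dahya} combined with the explicit form of the rules in \eqcref{eq:rules-graph:sig:article-graph-raj-dahya}. Because \Cref{prop:rew-sys-on-graph-is-noeth+conf:sig:article-graph-raj-dahya} guarantees that $(\Gamma_{\mathcal{G}}^{\ast}, \to_{\mathcal{G}})$ is Noetherian and confluent, normal forms exist and are unique, so the bulk of the work consists of checking that two particular words are already irreducible.

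First I would handle the loop case. By construction $(u,u) \in \breve{E}$, and $(\letter{(u,u)}, 1) \in R_{\mathcal{G}}$, so $\letter{(u,u)} \to_{\mathcal{G}} 1$, which yields $\iota(u,u) = [\letter{(u,u)}] = [1]$. The empty word $1 \in \Gamma_{\mathcal{G}}^{\ast}$ is trivially irreducible, since every left-hand side of a rule in $R_{\mathcal{G}}$ is non-empty and so the matching prerequisite in \eqcref{eq:context-free:sig:article-graph-raj-dahya} fails. Combined with $\letter{(u,u)} \overset{\ast}{\to}_{\mathcal{G}} 1$, the uniqueness clause of \Cref{prop:rs-normal-form:sig:article-graph-raj-dahya} identifies $1$ as the normal form of this class, giving $N_{\mathcal{G}}(\iota(u,u)) = 1$.

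Next I would treat $(u,v) \in \breve{E} \without \Gph{\id_{\Omega}}$, \idest $u \neq v$. The plan is to observe that the single-letter word $\letter{(u,v)}$ cannot be matched by any two-letter left-hand side of a rule (such matching requires at least two consecutive letters), and the only single-letter left-hand sides appearing in $R_{\mathcal{G}}$ are of the form $\letter{(s,s)}$. Since $u \neq v$, the letter $\letter{(u,v)}$ is not of this form, so $\letter{(u,v)} \in \Irred{\Gamma_{\mathcal{G}}^{\ast}}$. Hence $\letter{(u,v)}$ is itself the normal form of its class, \idest $N_{\mathcal{G}}(\iota(u,v)) = \letter{(u,v)}$.

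Injectivity then drops out almost for free: if $\iota(u,v) = \iota(u',v')$ with both pairs outside $\Gph{\id_{\Omega}}$, applying $N_{\mathcal{G}}$ to both sides yields $\letter{(u,v)} = \letter{(u',v')}$, and distinct edge pairs index distinct letters by the definition of $\Gamma_{\mathcal{G}}$ in \eqcref{eq:alphabet-graph:sig:article-graph-raj-dahya}, forcing $(u,v) = (u',v')$. I do not anticipate any substantive obstacle; the only point demanding care is the observation that two-letter rules cannot reduce a single-letter word, which is exactly what makes the hypothesis $u \neq v$ (rather than something stronger about $\breve{E}$) sufficient.
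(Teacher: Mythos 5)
Your proposal is correct and follows essentially the same route as the paper: reduce the loop letter to the empty word and note its irreducibility, then observe that a single letter $\letter{(u,v)}$ with $u \neq v$ matches no rule of $R_{\mathcal{G}}$ and is therefore its own normal form. The only cosmetic difference is that you argue irreducibility of $\letter{(u,v)}$ directly, whereas the paper enumerates the possible reductions of a one-letter word and rules out the nontrivial one; both hinge on the same observation that the sole single-letter left-hand sides in $R_{\mathcal{G}}$ are the loop letters.
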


    \begin{proof}
        For $u \in \Omega$
        it holds that
        $\letter{(u, u)} \to_{\mathcal{G}} 1$
        and thus
        $
            \iota(u, u)
            = [\letter{(u, u)}]
            = [1]
        $.
        Since $1 \in \Gamma_{\mathcal{G}}^{\ast}$
        is clearly irreducible,
        it follows that
            $
                N_{\mathcal{G}}(\iota((u, u)))
                = N_{\mathcal{G}}([1])
                = 1
            $.
        Now consider $(u, v) \in \breve{E} \without \Gph{\id_{\Omega}}$
        and let
            $
                x \coloneqq [\letter{(u, v)}]
            $.
        Using the normal form we have
            $
                \letter{(u, v)}
                \overset{\ast}{\to}_{\mathcal{G}} N_{\mathcal{G}}(x)
            $.
        Since the word $\letter{(u, v)}$ consists of a single letter,
        the only reductions under the context-free grammar
        induced by $\to_{\mathcal{G}}$
        are
            $\letter{(u, v)} \overset{0}{\to}_{\mathcal{G}} \letter{(u, v)}$
            and
            $\letter{(u, v)} \overset{1}{\to}_{\mathcal{G}} 1$.
        By construction of $R_{\mathcal{G}}$,
        the latter is only possible if $u = v$.
        Since $(u, v) \notin \Gph{\id_{\Omega}}$,
        it follows that $N_{\mathcal{G}}(x) = \letter{(u, v)}$.
    \end{proof}

For further work, it shall be useful to provide exact descriptions of irreducible elements.

\begin{defn}
    For $n\in\naturalsZero$
    say that a sequence of pairs
        $\{(u_{i},v_{i})\}_{i=1}^{n} \subseteq \Omega \times \Omega$,
    is \highlightTerm{non-coalescent}
    if
        $u_{i} \neq v_{i}$
    for $i \in \{1,2,\ldots,n\}$
    and
        $v_{i} \neq u_{i + 1}$
    for all $i \in \{1,2,\ldots,n-1\}$.
\end{defn}

Working through the construction of
the rules $R_{\mathcal{G}}$
it is a simple exercise to obtain:

\begin{prop}
\makelabel{prop:graph-form-irred-elements:sig:article-graph-raj-dahya}
    Let $x \in \Gamma_{\mathcal{G}}^{\ast}$.
    Then $x$ is irreducible if and only if
        $
            x = \prod_{i=1}^{n}
                \letter{(u_{i},v_{i})}
        $
    for some $n\in\naturalsZero$
    and a sequence of non-coalescent edges
        $\{(u_{i},v_{i})\}_{i=1}^{n} \subseteq \breve{E} \without \Gph{\id_{\Omega}}$.
\end{prop}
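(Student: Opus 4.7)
The plan is to unpack the definition of irreducibility directly against the two types of rules in $R_{\mathcal{G}}$. Recall from \eqcref{eq:rules-graph:sig:article-graph-raj-dahya} that a single reduction step in $(\Gamma_{\mathcal{G}}^{\ast},\to_{\mathcal{G}})$ consists either of (i)~deleting a letter $\letter{(u,u)}$ corresponding to a loop, or (ii)~replacing an adjacent pair $\letter{(u,v)}\letter{(v,w)}$ by $\letter{(u,w)}$ (which is allowed since $(u,v),(v,w)\in\breve{E}$ forces $(u,w)\in\breve{E}$ by transitivity). By \eqcref{eq:context-free:sig:article-graph-raj-dahya}, $x$ is reducible if and only if one of these substitutions can be performed somewhere inside $x$. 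Hence any irreducibility argument reduces to verifying two local conditions on the sequence of letters comprising $x$.

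For the ``only if'' direction, I would write an arbitrary $x \in \Gamma_{\mathcal{G}}^{\ast}$ as $\prod_{i=1}^{n}\letter{(u_{i},v_{i})}$ with $\{(u_{i},v_{i})\}_{i=1}^{n} \subseteq \breve{E}$ (the case $n = 0$ being vacuous) and argue contrapositively. If some $(u_{i},v_{i}) \in \Gph{\id_{\Omega}}$, then $(\letter{(u_{i},u_{i})},1) \in R_{\mathcal{G}}$ lets $x$ reduce, contradicting irreducibility; hence $\{(u_{i},v_{i})\}_{i=1}^{n} \subseteq \breve{E} \without \Gph{\id_{\Omega}}$. If instead $v_{i} = u_{i+1}$ for some $i<n$, then since $(u_{i},v_{i}),(v_{i},v_{i+1}) \in \breve{E}$ and $\breve{E}$ is transitive, $(u_{i},v_{i+1}) \in \breve{E}$, so the rule $((\letter{(u_{i},v_{i})},\letter{(v_{i},v_{i+1})}),\letter{(u_{i},v_{i+1})}) \in R_{\mathcal{G}}$ applies and again contradicts irreducibility. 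Thus the sequence is non-coalescent.

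For the ``if'' direction, suppose $x = \prod_{i=1}^{n}\letter{(u_{i},v_{i})}$ with $\{(u_{i},v_{i})\}_{i=1}^{n}$ a non-coalescent sequence in $\breve{E} \without \Gph{\id_{\Omega}}$. No rule of the form $(\letter{(u,u)},1)$ applies since $u_{i} \neq v_{i}$ for every $i$, and no rule of the form $((\letter{(u,v)},\letter{(v,w)}),\letter{(u,w)})$ applies since $v_{i} \neq u_{i+1}$ for every $i<n$. Since these two types exhaust $R_{\mathcal{G}}$ and any reduction in a string-rewriting system acts on a contiguous factor, no reduction step is available, so $x \in \Irred{\Gamma_{\mathcal{G}}^{\ast}}$.

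Neither direction presents any real obstacle; the proof is entirely a bookkeeping exercise in unpacking definitions. The only point worth stating explicitly is the use of transitivity of $\breve{E}$ to guarantee that the second type of rule is in fact always available whenever the coalescence condition $v_{i} = u_{i+1}$ holds, which is what couples the two conditions in the statement together into a single clean characterisation.
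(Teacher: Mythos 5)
Your proof is correct and is exactly the intended argument: the paper leaves this proposition as "a simple exercise," and your case analysis of the two rule types in $R_{\mathcal{G}}$, together with the observation that transitivity of $\breve{E}$ guarantees the binary rule is available whenever $v_{i}=u_{i+1}$, is the evident way to carry it out. Nothing is missing.
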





\subsection[Continuity]{Continuity}
\label{sec:algebra:continuity:sig:article-graph-raj-dahya}

\firstparagraph
Finally, we provide a notion of continuity,
which shall later be instrumental to obtain continuous dilations.
Let $G$ be a group, $E$ a topological space,
and ${\iota : E \to G}$ an arbitrary map.

\begin{defn}[Embedded uniform continuity]
\makelabel{defn:embedded-cts:sig:article-graph-raj-dahya}
    Let
        $\{\psi(g)\}_{g \in G} \subseteq \BoundedOps{\BanachRaum}$
    be a family of operators on a Banach space $\BanachRaum$.
    We shall say that $\psi$ has \highlightTerm{embedded uniform strong continuity}
    \wrt the map $\iota$ if

        \begin{restoremargins}
        \begin{equation}
        \label{eq:embedded-sot-continuity:sig:article-graph-raj-dahya}
            \sup_{g, h \in G_{\mathcal{G}}}
                \norm{
                    (
                        \psi(g\:\iota(e')\:h)
                        -
                        \psi(g\:\iota(e)\:h)
                    )
                    \xi
                }
            \longrightarrow 0
        \end{equation}
        \end{restoremargins}

    \continueparagraph
    for ${E \ni e' \longrightarrow e}$
    and for each $e \in E$
    and $\xi \in \BanachRaum$.
\end{defn}

In the context of the present paper,
we shall take $E$ to be the set of edges of a graph,
whose set of nodes $\Omega$ forms a topological space,
and $\iota$ shall be the embedding
${e \mapsto [\letter{e}] \in G_{(\Omega,E)}}$.




\section[Algebraic extensions of systems on graphs]{Algebraic extensions of systems on graphs}
\label{sec:ext:sig:article-graph-raj-dahya}

\firstparagraph
We now focus on
\ref{goal:ext:sig:article-graph-raj-dahya}
and
\ref{goal:cts:sig:article-graph-raj-dahya}
expressed at the start of this paper.
Let
    $\{\phi(u,v)\}_{(u,v) \in E} \subseteq \BoundedOps{\BanachRaum}$
a family of operators on the edges $E$
of a graph $\mathcal{G} = (\Omega, E)$
where $\BanachRaum$ is a Banach space.
As in the previous section,
we consider the equivalence relation
    $\breve{E} = \Generate{E \cup E^{-1} \cup \Gph{\id_{\Omega}}}$
generated by $E$,
the edge alphabet
    $\Gamma_{\mathcal{G}} = \{\letter{(u, v)} \mid (u, v) \in \breve{E}\}$
defined in \eqcref{eq:alphabet-graph:sig:article-graph-raj-dahya},
the reduction rules
    $
        R_{\mathcal{G}}
        \subseteq
        (\Gamma_{\mathcal{G}}^{\ast})^{\ast} \times \Gamma_{\mathcal{G}}^{\ast}
    $
defined in \eqcref{eq:rules-graph:sig:article-graph-raj-dahya},
and
the edge group
    $
        G_{\mathcal{G}}
        = \Gamma_{\mathcal{G}}^{\ast}/\overset{\ast}{\leftrightarrow}_{\mathcal{G}}
        = \{
            [x]
            \mid
            x \in \Gamma_{\mathcal{G}}^{\ast}
        \}
    $
associated with the graph $\mathcal{G}$,
where
    $
        [x]
        = [x]_{\mathcal{G}}
        = \{
            x' \in \Gamma_{\mathcal{G}}^{\ast}
            \mid
            x \overset{\ast}{\leftrightarrow}_{\mathcal{G}} x'
        \}
    $
for $x \in \Gamma_{\mathcal{G}}^{\ast}$.
Our goal is to find suitable operator families on $G_{\mathcal{G}}$
which are in some sense compatible with $\phi$.


\subsection[Discrete extensions for general graphs]{Discrete extensions for general graphs}
\label{sec:ext:general:sig:article-graph-raj-dahya}

\firstparagraph
We first consider the case of operator families defined on general graphs
with no particular properties.
Observing that
    $(\BoundedOps{\BanachRaum},\circ,\onematrix)$
is a monoid,
we obtain the following abstract results.

\begin{highlightboxWithBreaks}
\begin{lemm}
\makelabel{lemm:extension-normal-form:sig:article-graph-raj-dahya}
    Let $(M,\cdot,1)$ be any monoid,
    \exempli the set of contractions on a Banach space under operator multiplication.
    Let ${\phi : E \to M}$
    satisfy the identity axiom \eqcref{ax:dyn:id:sig:article-graph-raj-dahya},
    \idest $\phi(u, u) = 1$
    for $u \in \Omega$ provided $(u, u) \in E$.
    Then there exists a unique map
        ${\quer{\phi} : G_{\mathcal{G}} \to M}$
    with $\ran(\quer{\phi}) \subseteq \Generate{\ran(\phi) \cup \{1\}}$,%
    \footnoteref{ft:algebraic-generator:sig:article-graph-raj-dahya}
    satisfying
        $\quer{\phi}(1) = 1$
        and
        $\quer{\phi}([\letter{(u, v)}]) = \phi(u, v)$
    for all $(u, v) \in E$.
\end{lemm}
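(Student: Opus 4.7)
The plan is to exploit the canonical normal-form machinery for $G_{\mathcal{G}}$ developed in \Cref{prop:rew-sys-on-graph-is-noeth+conf:sig:article-graph-raj-dahya} and \Cref{prop:graph-form-irred-elements:sig:article-graph-raj-dahya}. Each $g \in G_{\mathcal{G}}$ admits a unique normal form $N_{\mathcal{G}}(g) = \prod_{i=1}^{n}\letter{(u_{i},v_{i})}$, where $\{(u_{i},v_{i})\}_{i=1}^{n}$ is a non-coalescent sequence in $\breve{E}\setminus\Gph{\id_{\Omega}}$. This strongly suggests defining $\quer{\phi}(g)$ as the corresponding product in $M$, once $\phi$ has been extended in a canonical way to all letters in $\Gamma_{\mathcal{G}}$.

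Concretely, I would first extend $\phi$ to a map $\psi : \breve{E} \to M$ by setting $\psi(u,v) \coloneqq \phi(u,v)$ for $(u,v) \in E$ and $\psi(u,v) \coloneqq 1$ for $(u,v) \in \breve{E} \setminus E$. This extension is coherent with the identity axiom, since both prescriptions yield $1$ on the diagonal. Next, define $\quer{\phi} : G_{\mathcal{G}} \to M$ by
\begin{displaymath}
    \quer{\phi}(g) \coloneqq \prod_{i=1}^{n} \psi(u_{i}, v_{i}),
\end{displaymath}
where $N_{\mathcal{G}}(g) = \prod_{i=1}^{n}\letter{(u_{i},v_{i})}$, with the empty product convention $\quer{\phi}(1) = 1$. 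Well-definedness is immediate from the uniqueness of $N_{\mathcal{G}}(g)$ guaranteed by \Cref{prop:rs-normal-form:sig:article-graph-raj-dahya}.

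Verification of the three required properties is then essentially mechanical. The identity $\quer{\phi}(1) = 1$ follows from the empty-product convention; for $(u,v) \in E$ with $u \neq v$, \Cref{prop:normal-form-for-single-letters-in-graph-alphabet:sig:article-graph-raj-dahya} gives $N_{\mathcal{G}}([\letter{(u,v)}]) = \letter{(u,v)}$, so $\quer{\phi}([\letter{(u,v)}]) = \psi(u,v) = \phi(u,v)$; for $(u,u) \in E$, the same proposition yields $[\letter{(u,u)}] = 1$, whence $\quer{\phi}([\letter{(u,u)}]) = 1 = \phi(u,u)$ by the identity axiom. The range condition $\ran(\quer{\phi}) \subseteq \Generate{\ran(\phi) \cup \{1\}}$ holds because every factor $\psi(u_{i},v_{i})$ lies in $\ran(\phi) \cup \{1\}$, and this set is preserved by products inside the generated submonoid.

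The delicate point is \emph{uniqueness}, since the extension $\psi$ from $E$ to $\breve{E}$ involves an a priori arbitrary choice on pairs $(u,v) \in \breve{E} \setminus E$ with $u \neq v$. My plan here is to unpack the footnoted interpretation of $\Generate{\ran(\phi) \cup \{1\}}$ as the submonoid generated, and to argue that the canonical assignment $\psi(u,v) = 1$ on $\breve{E} \setminus E$ is forced by the minimality of the range constraint coupled with the prescribed values on $\{1\} \cup \iota(E)$; on the level of normal forms, any two candidates satisfying the three stipulated properties must then agree factor-by-factor. This is the step I expect to be the main obstacle, as it requires the most care to state precisely, whereas the construction itself is a direct application of the reduction-system framework from \S\ref{sec:algebra:sig:article-graph-raj-dahya}.
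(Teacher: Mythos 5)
Your construction is exactly the paper's: extend $\phi$ to a map on $\breve{E}$ by the constant $1$ off $E$, and define $\quer{\phi}$ on each class as the product of the extended values along the unique normal form supplied by \Cref{prop:rs-normal-form:sig:article-graph-raj-dahya} and \Cref{prop:graph-form-irred-elements:sig:article-graph-raj-dahya}; your verification of $\quer{\phi}(1)=1$, of $\quer{\phi}([\letter{(u,v)}])=\phi(u,v)$ via \Cref{prop:normal-form-for-single-letters-in-graph-alphabet:sig:article-graph-raj-dahya}, and of the range inclusion likewise matches the paper. For existence there is nothing to add.

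The uniqueness step you flag as the main obstacle is, however, a genuine problem, and your sketched resolution will not close it. The three stipulated conditions constrain $\quer{\phi}$ only at $1$ and on $\iota(E)$; on every other element of $G_{\mathcal{G}}$ (classes of products of two or more letters, or of single letters $\letter{(u,v)}$ with $(u,v)\in\breve{E}\setminus E$ and $u\neq v$) the sole remaining requirement is that the value lie in the submonoid $\Generate{\ran(\phi)\cup\{1\}}$. That is an inclusion, not a minimality condition, so it forces nothing: reassigning $\quer{\phi}$ on any such element to a different member of that submonoid still satisfies all three conditions, and no factor-by-factor rigidity argument on normal forms can rescue this. (The paper's own proof in fact only establishes existence and the listed properties; the word \emph{unique} in the statement should be understood as referring to the canonical normal-form construction rather than as a consequence of the three conditions.) A defensible uniqueness claim would require strengthening the hypotheses, e.g.\ demanding additionally that $\quer{\phi}([\letter{(u,v)}])=1$ for $(u,v)\in\breve{E}\setminus E$ and that $\quer{\phi}$ be multiplicative along normal forms; with those in place your factor-by-factor comparison does go through.
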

\end{highlightboxWithBreaks}

\footnotetext[ft:algebraic-generator:sig:article-graph-raj-dahya]{%
    $\Generate{A}$ denotes the closure of a subset $A \subseteq M$ under multiplication.
}

    \begin{proof}
        We first extend $\phi$
        to the equivalence relation $\breve{E}$ generated by the edge set $E$.
        Define
            $\phi_{0} : \breve{E} \to M$
        via
            $\phi_{0}(u, v) \coloneqq \phi(u, v)$
        for $(u, v) \in E$
        and
            $\phi_{0}(u, v) \coloneqq 1$
            for $(u, v) \in \breve{E} \without E$.
        By construction and the assumptions on $\phi$,
        one has that
            $((\Omega, \breve{E}),\phi_{0})$
        satisfies
        the identity axiom \eqcref{ax:dyn:id:sig:article-graph-raj-dahya}.
        We now proceed to construct $\quer{\phi}$.

        \paragraph{Existence:}
        Recall that the map
            ${N_{\mathcal{G}} : G_{\mathcal{G}} \to \Gamma_{\mathcal{G}}^{\ast}}$
        defined in
            \eqcref{eq:graph-normal-form:sig:article-graph-raj-dahya},
        which associates to each equivalence class a unique normal form,
        is a choice function.
        Let $x \in \Gamma_{\mathcal{G}}^{\ast}$ be arbitrary.
        Then
            $\hat{x} \coloneqq N_{\mathcal{G}}([x]) \in [x]$
        is irreducible.
        By \Cref{prop:graph-form-irred-elements:sig:article-graph-raj-dahya},
        the word $\hat{x}$ may be uniquely written
        as a product of letters
            $
                \prod_{i=1}^{n}
                    \letter{(u_{i}, v_{i})}
            $,
        where
            $n\in\naturalsZero$
            and $\{(u_{i},v_{i})\}_{i=1}^{n} \subseteq \breve{E} \without \Gph{\id_{\Omega}}$
        is a non-coalescent sequence.
        Finally, we define

            \begin{restoremargins}
            \begin{equation}
            \label{eq:extension-normal-form:sig:article-graph-raj-dahya}
                \quer{\phi}([x])
                    \coloneqq
                        \prod_{i=1}^{n}
                            \phi_{0}(u_{i}, v_{i}),
            \end{equation}
            \end{restoremargins}

        \continueparagraph
        which in lieu of the choice function
        makes $\quer{\phi}$ a well-defined map
        from $G_{\mathcal{G}}$ into $M$.
        By construction, one clearly has
            $
                \ran(\phi)
                \subseteq
                    \Generate{\ran(\phi_{0})}
                = \Generate{\ran(\phi) \cup \{1\}}
            $.

        \paragraph{Properties:}
        Let $(u, v) \in E$ be arbitrary
        and let $x \coloneqq \letter{(u, v)}$.
        Write
            $\hat{x} \coloneqq N_{\mathcal{G}}([x])$
        as a product of letters
            $
                \prod_{i=1}^{n}
                    \letter{(u_{i}, v_{i})}
            $,
        where
            $n\in\naturalsZero$
            and $\{(u_{i},v_{i})\}_{i=1}^{n} \subseteq \breve{E} \without \Gph{\id_{\Omega}}$
        is a non-coalescent sequence.
        If $u = v$,
        then by
            \Cref{prop:normal-form-for-single-letters-in-graph-alphabet:sig:article-graph-raj-dahya},
        we have
            $
                \hat{x}
                = N_{\mathcal{G}}([x])
                = 1
                \in \Gamma_{\mathcal{G}}^{\ast}
            $.
        So $n = 0$
        and the right hand side of \eqcref{eq:extension-normal-form:sig:article-graph-raj-dahya}
        is empty, whence
            $
                \quer{\phi}([x])
                = 1
                = \phi_{0}(u, v)
                = \phi(u, v)
            $.
        If $u \neq v$,
        then by
            \Cref{prop:normal-form-for-single-letters-in-graph-alphabet:sig:article-graph-raj-dahya},
        we have
            $
                \hat{x}
                = N_{\mathcal{G}}([x])
                = \letter{(u, v)}
                \in \Gamma_{\mathcal{G}}^{\ast}
            $,
        so that $n = 1$ and $(u_{1}, v_{1}) = (u, v) \in E$.
        By \eqcref{eq:extension-normal-form:sig:article-graph-raj-dahya},
        we thus obtain
            $
                \quer{\phi}([x])
                = \phi_{0}(u_{1}, v_{1})
                = \phi_{0}(u, v)
                = \phi(u, v)
            $.
        Choosing any $u \in \Omega$,
        one has
            $
                \quer{\phi}(1)
                = \quer{\phi}([1])
                = \quer{\phi}([(u, u)])
                = \phi_{0}(u, u)
                = 1
            $,
        since $\phi_{0}$ satisfies
        the identity axiom \eqcref{ax:dyn:id:sig:article-graph-raj-dahya}.
    \end{proof}

We shall refer to the construction in
\eqcref{eq:extension-normal-form:sig:article-graph-raj-dahya}
as the \highlightTerm{normal form extension}.



\subsection[Continuous extensions for divisible systems]{Continuous extensions for divisible systems}
\label{sec:ext:lo:divisible:sig:article-graph-raj-dahya}

\firstparagraph
To make progress towards \ref{goal:cts:sig:article-graph-raj-dahya}
expressed at the start of this paper,
we now consider the case that the set of nodes $\Omega$ in the graph is topologised.
We seek sufficient conditions, under which the notion of continuity
presented in \Cref{defn:embedded-cts:sig:article-graph-raj-dahya}
holds for an extension.%
\footnote{%
    in \S{}\ref{sec:dilation:stroescu:cts:sig:article-graph-raj-dahya}
    this shall prove sufficient to establish continuity of dilations.
}
We immediately note that the normal form extension
    \eqcref{eq:extension-normal-form:sig:article-graph-raj-dahya}
constructed in
    \Cref{lemm:extension-normal-form:sig:article-graph-raj-dahya}
does not appear to be a promising candidate.
So our first task shall be to provide a new extension.

In order to achieve this, we narrow down further
and consider graphs $\mathcal{G} = (\Omega, E)$
for which $E$ is a reflexive linear ordering on $\Omega$.
As before we let $\mathrel{\prec}$
denote the irreflexive part of $E$,
so that $E = \{(u, v) \in \Omega \mid u \preceq v\}$.
Given such a linear ordering, we endow $\Omega$
with the order topology.%
\footnote{
    recall that this is the topology
    generated by sets of the form
        $\{u \in \Omega \mid u \prec v\}$
        and
        $\{u \in \Omega \mid v \prec u\}$
    for $v \in \Omega$
    (\cf \S{}\ref{sec:examples:hamiltonian:sig:article-graph-raj-dahya}).
}
Observe that since $E$ is a linear ordering,
the equivalence relation generated by $E$
is $\breve{E} = \Omega \times \Omega$.


Before establishing a new extension result for operator families defined on such graphs,
we need the following technical means:

\begin{enumerate}[
    label={\bfseries Cov\textsubscript{\arabic*})},
    ref={\bfseries Cov\textsubscript{\arabic*}},
]
    \item\label{it:cover:intervals:sig:article-graph-raj-dahya}
        For $u, v \in \Omega$ define
        $[u) \coloneqq \{u' \in \Omega \mid u' \prec u\}$
        and
        $
            [u,\:v)
            \coloneqq [v) \without [u)
            = \{u' \in \Omega \mid u \preceq u' \prec v\}
        $,
        which of course is only non-empty if $u \prec v$.

    \item\label{it:cover:atoms:sig:article-graph-raj-dahya}
        The map
            ${\onefct_{A} : \Omega \to \{0, 1\} \subseteq \reals}$
        denotes the indicator function
        of a subset $A \subseteq \Omega$.
        For $u, v \in \Omega$,
        we define
            $
                \cover_{u,v}
                \coloneqq \onefct_{[v)} - \onefct_{[u)}
            $.

    \item\label{it:cover:words:sig:article-graph-raj-dahya}
        Let $x \in \Gamma_{\mathcal{G}}^{\ast}$,
        which can be (uniquely) written as
            $x = \prod_{i=1}^{n}\letter{(u_{i}, v_{i})}$,
        for some $n \in \naturalsZero$
        and $
                \{(u_{i}, v_{i})\}_{i=1}^{n}
                \subseteq
                \quer{E}
                = \Omega \times \Omega
            $.
        Set

            \begin{restoremargins}
            \begin{equation}
            \label{eq:cover-of-element:sig:article-graph-raj-dahya}
                \cover_{x}
                \coloneqq
                \sum_{i=1}^{n}
                    \cover_{u_{i},v_{i}},
            \end{equation}
            \end{restoremargins}

        \continueparagraph
        which in general is a bounded map from $\Omega$ to (a finite subset of) $\integers$.
        We refer to $\cover_{x}$ as the \highlightTerm{cover} of $x$.
        By the above construction one has
            $\cover_{1} \equiv 0$,
            $
                \cover_{y y'}
                = \cover_{y} + \cover_{y'}
            $
            and thus
            $
                \cover_{y y'} = \cover_{y' y}
            $
            for all $y,y' \in \Gamma_{\mathcal{G}}^{\ast}$.

    \item\label{it:cover:support:sig:article-graph-raj-dahya}
        Continuing with the notation in \eqcref{it:cover:words:sig:article-graph-raj-dahya},
        we further define
            $\supp(x) \coloneqq \{u \in \Omega \mid \cover_{x}(u) > 0\}$
        to be the \highlightTerm{positive support of $x$}.

    \item\label{it:cover:simple-fct:sig:article-graph-raj-dahya}
        Let $x$, $n$, $\{(u_{i}, v_{i})\}_{i=1}^{n}$
        be as in \eqcref{it:cover:words:sig:article-graph-raj-dahya}.
        If $n = 0$, \idest $x = 1$,
        observe that
            $
                \cover_{x}
                = 0 \cdot \onefct_{[w_{0}, w_{1})}
            $
        for any elements $w_{0},w_{1}\in\Omega$
        with $w_{0} \preceq w_{1}$.
        Otherwise, by ordering the elements
            $u_{1},v_{1},u_{2},v_{2},\ldots,u_{n},v_{n}$
        \wrt $(\Omega,\prec)$,
        and without removing duplicates,
        one can construct
        a finite sequence
            $\{w_{k}\}_{k=0}^{m} \subseteq \Omega$
        with
            $m = 2n - 1$
        and
            $w_{0} \preceq w_{1} \preceq \ldots \preceq w_{m}$
        such that
            $
                \{w_{k} \mid k \in \{0,1,\ldots,m\}\}
                =
                \{u_{1},v_{1},u_{2},v_{2},\ldots,u_{n},v_{n}\}
            $.
        Let $i \in \{1,2,\ldots,n\}$ be arbitrary.
        Set $c_{i} \in \{0, 1, -1\}$
        depending on whether $u_{i} = v_{i}$, $u_{i} \prec v_{i}$, or $v_{i} \prec u_{i}$.
        By construction of the monotone sequence $\{w_{k}\}_{k=0}^{m}$,
        one can find
            $l_{i}, r_{i} \in \{0,1,\ldots,m\}$
        with $l_{i} < r_{i}$
        such that
            $w_{l_{i}} = \min\{u_{i},v_{i}\}$
            and
            $w_{r_{i}} = \max\{u_{i},v_{i}\}$.
        Using this one obtains

            \begin{restoremargins}
            \begin{equation}
            \label{eq:cover-fct-as-simple-fct:sig:article-graph-raj-dahya}
            \everymath={\displaystyle}
            \begin{array}[m]{rcl}
                \cover_{x}
                &=
                    &\sum_{i=1}^{n}
                        \onefct_{[v_{i})}
                        -
                        \onefct_{[u_{i})}
                    \\
                &=
                    &\sum_{i=1}^{n}
                        c_{i}
                        \cdot
                        (
                            \onefct_{[\max\{u_{i},v_{i}\})}
                            -
                            \onefct_{[\min\{u_{i},v_{i}\})}
                        )
                        \\
                &=
                    &\sum_{i=1}^{n}
                        c_{i}
                        \cdot
                        (
                            \onefct_{[w_{r_{i}})}
                            -
                            \onefct_{[w_{l_{i}})}
                        )
                        \\
                &=
                    &\sum_{i=1}^{n}
                        c_{i}
                        \cdot
                        \sum_{k=l_{i}+1}^{r_{i}}
                        (
                            \onefct_{[w_{k})}
                            -
                            \onefct_{[w_{k-1})}
                        )
                        \\
                &=
                    &\sum_{k=1}^{m}
                        \tilde{c}_{k}
                        \onefct_{[w_{k-1},\:w_{k})}
            \end{array}
            \end{equation}
            \end{restoremargins}

        \continueparagraph
        where
        $
            \tilde{c}_{k}
            \coloneqq
            \sum_{
                i \in \{1,2,\ldots,n\}:
                ~l_{i} < k \leq r_{i}
            }
                c_{i}
            \in \integers
        $.
        Since the sequence $\{w_{k}\}_{k=0}^{m} \subseteq \Omega$
        is monotone,
        the intervals in
            $\{[w_{k-1},\:w_{k})\}_{k=1}^{m}$
        are pairwise disjoint.

    \item\label{it:cover:equivalence-classes:sig:article-graph-raj-dahya}
        For $u \in \Omega$
        one has
            $\cover_{u, u} \equiv 0$,
        and thus
            $
                \cover_{\letter{(u, u)}}
                = \cover_{u, u}
                = \cover_{1}
            $.
        And for $u, v, w \in \Omega$
        one has
            $
                \cover_{\letter{(u, v)}\letter{(v, w)}}
                = \cover_{u, v} + \cover_{v, w}
                = (\onefct_{[v)} - \onefct_{[u)}) + (\onefct_{[w)} - \onefct_{[v)})
                = \onefct_{[w)} - \onefct_{[u)}
                = \cover_{u, w}
                = \cover_{\letter{(u, w)}}
            $.
        Working with the reduction rules
        defined in \eqcref{eq:rules-graph:sig:article-graph-raj-dahya}
        and the context-free grammar for string-rewriting systems
        defined in \eqcref{eq:context-free:sig:article-graph-raj-dahya},
        it follows that
            $\cover_{x} = \cover_{x'}$
        for all $x, x' \in \Gamma_{\mathcal{G}}^{\ast}$
        with $x \overset{\ast}{\to}_{\mathcal{G}} x'$.
        By a simple induction it thus follows that
            $\cover_{x} = \cover_{x'}$
        for all $x, x' \in \Gamma_{\mathcal{G}}^{\ast}$
        with $x \overset{\ast}{\leftrightarrow}_{\mathcal{G}} x'$.
        We may thus define
            $\cover_{[x]} \coloneqq \cover_{x}$
        and
            $\supp([x]) \coloneqq \supp(x)$
        for $x \in \Gamma_{\mathcal{G}}^{\ast}$.
\end{enumerate}



Using these tools we can obtain the following:

\begin{highlightboxWithBreaks}
\begin{lemm}[\First cover extension for linearly ordered graphs]
\makelabel{lemm:extension-cover:I:sig:article-graph-raj-dahya}
    Let $M$ be a monoid
    and $\mathcal{G} = (\Omega, E)$
    be a graph where $E$ is a reflexive linear ordering.
    Let ${\phi : E \to M}$
    satisfy the identity axiom \eqcref{ax:dyn:id:sig:article-graph-raj-dahya}
    and divisibility axiom \eqcref{ax:dyn:div:sig:article-graph-raj-dahya}.
    Then there exists a map
        ${\quer{\phi} : G_{\mathcal{G}} \to M}$
    with $\ran(\quer{\phi}) \subseteq \Generate{\ran(\phi)}$,%
    \footnoteref{ft:algebraic-generator:sig:article-graph-raj-dahya}
    satisfying
        $\quer{\phi}(1) = 1$
        and
        $\quer{\phi}([\letter{(u, v)}]) = \phi(u, v)$
    for all $(u, v) \in E$.
    Moreover
        $\quer{\phi}$ has cyclic invariance,
    \idest
        $\quer{\phi}(gh) = \quer{\phi}(hg)$
    for all $g,h \in G_{\mathcal{G}}$.
\end{lemm}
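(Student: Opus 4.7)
The crucial observation driving the construction is that the cover map $\cover_{[\cdot]}$ descends to $G_{\mathcal{G}}$ (by step \eqcref{it:cover:equivalence-classes:sig:article-graph-raj-dahya}) and is a homomorphism into the abelian group $\FnRm{\Omega}{\integers}$; in particular $\cover_{gh} = \cover_{g} + \cover_{h} = \cover_{hg}$ for all $g, h \in G_{\mathcal{G}}$. Hence any map that depends on $g$ only through $\cover_{g}$ automatically satisfies cyclic invariance. The plan is to read off $\quer{\phi}(g)$ from a simple-function representation of (the positive part of) this cover.

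Concretely, for $g \in G_{\mathcal{G}}$ pick any representative $x \in g$ and write
    \begin{equation*}
        \cover_{x} = \sum_{k=1}^{m} \tilde{c}_{k}\onefct_{[w_{k-1},\:w_{k})}
    \end{equation*}
in the form of \eqcref{eq:cover-fct-as-simple-fct:sig:article-graph-raj-dahya}, with $w_{0} \preceq w_{1} \preceq \ldots \preceq w_{m}$ and $\tilde{c}_{k} \in \integers$, and set
    \begin{equation*}
        \quer{\phi}(g) \coloneqq \prod_{k=1}^{m} \phi(w_{k-1},\:w_{k})^{\max\{\tilde{c}_{k},\:0\}},
    \end{equation*}
where the empty product is $1$, and for $w_{k-1} = w_{k}$ one has $\phi(w_{k-1}, w_{k}) = 1$ by the identity axiom \eqcref{ax:dyn:id:sig:article-graph-raj-dahya} (recall that $E$ is reflexive). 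Clipping coefficients at zero is essentially unavoidable, since $M$ is merely a monoid and negative contributions would demand inverses that need not exist.

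The main obstacle I expect is well-definedness, which I plan to establish via a common-refinement argument. Any two simple-function representations of the same cover admit a common refinement obtained by interleaving their partition points in order. Inserting an additional point $w$ with $w_{k-1} \prec w \prec w_{k}$ splits $\onefct_{[w_{k-1},\:w_{k})}$ into $\onefct_{[w_{k-1},\:w)} + \onefct_{[w,\:w_{k})}$ and replaces the factor $\phi(w_{k-1}, w_{k})^{c}$ by $(\phi(w_{k-1}, w)\phi(w, w_{k}))^{c}$ with $c \coloneqq \max\{\tilde{c}_{k}, 0\}$; these agree by iterated use of the divisibility axiom \eqcref{ax:dyn:div:sig:article-graph-raj-dahya}. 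Inserting a duplicate partition point contributes only $\phi(w, w)^{c} = 1$ by the identity axiom. Independence of the choice of representative $x$ of $g$ is immediate, since the cover descends to $G_{\mathcal{G}}$.

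The stated properties then follow directly: $\quer{\phi}(1) = 1$ as the empty product (since $\cover_{1} \equiv 0$); $\quer{\phi}([\letter{(u,v)}]) = \phi(u,v)$ for $(u, v) \in E$, since the cover reduces to $\onefct_{[u,v)}$ with one factor and $\tilde{c}_{1} = 1$; the inclusion $\ran(\quer{\phi}) \subseteq \Generate{\ran(\phi)}$ is built into the definition (using $1 = \phi(u,u) \in \ran(\phi)$ whenever needed, thanks to reflexivity of $E$); and cyclic invariance is automatic from $\cover_{gh} = \cover_{hg}$.
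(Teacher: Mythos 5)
Your overall strategy---define $\quer{\phi}(g)$ from a simple-function representation of $\cover_{g}$ and deduce cyclic invariance from $\cover_{gh} = \cover_{hg}$---is exactly the paper's, but your formula has a genuine well-definedness problem. You take the exponent of $\phi(w_{k-1}, w_{k})$ to be $\max\{\tilde{c}_{k},\:0\}$, whereas the paper uses the indicator $\chi_{k} \in \{0,1\}$ of $\tilde{c}_{k} > 0$. With your exponents the value is \emph{not} invariant under refinement in a noncommutative monoid: writing $a = \phi(w_{k-1}, w)$ and $b = \phi(w, w_{k})$ for an inserted point $w_{k-1} \prec w \prec w_{k}$ and taking $c = 2$, your refined product contributes $a^{2}b^{2}$, while the unrefined one contributes $\phi(w_{k-1}, w_{k})^{2} = (ab)^{2} = abab$; these differ unless $a$ and $b$ commute. (Your refinement step silently replaces $a^{c}b^{c}$ by $(ab)^{c}$, which is where the argument breaks.) Coefficients $\tilde{c}_{k} \geq 2$ genuinely occur: for $g = [\letter{(u, v)}]^{2}$ with $u \prec v$ the cover is $2\cdot\onefct_{[u,\:v)}$, so one admissible representation gives $\phi(u, v)^{2}$ while refining by any $u \prec w \prec v$ gives $\phi(u, w)^{2}\phi(w, v)^{2}$, and these need not agree in $\BoundedOps{\BanachRaum}$. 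Since agreement across different representatives of the same class (and hence cyclic invariance) rests precisely on this refinement invariance, the gap is not cosmetic.

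The fix is small and lands on the paper's construction: let each interval on which the cover is \emph{positive} contribute its $\phi$-value exactly once, irrespective of multiplicity, i.e.\ use the exponent $\chi_{k}$ rather than $\max\{\tilde{c}_{k},\:0\}$. Then refinement invariance follows from divisibility exactly as you argue (a positive interval splits into subintervals carrying the same positive coefficient, each contributing once, and their ordered product reassembles by \eqcref{ax:dyn:div:sig:article-graph-raj-dahya}); duplicate partition points contribute $\phi(w, w) = 1$ by \eqcref{ax:dyn:id:sig:article-graph-raj-dahya}; and on non-empty refined intervals the sign of the coefficient is determined by the cover, so two representations agree after common refinement. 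With that change the remaining verifications ($\quer{\phi}(1) = 1$, $\quer{\phi}([\letter{(u, v)}]) = \phi(u, v)$, the range inclusion, and cyclic invariance) go through as you describe and coincide with the paper's proof, which organises the same comparison via the index set of refined intervals contained in $\supp([x])$.
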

\end{highlightboxWithBreaks}

    \begin{proof}
        \paragraph{Existence:}
        Let $x \in \Gamma_{\mathcal{G}}^{\ast}$ be an arbitrary word.
        By \eqcref{eq:cover-fct-as-simple-fct:sig:article-graph-raj-dahya}
        and \eqcref{it:cover:equivalence-classes:sig:article-graph-raj-dahya} above
        we can write

            \begin{restoremargins}
            \begin{equation}
            \label{eq:0:\beweislabel}
                \cover_{[x]}
                = \sum_{i=1}^{m}
                    c_{i}\onefct_{[w_{i-1},\:w_{i})}
            \end{equation}
            \end{restoremargins}

        \continueparagraph
        for some $m\in\naturals$,
        some monotone sequence
            $\{w_{i}\}_{i=0}^{m} \subseteq \Omega$,
        some integer sequence
            $\{c_{i}\}_{i=1}^{m} \subseteq \integers$.
        Given such an expression, we wish to define

            \begin{restoremargins}
            \begin{equation}
            \label{eq:extension-cover:I:sig:article-graph-raj-dahya}
                \quer{\phi}([x])
                \coloneqq
                \prod_{\substack{i=1,\\c_{i} > 0}}^{m}
                    \phi(w_{i-1}, w_{i})
                =
                \prod_{i=1}^{m}
                    \phi(w_{i-1}, w_{i})^{\chi_{i}}
            \end{equation}
            \end{restoremargins}

        \continueparagraph
        where each $\chi_{i} \in \{0,1\}$
        indicates whether $c_{i} > 0$.
        Our goal is to show that \eqcref{eq:extension-cover:I:sig:article-graph-raj-dahya}
        is independent of the exact form of the expression in \eqcref{eq:0:\beweislabel}.

        To this end, let
            $\{w'_{j}\}_{j=0}^{n} \subseteq \Omega$
        be monotone and
            $\{c'_{i}\}_{i=1}^{n} \subseteq \integers$
        be any sequence for which
            $
                \cover_{[x]}
                = \sum_{j=1}^{n}
                    c'_{j}\onefct_{[w_{j-1},\:w_{j})}
            $.
        By ordering the elements
            $w_{0},w_{1},\ldots,w_{m},w'_{0},w'_{1},\ldots,w'_{n}$
        and removing duplicates,
        we can find a strictly monotone sequence
            $\{w''_{k}\}_{k=0}^{l} \subseteq \Omega$
        as well as monotone sequences
            $0 \leq k_{0} \leq k_{1} \leq \ldots \leq k_{m} \leq l$
        and
            $0 \leq k'_{0} \leq k'_{1} \leq \ldots \leq k'_{n} \leq l$
        such that
            $w_{i} = w''_{k_{i}}$
            for all $i \in \{0,1,\ldots,m\}$
            and
            $w'_{j} = w''_{k'_{j}}$
            for all $j \in \{0,1,\ldots,n\}$.
        Letting

            \begin{restoremargins}
            \begin{equation}
            \label{eq:1:\beweislabel}
            \everymath={\displaystyle}
            \begin{array}[m]{rcl}
                I
                    &\coloneqq
                    &\bigcup_{\substack{i=1,\\c_{i} > 0}}^{m}
                        \{k_{i-1}+1,\ldots,k_{i}\},
                ~\text{and}\\
                J
                    &\coloneqq
                    &\bigcup_{\substack{j=1,\\c'_{j} > 0}}^{n}
                        \{k'_{j-1}+1,\ldots,k'_{j}\},\\
            \end{array}
            \end{equation}
            \end{restoremargins}

        \continueparagraph
        we now claim that

            \begin{restoremargins}
            \begin{equation}
            \label{eq:2:\beweislabel}
                I = \{ k \in \{1,2,\ldots,m\} \mid \supp([x]) \supseteq [w''_{k-1},\:w''_{k}) \},
            \end{equation}
            \end{restoremargins}

        \continueparagraph
        and analogously for $J$,
        making $I = J$.
        Towards the $\subseteq$\=/inclusion,
        let $k \in I$ be arbitrary.
        Then for some $i \in \{1,2,\ldots,m\}$
        one has $k_{i-1} + 1 \leq k \leq k_{i}$
        and $c_{i} > 0$.
        By strict monotonicity,
        it follows that
            $
                w_{i-1}
                = w''_{k_{i-1}}
                \preceq w''_{k-1}
                \prec w''_{k}
                \preceq w''_{k_{i}}
                = w_{i}
            $,
        making $[w_{i-1},\:w_{i}) \supseteq [w''_{k-1},\:w''_{k})$
        non-empty intervals.
        Since $\cover_{[x]} \equiv c_{i}$ on $[w_{i-1},\:w_{i})$
        and thus on the subinterval $[w''_{k-1},\:w''_{k})$,
        it follows that
            $[w''_{k-1},\:w''_{k}) \subseteq \supp([x])$.
        Towards the $\supseteq$\=/inclusion,
        let $k \in \{1,\ldots,l\}$
        be such that
            $[w''_{k-1},\:w''_{k}) \subseteq \supp([x])$.
        Since this interval is non-empty,
            $\cover_{[x]}(w_{k-1}) > 0$,
        so that by \eqcref{eq:0:\beweislabel},
        some $i \in \{1,2,\ldots,m-1\}$ must exist,
        such that $c_{i} > 0$ and $w''_{k-1} \in [w_{i-1},\:w_{i})$.
        In particular $[w_{i-1},\:w_{i})$ must be non-empty,
        making $w_{i-1} \preceq w''_{k-1} \prec w_{i}$.
        By construction of the refinement it follows that
            $w_{i-1} \preceq w''_{k-1} \prec w''_{k} \preceq w_{i}$.
        So, by \eqcref{eq:1:\beweislabel}, one has $k \in I$.
        Thus \eqcref{eq:2:\beweislabel} holds.
        Analogously, the same equation holds for $J$.
        Thus $I = J$ as claimed.

        By making use of
        the divisibility axioms \eqcref{ax:dyn:div:sig:article-graph-raj-dahya},
        one obtains%
        \footnote{%
            note the products over $I$, $J$ are compute in the order of the indices.
        }

        \begin{shorteqnarray}
            \prod_{\substack{i=1,\\c_{i} > 0}}^{m}
                \phi(w_{i-1}, w_{i})
            &= &\prod_{\substack{i=1,\\c_{i} > 0}}^{m}
                    \phi(w''_{k_{i-1}}, w''_{k_{i}})
                \\
            &\eqcrefoverset{ax:dyn:div:sig:article-graph-raj-dahya}{=}
                &\prod_{\substack{i=1,\\c_{i} > 0}}^{m}
                \prod_{k=k_{i-1} + 1}^{k_{i}}
                    \phi(w''_{k-1}, w''_{k})
                \\
            &\eqcrefoverset{eq:2:\beweislabel}{=}
                &\prod_{k \in I}
                    \phi(w''_{k-1}, w''_{k})
                \\
            &=
                &\prod_{k \in J}
                    \phi(w''_{k-1}, w''_{k})
                \\
            &= &\prod_{\substack{j=1,\\c'_{j} > 0}}^{n}
                \phi(w'_{j-1}, w'_{j}),
        \end{shorteqnarray}

        \continueparagraph
        where the final expressions holds by the above claim that $I = J$,
        and by computations similar to the first expressions.
        It follows that
        \eqcref{eq:extension-cover:I:sig:article-graph-raj-dahya}
        depends only on $\cover_{[x]}$
        and not on the exact expression of this function.
        We thereby obtain a well-defined map
            ${\quer{\phi} : G_{\mathcal{G}} \to M}$,
        which clearly satisfies
            $
                \ran(\quer{\phi})
                \subseteq \Generate{\ran(\phi) \cup \{1\}}
                = \Generate{\ran(\phi)}
            $.

        \paragraph{Properties:}
        Let $(u, v) \in E$ be arbitrary
        and set $x \coloneqq \letter{(u, v)}$.
        Since $u \preceq v$,
        one has
            $
                \cover_{[x]}
                = \cover_{x}
                = \onefct_{[v]} - \onefct_{[u]}
                = \onefct_{[u,\:v)}
            $.
        By \eqcref{eq:extension-cover:I:sig:article-graph-raj-dahya},
        it follows that
            $
                \quer{\phi}([\letter{(u, v)}])
                = \quer{\phi}([x])
                = \phi(u, v)
            $.
        Furthermore,
        letting $u \in \Omega$ be arbitrary,
        we have
            $
                \quer{\phi}(1)
                = \quer{\phi}([1])
                = \quer{\phi}([(u, u)])
                = \phi(u, u)
                = 1
            $,
        since $\phi$ satisfies
        the identity axiom \eqcref{ax:dyn:id:sig:article-graph-raj-dahya}.
        Finally since
            $\quer{\phi}([x])$ is completely determined by $\cover_{x}$
            for each $x \in \Gamma_{\mathcal{G}}^{\ast}$,
        and since
        by \eqcref{it:cover:words:sig:article-graph-raj-dahya},
        the map
            ${\Gamma_{\mathcal{G}}^{\ast} \ni x \mapsto \cover_{x}}$
        has cyclic invariance,
        it follows that $\quer{\phi}$
        has cyclic invariance.
    \end{proof}

We shall refer to
the particular construction in
    \eqcref{eq:extension-cover:I:sig:article-graph-raj-dahya}
as the \highlightTerm{\First cover extension}.



\begin{highlightboxWithBreaks}
\begin{lemm}[Continuity of the \First cover extension]
\makelabel{lemm:extension-cover-continuous:sig:article-graph-raj-dahya}
    Let $\mathcal{G} = (\Omega, E)$
    be a graph where $E$ is a reflexive linear ordering.
    Let $\{\phi(u,v)\}_{(u,v) \in E} \subseteq \BoundedOps{\BanachRaum}$
    be a family of bounded operators
    (\resp contractions)
    on a Banach space $\BanachRaum$.
    Suppose further that $\phi$ is a divisible dynamical system on $\mathcal{G}$,
    \idest
        satisfies
        the identity axiom \eqcref{ax:dyn:id:sig:article-graph-raj-dahya}
        and
        the divisibility axiom \eqcref{ax:dyn:div:sig:article-graph-raj-dahya}.
    Suppose further that $\phi$ has geometric growth.
    Then there exists a family
        $\{\quer{\phi}(g)\}_{g \in G_{\mathcal{G}}} \subseteq \BoundedOps{\BanachRaum}$
    of bounded operators
    (\resp contractions)
    with $\ran(\quer{\phi}) \subseteq \Generate{\ran(\phi)}$,%
    \footnoteref{ft:algebraic-generator:sig:article-graph-raj-dahya}
    satisfying
        $\quer{\phi}(1) = \onematrix$
        and
        $\quer{\phi}([\letter{(u, v)}]) = \phi(u, v)$
        for all $(u, v) \in E$.
    Moreover, $\quer{\phi}$ has embedded uniform strong continuity
    \wrt the map ${\iota : E \ni e \to [\letter{e}] \in G_{\mathcal{G}}}$.%
    \footnoteref{ft:1:\beweislabel}
\end{lemm}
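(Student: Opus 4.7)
The plan is to take $\quer{\phi}$ to be the \First cover extension furnished by \Cref{lemm:extension-cover:I:sig:article-graph-raj-dahya}. That lemma directly yields $\quer{\phi}(1) = \onematrix$, the matching condition $\quer{\phi}([\letter{(u, v)}]) = \phi(u, v)$ for $(u, v) \in E$, and the range inclusion $\ran(\quer{\phi}) \subseteq \Generate{\ran(\phi)}$. By \eqcref{eq:extension-cover:I:sig:article-graph-raj-dahya}, each $\quer{\phi}(g)$ is a finite product of operators in $\ran(\phi)$, hence a bounded operator, and in the contractive case a contraction. All that remains is establishing embedded uniform strong continuity \wrt $\iota$.

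For this, fix $e = (u, v) \in E$ and $\xi \in \BanachRaum$, and let $e' = (u', v') \in E$ and $g, h \in G_{\mathcal{G}}$ be arbitrary. Writing $f \coloneqq \cover_{gh}$ (well-defined on equivalence classes by \eqcref{it:cover:equivalence-classes:sig:article-graph-raj-dahya}), the additivity of the cover yields
\begin{displaymath}
    \cover_{g\iota(e)h} = f + \onefct_{[u,v)},
    \qquad
    \cover_{g\iota(e')h} = f + \onefct_{[u',v')}.
\end{displaymath}
Hence these two covers agree outside the set $R \coloneqq ([u, v) \without [u', v')) \cup ([u', v') \without [u, v))$, which is contained in $[\min\{u, u'\},\max\{u, u'\}) \cup [\min\{v, v'\},\max\{v, v'\})$. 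Choose a strictly monotone sequence $\{w_k\}_{k=0}^{m} \subseteq \Omega$ refining simultaneously the constancy intervals of $f$, $\onefct_{[u, v)}$, and $\onefct_{[u', v')}$, so that by \eqcref{it:cover:simple-fct:sig:article-graph-raj-dahya} both cover extensions factor as ordered products
\begin{displaymath}
    \quer{\phi}(g \iota(e) h) = \prod_{k=1}^{m} \phi(w_{k-1}, w_k)^{\chi_k},
    \qquad
    \quer{\phi}(g \iota(e') h) = \prod_{k=1}^{m} \phi(w_{k-1}, w_k)^{\chi'_k},
\end{displaymath}
with $\chi_k, \chi'_k \in \{0, 1\}$ indicating positivity of the respective cover on $[w_{k-1}, w_k)$, and satisfying $\chi_k = \chi'_k$ whenever $[w_{k-1}, w_k) \cap R = \varemptyset$. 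Telescoping these products and using contractivity of the partial products yields
\begin{displaymath}
    \norm{\quer{\phi}(g \iota(e) h) - \quer{\phi}(g \iota(e') h)}
    \leq \sum_{\substack{j=1,\\ [w_{j-1}, w_j) \subseteq R}}^{m} \norm{\phi(w_{j-1}, w_j) - \onematrix},
\end{displaymath}
and the geometric growth of $\phi$ together with superadditivity of the witnessing length function $\ell$ bounds the right-hand side by $\ell(\min\{u, u'\},\max\{u, u'\}) + \ell(\min\{v, v'\},\max\{v, v'\})$ --- crucially, a bound independent of $g$ and $h$. As $e' \to e$ in $E$, continuity of $\ell$, $\min$, and $\max$ \wrt the order topology sends this quantity to zero, giving embedded uniform norm-continuity, and hence embedded uniform strong continuity.

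The main obstacle is the bookkeeping around the refined partition: one must ensure that the chosen $\{w_k\}_{k=0}^{m}$ is fine enough that each atomic interval $[w_{k-1}, w_k)$ is either contained in $R$ or disjoint from it, thereby yielding the clean dichotomy $\chi_k = \chi'_k$ for indices outside $R$. Even though the number of atomic intervals and the sequence $\{w_k\}$ themselves depend on $g, h$ (through $f = \cover_{gh}$), the terms in the telescoping sum that could possibly differ are indexed exclusively by $R$-bearing atomic intervals, and the superadditivity of $\ell$ collapses their contributions into the $g, h$-independent bound on the lengths of the two pieces of $R$. This uniformity is precisely what allows the same continuity estimate to hold simultaneously for all $g, h \in G_{\mathcal{G}}$.
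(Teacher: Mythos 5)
Your proposal is correct and follows essentially the same strategy as the paper: take the \First cover extension of \Cref{lemm:extension-cover:I:sig:article-graph-raj-dahya} (which already supplies all the algebraic properties), observe via additivity and cyclic invariance of the cover that $\cover_{g\iota(e)h}$ and $\cover_{g\iota(e')h}$ differ only on a set controlled by the endpoints of $e$ and $e'$, pass to a common refinement, and convert the resulting comparison of ordered products into a length estimate via geometric growth and superadditivity of $\ell$. The only substantive difference is in how that comparison is organised. The paper first proves the single-insertion estimate
$\norm{\quer{\phi}([x][\letter{(u,v)}][x']) - \quer{\phi}([x][x'])} \leq e^{\ell(u,v)}-1$,
expanding the difference of products over non-empty subsets of the set $I$ of differing indices, and then chains \emph{four} such insertions through the intermediate points $\min\{u_0,u\}$ and $\max\{v_0,v\}$ to compare two distinct edges; you instead compare the two edges in one step via the symmetric difference $R$ of the indicator functions and a telescoping sum, which gives the slightly sharper linear bound $\ell(\min\{u,u'\},\max\{u,u'\}) + \ell(\min\{v,v'\},\max\{v,v'\})$ in place of a sum of terms $e^{\ell(\cdot,\cdot)}-1$. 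Both bounds are independent of $g,h$ and tend to $0$ as $e'\to e$ by continuity of $\min$, $\max$, and $\ell$, so the two arguments are interchangeable; your bookkeeping (refining by all four endpoints $u,u',v,v'$ so that each atomic interval is contained in or disjoint from $R$, and noting that exactly one of $\chi_k,\chi'_k$ equals $1$ on the differing atoms) is sound. Note only that the telescoping step, like the paper's own estimate, uses contractivity of the factors, so the continuity claim is really being proved in the contraction case --- but this matches the paper exactly and is the only case in which the lemma is subsequently applied.
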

\end{highlightboxWithBreaks}

    \footnotetext[ft:1:\beweislabel]{%
        see \Cref{defn:embedded-cts:sig:article-graph-raj-dahya}.
    }

    \begin{proof}
        Setting $M$ to be the monoid of all bounded operators (\resp all contractions) on $\BanachRaum$
        under operator multiplication,
        the conditions of \Cref{lemm:extension-cover:I:sig:article-graph-raj-dahya}
        are clearly fulfilled.
        We can thus choose
            $\quer{\phi}$
        to be the \First cover extension
        of $\phi$.
        It remains to demonstrate the continuity claim.
        To this end we first establish the following estimate

            \begin{restoremargins}
            \begin{equation}
            \label{eq:1:\beweislabel}
                    \norm{
                        \quer{\phi}([x] [\letter{(u, v)}] [x'])
                        -
                        \quer{\phi}([x] [x'])
                    }
                    \leq e^{\ell(u, v)} - 1
            \end{equation}
            \end{restoremargins}

        \continueparagraph
        for $x, x' \in \Gamma^{\ast}_{\mathcal{G}}$
        and $(u, v) \in E$.
        Fixing such elements,
        by \eqcref{it:cover:words:sig:article-graph-raj-dahya}
        and \eqcref{it:cover:equivalence-classes:sig:article-graph-raj-dahya}
        above one has
            $
                \cover_{[x][\letter{(u,v)}][x']}
                = \cover_{[x'][x][\letter{(u,v)}]}
                = \cover_{[x'][x]} + \cover_{[\letter{(u,v)}]}
                = \cover_{[x][x']} + \cover_{\letter{(u,v)}}
                = \cover_{[xx']} + \cover_{u,v}
            $.
        Since $u \preceq v$ by definition of $E$,
        one has
            $
                \cover_{u,v}
                = \onefct_{[v)} - \onefct_{[u)}
                = \onefct_{[u,\:v)}
            $.
        By \eqcref{eq:cover-fct-as-simple-fct:sig:article-graph-raj-dahya}
        and \eqcref{it:cover:equivalence-classes:sig:article-graph-raj-dahya} above
        one can write
            $
                \cover_{[xx']}
                = \sum_{i=1}^{m}
                    c_{i}\onefct_{[w_{i-1},\:w_{i})}
            $
        for some $m\in\naturals$,
        some monotone sequence
            $\{w_{i}\}_{i=0}^{m} \subseteq \Omega$,
        some integer sequence
            $\{c_{i}\}_{i=1}^{m} \subseteq \integers$.
        By refinement, one can assume
            $w_{i_{0}} = u$
            and
            $w_{j_{0}} = v$
        for some $i_{0} < j_{0}$ in $\{0,1,\ldots,m\}$.
        By the construction in \eqcref{eq:extension-cover:I:sig:article-graph-raj-dahya},
        one thus has
        $
            \phi([x][x'])
            = \prod_{i=1}^{m}
            \phi(w_{i-1},w_{i})^{\chi_{i}}
        $
        and

            \begin{shorteqnarray}
                \phi([x][\letter{(u,v)}][x'])
                    =
                        \prod_{i=1}^{m}
                            \phi(w_{i-1},w_{i})^{\chi'_{i}}
                    = \prod_{i=1}^{m}
                        (\phi(w_{i-1},w_{i})^{\chi_{i}} + (\chi'_{i} - \chi_{i}) \cdot (\phi(w_{i-1},w_{i}) - \onematrix)),
            \end{shorteqnarray}

        \continueparagraph
        where each $\chi_{i} \in \{0,1\}$ indicates whether $c_{i} > 0$
        and each $\chi'_{i} \in \{0,1\}$ indicates whether $c_{i} + \onefct_{\{i_{0}+1,\ldots,j_{0}\}}(i) > 0$.
        Observe in particular that $\chi' \geq \chi$ pointwise
        with
            $
                I \coloneqq \{i \in \{1,2,\ldots,m\} \mid \chi_{i} \neq \chi'_{i}\}
                \subseteq \{i_{0}+1,\ldots,j_{0}\}
            $.
        Applying geometric growth thus yields

        \begin{longeqnarray}
            \norm{
                \phi([x][\letter{(u,v)}][x'])
                -
                \phi([x][x'])
            }
            &=
                &\normLong{
                    \sum_{\substack{
                        C \subseteq I,\\
                        C \neq \emptyset
                    }}
                        \prod_{i=1}^{m}
                            \begin{cases}
                                \phi(w_{i-1},w_{i}) - \onematrix
                                    &: &i \in C\\
                                \phi(w_{i-1},w_{i})^{\chi_{i}}
                                    &: &i \notin C\\
                            \end{cases}
                }
            \\
            &\leq
                &\sum_{\substack{
                    C \subseteq I,\\
                    C \neq \emptyset
                }}
                    \prod_{i=1}^{m}
                    \begin{cases}
                        \underbrace{
                            \norm{\onematrix - \phi(w_{i-1},w_{i})}
                        }_{
                            \leq \ell(w_{i-1}, w_{i})
                        }
                            &: &i \in C\\
                        \underbrace{
                            \norm{\phi(w_{i-1},w_{i})}
                        }_{\leq 1}
                        {}^{\chi_{i}}
                            &: &i \notin C\\
                    \end{cases}
            \\
            &\leq
                &\sum_{\substack{
                    C \subseteq I,\\
                    C \neq \emptyset
                }}
                    \prod_{i \in C}
                    \ell(w_{i-1}, w_{i})
            \\
            &=
                &\prod_{i \in I}
                    (1 + \ell(w_{i-1}, w_{i}))
                - 1
            \\
            &\leq
                &\prod_{i \in I}
                    e^{\ell(w_{i-1}, w_{i})}
                - 1
            \\
            &=
                &e^{\sum_{i \in I}\ell(w_{i-1}, w_{i})} - 1
            \\
            &\overset{(\ast)}{\leq}
                &e^{\sum_{i = i_{0} + 1}^{j_{0}}\ell(w_{i-1}, w_{i})} - 1
            \\
            &\overset{(\ast\ast)}{\leq}
                &e^{\ell(u, v)} - 1,
        \end{longeqnarray}

        \continueparagraph
        where
        ($\ast$) holds since $I \subseteq \{i_{0}+1,\ldots,j_{0}\}$
        and
        ($\ast\ast$) holds by superadditivity of $\ell$.
        Hence we have established \eqcref{eq:1:\beweislabel}

        Consider now arbitrary $x, x' \in \Gamma^{\ast}_{\mathcal{G}}$
        and $(u_{0}, v_{0}) \in E$.
        For $(u, v) \in E$
        set $\quer{u} \coloneqq \min\{u_{0}, u\}$
        and $\quer{v} \coloneqq \max\{v_{0}, v\}$.
        Then
            $
                (\quer{u},u_{0}),
                (\quer{u},u),
                (v_{0},\quer{v}),
                (v,\quer{v})
                \in E
            $.
        The estimate in \eqcref{eq:1:\beweislabel} yields

        \begin{shorteqnarray}
            &&\norm{
                \quer{\phi}([x] [\letter{(u, v)}] [x'])
                -
                \quer{\phi}([x] [\letter{(u_{0}, v_{0})}] [x'])
            }\\
            &\leq
                &\begin{array}[t]{0l}
                    \norm{
                        \quer{\phi}([x] [\letter{(u, v)}] [x'])
                        -
                        \quer{\phi}([x] [\letter{(\quer{u}, v)}] [x'])
                    }\\
                    + \norm{
                        \quer{\phi}([x] [\letter{(\quer{u}, v)}] [x'])
                        -
                        \quer{\phi}([x] [\letter{(\quer{u}, \quer{v})}] [x'])
                    }\\
                    + \norm{
                        \quer{\phi}([x] [\letter{(\quer{u}, \quer{v})}] [x'])
                        -
                        \quer{\phi}([x] [\letter{(\quer{u}, v_{0})}] [x'])
                    }\\
                    + \norm{
                        \quer{\phi}([x] [\letter{(\quer{u}, v_{0})}] [x'])
                        \quer{\phi}([x] [\letter{(u_{0}, v_{0})}] [x'])
                    }
                \end{array}
            \\
            &\leq
                &\begin{array}[t]{0l}
                    \norm{
                        \quer{\phi}([x] [\letter{(u, v)}] [x'])
                        -
                        \quer{\phi}([x] [\letter{(\quer{u}, u)}] [\letter{(u, v)}] [x'])
                    }\\
                    + \norm{
                        \quer{\phi}([x] [\letter{(\quer{u}, v)}] [x'])
                        -
                        \quer{\phi}([x] [\letter{(\quer{u}, v)}] [\letter{(v, \quer{v})}] [x'])
                    }\\
                    + \norm{
                        \quer{\phi}([x] [\letter{(\quer{u}, v_{0})}] [\letter{(v_{0}, \quer{v})}] [x'])
                        -
                        \quer{\phi}([x] [\letter{(\quer{u}, v_{0})}] [x'])
                    }\\
                    + \norm{
                        \quer{\phi}([x] [\letter{(\quer{u}, u_{0})}] [\letter{(u_{0}, v_{0})}] [x'])
                        \quer{\phi}([x] [\letter{(u_{0}, v_{0})}] [x'])
                    }
                \end{array}
            \\
            &\leq
                &e^{\ell(\quer{u}, u)} - 1
                + e^{\ell(v, \quer{v})} - 1
                + e^{\ell(v_{0}, \quer{v})} - 1
                + e^{\ell(\quer{u}, u)} - 1
            \\
            &= &\begin{array}[t]{0l}
                    e^{\ell(\min\{u_{0}, u\}, u)} - 1\\
                    + e^{\ell(v, \max\{v_{0}, v\})} - 1\\
                    + e^{\ell(v_{0}, \max\{v_{0}, v\})} - 1\\
                    + e^{\ell(\min\{u_{0}, u\}, u_{0})} - 1,
                \end{array}
        \end{shorteqnarray}

        \continueparagraph
        which, by the continuity of $\min$, $\max$, and $\ell$,
        converges uniformly (\idest independently of $x,x'$)
        to $0$ as $(u,v) \longrightarrow (u_{0}, v_{0})$.
        Thus $\quer{\phi}$ exhibits the desired continuity.
    \end{proof}

\begin{rem}
\makelabel{rem:strong-continuity-for-embedded-continuity:sig:article-graph-raj-dahya}
    As observed in \Cref{prop:geom-implies-norm-cts:dyn:sig:article-graph-raj-dahya},
    the divisibility and geometric growth
    assumed in \Cref{lemm:extension-cover-continuous:sig:article-graph-raj-dahya}
    necessarily imply norm-continuity of $\phi$.
    Geometric growth is thus quite a strong assumption.
    It would be useful to know whether upon replacing this by strong continuity,
    one is still able to achieve the desired embedded uniform continuity of $\quer{\phi}$.
\end{rem}




\subsection[Continuous extensions for indivisible systems]{Continuous extensions for indivisible systems}
\label{sec:ext:lo:indivisible:sig:article-graph-raj-dahya}

\firstparagraph
In order to handle dynamical systems for which the divisibility axiom may fail,
we need a further extension result.
We again consider graphs $\mathcal{G} = (\Omega, E)$
for which $E$ is a reflexive linear ordering on $\Omega$.
As above,
we let $\mathrel{\prec}$ denote the irreflexive part of $E$,
so that $E = \{(u, v) \in \Omega \mid u \preceq v\}$.
And we topologise $\Omega$ by the order topology,
and $E \subseteq \Omega \times \Omega$
with the relative topology.
We further restrict ourselves to families of the form
    $\{e^{A(u,v)}\}_{(u, v) \in E}$.

\begin{highlightboxWithBreaks}
\begin{lemm}[\Second cover extension for linearly ordered graphs]
\makelabel{lemm:extension-cover:II:sig:article-graph-raj-dahya}
    Let $\BanachRaum$ be a Banach space
    and
        $\phi = \{e^{A(u,v)}\}_{(u, v) \in E}  \subseteq \BoundedOps{\BanachRaum}$
    be a family of contractions,
    where
        $A = \{A(u,v)\}_{(u, v) \in E} \subseteq \BoundedOps{\BanachRaum}$
    is an additive family of bounded dissipative operators.
    Then there exists a family
        $\quer{\phi} = \{\quer{\phi}(g)\}_{g \in G_{\mathcal{G}}} \subseteq \BoundedOps{\BanachRaum}$
    of contractions
    satisfying
        $\quer{\phi}(1) = \onematrix$
        and
        $\quer{\phi}([\letter{(u, v)}]) = \phi(u, v)$
    for all $(u, v) \in E$.
    Moreover
        $\quer{\phi}$ has cyclic invariance,
    \idest
        $\quer{\phi}(gh) = \quer{\phi}(hg)$
    for all $g,h \in G_{\mathcal{G}}$.
    If $A$ has geometric growth,
    then $\quer{\phi}$ has embedded uniform strong continuity
    \wrt the map ${\iota : E \ni e \to [\letter{e}] \in G_{\mathcal{G}}}$.%
    \footnoteref{ft:1:\beweislabel}
\end{lemm}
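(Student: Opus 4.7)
The plan is to construct $\quer{\phi}$ by mirroring the \First cover extension of \Cref{lemm:extension-cover:I:sig:article-graph-raj-dahya}, but replacing operator products with exponentials of sums, exploiting the additivity of the generator family $A$. For $x \in \Gamma_{\mathcal{G}}^{\ast}$ with an expression $\cover_{[x]} = \sum_{i=1}^{m} c_{i}\onefct_{[w_{i-1},\:w_{i})}$ obtained via \eqcref{eq:cover-fct-as-simple-fct:sig:article-graph-raj-dahya}, I would set
\begin{shorteqnarray}
    \quer{\phi}([x])
        &\coloneqq
            &\exp\Big(
                \sum_{\substack{i=1,\\c_{i} > 0}}^{m}
                    A(w_{i-1}, w_{i})
            \Big).
\end{shorteqnarray}
Well-definedness (independence of the chosen expression) follows by essentially the same combinatorial argument as in the proof of \Cref{lemm:extension-cover:I:sig:article-graph-raj-dahya}: passing to a strict common refinement $\{w''_{k}\}_{k=0}^{l}$ of two representations, the set $I = \{k : [w''_{k-1}, w''_{k}) \subseteq \supp([x])\}$ depends solely on $\cover_{[x]}$, and the additivity axiom \eqcref{ax:gen:add:sig:article-graph-raj-dahya} rewrites the exponent as $\sum_{k \in I} A(w''_{k-1}, w''_{k})$.

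The properties $\quer{\phi}(1) = \onematrix$ and $\quer{\phi}([\letter{(u,v)}]) = e^{A(u,v)} = \phi(u,v)$ for $(u,v) \in E$ then follow verbatim from the argument in \Cref{lemm:extension-cover:I:sig:article-graph-raj-dahya}. Since positive linear combinations of bounded dissipative operators remain dissipative (see \S{}\ref{sec:examples:dissipative:sig:article-graph-raj-dahya}), each exponent above is dissipative and hence each $\quer{\phi}([x])$ is a contraction. Cyclic invariance is immediate from the identity $\cover_{xy} = \cover_{yx}$ noted in \eqcref{it:cover:words:sig:article-graph-raj-dahya}, since $\quer{\phi}$ depends only on the cover.

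The main obstacle is embedded uniform strong continuity under geometric growth of $A$. Given $g, h \in G_{\mathcal{G}}$ with representatives $x, x'$ so that $g = [x]$, $h = [x']$, and edges $(u_{0}, v_{0}), (u, v) \in E$, I would work with a strictly monotone common refinement $\{w_{k}\}_{k=0}^{l} \subseteq \Omega$ containing $\quer{u} \coloneqq \min\{u, u_{0}\}$, $\max\{u, u_{0}\}$, $\min\{v, v_{0}\}$, $\quer{v} \coloneqq \max\{v, v_{0}\}$ and the endpoints of $\cover_{xx'}$. Writing $X$ and $X + Y$ for the respective exponents corresponding to $g\iota(u_{0}, v_{0})h$ and $g\iota(u, v)h$, with sign-indicators $\chi_{i}, \chi'_{i} \in \{0, 1\}$ of the refined coefficients $c_{i}, c'_{i}$, the key elementary estimate is $|\chi'_{i} - \chi_{i}| \leq |c'_{i} - c_{i}|$ (by case-analysis, using $|c'_{i} - c_{i}| \leq 1$), and the right hand side vanishes outside the symmetric difference $[u, v) \triangle [u_{0}, v_{0}) \subseteq [\quer{u},\:\max\{u, u_{0}\}) \cup [\min\{v, v_{0}\},\:\quer{v})$. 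Applying geometric growth $\norm{A(w_{i-1}, w_{i})} \leq \ell(w_{i-1}, w_{i})$ and the superadditivity of $\ell$ on these subintervals yields
\begin{shorteqnarray}
    \norm{Y} &\leq &\ell(\quer{u},\:\max\{u, u_{0}\}) + \ell(\min\{v, v_{0}\},\:\quer{v}).
\end{shorteqnarray}
Since both $X$ and $X + Y$ are dissipative, \Cref{prop:bch-exp-perturbation:sig:article-graph-raj-dahya} then yields $\norm{\quer{\phi}(g\iota(u, v)h) - \quer{\phi}(g\iota(u_{0}, v_{0})h)} \leq \norm{Y}$, which converges to $0$ uniformly in $g, h$ as $(u, v) \longrightarrow (u_{0}, v_{0})$ by continuity of $\min$, $\max$, and $\ell$ together with $\ell(\cdot, \cdot)$ vanishing on the diagonal. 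The delicate point is handling the sign-indicator inequality $|\chi'_{i} - \chi_{i}| \leq |c'_{i} - c_{i}|$, which is not an immediate consequence of the cover identities but is essential for reducing the estimate to the perturbation bound \Cref{prop:bch-exp-perturbation:sig:article-graph-raj-dahya} and for the simultaneous dissipativity of $X$ and $X + Y$ needed to invoke it.
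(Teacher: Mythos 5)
Your construction is the paper's own: the \Second cover extension $\quer{\phi}([x]) = e^{A([x])}$ with $A([x])$ the sum of the generators over the positively covered subintervals, well-definedness by the same refinement argument as in \Cref{lemm:extension-cover:I:sig:article-graph-raj-dahya}, contractivity from dissipativity of positive linear combinations, and cyclic invariance from $\cover_{xy}=\cover_{yx}$. All of that matches.

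The one place you diverge is the continuity estimate, and there is a technical misstep there. You apply \Cref{prop:bch-exp-perturbation:sig:article-graph-raj-dahya} to $X=A(g\,\iota(u_{0},v_{0})\,h)$ and $X+Y=A(g\,\iota(u,v)\,h)$, justifying it by the dissipativity of $X$ and $X+Y$. But the proposition is stated for $X$ and $Y$ \emph{both} dissipative, and your $Y=\sum_{i}(\chi'_{i}-\chi_{i})A(w_{i-1},w_{i})$ is a signed combination whose negative coefficients destroy dissipativity in general; as written the proposition does not apply. The gap is easily patched in either of two ways. (i) Observe that the proof of the proposition only uses that $e^{s(X+tY)}$ is a contraction for $s,t\in[0,\:1]$, i.e.\ that $X+tY=(1-t)X+t(X+Y)$ is dissipative, which does follow by convexity from dissipativity of $X$ and $X+Y$ --- but then you are using a strengthened version of the proposition, which should be stated and proved. (ii) Split through the ``intersection'' exponent $W\coloneqq\sum_{i}\min\{\chi_{i},\chi'_{i}\}A(w_{i-1},w_{i})$, so that $X=W+Y_{-}$ and $X+Y=W+Y_{+}$ with $Y_{\pm}$ genuinely positive combinations (hence dissipative), and apply the proposition twice. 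The paper avoids the issue altogether by first proving the one-sided estimate $\norm{\quer{\phi}([x][\letter{(u,v)}][x'])-\quer{\phi}([x][x'])}\leq\ell(u,v)$ --- where the perturbation is the positive combination $\sum_{i\in I}A(w_{i-1},w_{i})$ and the proposition applies verbatim --- and then chaining four such single-edge insertions/removals through $(\quer{u},v)$, $(\quer{u},\quer{v})$, $(\quer{u},v_{0})$; this yields the same final bound as your symmetric-difference computation. Your remaining combinatorics ($\abs{\chi'_{i}-\chi_{i}}\leq\abs{c'_{i}-c_{i}}$, support in the symmetric difference, superadditivity of $\ell$) are correct.
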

\end{highlightboxWithBreaks}

    \footnotetext[ft:1:\beweislabel]{%
        see \Cref{defn:embedded-cts:sig:article-graph-raj-dahya}.
    }

    \begin{proof}
        \paragraph{Existence:}
        Let $x \in \Gamma_{\mathcal{G}}^{\ast}$ be an arbitrary word.
        By \eqcref{eq:cover-fct-as-simple-fct:sig:article-graph-raj-dahya}
        and \eqcref{it:cover:equivalence-classes:sig:article-graph-raj-dahya} above
        we can write

            \begin{restoremargins}
            \begin{equation}
            \label{eq:0:\beweislabel}
                \cover_{[x]}
                = \sum_{i=1}^{m}
                    c_{i}\onefct_{[w_{i-1},\:w_{i})}
            \end{equation}
            \end{restoremargins}

        \continueparagraph
        for some $m\in\naturals$,
        some monotone sequence
            $\{w_{i}\}_{i=0}^{m} \subseteq \Omega$,
        some integer sequence
            $\{c_{i}\}_{i=1}^{m} \subseteq \integers$.
        Let ${\chi : \{1,2,\ldots,m\} \to \{0,1\}}$
        be such that each $\chi_{i}$ indicates whether $c_{i} > 0$.
        Analogous to the \emph{existence} part of the proof of \Cref{lemm:extension-cover:I:sig:article-graph-raj-dahya},
        appealing to the additivity axiom on $\{A(u,v)\}_{(u, v) \in E}$,
        one has that

            \begin{restoremargins}
            \begin{equation}
            \label{eq:extension-cover:II:sig:article-graph-raj-dahya}
            \everymath={\displaystyle}
            \begin{array}[m]{rcl}
                \quer{\phi}([x])
                    &\coloneqq
                        &e^{A([x])},
                    ~\text{where}
                \\
                A([x])
                    &\coloneqq
                        &\sum_{\substack{i=1,\\c_{i} > 0}}^{m}
                            A(w_{i-1}, w_{i})
                    =
                        \sum_{i=1}^{m}
                            \chi_{i}A(w_{i-1}, w_{i}),
            \end{array}
            \end{equation}
            \end{restoremargins}

        \continueparagraph
        is independent of the
        exact form of the expression in \eqcref{eq:0:\beweislabel}.
        This yields a well-defined map
            ${\quer{\phi} : G_{\mathcal{G}} \to \BoundedOps{\BanachRaum}}$.
        Furthermore, as a positive linear combination
        of finitely many bounded dissipative operators,
            $A([x]) = \sum_{i=1}^{m} \chi_{i} A(w_{i-1}, w_{i})$
        is a bounded dissipative operator,
        and thus $\quer{\phi}([x])$ is a contraction.%
        \footnote{%
            \cf the discussion at the start of \S{}\ref{sec:examples:sig:article-graph-raj-dahya}.
        }

        \paragraph{Properties:}
        The basic properties of $\quer{\phi}$
        may be demonstrated analogously to \Cref{lemm:extension-cover:I:sig:article-graph-raj-dahya}.

        \paragraph{Continuity:}
        Suppose now that $A$ has geometric growth.
        Fix a continuous superadditive length function
            ${\ell : E \to [0,\infty)}$
        such that $\norm{A(u,v)} \leq \ell(u,v)$
        for all $(u, v) \in E$.
        We first establish the following estimate

            \begin{restoremargins}
            \begin{equation}
            \label{eq:1:\beweislabel}
                    \norm{
                        \quer{\phi}([x] [\letter{(u, v)}] [x'])
                        -
                        \quer{\phi}([x] [x'])
                    }
                    \leq \ell(u, v)
            \end{equation}
            \end{restoremargins}

        \continueparagraph
        for $x, x' \in \Gamma^{\ast}_{\mathcal{G}}$
        and $(u, v) \in E$.
        Fixing such elements,
        as in the proof of \Cref{lemm:extension-cover-continuous:sig:article-graph-raj-dahya},
        one has
            $
                \cover_{[x][\letter{(u,v)}][x']}
                = \cover_{[xx']} + \onefct_{[u,\:v)}
            $
        and one can write
            $
                \cover_{[x]}
                = \sum_{i=1}^{m}
                    c_{i}\onefct_{[w_{i-1},\:w_{i})}
            $
        for some $m\in\naturals$,
        some monotone sequence
            $\{w_{i}\}_{i=0}^{m} \subseteq \Omega$,
        some integer sequence
            $\{c_{i}\}_{i=1}^{m} \subseteq \integers$
        and by refinement, one can assume
            $w_{i_{0}} = u$
            and
            $w_{j_{0}} = v$
        for some $i_{0} < j_{0}$ in $\{0,1,\ldots,m\}$.
        By the construction in \eqcref{eq:extension-cover:I:sig:article-graph-raj-dahya},
        one thus has
            $
                A([x][x'])
                = \sum_{i=1}^{m}
                    \chi_{i}A(w_{i-1},w_{i})
            $
        and
            $
                A([x][\letter{(u,v)}][x'])
                = \sum_{i=1}^{m}
                    \chi'_{i}A(w_{i-1},w_{i})
            $,
        where each $\chi_{i} \in \{0,1\}$ indicates whether $c_{i} > 0$
        and each $\chi'_{i} \in \{0,1\}$ indicates whether $c_{i} + \onefct_{\{i_{0}+1,\ldots,j_{0}\}}(i) > 0$.
        Observe in particular that $\chi' \geq \chi$ pointwise
        with
            $
                I \coloneqq \{i \in \{1,2,\ldots,m\} \mid \chi_{i} \neq \chi'_{i}\}
                \subseteq \{i_{0}+1,\ldots,j_{0}\}
            $.
        So
            $
                A([x][\letter{(u,v)}][x'])
                = A([x][x']) + \sum_{i \in I}A(w_{i-1},w_{i})
            $.
        Applying estimates of perturbations of operator exponentials
        in \Cref{prop:bch-exp-perturbation:sig:article-graph-raj-dahya},
        and since $A([x][x'])$ and $\sum_{i \in I}A(w_{i-1},w_{i})$
        are dissipative,
        one obtains

        \begin{longeqnarray}
            \norm{
                \phi([x][\letter{(u,v)}][x'])
                -
                \phi([x][x'])
            }
            &=
                &\normLong{
                    e^{A([x][x']) + \sum_{i \in I}A(w_{i-1},w_{i})}
                    -
                    e^{A([x][x'])}
                }
            \\
            &\leq
                &\normLong{
                    \sum_{i \in I}A(w_{i-1},w_{i})
                }
            \\
            &\leq
                &\sum_{i \in I}
                    \ell(w_{i-1}, w_{i})
            \\
            &\overset{(\ast)}{\leq}
                &\sum_{i = i_{0} + 1}^{j_{0}}\ell(w_{i-1}, w_{i})
            \\
            &\overset{(\ast\ast)}{\leq}
                &\ell(u, v),
        \end{longeqnarray}

        \continueparagraph
        where
        ($\ast$) holds since $I \subseteq \{i_{0}+1,\ldots,j_{0}\}$
        and
        ($\ast\ast$) holds by superadditivity of $\ell$.
        Hence we have established \eqcref{eq:1:\beweislabel}.
        Analogous to the proof of \Cref{lemm:extension-cover-continuous:sig:article-graph-raj-dahya},
        one can derive from \eqcref{eq:1:\beweislabel}
        the expression

            \begin{shorteqnarray}
                &&\norm{
                    \quer{\phi}([x] [\letter{(u, v)}] [x'])
                    -
                    \quer{\phi}([x] [\letter{(u_{0}, v_{0})}] [x'])
                }\\
                &&\leq
                    \ell(\min\{u_{0}, u\}, u)
                    + \ell(v, \max\{v_{0}, v\})
                    + \ell(v_{0}, \max\{v_{0}, v\})
                    + \ell(\min\{u_{0}, u\}, u_{0}),
            \end{shorteqnarray}

        \continueparagraph
        for $x, x' \in \Gamma^{\ast}_{\mathcal{G}}$,
        $(u_{0}, v_{0}), (u, v) \in E$.
        By the continuity of $\min$, $\max$, and $\ell$,
        we thus obtain the embedded uniform strong continuity
        of $\quer{\phi}$.
    \end{proof}

We shall refer to the construction in
\eqcref{eq:extension-cover:II:sig:article-graph-raj-dahya}
as the \highlightTerm{\Second cover extension}.




\section[Non-classical dilation theorems]{Non-classical dilation theorems}
\label{sec:dilation:sig:article-graph-raj-dahya}

\firstparagraph
In the previous section,
we established means to extend families of operators living on graphs
to operator families which live on groups.
Towards \ref{goal:dilation:sig:article-graph-raj-dahya},
in this section we both recall and extend existing dilation results
for dynamical systems parameterised by groups.

We begin with results by Stroescu \cite{Stroescu1973ArticleBanachDilations}
for operator families on Banach spaces
and adapt her result to families
of positive unital operators on \TextCStarAlgs.
Continuing in this non-classical direction,
we consider the narrower setting
of CPTP\=/maps (defined below).
We recall Kraus's representation theorems \cite{%
    Kraus1971Article,%
    Kraus1983%
},
then present a generalisation of
a result due to vom~Ende and Dirr \cite{vomEnde2019unitaryDildiscreteCPsemigroups},
which in turn builds on Kraus\=/dilations.

The Stroescu\=/dilations have the advantage that they contain conditions to guarantee continuity.
The dilations based on Kraus and vom Ende--Dirr
are applicable to discrete groups,
but have the advantage that
they yield more concrete unitary representations.


\subsection[Stroescu\=/Dilations]{Stroescu\=/Dilations}
\label{sec:dilation:stroescu:sig:article-graph-raj-dahya}

\firstparagraph
The classical result of Sz.-Nagy \cite[Theorem~I.7.1]{Nagy1970}
provides dilations
of strongly continuous operator-valued maps on Hilbert spaces
which are parameterised by topological groups,
to strongly continuous unitary representations on larger Hilbert spaces.
From this basis, results were derived for $1$- and $2$\=/parameter semigroups
(see
    \cite[Theorems~I.4.2 and I.8.1]{Nagy1970},
    \cite{Ando1963pairContractions}
    \cite{Slocinski1974},
    \cite[Theorem~2]{Slocinski1982}%
).
The conditions of this general theorem are however not always satisfied
(see
    \cite[\S{}3]{Parrott1970counterExamplesDilation},
    \cite[Theorem~1]{Varopoulos1974counterexamples},
    \cite[Theorem~1.5 and Corollary~1.7~b)]{Dahya2024interpolation}%
).
By relaxing the desired properties,
Stroescu \cite{Stroescu1973ArticleBanachDilations}
proved by contrast that (strongly continuous) dilations
to representations on Banach spaces \emph{always} exist
under modest algebraic and topological conditions.
We shall adapt her result to handle families of operators defined on \TextCStarAlgs.


\subsubsection[Stroescu\=/Dilations for Banach spaces]{Stroescu\=/Dilations for Banach spaces}
\label{sec:dilation:stroescu:banach:sig:article-graph-raj-dahya}

\firstparagraph
Let $\BanachRaum$ be a Banach space, $G$ a topological group with neutral element $1$,
and $\phi : G \to \BoundedOps{\BanachRaum}$ an arbitrary operator-valued function.
We say that $(\tilde{\BanachRaum},U,j,r)$
is a (continuous) \highlightTerm{Banach space dilation}
of $(\BanachRaum,\phi)$ or simply $\phi$
via bounded operators (\resp contractions \resp surjective isometries)
if $\tilde{\BanachRaum}$ is a Banach space,
$j,r$ are linear maps with
    $j : \tilde{\BanachRaum} \to \BanachRaum$ being a surjective contraction
and
    $r : \BanachRaum \to \tilde{\BanachRaum}$ an isometry
satisfying $j \circ r = \onematrix$,
and $U : G \to \BoundedOps{\tilde{\BanachRaum}}$
is a(n $\topSOT$-continuous) representation of $G$ on $\tilde{\BanachRaum}$
consisting of operators
with $U(x)$ a bounded operator
(\resp a contraction \resp a surjective isometry)
and

\begin{restoremargins}
\begin{equation}
\label{eq:banach-dilation:sig:article-graph-raj-dahya}
    j\:U(x)\:r = \phi(x)
\end{equation}
\end{restoremargins}

\continueparagraph
for all $x \in G$.
In the case of dilations via surjective isometries,
which generalise unitary operators on Hilbert spaces,
we shall write $U \in \Repr{G}{\tilde{\BanachRaum}}$.

\begin{highlightboxWithBreaks}
\begin{thm}[Stroescu, 1973]
\makelabel{thm:stroescu:banach:sig:article-graph-raj-dahya}
    Let $G$ be a topological group
    and
        $K : G \to (0,\:\infty)$
    a continuous submultiplicative function,%
    \footnoteref{ft:1:\beweislabel}
    with $K(1) = 1$.
    Suppose that
        $\{\phi(x)\}_{x \in G}$
    is a family of bounded operators on a Banach space
        $\BanachRaum$
    with
        $\norm{\phi(x)} \leq K(x)$,
        and
        $\phi(1) = \onematrix$.
    Suppose further that $\phi$
    satisfies the following left-uniform $K$-continuity:

        \begin{restoremargins}
        \begin{equation}
        \label{eq:uniform-cts:banach:sig:article-graph-raj-dahya}
            \sup_{u \in G}
                K(u)^{-1}
                \norm{(\phi(ux') - \phi(ux))\xi}
            \longrightarrow 0
        \end{equation}
        \end{restoremargins}

    \continueparagraph
    as $x' \longrightarrow x$,
    for each $x \in G$
    and $\xi \in \BanachRaum$.
    Then $\phi$ admits a continuous Banach space dilation $(\tilde{\BanachRaum},U,j,r)$
    via bounded operators satisfying
        $\norm{U(x)\xi} \in [K(x^{-1})^{-1}\norm{\xi},\:K(x)\norm{\xi}]$
    for all $x \in G$, $\xi \in \tilde{\BanachRaum}$.
    In particular, if $K(\cdot) \equiv 1$, then $U$ consists of surjective isometries.
\end{thm}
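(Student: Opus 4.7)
The plan is to adapt the classical Sz.-Nagy-type construction, realising $\tilde{\BanachRaum}$ as a completion of a suitable space of $\BanachRaum$-valued functions on $G$. Specifically, I would work with the vector space $V$ of finitely supported functions $f : G \to \BanachRaum$ endowed with the seminorm
\begin{displaymath}
    |f| \coloneqq \sup_{x \in G} K(x)^{-1} \normLong{\sum_{g \in G} \phi(xg)\:f(g)},
\end{displaymath}
which is finite for every $f \in V$ by $\norm{\phi(y)} \leq K(y)$ together with submultiplicativity. Then $\tilde{\BanachRaum}$ is the Banach space obtained by quotienting $V$ by $\{f : |f| = 0\}$ and completing.

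Next, let $G$ act on $V$ by left translation, $(U(h)f)(g) \coloneqq f(h^{-1}g)$. Reindexing the sum via $g \mapsto hg$, combined with submultiplicativity $K(xh) \leq K(x)\:K(h)$, yields
\begin{displaymath}
    |U(h)f| = \sup_{x} K(x)^{-1} \normLong{\sum_{g} \phi(xhg)\:f(g)} \leq K(h) \sup_{x} K(xh)^{-1} \normLong{\sum_{g} \phi(xhg)\:f(g)} = K(h)\:|f|.
\end{displaymath}
Applying the same estimate to $U(h^{-1})$ gives $|U(h)f| \geq K(h^{-1})^{-1}|f|$, so each $U(h)$ extends to a bounded isomorphism of $\tilde{\BanachRaum}$ satisfying the required two-sided bound, and $U$ is a representation because left translation respects composition. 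Set $r(\xi) \coloneqq \delta_{1} \otimes \xi$ (the function supported at the identity with value $\xi$); evaluating the seminorm at $x = 1$ gives $|r(\xi)| \geq \norm{\xi}$, and submultiplicativity gives the reverse bound, so $r$ is isometric. Set $j(f) \coloneqq \sum_{g} \phi(g)\:f(g)$; the case $x = 1$ of the seminorm yields $\norm{j(f)} \leq |f|$, so $j$ descends to a contraction on $\tilde{\BanachRaum}$. One directly verifies $j \circ r = \onematrix$ (whence $j$ is surjective) and $j\:U(h)\:r(\xi) = j(\delta_{h} \otimes \xi) = \phi(h)\xi$, establishing the dilation property.

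The main obstacle, and the only place the hypothesis \eqref{eq:uniform-cts:banach:sig:article-graph-raj-dahya} is needed, is the strong continuity of $U$. I would first verify it on basic elements $f = \delta_{g_{0}} \otimes \xi$: for $h' \to h$,
\begin{displaymath}
    |U(h')f - U(h)f| = \sup_{x} K(x)^{-1} \norm{(\phi(x\:h'g_{0}) - \phi(x\:hg_{0}))\:\xi}.
\end{displaymath}
Substituting $u = xh$ and using $K(uh^{-1})^{-1} \leq K(h)\:K(u)^{-1}$ (again from submultiplicativity) bounds this by
\begin{displaymath}
    K(h) \sup_{u} K(u)^{-1} \norm{(\phi(u \cdot h^{-1}h'g_{0}) - \phi(u \cdot g_{0}))\:\xi},
\end{displaymath}
which is exactly the quantity controlled by the left-uniform $K$-continuity assumption, with $h^{-1}h'g_{0} \to g_{0}$ in $G$. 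The general case follows by linearity and density of $V$ in $\tilde{\BanachRaum}$, using $\norm{U(h)} \leq K(h)$ and local boundedness of $K$ to handle tails. Finally, the case $K \equiv 1$ collapses the two-sided bound to $\norm{U(h)\xi} = \norm{\xi}$, forcing the dilating operators to be surjective isometries as claimed.
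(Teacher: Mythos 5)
Your proposal is correct and is essentially Stroescu's construction as the paper presents it (the paper cites \cite{Stroescu1973ArticleBanachDilations} for the proof and echoes the construction in \Cref{cor:stroescu:cstar:sig:article-graph-raj-dahya} and \Cref{lemm:stroescu:banach:restr:sig:article-graph-raj-dahya}): your completion of $(V,\lvert\cdot\rvert)$ is isometrically the closure of $\ran(\theta)$ inside the ($K$-weighted) bounded-function space on $G$, and your left translation of coefficients is exactly the right-shift representation $R_{x}$ used there, with the same $j$ (evaluation at $1$), the same $r$ ($\xi\mapsto\phi(\cdot)\xi$), and the same use of the left-uniform $K$-continuity to get strong continuity on the generating elements.
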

\end{highlightboxWithBreaks}

\footnotetext[ft:1:\beweislabel]{%
    \idest $K(xy) \leq K(x)K(y)$ for all $x,y \in G$.
}

For a proof, see \cite{Stroescu1973ArticleBanachDilations}.
As an immediate consequence of \Cref{thm:stroescu:banach:sig:article-graph-raj-dahya},
a Banach space version of Sz.-Nagy's result \cite[Theorem~I.8.1]{Nagy1970} for the dilation of one-parameter semigroups on Hilbert spaces
easily follows (see \cite[Corollary~1]{Stroescu1973ArticleBanachDilations}).

\begin{rem}
    In the original result in \cite{Stroescu1973ArticleBanachDilations}
    Stroescu first constructs the larger dilation
    without assuming the left-uniform $K$-continuity of $\phi$.
    Both variants are equivalent,
    as one can simply impose the discrete topology on $G$ throughout,
    which renders the left-uniform $K$-continuity assumption trivially fulfilled.
\end{rem}



\subsubsection[Stroescu\=/Dilations for \TextCStarAlgs]{Stroescu\=/Dilations for \TextCStarAlgs}
\label{sec:dilation:stroescu:cstar:sig:article-graph-raj-dahya}

\firstparagraph
Consider now the case that the Banach space $\BanachRaum$ is a unital \TextCStarAlg $\CStarAlg$.
Recall that a linear operator ${\Phi : \CStarAlg \to \CStarAlg}$
is called \highlightTerm{positive} if
    $\Phi(a)$ is positive for all positive elements%
    \footnote{%
        recall that $a \in \CStarAlg$
        is positive,
        written $a \geq \zeromatrix$,
        if $a = b^{\ast}b$
        for some $b \in \CStarAlg$.
    }
    $a \in \CStarAlg$,
and \highlightTerm{unital} if $\Phi(1) = 1$,
where $1$ is the unit element of $\CStarAlg$.
By the Russo--Dye theorem,
positive maps always satisfy
    $\norm{\Phi} = \norm{\Phi(1)}$
(see
    \cite[Corollary~2.9]{Paulsen2002book},
    \cite[Theorem~1.3.3]{Stoermer2013BookPosOps}%
),
and thus positive unital maps are always contractions.
Let
    $\phi = \{\Phi_{x}(\cdot)\}_{x \in G}$
be an arbitrary family of positive unital linear operators.
We shall say that $(\tilde{\CStarAlg},U,j,r)$
is a (continuous) \highlightTerm{\TextCStarAlg dilation}
of $(\CStarAlg,\phi)$ or simply $\phi$
via representations
if
    $\tilde{\CStarAlg}$ is a unital \TextCStarAlg,
    $r$ is an isometric positive unital map,
    and
    $j$ a surjective unital {}\textsuperscript{\ensuremath{\ast}}\=/homomorphism,
such that
    $j \circ r = \id_{\CStarAlg}$,
    $U : G \to \Aut{\tilde{\CStarAlg}}$
    is a(n $\topSOT$-continuous)%
    \footnote{%
        \viz
        ${G \ni x \mapsto U(x)a \in \tilde{\CStarAlg}}$
        is continuous \wrt the norm topology on the
        \TextCStarAlg
        for all $a \in \tilde{\CStarAlg}$.
    }
    representation of $G$
    via {}\textsuperscript{\ensuremath{\ast}}\=/automorphisms of $\tilde{\CStarAlg}$,
and

\begin{restoremargins}
\begin{equation}
\label{eq:banach-dilation:sig:article-graph-raj-dahya}
    j\:U(x)\:r = \Phi_{x}
\end{equation}
\end{restoremargins}

\continueparagraph
for all $x \in G$.
We now adapt Stroescu's theorem
to provide a version for \TextCStarAlgs.

\begin{highlightboxWithBreaks}
\begin{cor}[cf. Stroescu, 1973]
\makelabel{cor:stroescu:cstar:sig:article-graph-raj-dahya}
    Let $G$ be a topological group
    and
        $\phi = \{\Phi_{x}(\cdot)\}_{x \in G}$
    a family of positive unital linear operators
    on a (commutative) unital \TextCStarAlg $\CStarAlg$,
    with $\Phi_{1} = \id_{\CStarAlg}$.
    Suppose further that $\phi$ is uniformly left-continuous,
    \idest

        \begin{restoremargins}
        \begin{equation}
        \label{eq:uniform-cts:cstar:sig:article-graph-raj-dahya}
            \sup_{u \in G}
                \norm{\Phi_{ux'}(a) - \Phi_{ux}(a)}
            \longrightarrow 0
        \end{equation}
        \end{restoremargins}

    \continueparagraph
    as $x' \longrightarrow x$,
    for each $x \in G$, $a \in \CStarAlg$.
    Then $\phi$ admits a continuous \TextCStarAlg dilation $(\tilde{\CStarAlg},U,j,r)$,
    where $\tilde{\CStarAlg}$ is a (commutative) unital \TextCStarAlg.
\end{cor}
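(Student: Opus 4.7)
The plan is to build the dilation explicitly as a norm-closed unital \TextCStarSubAlg{} of the \TextCStarAlg{} of bounded $\CStarAlg$\=/valued functions on $G$, thereby adapting Stroescu's Banach-space construction while retaining a natural pointwise multiplicative structure. Write $\ell^{\infty}(G, \CStarAlg)$ for the \TextCStarAlg{} of bounded maps ${G \to \CStarAlg}$ with pointwise operations, pointwise involution, and supremum norm; this is unital, and commutative whenever $\CStarAlg$ is. Right translation, ${(U(x)F)(g) := F(gx)}$, defines an isometric representation ${U : G \to \Aut{\ell^{\infty}(G, \CStarAlg)}}$ by \textsuperscript{\ensuremath{\ast}}\=/automorphisms. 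Define ${r : \CStarAlg \to \ell^{\infty}(G, \CStarAlg)}$ by $r(a)(g) := \Phi_{g}(a)$; since each $\Phi_{g}$ is positive and unital, $r$ is positive and unital, and $\|r(a)\|_{\infty} = \sup_{g}\|\Phi_{g}(a)\| = \|a\|$ using $\Phi_{1} = \id_{\CStarAlg}$ together with the Russo--Dye contractivity of positive unital maps, so $r$ is an isometry.

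Let $\tilde{\CStarAlg}$ denote the smallest norm-closed unital \textsuperscript{\ensuremath{\ast}}\=/subalgebra of $\ell^{\infty}(G, \CStarAlg)$ containing the set $\{U(h)\:r(a) \mid h \in G,\:a \in \CStarAlg\}$. The identity $U(x)U(h) = U(xh)$ shows this generating set is preserved by each $U(x)$, so $U$ restricts to a group homomorphism ${U : G \to \Aut{\tilde{\CStarAlg}}}$. Let ${j : \tilde{\CStarAlg} \to \CStarAlg}$ be evaluation at the identity, ${j(F) := F(1)}$; this is a surjective unital \textsuperscript{\ensuremath{\ast}}\=/homomorphism, and ${(j \circ r)(a) = \Phi_{1}(a) = a}$ gives $j \circ r = \id_{\CStarAlg}$. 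The dilation identity follows at once:
\begin{equation*}
    (j \circ U(x) \circ r)(a)
    = (U(x)\:r(a))(1)
    = r(a)(x)
    = \Phi_{x}(a)
    \quad\text{for all $x \in G$, $a \in \CStarAlg$.}
\end{equation*}

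The principal remaining task, which carries the real technical content, is the verification of strong continuity of $U$. On each generator $U(h)\:r(a)$ one computes
\begin{equation*}
    \|U(x)U(h)\:r(a) - U(x_{0})U(h)\:r(a)\|_{\infty}
    = \sup_{g \in G} \|\Phi_{g \cdot xh}(a) - \Phi_{g \cdot x_{0}h}(a)\|,
\end{equation*}
which tends to $0$ as $x \to x_{0}$, by the uniform left-continuity hypothesis \eqcref{eq:uniform-cts:cstar:sig:article-graph-raj-dahya} applied with the role of $x'$ played by $xh$ and of $x$ by $x_{0}h$. Because each $U(x)$ is a \textsuperscript{\ensuremath{\ast}}\=/automorphism, submultiplicativity of the supremum norm propagates continuity from the generators to all finite sums, products, and involutions thereof, giving strong continuity on a dense \textsuperscript{\ensuremath{\ast}}\=/subalgebra. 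Since $\{U(x)\}_{x \in G}$ is uniformly isometric, a standard $\eps/3$ density argument extends strong continuity to all of $\tilde{\CStarAlg}$. The hypothesis \eqcref{eq:uniform-cts:cstar:sig:article-graph-raj-dahya} is engineered precisely for the orbit maps arising in this construction; this is the step where a weaker pointwise-continuity assumption on $\phi$ would fail.
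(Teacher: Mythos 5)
Your construction is essentially the paper's: the norm-closed unital \textsuperscript{\ensuremath{\ast}}\=/subalgebra of bounded $\CStarAlg$\=/valued functions on $G$ generated by the right translates of $a \mapsto \Phi_{\cdot}(a)$, with right translation as the representation, evaluation at $1$ as $j$, and $a \mapsto \Phi_{\cdot}(a)$ as $r$, followed by the same generator\==to\==products\==to\==closure continuity argument. The only step you elide is the verification that $r(a)$ is positive \emph{as an element of $\tilde{\CStarAlg}$} for positive $a$ --- the paper establishes this by producing $\sqrt{\Phi_{\cdot}(a)}$ inside $\tilde{\CStarAlg}$ via polynomial approximation, though spectral permanence for unital \TextCStarSubAlgs also settles it in one line.
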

\end{highlightboxWithBreaks}

    \begin{proof}
        The proof consists of a few stages:
        construction of the larger \TextCStarAlg;
        construction of the embeddings;
        construction of the representation;
        and proof of the properties.

        \paragraph{Construction of the larger \TextCStarAlg:}
        Observe that
            $\CStarAlg_{1} \coloneqq \Cts_{b}{G}{\CStarAlg}$
        under the uniform norm
        constitutes a (commutative) unital \TextCStarAlg,
        whereby the unit is simply the function constantly
        equal to $1$ (the unit of $\CStarAlg$) on $G$.
        For $x \in G$ let $R_{x}$ denote the right-shift
        on $\Cts_{b}{G}{\CStarAlg}$.
        Since $\phi$ is $\topSOT$\=/continuous
        and each $\Phi_{\cdot}$ is a contraction,
        it is straightforward to see that
            ${R_{x}\Phi_{\cdot}(a) : G \ni y \mapsto \Phi_{yx}(a) \in \CStarAlg}$
        is a bounded continuous map
        for each $x \in G$ and $a \in \CStarAlg$.
        Thus

        \begin{restoremargins}
        \begin{equation}
        \label{eq:stroescu-theta:sig:article-graph-raj-dahya}
        \everymath={\displaystyle}
        \begin{array}[m]{rcccl}
            \theta
            &:
                &c_{00}(G, \CStarAlg)
                &\to
                &\Cts_{b}{G}{\CStarAlg}\\
            &&f
                &\mapsto
                    &\sum_{x \in \supp(f)}
                        R_{x}\Phi_{\cdot}(f(x)),
        \end{array}
        \end{equation}
        \end{restoremargins}

        \continueparagraph
        is well-defined.
        The multiplicative closure
            $\GenSet{\ran(\theta)}$
        is thus a (commutative) unital ${}^{\ast}$-subalgebra,
        and thereby the closure
            $\CStarAlg_{\theta} \coloneqq \quer{\GenSet{\ran(\theta)}}$
        a (commutative) unital \TextCStarSubAlg of $\CStarAlg_{1}$.

        \paragraph{Construction of the embeddings:}
        Define the linear maps
            ${j:\CStarAlg_{\theta} \ni f \mapsto f(1) \in \CStarAlg}$
        and
            ${r \colon \CStarAlg \ni a \mapsto \Phi_{\cdot}(a) \in \CStarAlg_{\theta}}$.
        It is straightforward to see that
            $j$ is a {}\textsuperscript{\ensuremath{\ast}}\=/homomorphism
        and that $r$ is unital,
        as $(r\:1)(x) = \Phi_{x}(1) = 1$ for all $x \in G$.
        Since each $\Phi_{x}$ is contractive
        one has
            $
                \norm{a}
                = \norm{\Phi_{1}(a)}
                \leq \sup_{x}\norm{\Phi_{x}(a)}
                \leq \norm{a}
            $
        and thus
            $\norm{r\:a} = \sup_{x}\norm{\Phi_{x}(a)} = \norm{a}$,
        \idest $r$ is an isometry.

        To show that $r$ is positive,
        consider a positive element $a \in \CStarAlg$.
        Now by the Stone-Weierstra{\ss} theorem,
        there exists a sequence $\{p_{n}\}_{n\in\naturals}$
        of (real-valued) polynomials, such that
            ${
                \sup_{t \in [0,\:\norm{a}]}\abs{\sqrt{t} - p_{n}(t)}
                \underset{n}{\longrightarrow}
                0
            }$.
        Let $x \in G$ be arbitrary.
        Since by assumption $\Phi_{x}(a) \in \CStarAlg$ is positive,
        $\sqrt{\Phi_{x}(a)}$ exists in $\CStarAlg$.
        Moreover, since $\norm{\Phi_{x}(a)} \leq \norm{a}$,
        the spectrum satisfies
            $
                \sigma(\Phi_{x}(a))
                \subseteq [0,\:\norm{\Phi_{x}(a)}]
                \subseteq [0,\:\norm{a}]
            $,
        whence we may apply the polynomial-approximants
        to compute the square roots.
        In particular, by the spectral theory for continuous functions,
            $
                \sup_{x \in G}
                    \norm{\sqrt{\Phi_{x}(a)} - p_{n}(\Phi_{x}(a))}
                = \sup_{x \in G}
                  \sup_{t \in \sigma(\Phi_{x}(a))}
                    \abs{\sqrt{t} - p_{n}(t)}
                \leq
                    \sup_{t \in [0,\:\norm{a}]}
                        \abs{\sqrt{t} - p_{n}(t)}
            $,
        which converges uniformly (\idest independently of $x$) to $0$
        as ${n \longrightarrow \infty}$.
        It follows that
            $(p_{n}(\Phi_{\cdot}(a)))_{n \in \naturals} \subseteq \CStarAlg_{\theta}$
        is a Cauchy-sequence \wrt the uniform norm
        with limit $\sqrt{\Phi_{\cdot}(a)}$.
        Since $\CStarAlg_{\theta}$ is closed,
        it follows that $g \coloneqq \sqrt{\Phi_{\cdot}(a)} \in \CStarAlg_{\theta}$.
        And since clearly
            $\Phi_{\cdot}(a) = g^{\ast}g$,
        it follows that
            $r\:a = \Phi_{\cdot}(a)$
        is positive.

        \paragraph{Construction of the representation:}
        Let $x \in G$.
        We define $U(x)$ to be the right-shift $R_{x}$
        acting on $\CStarAlg_{1}=\Cts_{b}{G}{\CStarAlg}$.
        It is a straightforward exercise to verify the following:

        \begin{enumerate}[
            label={\bfseries {\arabic*}.},
            ref={\arabic*},
        ]
            \item\label{prop:1:\beweislabel}
                $R_{x}$ is a unital {}\textsuperscript{\ensuremath{\ast}}\=/automorphism of
                $\Cts_{b}{G}{\CStarAlg}$.

            \item\label{prop:2:\beweislabel}
                The intertwining property
                    $R_{x} \circ \theta = \theta \circ R_{x}$
                holds,
                where on the left hand side of this expression
                $R_{x}$ denotes the right-shift on $\Cts{G}{\CStarAlg}$
                and on the right hand side
                the right-shift on $c_{00}(G, \CStarAlg)$.
                Since $c_{00}(G, \CStarAlg)$ is closed under right-shifts
                and $R_{x}$ is a homomorphism on $\Cts_{b}{G}{\CStarAlg}$,
                it follows that
                    $
                        R_{x}\GenSet{\ran(\theta)}
                        \subseteq
                        \GenSet{\ran(\theta)}
                    $.
        \end{enumerate}

        \continueparagraph
        So since $\CStarAlg_{\theta}$ is the closure of
            $\GenSet{\ran(\theta)}$
        within $(\CStarAlg_{1},\norm{\cdot})$
        and since $R_{x}$ preserves norms,
        the invariance in \eqcref{prop:2:\beweislabel}
        implies that
            $R_{x}\CStarAlg_{\theta} \subseteq \CStarAlg_{\theta}$
        for all $x \in G$.
        Since $R_{x^{-1}}$ inverts $R_{x}$,
        it follows from \eqcref{prop:1:\beweislabel}
        that $U(x) = R_{x}$ is a {}\textsuperscript{\ensuremath{\ast}}\=/automorphism
        of $\CStarAlg_{\theta}$.

        \paragraph{Dilation property:}
        For $x \in G$ and $a \in \CStarAlg$
        one has
        $
            j\:U(x)\:r\:a
            = (U(x)\Phi_{\cdot}(a))(1)
            = (R_{x}\Phi_{\cdot}(a))(1)
            = (\Phi_{\cdot x}(a))(1)
            = \Phi_{x}(a)
        $.
        Thus $(\CStarAlg_{\theta},U,j,r)$
        is a dilation of $(\CStarAlg,\phi)$.

        \paragraph{Continuity of $U$:}
        First consider an arbitrary element
            $
                h = \theta f \in \ran(\theta)
            $,
        where $f \in c_{00}(G, \CStarAlg)$.
        For $x,x' \in G$
        one obtains

            \begin{longeqnarray}
                \norm{(U(x') - U(x))h}
                &= &\norm{R_{x'}\theta f - R_{x}\theta f}\\
                &= &\norm{
                        \sum_{y \in \supp(f)}
                            (R_{x'y}\Phi_{\cdot} - R_{xy}\Phi_{\cdot})f(y)
                    }
                    \\
                &\leq
                    &\sum_{y \in \supp(f)}
                        \norm{
                            (R_{x'y}\Phi_{\cdot} - R_{xy}\Phi_{\cdot})f(y)
                        }
                    \\
                &= &\sum_{y \in \supp(f)}
                    \sup_{u \in G}
                        \norm{
                            \Phi_{ux'y}(f(y))
                            -
                            \Phi_{uxy}(f(y))
                        },
            \end{longeqnarray}

        \continueparagraph
        from which the continuity
        of ${G \ni x \mapsto U(x)\theta f \in \CStarAlg_{\theta}}$
        follows by the assumed
        uniform left-continuity \eqcref{eq:uniform-cts:cstar:sig:article-graph-raj-dahya}.

        Now consider an arbitrary element
        $
            a
            \coloneqq \prod_{i=1}^{n}\theta f_{i}
        $
        of the dense subspace $\GenSet{\ran(\theta)} \subseteq \CStarAlg_{\theta}$,
        where
            $f_{1},f_{2},\ldots,f_{n} \in c_{00}(G, \CStarAlg)$
        and $n\in\naturals$.
        Since each $U(x)$ is a homomorphism
        and
        multiplication is continuous \wrt the topology on $\CStarAlg_{\theta}$,
        it follows that
            ${
                G \ni x
                \mapsto
                U(x)a
                = \prod_{i=1}^{n}U(x)\theta f_{i}
            }$
        is continuous.

        Finally, consider arbitrary
            $x \in G$,
            $a \in \CStarAlg_{\theta}$,
            and
            $\eps > 0$.
        By density one may find
            $\tilde{a} \in \GenSet{\ran(\theta)}$
        with
            $
                \norm{a - \tilde{a}}
                < \frac{\eps}{4}
            $.
        By the established continuity of
            $U(\cdot)\tilde{a}$,
        there exists a neighbourhood
            $W \subseteq G$
        of $x$
        such that
            $\norm{(U(x') - U(x))\tilde{a}} < \tfrac{\eps}{2}$
        for all $x' \in W$.
        Relying on this, one obtains

            \begin{longeqnarray}
                \norm{(U(x') - U(x))a}
                &\leq
                    &\norm{(U(x') - U(x))\tilde{a}}
                    + \norm{U(x')(a - \tilde{a})}
                    + \norm{U(x)(a - \tilde{a})}\\
                &< &\frac{\eps}{2}
                    + 2\norm{a - \tilde{a}}
                < \frac{\eps}{2}
                    + 2\frac{\eps}{4}
                = \eps
            \end{longeqnarray}

        \continueparagraph
        for all $x' \in W$.
        This establishes the $\topSOT$-continuity of the representation.
    \end{proof}

\begin{rem}
    In \cite[Theorem~1]{Evans1976ArticlePos}
    Evans established a \TextCStarAlg dilation result
    as a response to Stroescu's result.
    However the conditions in his result are stricter
    than \Cref{cor:stroescu:cstar:sig:article-graph-raj-dahya},
    requiring the maps to be \highlightTerm{completely positive}
    (which we shall discuss in \S{}\ref{sec:dilation:kraus:sig:article-graph-raj-dahya}).
    This is needed as his approach relies on the Stinespring dilation theorem.
    Stroescu's approach by contrast,
    which we followed above,
    is not based on Stinespring's result.
\end{rem}



\subsubsection[Restricted continuity]{Restricted continuity}
\label{sec:dilation:stroescu:cts:sig:article-graph-raj-dahya}

\firstparagraph
The results of Stroescu provide us with
(necessary and) sufficient means to achieve continuity of the dilations.%
\footnote{%
    the necessity of the conditions can be demonstrated
    by applying the dilation to the expressions in
    \eqcref{eq:uniform-cts:banach:sig:article-graph-raj-dahya}
    and
    \eqcref{eq:uniform-cts:cstar:sig:article-graph-raj-dahya}.
}
These conditions can however be quite imposing and difficult to ensure.
Fortunately we are less interested in the continuity of the dilations
on their full definition sets,
but rather on a subset.
In this subsection we establish sufficient conditions to achieve this restricted continuity.

\begin{highlightboxWithBreaks}
\begin{lemm}
\makelabel{lemm:stroescu:banach:restr:sig:article-graph-raj-dahya}
    Let ${\iota : E \to G}$ be an arbitrary map between
    a topological space $E$ and a group $G$,
    and let
        $\{\phi(x)\}_{x \in G} \subseteq \BoundedOps{\BanachRaum}$
    be a family of contractions on a Banach space $\BanachRaum$,
    with $\phi(1) = \onematrix$.
    Consider the Stroescu\=/dilation
        $(\tilde{\BanachRaum},U,j,r)$
    of $\phi$ under discretisation of $G$
    in \Cref{thm:stroescu:banach:sig:article-graph-raj-dahya}.%
    \footnoteref{ft:1:\beweislabel}
    If $\phi$ has embedded uniform strong continuity \wrt $\iota$,%
    \footnoteref{ft:defn:embedded-cts:sig:article-graph-raj-dahya}
    then
        ${U \circ \iota : E \to \BoundedOps{\tilde{\BanachRaum}}}$
    is strongly continuous.
\end{lemm}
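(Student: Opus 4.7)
The plan is to unwind the explicit Stroescu construction (with $G$ carrying the discrete topology) so as to identify both a convenient dense subset of $\tilde{\BanachRaum}$ and a useable formula for the seminorm, then to show strong continuity of $U \circ \iota$ on that dense subset using the embedded uniform strong continuity hypothesis, and finally to extend to all of $\tilde{\BanachRaum}$ by density together with the fact that each $U(x)$ is a surjective isometry (which holds since $K \equiv 1$ in the contractive setting).

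Concretely, I would recall that Stroescu builds $\tilde{\BanachRaum}$ as the completion of $V_{0}/\ker(p)$, where $V_{0}$ is the space of finitely supported functions $f : G \to \BanachRaum$ and
\begin{displaymath}
    p(f)
    \coloneqq \sup_{g \in G}
        \normLong{
            \sum_{y \in G} \phi(gy)\:f(y)
        }.
\end{displaymath}
The embedding is $r\:\xi = [\delta_{1} \otimes \xi]$, the projection $j$ evaluates at $1$, and $U(x)[f] = [L_{x}f]$ where $L_{x}f(y) = f(x^{-1}y)$; in particular $U(h)\:r\:\xi = [\delta_{h} \otimes \xi]$. Since $\phi(1) = \onematrix$, the element $r\:\xi$ is isometric. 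The natural dense subset is
\begin{displaymath}
    D
    \coloneqq \linspann\{U(h)\:r\:\xi \mid h \in G,~\xi \in \BanachRaum\}
    \subseteq \tilde{\BanachRaum},
\end{displaymath}
which is precisely the image in $\tilde{\BanachRaum}$ of the finitely supported functions.

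The heart of the argument is the following computation: for $\tilde{\xi} = \sum_{k=1}^{n} U(h_{k})\:r\:\xi_{k} \in D$ and $e,e' \in E$,
\begin{displaymath}
    \norm{(U(\iota(e')) - U(\iota(e)))\:\tilde{\xi}}
    =
    \sup_{g \in G}
        \normLong{
            \sum_{k=1}^{n}
                \big(
                    \phi(g\:\iota(e')\:h_{k})
                    - \phi(g\:\iota(e)\:h_{k})
                \big)
                \:\xi_{k}
        }
    \leq
        \sum_{k=1}^{n}
            \sup_{g \in G}
                \norm{
                    (\phi(g\:\iota(e')\:h_{k}) - \phi(g\:\iota(e)\:h_{k}))\:\xi_{k}
                }.
\end{displaymath}
Each summand is dominated by the supremum from \eqcref{eq:embedded-sot-continuity:sig:article-graph-raj-dahya} (taking $h \coloneqq h_{k}$), and thus converges to $0$ as $e' \to e$ in $E$ by the assumed embedded uniform strong continuity of $\phi$. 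This establishes strong continuity of $U \circ \iota$ on the dense subset $D$.

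To finish, a standard $\eps/3$-argument closes the gap: for arbitrary $\tilde{\xi} \in \tilde{\BanachRaum}$ and $\eps > 0$, one picks $\tilde{\xi}_{0} \in D$ with $\norm{\tilde{\xi} - \tilde{\xi}_{0}} < \eps/3$, then exploits the isometry of each $U(\iota(e))$ together with the already-established continuity at $\tilde{\xi}_{0}$ to make $\norm{(U(\iota(e')) - U(\iota(e)))\:\tilde{\xi}}$ less than $\eps$ on a neighbourhood of $e$. The main obstacle is matching the exact shape of the Stroescu seminorm $p$ with the two-sided supremum in \eqcref{eq:embedded-sot-continuity:sig:article-graph-raj-dahya}; this is essentially bookkeeping, but it is the reason the embedded uniform strong continuity condition was formulated with both the $g$ and $h$ quantifiers in the first place, guaranteeing that both the translation action of $U$ and the supremum inside $p$ are uniformly absorbed.
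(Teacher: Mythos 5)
Your proposal is correct and follows essentially the same route as the paper: both unwind the Stroescu construction to identify the dense subspace spanned by (images of) finitely supported functions, compute the dilation norm of $(U(\iota(e'))-U(\iota(e)))$ applied to such an element as a supremum of $\norm{\sum_{k}(\phi(g\,\iota(e')\,h_{k})-\phi(g\,\iota(e)\,h_{k}))\,\xi_{k}}$ over $g \in G$, absorb this termwise into the embedded uniform strong continuity condition, and conclude by density and the isometry of the $U(x)$. The only cosmetic difference is that you present $\tilde{\BanachRaum}$ as the completion of $c_{00}(G,\BanachRaum)/\ker(p)$ while the paper realises it as the norm closure of $\ran(\theta)$ inside $\Cts_{b}{G}{\BanachRaum}$; these are isometrically the same space.
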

\end{highlightboxWithBreaks}

    \footnotetext[ft:1:\beweislabel]{%
        Here we apply the special case $K \equiv 1$ of this result.
        By discretising $G$,
        condition
        \eqcref{eq:uniform-cts:banach:sig:article-graph-raj-dahya}
        is trivially fulfilled.
    }

    \footnotetext[ft:defn:embedded-cts:sig:article-graph-raj-dahya]{%
        see \Cref{defn:embedded-cts:sig:article-graph-raj-dahya}.
    }

    \begin{proof}
        The constructions in the proof of \Cref{thm:stroescu:banach:sig:article-graph-raj-dahya}
        are similar to (but slightly simpler than)
        those involved in the proof of the \TextCStarAlg version:
        One has that each $U(x)$ is an isometry
        and
            $\tilde{\BanachRaum} = \quer{\ran}(\theta)$,
        \idest the norm closure of $\ran(\theta)$,
        where
            ${\theta : c_{00}(G, \BanachRaum) \to \tilde{\BanachRaum}}$
        is a map defined
        analogously to \eqcref{eq:stroescu-theta:sig:article-graph-raj-dahya}.
        To prove the continuity of
            $U \circ \iota$,
        it thus suffices to prove
        the norm-continuity of
            ${E \ni e \mapsto U(\iota(e))\theta f \in \tilde{\BanachRaum}}$
        for each $f \in c_{00}(G, \BanachRaum)$.
        So let $f \in c_{00}(G, \BanachRaum)$ and $e \in E$ be arbitrary.
        For $e' \in E$ one computes

            \begin{longeqnarray}
                \norm{U(\iota(e'))\theta f - U(\iota(e))\theta f}
                &= &\norm{R_{\iota(e')}\theta f - R_{\iota(e)}\theta f}\\
                &= &\norm{
                        R_{\iota(e')}
                        \sum_{y \in \supp(f)}
                            R_{y}\phi(\cdot)(f(y))
                        -
                        R_{\iota(e)}
                        \sum_{y \in \supp(f)}
                            R_{y}\phi(\cdot)(f(y))
                    }
                    \\
                &\leq
                    &\sum_{y \in \supp(f)}
                        \norm{
                            (
                                R_{\iota(e')y}\phi(\cdot)
                                -
                                R_{\iota(e)y}\phi(\cdot)
                            )(f(y))
                        }
                    \\
                &=
                    &\sum_{y \in \supp(f)}
                    \sup_{x \in G}
                        \norm{
                            (
                                \phi(x\iota(e')y)
                                -
                                \phi(x\iota(e)y)
                            )(f(y))
                        }
                    \\
                &=
                    &\card{\supp(f)}
                        \sup_{x,y \in G}
                        \norm{
                            (
                                \phi(x\iota(e')y)
                                -
                                \phi(x\iota(e)y)
                            )(f(y))
                        },
            \end{longeqnarray}

        \continueparagraph
        which converges to $0$
        as $e' \longrightarrow e$
        by the continuity condition \eqcref{eq:embedded-sot-continuity:sig:article-graph-raj-dahya}
        and since $\supp(f)$ is finite.
    \end{proof}

\begin{highlightboxWithBreaks}
\begin{lemm}
\makelabel{lemm:stroescu:cstar:restr:sig:article-graph-raj-dahya}
    Let ${\iota : E \to G}$ be an arbitrary map between
    a topological space $E$ and a group $G$,
    and let
        $\phi = \{\Phi_{x}(\cdot)\}_{x \in G} \subseteq \BoundedOps{\CStarAlg}$
    be a family of positive unital linear operators
    on a unital \TextCStarAlg $\CStarAlg$,
    with $\Phi_{1} = \id_{\CStarAlg}$.
    Consider the Stroescu\=/dilation
        $(\tilde{\CStarAlg},U,j,r)$
    of $\phi$ under discretisation of $G$
    in \Cref{cor:stroescu:cstar:sig:article-graph-raj-dahya}.%
    \footnoteref{ft:1:\beweislabel}
    If $\phi$ has embedded uniform strong continuity \wrt $\iota$,%
    \footnoteref{ft:defn:embedded-cts:sig:article-graph-raj-dahya}
    then
        ${U \circ \iota : E \to \BoundedOps{\tilde{\CStarAlg}}}$
    is strongly continuous.
\end{lemm}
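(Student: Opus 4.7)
The proof follows the pattern of \Cref{lemm:stroescu:banach:restr:sig:article-graph-raj-dahya} but must accommodate the fact that $\tilde{\CStarAlg}$ is the norm-closure of the multiplicative closure $\GenSet{\ran(\theta)}$ (rather than just the closure of $\ran(\theta)$), where $\theta$ is as in \eqcref{eq:stroescu-theta:sig:article-graph-raj-dahya}. The plan is therefore a three-stage density argument: first establish norm-continuity of $e \mapsto U(\iota(e)) h$ for $h \in \ran(\theta)$; then extend this to $\GenSet{\ran(\theta)}$ using that each $U(x) = R_x$ is a ${}^\ast$-automorphism and that multiplication in $\tilde{\CStarAlg}$ is jointly continuous; and finally pass to all of $\tilde{\CStarAlg}$ via an $\eps/3$-approximation, exploiting the fact that each $U(x)$ is an isometry.

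For the first stage, fix $f \in c_{00}(G, \CStarAlg)$ and $e \in E$. For $e' \in E$ one computes, in direct analogy with the Banach space case,
\begin{shorteqnarray}
\norm{U(\iota(e'))\theta f - U(\iota(e))\theta f}
&= &\normLong{
    \sum_{y \in \supp(f)} (R_{\iota(e')y}\Phi_{\cdot} - R_{\iota(e)y}\Phi_{\cdot})(f(y))
}\\
&\leq &\card{\supp(f)}
    \sup_{x, y \in G} \norm{(\Phi_{x\iota(e')y} - \Phi_{x\iota(e)y})(f(y))},
\end{shorteqnarray}
which converges to $0$ as $e' \longrightarrow e$ by the assumed embedded uniform strong continuity of $\phi$ with respect to $\iota$ (see \Cref{defn:embedded-cts:sig:article-graph-raj-dahya}) and since $\supp(f)$ is finite.

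For the second stage, consider $a = \prod_{i=1}^n \theta f_i \in \GenSet{\ran(\theta)}$ with $f_1,\ldots,f_n \in c_{00}(G,\CStarAlg)$. Since each $U(\iota(e))$ is a ${}^\ast$-homomorphism, one has $U(\iota(e)) a = \prod_{i=1}^n U(\iota(e)) \theta f_i$, and by joint continuity of multiplication in the \TextCStarAlg $\tilde{\CStarAlg}$ together with stage one, the map $e \mapsto U(\iota(e)) a$ is norm-continuous. For the third stage, given $a \in \tilde{\CStarAlg}$, $e \in E$, and $\eps > 0$, pick $\tilde{a} \in \GenSet{\ran(\theta)}$ with $\norm{a - \tilde{a}} < \eps/4$; by stage two there is a neighbourhood $W \subseteq E$ of $e$ with $\norm{(U(\iota(e')) - U(\iota(e)))\tilde{a}} < \eps/2$ for $e' \in W$. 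Then, since each $U(x)$ is an isometry,
\begin{shorteqnarray}
\norm{(U(\iota(e')) - U(\iota(e)))a}
&\leq &\norm{(U(\iota(e')) - U(\iota(e)))\tilde{a}} + 2\norm{a - \tilde{a}}\\
&< &\tfrac{\eps}{2} + 2 \cdot \tfrac{\eps}{4} = \eps
\end{shorteqnarray}
for all $e' \in W$, yielding strong continuity of $U \circ \iota$.

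The main obstacle is not conceptual but notational: correctly carrying the parameters through both the shift operator $R_{\iota(e)y}$ on the left and the product with $y$ on the right so that the supremum in the definition of embedded uniform strong continuity aligns with \eqcref{eq:embedded-sot-continuity:sig:article-graph-raj-dahya}. In particular, it is essential that the supremum in \eqcref{eq:embedded-sot-continuity:sig:article-graph-raj-dahya} is taken over \emph{both} left and right multipliers in $G$, which is precisely what allows stage one to absorb the factors $x$ and $y$ simultaneously.
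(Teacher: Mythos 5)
Your proposal is correct and follows essentially the same route as the paper: the paper's proof reduces to showing norm-continuity of $e \mapsto U(\iota(e))\theta f$ via the computation from \Cref{lemm:stroescu:banach:restr:sig:article-graph-raj-dahya}, and then invokes the same homomorphism-plus-density argument (already spelled out in the proof of \Cref{cor:stroescu:cstar:sig:article-graph-raj-dahya}) that you reproduce as your second and third stages. Your write-up merely makes explicit what the paper leaves as a cross-reference.
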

\end{highlightboxWithBreaks}

    \footnotetext[ft:1:\beweislabel]{%
        by discretising $G$,
        condition
        \eqcref{eq:uniform-cts:cstar:sig:article-graph-raj-dahya}
        is trivially fulfilled.
    }

    \begin{proof}
        We work with the construction in \Cref{cor:stroescu:cstar:sig:article-graph-raj-dahya}.
        Note that each $U(x)$ is a {}\textsuperscript{\ensuremath{\ast}}\=/automorphism
        of $\tilde{\CStarAlg}$
        and thereby an isometry.
        Moreover
            $\tilde{\CStarAlg} = \quer{\Generate{\ran(\theta)}}$,
        \idest the norm closure of the multiplicative closure of $\ran(\theta)$,
        where
            ${\theta : c_{00}(G, \CStarAlg) \to \tilde{\CStarAlg}}$
        is the map defined in \eqcref{eq:stroescu-theta:sig:article-graph-raj-dahya}.
        To prove the continuity of
            $U \circ \iota$,
        it thus suffices to prove
        the norm-continuity of
            ${E \ni e \mapsto U(\iota(e))\theta f \in \tilde{\CStarAlg}}$
        for each $f \in c_{00}(G, \CStarAlg)$.
        This in turn may be shown
        by a computation analogous to the one in the proof \Cref{lemm:stroescu:banach:restr:sig:article-graph-raj-dahya}
        and relying on the assumed continuity condition satisfied by $\phi$.
    \end{proof}




\subsection[Kraus\=/Dilations of CPTP\=/maps]{Kraus\=/Dilations of CPTP\=/maps}
\label{sec:dilation:kraus:sig:article-graph-raj-dahya}

\firstparagraph
We now consider the narrower context of \TextCStarAlgs
being the full space of bounded operators on a Hilbert space,
\idest $\CStarAlg = \BoundedOps{\HilbertRaum}$,
which constitutes a von~Neumann algebra.
In the previous subsection we established dilations
for families of positive unital operators on \TextCStarAlgs
to families of automorphisms.
In order to further obtain \emph{inner} automorphisms,
\idest automorphisms obtained by
the adjoint action of unitaries (and ultimately unitary representations),
we require the stronger notion of \highlightTerm{complete positivity}
(see \S{}\ref{sec:intro:notation:sig:article-graph-raj-dahya}).


\subsubsection[CPTP\=/maps]{CPTP\=/maps}
\label{sec:dilation:kraus:examples:sig:article-graph-raj-dahya}

\firstparagraph
In the present context, one may consider linear maps
on $\BoundedOps{\HilbertRaum}$
(for the so-called \usesinglequotes{Heisenberg picture}),
or dually (\cf
    \cite[\S{}2]{Kraus1983}%
)
the predual $\BoundedOps{\HilbertRaum}_{\ast} \cong L^{1}(\HilbertRaum)$
of trace class operators
(for the so-called \usesinglequotes{Schr\"{o}dinger picture}).
Consider Hilbert spaces $H_{1}$, $H_{2}$
and a linear map
    ${\Phi : L^{1}(H_{1}) \to L^{1}(H_{2})}$.
Recall from \S{}\ref{sec:intro:notation:sig:article-graph-raj-dahya}
that $\Phi$ is called a CPTP\=/map,
if it is completely positive
and $\tr(\Phi(s)) = \tr(s)$
for all $s \in L^{1}(H_{1})$.%
\footnote{%
    Note that Haag, Kastler, and Kraus
    refer to CPTP\=/maps as \highlightTerm{(physical) operations}
    (see
        \cite[\S{}I--II]{HaagKastler1964Article},
        \cite[\S{}2]{Kraus1971Article},
        \cite[\S{}2]{Kraus1983}%
    ).
    These are also referred to as \highlightTerm{quantum channels}
    for the Schr\"{o}dinger picture.
}

In particular,
CPTP\=/maps preserve trace one positive operators.
But what do such operators signify, in particular in the quantum setting?
Consider a Hilbert space $\HilbertRaum$.
Recall that $\ketbra{\xi}{\eta} \in \BoundedOps{\HilbertRaum}$
denotes the operator
defined by $\ketbra{\xi}{\eta}x = \brkt{x}{\eta}\:\xi$
for all vectors $\xi,\eta,x\in\HilbertRaum$.
In quantum mechanics,
each unit vector $\xi \in \HilbertRaum$,
or the corresponding operator
    $\ketbra{\xi}{\xi}$
is interpreted as a \highlightTerm{pure state} of a physical system
and can be used to compute expectations.%
\footnote{%
    The reason for this becomes apparent
    when the Hilbert space is $\HilbertRaum = L^{2}(X)$
    for some measure space $(X, \mu)$.
    Let $\xi \in L^{2}(X)$ be a unit vector.
    Then $\abs{\xi}^{2} \in L^{1}(X)$
    and $\mu_{\xi} \coloneqq \abs{\xi}^{2} \cdot \mu$
    defines a probability measure on $X$.
    For an arbitrary function $f \in L^{\infty}(X)$,
    the measure theoretic expectation
    can be related to an operator theoretic notion as follows:
    $
        \mathbb{E}_{\mu_{\xi}}[f]
        = \int f \:\dee\mu_{\xi}
        = \int f\cdot|\xi|^{2}\:\dee\mu
        = \int f\cdot\xi\cdot\xi^{\ast}\:\dee\mu
        = \int M_{f}\xi\cdot\xi^{\ast}\:\dee\mu
        = \brkt{M_{f}\xi}{\xi}
        = \tr(M_{f}\ketbra{\xi}{\xi})
    $,
    where $M_{f} \in \BoundedOps{\HilbertRaum}$
    is the multiplication operator.
    We thus see that the operator
        $\ketbra{\xi}{\xi}$
    can be used to compute expectations.
}
By the spectral theory of compact self-adjoint operators,
each trace one positive operator $\rho \in L^{1}(\HilbertRaum)$
admits a representation of the form

\begin{shorteqnarray}
    \rho
    = \sum_{e \in B} p_{e} \ketbra{e}{e}
\end{shorteqnarray}

\continueparagraph
where $B \subseteq \HilbertRaum$
is an ONB of $\HilbertRaum$,
and
$\{p_{e}\}_{e \in B}$ is a probability distribution.
Trace one positive operators
thus admit an interpretation
as probabilistic \highlightTerm{ensembles} of pure states,
and thereby \highlightTerm{general (quantum) states}
of a physical system.

We now consider examples (and counterexamples) of linear operators
which are well-known to constitute (\resp fail to be) CPTP\=/maps,
the proofs of which are left as an exercise.

\begin{e.g.}[Identity]
    Let $\HilbertRaum$ be a Hilbert space.
    Then the identity map
    ${
        \id_{L^{1}(\HilbertRaum)}
        :
        L^{1}(\HilbertRaum)
        \to
        L^{1}(\HilbertRaum)
    }$
    is a CPTP\=/map.
\end{e.g.}

\begin{e.g.}[Adjoints]
    Let $\HilbertRaum$ be a Hilbert space
    and $u \in \BoundedOps{\HilbertRaum}$ be a unitary.
    Then
        $\Phi(s) = \adjoint_{u}(s) = u\:s\:u^{\ast}$
        for $s \in L^{1}(\HilbertRaum)$
    defines a CPTP\=/map.
\end{e.g.}

\begin{e.g.}[Embeddings]
    Let $\HilbertRaum$ and $H_{2}$ be Hilbert spaces
    and $\omega \in L^{1}(H_{2})$
    with $\omega \geq \zeromatrix$
    and $\tr(\omega) = 1$.
    Then
        $\Phi(s) = s \otimes \omega$
    defines a CPTP\=/map
    from $L^{1}(\HilbertRaum)$
    to $L^{1}(\HilbertRaum \otimes H_{2})$.
\end{e.g.}

\begin{e.g.}[Partial trace]
    Let $\HilbertRaum$ and $H_{2}$ be Hilbert spaces.
    Then the \highlightTerm{partial trace}

    \begin{shorteqnarray}
        \tr_{2}
        : L^{1}(\HilbertRaum \otimes H_{2})
        \to
        L^{1}(\HilbertRaum),
    \end{shorteqnarray}

    \continueparagraph
    which associates to each $s \in L^{1}(\HilbertRaum \otimes H_{2})$
    the unique linear operator $\tr_{2}(s) \in L^{1}(\HilbertRaum)$
    satisfying

    \begin{shorteqnarray}
        \tr(T\tr_{2}(s))
        = \tr((T \otimes \onematrix)s)
    \end{shorteqnarray}

    \continueparagraph
    for all bounded operators $T \in \BoundedOps{\HilbertRaum}$,
    constitutes a CPTP\=/map
    (\cf \cite[Theorem~3.5]{Kraus1971Article}).
\end{e.g.}

\begin{e.g.}[Composition]
\label{e.g.:qo:composition:sig:article-graph-raj-dahya}
    Let
        $H_{1}$, $H_{2}$, $H_{3}$
    be Hilbert spaces
    and let
        ${\Phi_{1} : L^{1}(H_{1}) \to L^{1}(H_{2})}$
        and
        ${\Phi_{2} : L^{1}(H_{2}) \to L^{1}(H_{3})}$
    be CPTP\=/maps.
    Then $\Phi_{2} \circ \Phi_{1}$ is a CPTP\=/map.
    In particular,
    by the above examples,
    letting
        $\HilbertRaum$ and $H_{2}$
        be Hilbert spaces,
        $\omega \in L^{1}(H_{2})$
        a general state
        (\idest $\omega \geq \zeromatrix$ and $\tr(\omega) = 1$),
        and
        $u \in \BoundedOps{\HilbertRaum \otimes H_{2}}$
        a unitary operator,
    the linear operator ${\Phi : L^{1}(\HilbertRaum) \to L^{1}(\HilbertRaum)}$
    defined by

    \begin{restoremargins}
    \begin{equation}
    \label{eq:cptp-comp:sig:article-graph-raj-dahya}
        \Phi(s) \coloneqq \tr_{2}(\adjoint_{u}(s \otimes \omega))
    \end{equation}
    \end{restoremargins}

    \continueparagraph
    for $s \in \HilbertRaum$,
    constitutes a CPTP\=/map.
\end{e.g.}

Note that the complete positivity axiom is indeed strictly stronger than positivity:

\begin{e.g.}[Counterexamples]
    Consider the Hilbert space $\complex^{d}$
    for some $d \in \{2,3,\ldots\}$.
    The transposition map
        ${M_{d \times d}(\complex) \ni s \mapsto s^{T} \in M_{d \times d}(\complex)}$
    \wrt the standard basis
        $\{\BaseVector{1},\BaseVector{2},\ldots,\BaseVector{d}\}$
    as well as
    ${
        M_{d \times d}(\complex)
        \ni s
        \mapsto
        \frac{d+1}{d}\tr(s)\onematrix - s
        \in M_{d \times d}(\complex)
    }$
    are positive trace-preserving operators,
    which fail to be completely positive.
    Proof of this is a simple exercise
    involving us of Choi matrices
    (see
        \cite[Definition~4.1.1 and Theorem~4.1.8]{Stoermer2013BookPosOps}%
    ).
\end{e.g.}



\subsubsection[The \First representation theorem of Kraus]{The \First representation theorem of Kraus}
\label{sec:dilation:kraus:I:sig:article-graph-raj-dahya}

\firstparagraph
It turns out that \emph{all} CPTP\=/maps take the form
of the expression
in \eqcref{eq:cptp-comp:sig:article-graph-raj-dahya}
of \Cref{e.g.:qo:composition:sig:article-graph-raj-dahya}.
This is particularly useful as it
provides a bridge between linear operators on the state space on the one hand,
and inner automorphism via unitaries on the other.
This is the content of the \Second of two representation theorems due to Kraus,
both of which we now recall.

\begin{thm}[Kraus, 1971]
\makelabel{thm:kraus:I:sig:article-graph-raj-dahya}
    Let $\HilbertRaum$ be an arbitrary Hilbert space
    and ${\Phi : L^{1}(\HilbertRaum) \to L^{1}(\HilbertRaum)}$
    a linear operator.
    Then $\Phi$ is CPTP if and only if
    a family of bounded operators
        $\{w_{i}\}_{i \in I} \subseteq \BoundedOps{\HilbertRaum}$
    exists,
    satisfying
        $\sum_{i \in I}w_{i}w_{i}^{\ast} = \onematrix$
        computed \wrt the ultra-weak topology,%
        \footnoteref{ft:1:\beweislabel}
    such that

        \begin{restoremargins}
        \begin{equation}
        \label{eq:kraus:I:sig:article-graph-raj-dahya}
            \Phi(s)
                =
                    \sum_{i \in I}
                        w_{i}^{\ast}\:s\:w_{i}
        \end{equation}
        \end{restoremargins}

    \continueparagraph
    for all $s \in L^{1}(\HilbertRaum)$,
    where the sum converges in the trace-norm sense.
\end{thm}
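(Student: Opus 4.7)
My plan is to prove the two implications separately. The ``if'' direction is computational, while the ``only if'' direction rests on Stinespring's dilation theorem together with the structure theory of normal representations of $\BoundedOps{\HilbertRaum}$.

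For the sufficiency direction, assume the decomposition holds. Complete positivity would follow from the fact that each summand $s \mapsto w_{i}^{\ast} s w_{i}$ is completely positive (at each level $n \in \naturals$ the amplified map is induced by conjugation by the block-diagonal operator with diagonal entries $w_{i}$, which preserves positivity), together with the observation that pointwise trace-norm limits of CP maps remain CP. Trace preservation follows from $\tr(\sum_{i} w_{i}^{\ast} s w_{i}) = \sum_{i} \tr(s w_{i} w_{i}^{\ast}) = \tr(s \sum_{i} w_{i} w_{i}^{\ast}) = \tr(s)$ for $s \in L^{1}(\HilbertRaum)$, using cyclicity of the trace and the ultra-weak convergence of the partial sums $\sum_{i} w_{i} w_{i}^{\ast}$ to $\onematrix$, tested against the trace-class element $s$.

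For the necessity direction, I would pass to the Banach-space adjoint ${\Phi^{\ast} : \BoundedOps{\HilbertRaum} \to \BoundedOps{\HilbertRaum}}$, which is automatically ultra-weakly continuous (\idest normal). Trace preservation of $\Phi$ translates, via the duality pairing $(s,a) \mapsto \tr(sa)$, into unitality of $\Phi^{\ast}$, while complete positivity is preserved under this duality. I would then apply Stinespring's dilation theorem to the normal unital CP map $\Phi^{\ast}$ to obtain a Hilbert space $K$, a normal $\ast$-representation ${\pi : \BoundedOps{\HilbertRaum} \to \BoundedOps{K}}$, and an isometry ${V : \HilbertRaum \to K}$ with $\Phi^{\ast}(a) = V^{\ast} \pi(a) V$. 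By the classical structure theorem for normal representations of $\BoundedOps{\HilbertRaum}$, there exist an auxiliary Hilbert space $K_{0}$ and a unitary ${U : K \to \HilbertRaum \otimes K_{0}}$ intertwining $\pi$ with the amplification $a \mapsto a \otimes \onematrix_{K_{0}}$; replacing $V$ by $UV$ yields $\Phi^{\ast}(a) = V^{\ast}(a \otimes \onematrix_{K_{0}})V$. The final step would be to fix an orthonormal basis $\{e_{i}\}_{i \in I}$ of $K_{0}$ and, for each $\xi \in \HilbertRaum$, expand the dilation isometry coordinate-wise as $V\xi = \sum_{i \in I}(w_{i}^{\ast}\xi) \otimes e_{i}$ for uniquely determined vectors $w_{i}^{\ast}\xi \in \HilbertRaum$, thereby defining bounded operators $w_{i} \in \BoundedOps{\HilbertRaum}$. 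A direct inner-product calculation then yields $\brkt{\Phi^{\ast}(a)\xi}{\eta} = \sum_{i}\brkt{w_{i} a w_{i}^{\ast}\xi}{\eta}$ for all $\xi, \eta \in \HilbertRaum$, giving $\Phi^{\ast}(a) = \sum_{i} w_{i} a w_{i}^{\ast}$ ultra-weakly; taking $a = \onematrix$ and using unitality recovers $\sum_{i} w_{i} w_{i}^{\ast} = \onematrix$. Dualising back would then produce \eqcref{eq:kraus:I:sig:article-graph-raj-dahya} in the trace-norm sense.

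The main obstacle I foresee is the careful management of topologies and convergence. One must ensure that Stinespring's theorem is applied in a form that yields a \emph{normal} representation $\pi$ (so that the amplification structure theorem is applicable), and the resulting Kraus series must be checked to converge in the correct topology on each side of the duality: trace-norm convergence of $\sum_{i} w_{i}^{\ast} s w_{i}$ for $s \in L^{1}(\HilbertRaum)$ versus only ultra-weak convergence of $\sum_{i} w_{i} a w_{i}^{\ast}$ for $a \in \BoundedOps{\HilbertRaum}$. In the non-separable setting the index set $I$ may be uncountable, but only summability against trace-class elements is needed, which poses no real difficulty.
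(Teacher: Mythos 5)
Your proposal is correct and follows exactly the route the paper points to: the paper does not reprove this theorem but cites Kraus (and Davies, vom~Ende) and remarks that the key ingredients are Stinespring's dilation theorem and the representation theory of $\BoundedOps{\HilbertRaum}$, which is precisely your argument of dualising to the normal unital CP map $\Phi^{\ast}$, taking a normal Stinespring dilation, identifying the representation with an amplification $a \mapsto a \otimes \onematrix_{K_{0}}$, and reading off the Kraus operators from the coordinates of the dilation isometry. The points you flag (normality of the minimal Stinespring representation, and trace-norm versus ultra-weak convergence on the two sides of the duality) are the right ones and are handled correctly, e.g.\ trace-norm convergence of $\sum_{i} w_{i}^{\ast} s w_{i}$ for positive $s$ follows since the partial sums increase with traces bounded by $\tr(s)$.
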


\footnotetext[ft:1:\beweislabel]{%
    in fact, this converges \wrt the strong operator topology,
    \cf \Cref{rem:isometric-partitions:sig:article-graph-raj-dahya}.
}

\begin{rem}[Dimension]
    In the original proof by Kraus
    (see
        \cite[Theorem~4.1]{Kraus1971Article},
        \cite[Theorem~1]{Kraus1983}%
    ),
    separability of the Hilbert spaces are assumed.
    This turns out to be an unnecessary requirement
    (see \exempli
        \cite[Theorem~9.2.3]{Davies1976BookQuantumOpenSys},
        \cite[Proposition~2.3.10, Remark~2.3.11, and Appendix~A.5.3]{vomEnde2020PhdThesis}%
    ).
    In particular, the key ingredients in Kraus's theorem,
    \viz the Stinespring dilation theorem
        (see
            \cite[Theorem~4.8]{Pisier2001bookCBmaps},
            \cite[Theorem~9.2.1]{Davies1976BookQuantumOpenSys}%
        )
    and Naimark's representation theorem
        (see
            \cite[Theorem~3]{Naimark1972normedalg},
            \cite[Lemma~9.2.2]{Davies1976BookQuantumOpenSys}%
        ),
    do not require separability.
\end{rem}

\begin{rem}[From Kraus operators to isometric partitions]
\makelabel{rem:isometric-partitions:sig:article-graph-raj-dahya}
    Consider the family $\{w_{i}\}_{i \in I}$
    in \Cref{thm:kraus:I:sig:article-graph-raj-dahya}
    (whose involutions are referred to as \highlightTerm{Kraus operators}).
    Define
        $\tilde{\HilbertRaum} \coloneqq \HilbertRaum \otimes \ell^{2}(I)$.
    and let
        $\{\BaseVector{i}\}_{i \in I}$
    denote the canonical ONB for $\ell^{2}(I)$.
    Since
        $
            \sum_{i \in I} w_{i}w_{i}^{\ast}
            = \onematrix_{\HilbertRaum}
        $,
    one can easily verify that
        ${
            v : \HilbertRaum \ni \xi
            \mapsto
            \sum_{i \in I}
                w_{i}^{\ast}\xi \otimes \BaseVector{i}
            \in \tilde{\HilbertRaum}
        }$
    defines an isometry
    and that for each $i \in I$

        \begin{restoremargins}
        \begin{equation}
        \label{eq:kraus-ops-isometric-decomp:sig:article-graph-raj-dahya}
            w_{i} = v^{\ast}v_{i},
        \end{equation}
        \end{restoremargins}

    \continueparagraph
    where
        ${
            v_{i} : \HilbertRaum \ni \xi
            \mapsto \xi \otimes \BaseVector{i}
            \in \tilde{\HilbertRaum}
        }$,
    which is clearly an isometry.%
    \footnote{%
        by going through the proofs in
        \cite[Theorem~4.1]{Kraus1971Article},
        \cite[Theorem~1]{Kraus1983},
        this form of $w_{i}$ can be explicitly seen
        in Kraus's arguments.
    }
    Observe that
        $
            v_{j}^{\ast}v_{i}
            = \delta_{ij}\cdot\onematrix_{\HilbertRaum}
        $
    for $i,j \in I$
    and
        $
            \sum_{i \in I}v_{i}v_{i}^{\ast}
            = \onematrix_{\tilde{\HilbertRaum}}
        $.
    We shall refer to
        $\{v_{i}\}_{i \in I}$
    as an
    \highlightTerm{isometric partition}
    \resp an \highlightTerm{isometric partition of the identity}
    if it satisfies
    the second last property
    \resp both of these properties.
\end{rem}

\begin{rem}[Families of CPTP\=/maps, I]
\makelabel{rem:kraus:I:family:sig:article-graph-raj-dahya}
    Consider a family
        $\{\Phi_{\alpha}\}_{\alpha \in \Lambda}$
    of CPTP\=/maps on $L^{1}(\HilbertRaum)$,
    where again $\HilbertRaum$ is an arbitrary Hilbert space.
    For each $\alpha \in \Lambda$,
    by \Cref{thm:kraus:I:sig:article-graph-raj-dahya}
    there exists a Hilbert space
        $H_{\alpha}$,
    an isometry
    $v_{\alpha} \in \BoundedOps{\HilbertRaum}{H_{\alpha}}$,
    and an isometric partition of the identity
        $
            \{v_{\alpha,i}\}_{i \in I_{\alpha}}
            \subseteq
            \BoundedOps{\HilbertRaum}{H_{\alpha}}
        $,
    such that

        \begin{restoremargins}
        \begin{equation}
        \label{eq:kraus:I:alpha:sig:article-graph-raj-dahya}
            \Phi_{\alpha}(s)
                =
                    \sum_{i \in I_{\alpha}}
                        v_{\alpha,i}^{\ast}v_{\alpha}
                        \:s
                        \:v_{\alpha}^{\ast}v_{\alpha,i}
        \end{equation}
        \end{restoremargins}

    \continueparagraph
    for $s \in L^{1}(\HilbertRaum)$.
    We now show that a common Hilbert space can be chosen
    in place of the $H_{\alpha}$:

    Let $\kappa_{\alpha} \coloneqq \dim(H_{\alpha})$ for each $\alpha\in\Lambda$
    and set
        $
            \kappa
            \coloneqq
            \max\{
                \aleph_{0},
                \sup_{\alpha \in \Lambda}\kappa_{\alpha}
            \}
        $.
    Note that we can view the cardinal $\kappa$ itself as a set.%
    \footnote{%
        \viz the set of all \usesinglequotes{ordinals} below $\kappa$,
        see \exempli \cite[(2.2) and Chapter~3, Alephs]{Jech2003}.
    }
    Observe that since each $v_{\alpha}$ isometrically embeds $\HilbertRaum$ into $H_{\alpha}$,
    we have that each $\kappa_{\alpha} \geq \dim(\HilbertRaum)$,
    and thus $\kappa$ is an infinite cardinal at least as large as $\dim(\HilbertRaum)$.
    We now consider the Hilbert space
        $\ell^{2}(\kappa)$
    which has dimension $\kappa$.

    Let $\alpha$ be arbitrary.
    By the choice of $\kappa$,
    we can find isometries
        $\iota_{\alpha} \in \BoundedOps{H_{\alpha}}{\ell^{2}(\kappa)}$,
    such that
        $\dim(\ran(\iota_{\alpha})^{\perp}) = \kappa$.
    Replacing $v_{\alpha}$ by $\iota_{\alpha} \circ v_{\alpha}$
    and each $v_{\alpha,i}$ by $\iota_{\alpha} \circ v_{\alpha,i}$,
    we have that $\{v_{\alpha,i}\}_{i \in I_{\alpha}}$
    remains an isometric partition
    and that \eqcref{eq:kraus:I:alpha:sig:article-graph-raj-dahya}
    continues to hold.
    We also have
        $
            \sum_{i \in I_{\alpha}}
                v_{\alpha,i}v_{\alpha,i}^{\ast}
            = \iota_{\alpha}\iota_{\alpha}^{\ast}
        $,
    which is the projection in $\ell^{2}(\kappa)$
    onto $\ran(\iota_{\alpha})$.
    Our goal is to extend $\{v_{\alpha,i}\}_{i \in I_{\alpha}}$
    to an isometric partition of the identity,
    in a way that \eqcref{eq:kraus:I:alpha:sig:article-graph-raj-dahya}
    continues to hold.

    By the properties of $\kappa$ as an infinite cardinal,
    applying basic cardinal arithmetic,%
    \footnote{%
        see \exempli \cite[(3.14)]{Jech2003}.
    }
    we have
        $
            \kappa \cdot \dim(\HilbertRaum)
            = \max\{\kappa,\dim(\HilbertRaum)\}
            = \kappa
            = \dim(\ran(\iota_{\alpha})^{\perp})
        $.
    There thus exists a decomposition
        $
            \ran(\iota_{\alpha})^{\perp}
            = \bigoplus_{\gamma \in \kappa}
                H_{\alpha,\gamma}
        $,
    where each $H_{\alpha,\gamma}$ is a Hilbert space
    with $\dim(H_{\alpha,\gamma}) = \dim(\HilbertRaum)$.
    Now since
    $
        \ell^{2}(\kappa)
        = \ran(\iota_{\alpha}) \oplus \ran(\iota_{\alpha})^{\perp}
        = \ran(\iota_{\alpha})
            \oplus
            \bigoplus_{\gamma \in \kappa}
                H_{\alpha,\gamma}
    $
    and each $H_{\alpha,\gamma}$ is isomorphic to $\HilbertRaum$,
    we can extend the family
        $\{v_{\alpha,i}\}_{i \in I_{\alpha}}$
    to a family
        $
            \{u_{\alpha,j}\}_{j \in J_{\alpha}}
            \subseteq
            \BoundedOps{\HilbertRaum}{\ell^{2}(\kappa)}
        $
    which now constitutes an isometric partition of the identity.
    Due to the decomposition,
    and since
        $\ran(v_{\alpha}) \subseteq \ran(\iota_{\alpha})$,
    we have that
        $u_{\alpha,j}^{\ast}v = \zeromatrix$
    for all $u_{\alpha,j}$ not in the original family of isometries.
    Thus \eqcref{eq:kraus:I:alpha:sig:article-graph-raj-dahya}
    implies
        $
            \Phi_{\alpha}(s)
            = \sum_{j \in J_{\alpha}}
                u_{\alpha,j}^{\ast}v_{\alpha}
                \:s
                \:v_{\alpha}^{\ast}u_{\alpha,j}
        $.
    This shows that the \First representation theory can be applied
    to arbitrary families of CPTP\=/maps,
    in a way that yields representations involving a common larger Hilbert space.
\end{rem}



\subsubsection[The \Second representation theorem of Kraus]{The \Second representation theorem of Kraus}
\label{sec:dilation:kraus:II:sig:article-graph-raj-dahya}

\begin{highlightboxWithBreaks}
\begin{thm}[Kraus, 1983]
\makelabel{thm:kraus:II:sig:article-graph-raj-dahya}
    Let $\HilbertRaum$ be an arbitrary Hilbert space
    and $\Phi : L^{1}(\HilbertRaum) \to L^{1}(\HilbertRaum)$
    a linear operator.
    Then $\Phi$ is CPTP if and only if there exists
    a Hilbert space $\tilde{\HilbertRaum}$,
    and a general state $\omega \in L^{1}(\tilde{\HilbertRaum})$,
    and a unitary operator $u \in \BoundedOps{\HilbertRaum \otimes \tilde{\HilbertRaum}}$
    such that

        \begin{restoremargins}
        \begin{equation}
        \label{eq:kraus-II:sig:article-graph-raj-dahya}
            \Phi(s) = \tr_{2}(\adjoint_{u}(s \otimes \omega))
        \end{equation}
        \end{restoremargins}

    \continueparagraph
    for $s \in \HilbertRaum$.
    Moreover,
    $u$ can be chosen to be a reflection
    and
    the state $\omega$ can be arbitrarily chosen to be any pure state
    and this choice can be made independently of $\Phi$.%
    \footnoteref{ft:1:\beweislabel}
\end{thm}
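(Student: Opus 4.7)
The sufficient direction is immediate from \Cref{e.g.:qo:composition:sig:article-graph-raj-dahya}, since the right-hand side is the composition of three CPTP\=/maps: the embedding ${s \mapsto s \otimes \omega}$, the adjoint action $\adjoint_{u}$ via a unitary, and the partial trace $\tr_{2}$.

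For the necessary direction, apply \Cref{thm:kraus:I:sig:article-graph-raj-dahya} to obtain a family $\{w_{i}\}_{i \in I} \subseteq \BoundedOps{\HilbertRaum}$ with $\sum_{i}w_{i}w_{i}^{\ast} = \onematrix$ such that $\Phi(s) = \sum_{i \in I}w_{i}^{\ast}sw_{i}$ for all $s \in L^{1}(\HilbertRaum)$. Fix any Hilbert space $\tilde{\HilbertRaum}$ of Hilbert dimension at least $\max\{\card{I},\:\dim(\HilbertRaum),\:\aleph_{0}\}$, fix a unit vector $\eta \in \tilde{\HilbertRaum}$ yielding the pure state $\omega \coloneqq \ketbra{\eta}{\eta}$, and choose an orthonormal family $\{f_{i}\}_{i \in I} \subseteq \tilde{\HilbertRaum}$. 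Define the linear map ${V : \HilbertRaum \to \HilbertRaum \otimes \tilde{\HilbertRaum}}$ by $V\xi \coloneqq \sum_{i \in I}w_{i}^{\ast}\xi \otimes f_{i}$; the hypothesis $\sum_{i}w_{i}w_{i}^{\ast} = \onematrix$ implies $V$ is an isometry. A direct rank-one calculation then shows
\begin{shorteqnarray}
    V \ketbra{\xi_{1}}{\xi_{2}} V^{\ast}
    = \sum_{i,j \in I}
        w_{i}^{\ast}\ketbra{\xi_{1}}{\xi_{2}}w_{j}
        \otimes
        \ketbra{f_{i}}{f_{j}},
\end{shorteqnarray}
so taking $\tr_{2}$ picks out the diagonal, yielding $\tr_{2}(VsV^{\ast}) = \sum_{i}w_{i}^{\ast}sw_{i} = \Phi(s)$ on rank-one operators, and by trace-norm continuity on all $s \in L^{1}(\HilbertRaum)$.

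Next, set ${\pi : \HilbertRaum \ni \xi \mapsto \xi \otimes \eta \in \HilbertRaum \otimes \tilde{\HilbertRaum}}$. Both $\pi$ and $V$ are isometries whose ranges have the same Hilbert codimension in $\HilbertRaum \otimes \tilde{\HilbertRaum}$ (by the choice of $\tilde{\HilbertRaum}$ and standard infinite cardinal arithmetic), so one may freely choose a unitary between $(\ran\pi)^{\perp}$ and $(\ran V)^{\perp}$ and combine it with $V\pi^{\ast}$ to obtain a unitary $u_{0}$ on $\HilbertRaum \otimes \tilde{\HilbertRaum}$ satisfying $u_{0}\pi = V$. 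This yields $\tr_{2}(\adjoint_{u_{0}}(s \otimes \omega)) = \tr_{2}(VsV^{\ast}) = \Phi(s)$, establishing the unitary dilation with arbitrary pure state $\omega$.

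The main remaining obstacle is enforcing $u$ to be a reflection ($u^{2} = \onematrix$), for which I would apply a doubling trick. Replace $\tilde{\HilbertRaum}$ by $\tilde{\HilbertRaum}^{\sharp} \coloneqq \tilde{\HilbertRaum} \oplus \tilde{\HilbertRaum}$, replace $\omega$ by $\omega^{\sharp} \coloneqq \ketbra{(\eta,0)}{(\eta,0)}$, and via the natural identification $\HilbertRaum \otimes \tilde{\HilbertRaum}^{\sharp} \cong K \oplus K$ with $K \coloneqq \HilbertRaum \otimes \tilde{\HilbertRaum}$, define
\begin{shorteqnarray}
    u
    \coloneqq
    \begin{pmatrix}
        \zeromatrix & u_{0}^{\ast}\\
        u_{0} & \zeromatrix
    \end{pmatrix}
    \in \BoundedOps{K \oplus K},
\end{shorteqnarray}
which by a direct block computation satisfies $u = u^{\ast}$ and $u^{2} = \onematrix$. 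A further block computation shows $u(s \otimes \omega^{\sharp})u$ has its sole non-zero block equal to $u_{0}(s \otimes \omega)u_{0}^{\ast}$, and the partial trace over $\tilde{\HilbertRaum}^{\sharp}$ reduces to $\tr_{2}$ applied to that block, recovering $\Phi(s)$. Finally, to handle any prescribed pure state $\omega'' = \ketbra{\zeta}{\zeta}$ on $\tilde{\HilbertRaum}^{\sharp}$ independently of $\Phi$, pick a unitary $W$ on $\tilde{\HilbertRaum}^{\sharp}$ mapping $(\eta,0)$ to $\zeta$, and replace $u$ by $\tilde{u} \coloneqq (\onematrix_{\HilbertRaum} \otimes W)\:u\:(\onematrix_{\HilbertRaum} \otimes W^{\ast})$; this preserves the reflection property, and invariance of $\tr_{2}$ under conjugation by $\onematrix_{\HilbertRaum} \otimes W$ preserves the dilation identity. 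The delicate step will be verifying that the codimensions match in the general (possibly non-separable) Hilbert space setting so that the extension of $V$ to $u_{0}$ exists as needed.
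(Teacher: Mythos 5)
Your overall strategy is sound, and it genuinely differs in its details from the paper's route: the paper first rewrites the Kraus operators as $w_{i} = v^{\ast}v_{i}$ for an isometric partition of the identity, forms the isometry $D = \sum_{i}v_{i}^{\ast}v\otimes v_{i}$, and then takes the Halmos-type block operator $\begin{smatrix}\zeromatrix & D^{\ast}\\ D & \onematrix - DD^{\ast}\end{smatrix}$, which is \emph{automatically} a self-adjoint unitary on the enlarged space regardless of the size of $\ran(D)^{\perp}$. You instead build the Stinespring-style isometry $V\xi = \sum_{i}w_{i}^{\ast}\xi\otimes f_{i}$ directly, try to match it against the reference isometry $\pi$ by a unitary $u_{0}$ on the \emph{same} space, and only afterwards double to manufacture a reflection. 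Your verification of $\tr_{2}(VsV^{\ast}) = \Phi(s)$, the doubling trick, and the final conjugation by $\onematrix\otimes W$ are all correct.

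The genuine gap is exactly the step you flag: the claim that $\ran(\pi)^{\perp}$ and $\ran(V)^{\perp}$ have the same Hilbert dimension does not follow from the choice of $\tilde{\HilbertRaum}$ and cardinal arithmetic alone, and as written it can fail. For $\pi$ one always has $\dim(\ran(\pi)^{\perp}) = \dim(\HilbertRaum\otimes\tilde{\HilbertRaum})$; but if $\dim(\HilbertRaum) = \dim(\tilde{\HilbertRaum})$ and $\{f_{i}\}_{i\in I}$ happens to be an orthonormal \emph{basis}, then $V$ can even be surjective (take $w_{i}^{\ast} \coloneqq (\onematrix\otimes\bra{f_{i}})u$ for a unitary ${u : \HilbertRaum\to\HilbertRaum\otimes\tilde{\HilbertRaum}}$; then $V = u$ and $\sum_{i}w_{i}w_{i}^{\ast} = \onematrix$), in which case no unitary $u_{0}$ with $u_{0}\pi = V$ exists, since a unitary cannot map the proper closed subspace $\ran(\pi)$ onto the whole space. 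The repair is cheap: since $\dim(\tilde{\HilbertRaum}) \geq \max\{\card{I},\aleph_{0}\}$ gives $\card{I} + \dim(\tilde{\HilbertRaum}) = \dim(\tilde{\HilbertRaum})$, you may choose $\{f_{i}\}_{i\in I}$ so that $(\quer{\linspann}\{f_{i}\})^{\perp}$ has dimension $\dim(\tilde{\HilbertRaum})$; then $\ran(V)^{\perp}\supseteq\HilbertRaum\otimes(\quer{\linspann}\{f_{i}\})^{\perp}$ has dimension $\dim(\HilbertRaum\otimes\tilde{\HilbertRaum})$ and the codimensions do match. Alternatively you can drop the matching step entirely and apply the Halmos construction to the partial isometry $P \coloneqq V\pi^{\ast}$ on the doubled space; this is precisely what the paper's operator $D$ accomplishes, and it hands you the reflection property for free, rendering your separate doubling step redundant.
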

\end{highlightboxWithBreaks}

\footnotetext[ft:1:\beweislabel]{%
    recall that general states are the trace one positive elements
    $\omega \in L^{1}(\tilde{\HilbertRaum})$,
    and pure states are elements of the form
    $\ketbra{\eta}{\eta}$
    for unit vectors $\eta \in \tilde{\HilbertRaum}$.
}

The original proof can be found in \cite[Theorem~2]{Kraus1983}.
Since this was originally stated in terms of separable spaces,
we show how the unrestricted form of the \Second representation theorem
can be derived from the \First one.

\def\beweislabel{thm:kraus:II:sig:article-graph-raj-dahya}
\begin{proof}[of \Cref{\beweislabel}]
    The \usesinglequotes{if}\=/direction holds
    by \Cref{e.g.:qo:composition:sig:article-graph-raj-dahya}.
    Towards the \usesinglequotes{only if}\=/direction,
    suppose that $\Phi$ is a CPTP\=/map.
    By the \First representation theorem
    as well as
    \Cref{%
        rem:isometric-partitions:sig:article-graph-raj-dahya,%
        rem:kraus:I:family:sig:article-graph-raj-dahya%
    }
    and
    \eqcref{eq:kraus-ops-isometric-decomp:sig:article-graph-raj-dahya},
    there exists a Hilbert space
        $\tilde{\HilbertRaum}$
    with $\dim(\tilde{\HilbertRaum}) \geq \max\{\aleph_{0},\dim(\HilbertRaum)\}$,
    an isometry
        $v \in \BoundedOps{\HilbertRaum}{\tilde{\HilbertRaum}}$,
    and an isometric partition of the identity
        $
            \{v_{i}\}_{i \in I}
            \subseteq
            \BoundedOps{\HilbertRaum}{\tilde{\HilbertRaum}}
        $,
    such that \eqcref{eq:kraus:I:sig:article-graph-raj-dahya}
    holds with $w_{i} \coloneqq v^{\ast}v_{i}$.
    Working with this setup,
    let
        $H_{1} \coloneqq \HilbertRaum \otimes \HilbertRaum$,
        $H_{2} \coloneqq \HilbertRaum \otimes \tilde{\HilbertRaum}$,
    and

        \begin{displaymath}
            D
            \coloneqq
            \sum_{i \in I}
                v_{i}^{\ast}v
                \otimes
                v_{i}
            \in
            \BoundedOps{H_{1}}{H_{2}},
        \end{displaymath}

    \continueparagraph
    where the sum is computed \wrt the $\topSOT$\=/topology.
    By the properties of the family
    $\{v_{i}\}_{i \in I}$
    of isometries with orthogonal ranges,
    we have that
        $
            D^{\ast}D
            = \sum_{i,j \in I}
                v^{\ast}v_{j}v_{i}^{\ast}v
                \otimes
                v_{j}^{\ast}v_{i}
            = \sum_{i \in I}
                v^{\ast}v_{i}v_{i}^{\ast}v
                \otimes
                \onematrix
            = \onematrix
        $,
    \idest $D$ is an isometry.
    Consider the Hilbert space
        $\HilbertRaum \oplus \tilde{\HilbertRaum}$
    and let
        ${\iota_{1} : \HilbertRaum \to \HilbertRaum \oplus \tilde{\HilbertRaum}}$
        and
        ${\iota_{2} : \tilde{\HilbertRaum} \to \HilbertRaum \oplus \tilde{\HilbertRaum}}$
    denote the canonical isometric embeddings.
    Set
        $
            u_{0} \coloneqq
            (\onematrix \otimes \iota_{1})
            \:D^{\ast}
            \:(\onematrix \otimes \iota_{2})^{\ast}
            +
            (\onematrix \otimes \iota_{2})
            \:D
            \:(\onematrix \otimes \iota_{1})^{\ast}
            +
            (\onematrix \otimes \iota_{2})
            \:(\onematrix - D\:D^{\ast})
            \:(\onematrix \otimes \iota_{2})^{\ast}
        $,
    which is a bounded operator
    on $\HilbertRaum \otimes (\HilbertRaum \oplus \tilde{\HilbertRaum})$.
    Since this latter space
    is canonically isomorphic to
        $
            H_{1} \oplus H_{2}
        $,
    one may view $u_{0}$ as

        \begin{displaymath}
            \begin{matrix}{cc}
                \zeromatrix &D^{\ast}\\
                D &\onematrix - D\:D^{\ast}\\
            \end{matrix},
        \end{displaymath}

    \continueparagraph
    which is a unitary operator (in fact a reflection!).
    Finally, we choose unit vectors
        $\xi \in \HilbertRaum$
        and
        $\eta \in \tilde{\HilbertRaum}$
    (independently of $\Phi$)
    and fix the pure states
        $\omega_{0} \coloneqq \ketbra{\xi}{\xi}$
        and
        $\omega \coloneqq \ketbra{\eta}{\eta}$.

    Now since $\tilde{\HilbertRaum}$ is infinite dimensional and larger than $\HilbertRaum$,
    there exists a unitary operator
        $w \in \BoundedOps{\HilbertRaum \oplus \tilde{\HilbertRaum}}{\tilde{\HilbertRaum}}$
    and we can also ensure that $w^{\ast}\eta = \iota_{1}\xi$.
    Let $
        u
        \coloneqq \adjoint_{\onematrix \otimes w}u_{0}
        = (\onematrix \otimes w)\:u_{0}\:(\onematrix \otimes w^{\ast})
    $,
    which is a unitary operator on
        $\HilbertRaum \otimes \tilde{\HilbertRaum}$.
    Our goal is to demonstrate
        \eqcref{eq:kraus-II:sig:article-graph-raj-dahya}.

    Let
        $s \in L^{1}(\HilbertRaum)$
        and
        $T \in \BoundedOps{\HilbertRaum}$
    be arbitrary.
    By the choice of $\eta$ and $w$
    one has

        \begin{longeqnarray}
            \adjoint_{u}(s \otimes \omega)
            &= &\adjoint_{(\onematrix \otimes w)u_{0}}
                \:(\id \otimes \adjoint_{w^{\ast}})(s \otimes \omega)\\
            &= &\adjoint_{(\onematrix \otimes w)u_{0}}(s \otimes w^{\ast}\ketbra{\eta}{\eta}w)\\
            &= &\adjoint_{(\onematrix \otimes w)u_{0}}(s \otimes \iota_{1}\ketbra{\xi}{\xi}\iota_{1}^{\ast})\\
            &= &\adjoint_{(\onematrix \otimes w)u_{0}(\onematrix \otimes \iota_{1})}(s \otimes \omega_{0})\\
            &= &\adjoint_{(\onematrix \otimes w)(\onematrix \otimes \iota_{2})D}(s \otimes \omega_{0}),
        \end{longeqnarray}

    \continueparagraph
    where the final simplification holds,
    since
        $
            u_{0}\:(\onematrix \otimes \iota_{1})
            = \begin{smatrix}
                    \zeromatrix\\
                    D\\
                \end{smatrix}
            = (\onematrix \otimes \iota_{2})D
        $.
    Thus

        \begin{longeqnarray}
            \tr(
                (T \otimes \onematrix)
                \cdot
                \adjoint_{u}(s \otimes \omega)
            )
                &= &\tr(
                        (T \otimes \onematrix)
                        \:(\onematrix \otimes w\iota_{2})
                        \:D
                        \:(s \otimes \omega_{0})
                        \:D^{\ast}
                        \:(\onematrix \otimes w\iota_{2})^{\ast}
                    )\\
                &= &\tr(
                        (T \otimes \iota_{2}^{\ast}w^{\ast}w\iota_{2})
                        \:D
                        \:(s \otimes \omega_{0})
                        \:D^{\ast}
                    )\\
                &= &\tr(
                        (T \otimes \onematrix)
                        \:D
                        \:(s \otimes \omega_{0})
                        \:D^{\ast}
                    )\\
                &= &\sum_{i,j \in I}
                        \tr(
                            (T \otimes \onematrix)
                            \:(v_{j}^{\ast}v \otimes v_{j})
                            \:(s \otimes \omega_{0})
                            \:(v_{i}^{\ast}v \otimes v_{i})^{\ast}
                        )\\
                &= &\sum_{i,j \in I}
                        \tr(
                            T
                            \:v_{j}^{\ast}v
                            \:s
                            \:v^{\ast}v_{i}
                            \otimes
                            v_{j}
                            \omega_{0}
                            v_{i}^{\ast}
                        )
                    \\
                &= &\sum_{i,j \in I}
                        \tr(
                            T
                            \:v_{j}^{\ast}v
                            \:s
                            \:v^{\ast}v_{i}
                        )
                        \underbrace{
                            \tr(
                                v_{j}
                                \omega_{0}
                                v_{i}^{\ast}
                            )
                        }_{\substack{
                            = \tr(v_{i}^{\ast}v_{j}\omega_{0})\\
                            = \delta_{ij}\cancelto{1}{\tr(\omega_{0})}
                        }}
                    \\
                &= &\sum_{i \in I}
                        \tr(
                            T
                            \:v_{i}^{\ast}v
                            \:s
                            \:v^{\ast}v_{i}
                        )
                    \\
                &= &\tr(
                        T
                        \:
                        \sum_{i \in I}
                        v_{i}^{\ast}v
                        \:s
                        \:v^{\ast}v_{i}
                    )
                \eqcrefoverset{eq:kraus:I:sig:article-graph-raj-dahya}{=}
                    \tr(T\Phi(s)),
        \end{longeqnarray}

    \continueparagraph
    and since this holds for all $T$,
    it follows by definition of the partial trace
    that
        $
            \Phi(s)
            = \tr_{2}(\adjoint_{u}(s \otimes \omega))
        $
    for all $s \in L^{1}(\HilbertRaum)$.
\end{proof}

\begin{rem}[Physical interpretation]
\makelabel{rem:physical-interpretation-cptp:sig:article-graph-raj-dahya}
    The tensor product
    allows us to view the original system (S)
    as being naturally embedded in a larger system
    consisting of (S) together with an \usesinglequotes{environment} (E).
    A given state $\rho$ of (S) can be initially viewed
    as the separably coupled state $\rho \otimes \omega$ in (S+E),
    where $\omega$ is the state of (E).
    The general form of CPTP\=/maps
    in \eqcref{eq:kraus-II:sig:article-graph-raj-dahya}
    quantifies how $\omega$ can affect an otherwise undisturbed unitary evolution of $\rho$.
    For this reason, CPTP\=/maps
    for which the representation in \eqcref{eq:kraus-II:sig:article-graph-raj-dahya}
    does not simplify to simple unitary evolution,
    are referred to in the literature as
    \highlightTerm{noisy quantum channels}
    (under the Schr\"{o}dinger picture).
\end{rem}

\begin{rem}[Choice of pure state]
    The state $\omega$ being pure, was only used once,
    \viz in order to restrict the construction of $w$
    to ensure the desired intertwining property with $u_{0}$.
\end{rem}

\begin{rem}[Families of CPTP\=/maps, II]
\makelabel{rem:kraus:II:family:sig:article-graph-raj-dahya}
    Consider a family
        $\{\Phi_{\alpha}\}_{\alpha \in \Lambda}$
    of CPTP\=/maps on $L^{1}(\HilbertRaum)$,
    where $\HilbertRaum$ is an arbitrary Hilbert space.
    By \Cref{rem:kraus:I:family:sig:article-graph-raj-dahya},
    a single Hilbert space
        $\tilde{\HilbertRaum}$
    exists with
        $
            \dim(\tilde{\HilbertRaum})
            \geq
            \max\{
                \aleph_{0},
                \dim(\HilbertRaum)
            \}
        $
    such that each
        $\Phi_{\alpha}$
    has a representation \'{a}~la the \First representation theorem
    of the form
        $
            \Phi_{\alpha}(s)
            = \sum_{i \in I_{\alpha}}
                v_{\alpha,i}^{\ast}
                v_{\alpha}
                \:s
                \:v_{\alpha}^{\ast}
                v_{\alpha,i}
        $
    for $s \in L^{1}(\HilbertRaum)$,
    where $v_{\alpha}\in\BoundedOps{\HilbertRaum}{\tilde{\HilbertRaum}}$
    is an isometry
    and $\{v_{\alpha,i}\}_{i \in I_{\alpha}} \subseteq \BoundedOps{\HilbertRaum}{\tilde{\HilbertRaum}}$
    is an isometric partition of the identity.
    Fixing some pure state
        $\omega = \ketbra{\eta}{\eta}$
    for some unit vector $\eta \in \tilde{\HilbertRaum}$,
    we can run through the same arguments as in our proof of \Cref{thm:kraus:II:sig:article-graph-raj-dahya},
    and obtain unitaries
        $
            \{u_{\alpha}\}_{\alpha\in\Lambda}
            \subseteq
            \BoundedOps{\HilbertRaum\otimes\tilde{\HilbertRaum}}
        $
    (in fact reflections),
    such that
        $
            \Phi_{\alpha}(s)
            =
            \tr_{2}(\adjoint_{u_{\alpha}}(s \otimes \omega))
        $
    for all $s \in L^{1}(\HilbertRaum)$
    and all $\alpha \in \Lambda$.
\end{rem}




\subsection[Dilations for families of CPTP\=/maps]{Dilations for families of CPTP\=/maps}
\label{sec:dilation:vom-ende:sig:article-graph-raj-dahya}

\firstparagraph
We now consider families of CPTP\=/maps%
\footnote{%
    or dually: unital completely positive maps defined on von~Neumann algebras.
}
parameterised by (discrete) groups.
In the continuous setting, Davies
\cite[Theorem~2.1 and Theorem~3.1]{Davies1978dilationsCPmaps}
established dilation results involving strongly continuous unitary representations on Hilbert spaces.
However, the full expression of these dilations involve cumbersome limits
(%
    see
    \cite[Note~(ii), p.~335]{Davies1978dilationsCPmaps}%
).
More recently, vom~Ende and Dirr
obtained more concrete expressions
for families
    $\{\Phi_{n}\}_{n\in\naturalsZero}$
of CPTP\=/maps on $L^{1}(\HilbertRaum)$, where $\HilbertRaum$ is a separable Hilbert space,
parameterised by non-negative integers
(%
    see
    \cite[Theorem~4]{vomEnde2019unitaryDildiscreteCPsemigroups}%
).%
\footnote{%
    In the setup of the result of vom~Ende and Dirr,
    $\Phi_{n}(\cdot) = T^{n}$
    for a single CPTP\=/map $T$ defined on $L^{1}(\HilbertRaum)$.
    However their result applies more generally.
}
Their approach exploits the properties of Kraus's \Second representation theorem,
discussed in the preceding subsection.
Slightly adapting their approach, one may readily obtain the following generalisation:

\begin{highlightboxWithBreaks}
\begin{thm}[cf. vom~Ende--Dirr, 2019]
\makelabel{thm:vom-ende-dirr:sig:article-graph-raj-dahya}
    Let $\HilbertRaum$ be an arbitrary Hilbert space,
    and $G$ a discrete group with neutral element $1$.
    Let $\{\Phi_{x}\}_{x \in G}$ be a family of CPTP\=/maps on $L^{1}(\HilbertRaum)$
    with $\Phi_{1} = \id$.
    Then there exists a Hilbert space $\tilde{\HilbertRaum}$,
    a pure state $\omega \in L^{1}(\tilde{\HilbertRaum})$,
    and a unitary representation
        $U \in \Repr{G}{\HilbertRaum \otimes \tilde{\HilbertRaum}}$,
    such that

        \begin{restoremargins}
        \begin{equation}
        \label{eq:vom-ende-dirr:sig:article-graph-raj-dahya}
            \Phi_{x}(s)
            = \tr_{2}(\adjoint_{U(x)}(s \otimes \omega))
        \end{equation}
        \end{restoremargins}

    \continueparagraph
    holds for all $s \in L^{1}(\HilbertRaum)$
    and $x \in G$.
\end{thm}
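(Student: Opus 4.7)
The plan is to first invoke \Cref{rem:kraus:II:family:sig:article-graph-raj-dahya} to replace the CPTP family by a family of implementing unitaries on a common Hilbert space, and then to upgrade this (possibly non-multiplicative) family to a genuine unitary representation by a shift-twisted construction on $\ell^{2}(G)$. The naive ansatz $U(x) = u_{x} \otimes \lambda(x)$ (with $\lambda$ the left regular representation) will fail because the $u_{x}$ obey no group law; the construction below will absorb this defect via a $1$\=/cocycle twist.

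First, applying \Cref{rem:kraus:II:family:sig:article-graph-raj-dahya} to $\{\Phi_{x}\}_{x \in G}$ yields an auxiliary Hilbert space $\tilde{H}_{0}$, a pure state $\omega_{0} = \ketbra{\eta_{0}}{\eta_{0}} \in L^{1}(\tilde{H}_{0})$, and unitaries $\{u_{x}\}_{x \in G} \subseteq \BoundedOps{\HilbertRaum \otimes \tilde{H}_{0}}$ with $\Phi_{x}(s) = \tr_{2}(\adjoint_{u_{x}}(s \otimes \omega_{0}))$ for all $s \in L^{1}(\HilbertRaum)$; since $\Phi_{1} = \id$ and the $u_{1}$ may be chosen freely, I would arrange $u_{1} = \onematrix$. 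Next, set $\tilde{\HilbertRaum} \coloneqq \ell^{2}(G) \otimes \tilde{H}_{0}$ with pure state $\omega \coloneqq \ketbra{\delta_{1} \otimes \eta_{0}}{\delta_{1} \otimes \eta_{0}}$, identify $\HilbertRaum \otimes \tilde{\HilbertRaum} \cong \ell^{2}(G,\:\HilbertRaum \otimes \tilde{H}_{0})$, and for each $x \in G$ define
\[
    (U(x)f)(g) \coloneqq u_{g}\:u_{x^{-1}g}^{\ast}\:f(x^{-1}g),
    \quad
    g \in G.
\]
Each $U(x)$ is a composition of the shift $f(\cdot) \mapsto f(x^{-1}\cdot)$ with the diagonal operator on $\ell^{2}(G,\:\HilbertRaum \otimes \tilde{H}_{0})$ whose fibre at $g$ is $u_{g}u_{x^{-1}g}^{\ast}$, and is therefore unitary. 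The telescoping identity $(u_{g}u_{y^{-1}g}^{\ast})(u_{y^{-1}g}u_{(yx)^{-1}g}^{\ast}) = u_{g}u_{(yx)^{-1}g}^{\ast}$ yields $U(y)U(x) = U(yx)$, and $u_{1} = \onematrix$ gives $U(1) = \onematrix$, so $U \in \Repr{G}{\HilbertRaum \otimes \tilde{\HilbertRaum}}$ with $U(x)^{-1} = U(x^{-1})$.

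It remains to check the dilation property. Under the above identification, $\xi \otimes \delta_{1} \otimes \eta_{0}$ corresponds to the function supported only at $g = 1$ with value $\xi \otimes \eta_{0}$; since $u_{1} = \onematrix$, the vector $U(x)(\xi \otimes \delta_{1} \otimes \eta_{0})$ is supported only at $g = x$ with value $u_{x}(\xi \otimes \eta_{0})$. For a rank-one $s = \ketbra{\xi}{\xi'}$ the matrix element $U(x)(s \otimes \omega)U(x)^{\ast}$ is thus concentrated at the single fibre $g = x$, and computing $\tr_{2}$ fibrewise and summing over $g$ collapses to $\tr_{2}'(\adjoint_{u_{x}}(s \otimes \omega_{0})) = \Phi_{x}(s)$, where $\tr_{2}'$ denotes the partial trace $L^{1}(\HilbertRaum \otimes \tilde{H}_{0}) \to L^{1}(\HilbertRaum)$. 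Linearity and trace-norm continuity will then extend this to all $s \in L^{1}(\HilbertRaum)$, establishing \eqref{eq:vom-ende-dirr:sig:article-graph-raj-dahya}.

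The main obstacle, to be isolated early, is identifying the correct $1$\=/cocycle: one must spot that the choice $u_{x,g} \coloneqq u_{g}u_{x^{-1}g}^{\ast}$ simultaneously guarantees multiplicativity of $U$ \emph{and} collapses to $u_{x}$ precisely along the fibre $g = x$ that carries the image of the distinguished vector $\delta_{1} \otimes \eta_{0}$. Once this coboundary-type ansatz is in place, both the representation property and the dilation identity reduce to routine verifications.
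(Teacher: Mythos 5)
Your construction is correct and is essentially the paper's proof in different notation: your cocycle $u_{x,g}=u_{g}u_{x^{-1}g}^{\ast}$ is a coboundary, so your $U(x)$ is exactly the paper's $\adjoint_{u}(\onematrix\otimes\onematrix\otimes L_{x})$ with $u=\sum_{g}u_{g}\otimes\ketbra{\BaseVector{g}}{\BaseVector{g}}$, written fibrewise on $\ell^{2}(G,\:\HilbertRaum\otimes\tilde{H}_{0})$. The only cosmetic difference is that the paper obtains multiplicativity by conjugating the representation $x\mapsto L_{x}$ by the fixed diagonal unitary $u$, whereas you verify it directly by telescoping; the dilation computation is the same.
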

\end{highlightboxWithBreaks}

\begin{proof}
    By \Cref{thm:kraus:II:sig:article-graph-raj-dahya}
    and \Cref{rem:kraus:II:family:sig:article-graph-raj-dahya}
    there exists a Hilbert space $H$,
    a pure state $\omega_{0} \in L^{1}(H)$,
    as well as unitaries
        $\{u_{x}\}_{x \in G} \subseteq \BoundedOps{\HilbertRaum \otimes H}$,
    such that
        $
            \Phi_{x}(s)
            =
            \tr_{2}(\adjoint_{u_{x}}(s \otimes \omega_{0}))
        $
    for all $s \in L^{1}(\HilbertRaum)$
    and all $x \in G$.
    Since $\Phi_{1} = \id$,
    \withoutlog we may assume that $u_{1} = \onematrix$.

    Consider the Hilbert space
        $
            \tilde{\HilbertRaum} \coloneqq H \otimes \ell^{2}(G)
        $.
    Let
        $
            \{\BaseVector{x}\}_{x \in G} \subseteq\ell^{2}(G)
        $
    denote the canonical ONB
    and observe that the diagonal construction

        \begin{shorteqnarray}
            u
            \coloneqq
            \sum_{x \in G}
                u_{x}
                \otimes
                \ketbra{\BaseVector{x}}{\BaseVector{x}}
        \end{shorteqnarray}

    \continueparagraph
    constitutes a unitary operator on
        $
            \HilbertRaum \otimes \tilde{\HilbertRaum}
        $.
    Let $L_{x}$ denote the left-shift on $\ell^{2}(G)$
    and set

        \begin{restoremargins}
        \begin{equation}
        \label{eq:1:\beweislabel}
        \everymath={\displaystyle}
        \begin{array}[m]{rcl}
            U(x)
            &\coloneqq
                &\adjoint_{u}(\onematrix \otimes \onematrix \otimes L_{x})
                \\
            &=
                &\sum_{y,y' \in G}
                    u_{y'}
                    u_{y}^{\ast}
                    \otimes
                    \ketbra{\BaseVector{y'}}{\BaseVector{y'}}
                    L_{x}
                    \ketbra{\BaseVector{y}}{\BaseVector{y}}
                \\
            &=
                &\sum_{y',y \in G}
                    u_{y'}
                    u_{y}^{\ast}
                    \otimes
                    \ketbra{\BaseVector{y'}}{\BaseVector{y'}}
                    \ketbra{\BaseVector{xy}}{\BaseVector{y}}
                \\
            &=
                &\sum_{y \in G}
                    u_{xy}
                    u_{y}^{\ast}
                    \otimes
                    \ketbra{\BaseVector{xy}}{\BaseVector{y}}
        \end{array}
        \end{equation}
        \end{restoremargins}

    \continueparagraph
    for $x \in G$.
    Since
        ${G \ni x \mapsto L_{x} \in \BoundedOps{\ell^{2}(G)}}$
    is a unitary representation of $G$,
    it follows that $U \in \Repr{G}{\HilbertRaum \otimes \tilde{\HilbertRaum}}$.
    Finally, set
        $
            \omega \coloneqq \omega_{0} \otimes \ketbra{\BaseVector{1}}{\BaseVector{1}}
        $,
    which constitutes a pure state on $\tilde{\HilbertRaum}$.

    For $x \in G$ and $s \in L^{1}(\HilbertRaum)$
    one has

        \begin{longeqnarray}
            \adjoint_{U(x)}(s \otimes \omega)
            &= &U(x)
                (s \otimes \omega_{0} \otimes \ketbra{\BaseVector{1}}{\BaseVector{1}})
                U(x)^{\ast}
                \\
            &\eqcrefoverset{eq:1:\beweislabel}{=}
                &\sum_{y,y' \in G}
                    u_{xy}
                    u_{y}^{\ast}
                    \:(s \otimes \omega_{0})
                    \:u_{y'}
                    u_{xy'}^{\ast}
                    \otimes
                    \ketbra{\BaseVector{xy}}{\BaseVector{y}}
                    \ketbra{\BaseVector{1}}{\BaseVector{1}}
                    \ketbra{\BaseVector{y'}}{\BaseVector{xy'}}
                \\
            &=
                &u_{x}
                u_{1}^{\ast}
                \:(s \otimes \omega_{0})
                \:u_{1}
                u_{x}^{\ast}
                \otimes
                \ketbra{\BaseVector{x}}{\BaseVector{x}}
                \\
            &=
                &\adjoint_{u_{x}}(s \otimes \omega_{0})
                \otimes
                \ketbra{\BaseVector{x}}{\BaseVector{x}},
        \end{longeqnarray}

        \continueparagraph
        since $u_{1} = \onematrix$ (see above).
        For $T \in \BoundedOps{\HilbertRaum}$ one thus obtains

        \begin{longeqnarray}
            \tr(
                (T \otimes \onematrix_{\tilde{\HilbertRaum}})
                \cdot
                \adjoint_{U(x)}(s \otimes \omega)
            )
            &=
                &\tr(
                    (T \otimes \onematrix_{H} \otimes \onematrix_{\ell^{2}(G)})
                    \cdot
                    (
                        \adjoint_{u_{x}}(s \otimes \omega_{0})
                        \otimes
                        \ketbra{\BaseVector{x}}{\BaseVector{x}}
                    )
                )
                \\
            &=
                &\tr(
                    (T \otimes \onematrix_{H})
                    \cdot
                    \adjoint_{u_{x}}(s \otimes \omega_{0})
                )
                \cdot\cancelto{1}{\tr(\ketbra{\BaseVector{x}}{\BaseVector{x}})}
                \\
            &\overset{(\ast)}{=}
                &\tr(
                    T
                    \cdot
                    \tr_{2}(\adjoint_{u_{x}}(s \otimes \omega_{0}))
                )
                \\
            &=
                &\tr(T \Phi_{x}(s))
        \end{longeqnarray}

    \continueparagraph
    where ($\ast$) holds per definition of the partial trace
    computed for $\HilbertRaum \otimes H$.
    It follows that
        $
            \tr_{2}(\adjoint_{U(x)}(s \otimes \omega))
            = \Phi_{x}(s)
        $,
    whereby the partial trace here is computed for
    $\HilbertRaum \otimes \tilde{\HilbertRaum}$.
\end{proof}




\section[Proof of main results]{Proof of main results}
\label{sec:results:sig:article-graph-raj-dahya}

\firstparagraph
In
    \S{}\ref{sec:algebra:sig:article-graph-raj-dahya}
    and
    \S{}\ref{sec:ext:sig:article-graph-raj-dahya},
we established natural group structures associated with graphs
as well as means to lift operator families on graphs
to (continuous) operator families on their associated groups.
In \S{}\ref{sec:dilation:sig:article-graph-raj-dahya}
we recalled and extended known dilation results for dynamical systems
defined on topological and discrete groups.
Piecing these together, we obtain our main results.


We first prove \Cref{thm:result:graph-dilations:discrete:sig:article-graph-raj-dahya}
by making use of the normal form extension in \Cref{lemm:extension-normal-form:sig:article-graph-raj-dahya},
as well as the dilation results of Stroescu and Kraus / vom~Ende--Dirr.

\def\beweislabel{thm:result:graph-dilations:discrete:sig:article-graph-raj-dahya}
\begin{proof}[of \Cref{\beweislabel}]
    Let $G = G_{\mathcal{G}}$
    be the edge group associated to the graph $\mathcal{G}$
    and define
        ${\iota : E \to G}$
    by $\iota(e) \coloneqq [\letter{e}]$
    for $e \in E$.
    Apply \Cref{lemm:extension-normal-form:sig:article-graph-raj-dahya}
    to obtain the normal form extension
        ${\quer{\phi} : G \to \BoundedOps{\BanachRaum}}$
    of $\phi$,
    which satisfies
        $\quer{\phi}(1) = \onematrix$,
        $\quer{\phi} \circ \iota = \phi$,
    and $\ran(\quer{\phi}) \subseteq \Generate{\ran(\phi) \cup \{\onematrix\}}$.

    \paragraph{Claim~\punktcref{banach}:}
        Since
            $\ran(\quer{\phi}) \subseteq \Generate{\ran(\phi) \cup \{\onematrix\}}$,
        one has that
            $\{\quer{\phi}(x)\}_{x \in G}$
        is a family of contractions on $\BanachRaum$
        with $\quer{\phi}(1) = \onematrix$.
        Viewing $G$ with the discrete topology,
        we may apply \Cref{thm:stroescu:banach:sig:article-graph-raj-dahya}
        (Stroescu\=/dilations for Banach spaces)
        under the special case of $K \equiv 1$,
        and obtain a Banach space dilation
            $(\tilde{\BanachRaum},\quer{U},j,r)$
        of $(G,\quer{\phi})$,
        which immediately delivers the desired properties for $j$ and $r$.
        Finally,
        set
            ${U \coloneqq \quer{U} \circ \iota : E \to \BoundedOps{\tilde{\BanachRaum}}}$.
        We now verify the properties of $U$.
        By the properties of the dilation and the extension,
        one has that
            $\{U(u,v)\}_{(u, v) \in E}$
        is a family of surjective isometries on $\tilde{\BanachRaum}$
        with
            $
                j\:U(u, v)\:r
                = j\:\quer{U}(\iota(u, v))\:r
                = \quer{\phi}(\iota(u, v))
                = \phi(u, v)
            $
        for all $(u, v) \in E$.
        Since $\quer{U}$ is a representation of $G$
        one has
            $
                U(u, u)
                = \quer{U}(\iota(u, u))
                = \quer{U}(1)
                = \onematrix
            $
            for $u \in \Omega$
            for which $(u, u) \in E$.
        And
            $
                U(u, v)U(v, w)
                = \quer{U}(\iota(u, v))
                \quer{U}(\iota(v, w))
                = \quer{U}([\letter{(u, v)}][\letter{(v, w)}])
                = \quer{U}([\letter{(u, v)}\letter{(v, w)}])
                = \quer{U}([\letter{(u, w)}])
                = \quer{U}(\iota(u, w))
                = U(u, w)
            $
        for all $u, v, w \in \Omega$
        with $(u, v), (v, w), (u, w) \in E$.
        Hence $U$ is a divisible dynamical system on the graph $\mathcal{G}$.

    \paragraph{Claim~\punktcref{cstar}:}
        Since
            $\ran(\quer{\phi}) \subseteq \Generate{\ran(\phi) \cup \{\id_{\CStarAlg}\}}$,
        one has that
            $\quer{\phi} = \{\quer{\Phi}_{g}\}_{g \in G}$
        is a family of positive unital operators on $\CStarAlg$
        with $\Phi_{1} = \id_{\CStarAlg}$.
        Viewing $G$ with the discrete topology,
        we may apply \Cref{cor:stroescu:cstar:sig:article-graph-raj-dahya}
        (Stroescu\=/dilations for \TextCStarAlgs).
        The remainder of the proof is analogous to Claim~\punktcref{banach}.

    \paragraph{Claim~\punktcref{cptp}:}
        Since
            $\ran(\quer{\phi}) \subseteq \Generate{\ran(\phi) \cup \{\id_{L^{1}(\HilbertRaum)}\}}$,
        one has that
            $\quer{\phi} = \{\quer{\Phi}_{g}\}_{g \in G}$
        is a family of CPTP\=/maps on $L^{1}(\HilbertRaum)$
        with $\Phi_{1} = \id_{L^{1}(\HilbertRaum)}$.
        We may thus apply
        \Cref{thm:vom-ende-dirr:sig:article-graph-raj-dahya}
        (vom~Ende--Dirr dilations for CPTP\=/maps).
        The remainder of the proof is analogous to Claim~\punktcref{banach}.
\end{proof}



We now prove \Cref{thm:result:graph-dilations:cts:divisible:sig:article-graph-raj-dahya}
by making use of the \First cover extension in \Cref{lemm:extension-cover:I:sig:article-graph-raj-dahya},
the condition of geometric growth (see \Cref{defn:geom-growth:op-family:sig:article-graph-raj-dahya})
and the dilation results of Stroescu.

\def\beweislabel{thm:result:graph-dilations:cts:divisible:sig:article-graph-raj-dahya}
\begin{proof}[of \Cref{\beweislabel}]
    Let $G = G_{\mathcal{G}}$
    be the edge group associated to the graph $\mathcal{G}$
    and define
        ${\iota : E \to G}$
    by $\iota(e) \coloneqq [\letter{e}]$
    for $e \in E$.
    Apply \Cref{lemm:extension-cover:I:sig:article-graph-raj-dahya}
    to obtain the \First cover extension
        ${\quer{\phi} : G \to \BoundedOps{\BanachRaum}}$
    of $\phi$,
    which satisfies
        $\quer{\phi}(1) = \onematrix$,
        $\quer{\phi} \circ \iota = \phi$,
    and $\ran(\quer{\phi}) \subseteq \Generate{\ran(\phi)}$.
    Since $\phi$
    satisfies
        the identity axiom \eqcref{ax:dyn:id:sig:article-graph-raj-dahya}
        and
        the divisibility axiom \eqcref{ax:dyn:div:sig:article-graph-raj-dahya}
    and has geometric growth,
    by
        \Cref{lemm:extension-cover-continuous:sig:article-graph-raj-dahya}
    the \First cover extension $\quer{\phi}$
    has embedded uniform strong continuity \wrt the map $\iota$.

    \paragraph{Claim~\punktcref{banach}:}
        Since
            $\ran(\quer{\phi}) \subseteq \Generate{\ran(\phi)}$,
        one has that
            $\{\quer{\phi}(x)\}_{x \in G}$
        is a family of contractions on $\BanachRaum$
        with $\quer{\phi}(1) = \onematrix$.
        Viewing $G$ with the discrete topology,
        we may apply \Cref{thm:stroescu:banach:sig:article-graph-raj-dahya}
        (Stroescu\=/dilations for Banach spaces)
        under the special case of $K \equiv 1$,
        and obtain a Banach space dilation
            $(\tilde{\BanachRaum},\quer{U},j,r)$
        of $(G,\quer{\phi})$.
        Setting
            ${U \coloneqq \quer{U} \circ \iota : E \to \BoundedOps{\tilde{\BanachRaum}}}$,
        we obtain
        all the desired properties for $(U,j,r)$
        bar continuity,
        analogous to the proof of
        \Crefit{thm:result:graph-dilations:discrete:sig:article-graph-raj-dahya}{banach}.
        Since $\quer{\phi}$ has embedded uniform strong continuity \wrt $\iota$,
        the conditions of \Cref{lemm:stroescu:banach:restr:sig:article-graph-raj-dahya}
        are fulfilled,
        which implies that
            ${U = \quer{U} \circ \iota : E \to \BoundedOps{\tilde{\BanachRaum}}}$
        is strongly continuous.

    \paragraph{Claim~\punktcref{cstar}:}
        Since
            $\ran(\quer{\phi}) \subseteq \Generate{\ran(\phi)}$,
        one has that
            $\quer{\phi} = \{\quer{\Phi}_{g}\}_{g \in G}$
        is a family of positive unital operators on $\CStarAlg$
        with $\Phi_{1} = \id_{\CStarAlg}$.
        Viewing $G$ with the discrete topology,
        we may apply \Cref{cor:stroescu:cstar:sig:article-graph-raj-dahya}
        (Stroescu\=/dilations for \TextCStarAlgs)
        to obtain a \TextCStarAlg dilation
            $(\tilde{\CStarAlg},\quer{U},j,r)$
        of $(G,\quer{\phi})$,
        where $\tilde{\CStarAlg}$
        is a unital (\resp unital commutative) \TextCStarAlg.
        Setting
            ${U \coloneqq \quer{U} \circ \iota : E \to \BoundedOps{\tilde{\CStarAlg}}}$,
        we obtain
        all the desired properties for $(U,j,r)$
        bar continuity,
        analogous to the proof of
        \Crefit{thm:result:graph-dilations:discrete:sig:article-graph-raj-dahya}{cstar}.
        Since $\quer{\phi}$ has embedded uniform strong continuity \wrt $\iota$,
        the conditions of \Cref{lemm:stroescu:cstar:restr:sig:article-graph-raj-dahya}
        are fulfilled,
        which implies that
            ${U = \quer{U} \circ \iota : E \to \BoundedOps{\tilde{\CStarAlg}}}$
        is strongly continuous.
\end{proof}



Finally, we prove \Cref{thm:result:graph-dilations:cts:indivisible:sig:article-graph-raj-dahya}
by making use of the \Second cover extension in \Cref{lemm:extension-cover:II:sig:article-graph-raj-dahya}.

\def\beweislabel{thm:result:graph-dilations:cts:indivisible:sig:article-graph-raj-dahya}
\begin{proof}[of \Cref{\beweislabel}]
    Let $G = G_{\mathcal{G}}$
    be the edge group associated to the graph $\mathcal{G}$
    and define
        ${\iota : E \to G}$
    by $\iota(e) \coloneqq [\letter{e}]$
    for $e \in E$.

    \paragraph{Claim~\punktcref{banach}:}
    Apply \Cref{lemm:extension-cover:II:sig:article-graph-raj-dahya}
    to obtain the \Second cover extension
        ${\quer{\phi} : G \to \BoundedOps{\BanachRaum}}$
    of $\phi$,
    which satisfies
        $\quer{\phi}(1) = \onematrix$
        and
        $\quer{\phi}(\iota(u, v)) = \phi(u, v)$
        for all $(u, v) \in E$,
    and which is a family of contractions on $\BanachRaum$.
    Since $A$ is assumed to have geometric growth,
    the \Second cover extension $\quer{\phi}$
    has embedded uniform strong continuity \wrt the map $\iota$.
    The remainder of the proof is as in the proof of
    \Cref{thm:result:graph-dilations:cts:divisible:sig:article-graph-raj-dahya}%
    ~\eqcref{it:banach:thm:result:graph-dilations:cts:divisible:sig:article-graph-raj-dahya}.

    \paragraph{Claim~\punktcref{cstar}:}
    By assumption,
        $
            \phi = \{\Phi_{(u,v)} \coloneqq e^{A(u,v)}\}_{(u,v) \in E}
        $,
    where
    each $A(u, v)$
    is a self-adjoint map
        $L_{(u, v)}$
    satisfying
        $L_{(u,v)}(1) = \zeromatrix$
    and
        $D_{L_{(u, v)}}(a, a) \geq \zeromatrix$
    for all $a \in \CStarAlg$.%
    \footnote{%
        see \S{}\ref{sec:intro:notation:sig:article-graph-raj-dahya}
        for the definition of the dissipation map, $D_{L}$.
    }
    By \Cref{cor:linblad:dissipative:sig:article-graph-raj-dahya}
    each $L_{(u, v)}$ is a dissipative operator on $\CStarAlg$.
    We thus have a similar setup to Claim~\punktcref{banach}:
    Applying \Cref{lemm:extension-cover:II:sig:article-graph-raj-dahya},
    we obtain the \Second cover extension
        ${\quer{\phi} : G \to \BoundedOps{\CStarAlg}}$
    of $\phi$,
    which satisfies
        $\quer{\phi}(1) = \id$
        and
        $\quer{\phi}(\iota(u, v)) = \Phi_{(u, v)}$
        for all $(u, v) \in E$.
    Since $A$ is assumed to have geometric growth,
    the \Second cover extension $\quer{\phi}$
    has embedded uniform strong continuity \wrt the map $\iota$.

    Now by the construction of this cover extension
    one has
        $\quer{\phi}(g) = e^{A(g)}$
    for each $g \in G$,
    where by \eqcref{eq:extension-cover:II:sig:article-graph-raj-dahya}
    each $A(g)$ is a positive linear combination
    of the elements in
        $\{A(u, v) = L_{(u, v)}\}_{(u, v) \in E} \subseteq \BoundedOps{\CStarAlg}$.
    By \Cref{rem:class-of-linblad-schwarz:sig:article-graph-raj-dahya},
    each $A(g)$ is a self-adjoint map $L$
    satisfying
        $L(1) = \zeromatrix$
    and
        $D_{L}(a, a) \geq \zeromatrix$
    for all $a \in \CStarAlg$.
    Thus by the correspondence
    in \Cref{lemm:linblad:schwarz:sig:article-graph-raj-dahya},
    the semigroup
        $\{e^{t A(g)}\}_{t \in \realsNonNeg}$
    consists of unital Schwarz and thus positive operators.
    In particular, each $\quer{\phi}(g)$
    is a unital positive operator on $\CStarAlg$.
    The remainder of the proof is as in the proof of
    \Cref{thm:result:graph-dilations:cts:divisible:sig:article-graph-raj-dahya}%
    ~\eqcref{it:cstar:thm:result:graph-dilations:cts:divisible:sig:article-graph-raj-dahya}.
\end{proof}



\begin{rem}
    To establish continuous dilations,
    we restricted our attention to linearly ordered graphs
    and developed different extensions of $\{\phi(u,v)\}_{(u,v) \in E}$
    to operator families defined on the group $G_{\mathcal{G}}$
    with embedded uniform continuity.
    It would be interesting to know if similar results can be achieved
    for other classes of graphs.
\end{rem}

\begin{rem}
    Recall that the conditions imposed in
    \Cref{%
        thm:result:graph-dilations:cts:divisible:sig:article-graph-raj-dahya,%
        thm:result:graph-dilations:cts:indivisible:sig:article-graph-raj-dahya%
    }
    necessitate norm-continuity of the operator families
    (see \Cref{%
        prop:geom-implies-norm-cts:dyn:sig:article-graph-raj-dahya,%
        prop:geom-implies-norm-cts:gen:sig:article-graph-raj-dahya%
    }).
    It remains a challenge to obtain these results
    under the weaker assumption of strong continuity
    (\cf \Cref{rem:strong-continuity-for-embedded-continuity:sig:article-graph-raj-dahya}).
\end{rem}

\begin{rem}
    By the correspondence in \Cref{lemm:linblad:schwarz:sig:article-graph-raj-dahya},
    the setup in
    \Cref{thm:result:graph-dilations:cts:indivisible:sig:article-graph-raj-dahya}~%
    \eqcref{it:cstar:thm:result:graph-dilations:cts:indivisible:sig:article-graph-raj-dahya}
    implies that each $\Phi_{(u,v)} \coloneqq e^{A(u,v)}$
    is a Schwarz-operator on $\CStarAlg$.
    It would be interesting to know if the claim still holds
    under the weaker assumption of positivity
    (\cf
        \Cref{thm:result:graph-dilations:cts:divisible:sig:article-graph-raj-dahya}~%
        \eqcref{it:cstar:thm:result:graph-dilations:cts:divisible:sig:article-graph-raj-dahya}%
    ).
\end{rem}

\begin{rem}
    It would be very useful to know if
    \Cref{%
        thm:result:graph-dilations:cts:divisible:sig:article-graph-raj-dahya,%
        thm:result:graph-dilations:cts:indivisible:sig:article-graph-raj-dahya%
    }
    can be extended to dilate
    dynamical systems
        $
            \{\Phi_{(u, v)}(\cdot)\}_{(u, v) \in E}
            \subseteq L^{1}(\HilbertRaum)
        $
    consisting of CPTP\=/maps on the trace-class operators
    of a Hilbert space $\HilbertRaum$,
    to obtain continuous
    counterparts to
    \Crefit{thm:result:graph-dilations:discrete:sig:article-graph-raj-dahya}{cptp}.
\end{rem}

\begin{rem}
\makelabel{rem:literature-algebraic:sig:article-graph-raj-dahya}
    In the current paper we worked with presented groups
    which arise from a monoid of words and relations induced by graph edges.
    In the literature, Vernik, \andothers
    \cite{%
        Vernik2016Article,%
        Atkinson2019Article%
    }
    employ similar techniques to model dynamical systems,
    but with words induced by graph nodes.
    It is not immediately clear how to translate
    between the two algebraic frameworks.
    In particular, whilst our graphs encoded
    the notion of \highlightTerm{divisibility},
    in the afore mentioned works these structures
    are used to encode commutation relations.
    To obtain dilations of such dynamical systems,
    Vernik makes use of the concept of \highlightTerm{subproduct systems}
    developed by Shalit and Solel
    \cite{ShalitSolel2009Article}
    (see also
        \cite{%
            HartzShalit2024Article,%
            ShalitSkeide2022multiparam%
        }%
    ).
    It would be interesting to know if this technology in turn
    can be adapted to encode the algebraic relations needed in the present paper.
\end{rem}

Our main results,
\viz
\Cref{%
    thm:result:graph-dilations:discrete:sig:article-graph-raj-dahya,%
    thm:result:graph-dilations:cts:divisible:sig:article-graph-raj-dahya,%
    thm:result:graph-dilations:cts:indivisible:sig:article-graph-raj-dahya%
},
can immediately be applied to the examples presented in
\S{}\ref{sec:examples:hamiltonian:sig:article-graph-raj-dahya}.
Considering in particular
\Cref{e.g.:hamiltonian:indivisible:sig:article-graph-raj-dahya},
by
\Cref{thm:result:graph-dilations:cts:indivisible:sig:article-graph-raj-dahya}~%
\eqcref{it:cstar:thm:result:graph-dilations:cts:indivisible:sig:article-graph-raj-dahya}
for any
(not necessarily commuting)
uniformly bounded strongly measurable family
    $\{H_{\tau}\}_{\tau\in\realsNonNeg}$
of self-adjoint operators
on a Hilbert space $\HilbertRaum$,
the dynamical system
    $
        \{\Phi_{(t,s)}\}_{s,t \in \realsNonNeg, t \geq s}
    $
on the unital \TextCStarAlg $\CStarAlg \coloneqq \BoundedOps{\HilbertRaum}$
(or dually: $L^{1}(\HilbertRaum)$)
defined by

\begin{shorteqnarray}
    \Phi_{(t,s)}
    \coloneqq
    e^{\iunit [\int_{s}^{t} H_{\tau}\:\dee\tau,\:\cdot]}
    = \adjoint_{e^{\iunit \int_{s}^{t} H_{\tau}\:\dee\tau}}
\end{shorteqnarray}

\continueparagraph
for $t \geq s$,
may itself not be a divisible process,
but it can always be embedded into one
defined on a larger \TextCStarAlg.
More generally, if continuity is not a concern,
then by
    \Cref{thm:result:graph-dilations:discrete:sig:article-graph-raj-dahya}~%
    \eqcref{it:cptp:thm:result:graph-dilations:discrete:sig:article-graph-raj-dahya}
the embeddings themselves are physically meaningful.
This result confirms the understanding
that Markovian systems are unavoidable phenomena in the quantum setting.






\null


\paragraph{Acknowledgement.}
The author is grateful
to Orr Shalit for helpful suggestions and references,
to Jacob Barandes for insight into the physics side of indivisibility,
to Leonardo Goller for useful exchanges and remarks regarding the BCH formula,
and to Tanja Eisner for her feedback.


\bibliographystyle{siam}
\def\bibname{References}
\bgroup
\footnotesize

\egroup


\addresseshere
\end{document}
